\newcommand{\be}{\begin{equation}}
\newcommand{\bea}{\begin{equation}\begin{aligned}}
\newcommand{\beas}{\begin{equation*}\begin{aligned}}
\newcommand{\eeas}{\end{aligned}\end{equation*}}
\newcommand{\eea}{\end{aligned}\end{equation}}
\newcommand{\ee}{\end{equation}}
\def\diva{\diverge_{\mathcal{A}}}
\renewcommand{\(}{\left(}
\renewcommand{\)}{\right)}
\renewcommand{\[}{\left[}
\providecommand{\norm}[1]{\left\Vert#1\right\Vert}
\providecommand{\ns}[1]{\norm{#1}^2}
\providecommand{\as}[1]{\abs{#1}^2}
\providecommand{\abs}[1]{\left\vert#1\right\vert}
\providecommand{\Rn}[1]{\mathbb{R}^{#1}}
\providecommand{\se}[1]{\mathcal{E}_{#1}}
\providecommand{\fd}[1]{\mathfrak{D}_{#1}}
\def\sg{\mathbb{D}}
\def\sgz{\mathbb{D}^0}
\def\nab{\nabla}
\def\dt{\partial_t}
\def\hal{\frac{1}{2}}
\def\ls{\lesssim}
\def\ls{\lesssim}
\def\p{\partial}
\def\da{\Delta_{\mathcal{A}}}
\def\naba{\nab_{\mathcal{A}}}
\def\a{\mathcal{A}}
\def\n{\mathcal{N}}
\def\S{\mathbb{S}}
\newcommand{\beq}{\begin{equation}}
\newcommand{\eeq}{\end{equation}}
\newcommand{\bal}{\begin{align}}
\newcommand{\eal}{\end{align}}
\renewcommand{\(}{\left(}
\renewcommand{\)}{\right)}
\renewcommand{\[}{\left[}
\def\f{\mathcal{F}_{2N}}
\def\rest{\hskip 1pt{\hbox to 10.8pt{\hfill\vrule height 7pt width 0.4pt depth 0pt\hbox{\vrule height 0.4pt
width 7.6pt depth 0pt}\hfill}}}
\def\evalu{\hskip 1pt{\hbox to 2pt{\hfill \vrule height -6pt width 0.4pt depth0pt}}}
\DeclareMathOperator{\diverge}{div}
\newtheorem{thm}{Theorem}[section]
\newtheorem{lem}[thm]{Lemma}
\newtheorem{prop}[thm]{Proposition}
\newtheorem{rem}[thm]{Remark}
\title[compressible viscous surface waves]{Global Well-posedness of Compressible Viscous Surface Waves without Surface Tension}
\author{Ting Sun}
\address{School of Mathematical Sciences\\
	Xiamen University\\
	Xiamen, Fujian 361005, China}
\email[T. Sun]{sunting12120721@163.com}
\author{Yanjin Wang}
\address{School of Mathematical Sciences\\
	Xiamen University\\
	Xiamen, Fujian 361005, China}
\email[Y. J. Wang]{yanjin$\_$wang@xmu.edu.cn}
\thanks{This work was supported by the National Natural Science Foundation of China (12171401, 12231016).}
\subjclass[2020]{35B40, 35Q30, 35R35, 76N06, 76N10.}
\keywords{Free boundary problem; Compressible Navier--Stokes equations; Viscous surface waves; Global well-posedness; Decay.}
\date{\today}
\begin{document}

\begin{abstract}
We consider the free boundary problem for a layer of  compressible viscous barotropic fluid lying above a fixed rigid  bottom
and below the atmosphere of  positive constant pressure. The fluid dynamics is governed by the compressible Navier--Stokes equations with gravity, and the effect of surface tension is neglected on the upper free boundary. We prove the global well-posedness of the reformulated problem in flattening coordinates near the equilibrium in both two and three dimensions without any low frequency assumption of the initial data. The key ingredients here are the new control of the {\it Eulerian spatial derivatives} of the solution, which benefits a crucial nonlinear cancellation of the highest order spatial regularity of the free boundary, and the time weighted energy estimates.
\end{abstract}
\maketitle

\section{Introduction}\label{introud}
\numberwithin{equation}{section}
\subsection{Formulation of the problem.}
We consider a compressible, viscous, barotropic fluid evolving in a  moving domain
\begin{equation*}
\Omega(t) = \left\{ y \in   \Rn{d} \;\vert\; -b< y_d  < \eta(t,y_h)\right\},\quad  t\ge 0.
\end{equation*}
Here the dimension $d=2$ or $3$, and $y=(y_h,y_d)$ for $y_h=(y_1,y_{d-1})$ the horizontal spatial coordinate and $y_d$ the vertical one. The lower boundary of $\Omega(t)$, denoted by $\Sigma_b$,  is assumed to be rigid and given by the fixed constant $b>0$, but the upper boundary, denoted by $\Sigma(t)$, is a free boundary that is the graph of the unknown function $\eta: [0, \infty) \times \Rn{d-1}\to \Rn{}$.

The fluid is described by its density and velocity, which are given for each $t\ge 0$ by $ \rho(t,\cdot):\Omega(t) \to  \Rn{+} $ and $ u(t,\cdot):\Omega(t) \to \Rn{d} $, respectively. The pressure is a function of density: $P=P(\rho)$, which is assumed to be  positive, smooth, and strictly increasing. We define the symmetric gradient of $u$ by $ \sg u  =   \nabla u  + (\nabla u)^T $ and  its deviatoric (trace-free) part by $\sgz u = \sg u - \frac{2}{d} \diverge{u} I$
with $I$ the $d \times d$ identity matrix, then the viscous stress tensor is given by
$$
\mathbb{S} u =\mu \mathbb{D}^0u+\mu' \diverge u I,
$$
where $\mu $ is the shear viscosity and  $\mu' $ is the bulk viscosity; for technical reasons (see Korn's inequality of Lemma \ref{xm5}) we assume they satisfy the following conditions:
\begin{equation}\label{viscosity}
\mu >0,\ \mu '> 0\text{ if }d=2;\ \mu >0,\ \mu '\ge 0\text{ if }d=3.
\end{equation}

 For each $t>0$ we require that $(\rho, u, \eta)$ satisfy the following free boundary problem for the compressible Navier--Stokes equations:
\begin{equation}\label{NS}
\begin{cases}
\partial_{t}\rho+\diverge(\rho u)=0& \text{in }\Omega(t)
\\ \partial_t(\rho u)+\diverge(\rho u\otimes u)+\nabla P(\rho)-\diverge \mathbb{S} u =-\rho ge_{d} &\text{in}~ \Omega(t)
\\ \partial_t \eta = u\cdot \n &\text{on }\Sigma(t)
\\ \(P(\rho)I-\mathbb{S} u \)\n =p_{atm} \n  &\text{on }\Sigma(t)
\\u=0 & \text{on }\Sigma_{b} .
\end{cases}
\end{equation}
The first equation in \eqref{NS} is the continuity equation, and the second one is the momentum equations in which $-\rho g   e_d$ is the gravitational force with the constant $g>0$ the acceleration of gravity and $e_d$ the vertical unit vector. The third equation is the kinematic boundary condition which implies that the free boundary is advected with the fluid, where $\n=(-D\eta,1)$ is the upward-pointing non-unit normal to $\Sigma(t)$ for $D$ the horizontal gradient. The fourth equation is the dynamic boundary condition which states the balance  of normal stresses on the free boundary, without taking into account the effect of surface tension, where the constant $p_{atm}>0$ is the constant atmospheric pressure. The last equation is the usual no-slip boundary condition for the velocity on the rigid bottom.
To complete the statement of the problem \eqref{NS}, we must specify the initial conditions; we suppose that the initial upper boundary $\Sigma(0)$ is given by the graph of the function $\eta(0)=\eta_0: \Rn{d-1}\rightarrow \mathbb{R}$, which yields the initial domain $\Omega(0)$ on which we specify the initial density, $ \rho(0)=\rho_0: \Omega(0) \rightarrow  \mathbb{R}^+$, and the velocity, $u(0)=u_0: \Omega(0) \rightarrow \Rn{d}$.

Note that there exists a stratified steady-state equilibrium solution to \eqref{NS} with $(u,\eta)=(0,0)$ provided certain necessary and sufficient conditions are satisfied. Indeed, the corresponding equilibrium domain is given by
$$
\Omega=\left\{y\in \mathbb{R}^d|-b<y_d<0\right\},
$$
and \eqref{NS} reduces to an ODE for the equilibrium density $\bar{\rho}=\bar{\rho}(y_d)$:
\begin{equation}\label{ode}
\begin{cases}
\displaystyle\frac{d(P(\bar{\rho}))}{dy_d}=-\bar{\rho}g   \quad\text{in }(-b,0),\\
P(\bar{\rho}(0))=p_{atm}.
\end{cases}
\end{equation}
We claim that  \eqref{ode} admits a unique solution $\bar{\rho}>0$ if and only if
\beq\label{cd}
p_{atm} \in P(\mathbb{R}^+), \text{ and } 0<b<\displaystyle\frac{1}{g}\int_{\rho^\star}^{+\infty}\frac{P'(s)}{s}\,ds \text{ with } \rho^\star:=P^{-1}(p_{atm})>0.
\eeq
To see this,  we  introduce the enthalpy function $h: (0,\infty)\rightarrow\mathbb{R}$  by
\beq\label{bh}
h(z)=\displaystyle\int_{\rho^\star}^{z}\frac{P'(s)}{s}ds,
\eeq
which is smooth, strictly increasing, and positive on $(\rho^\star, \infty)$ with $h(\rho^\star)=0$. Then the solution to \eqref{ode} is given by
\beq\label{hdef}
\bar{\rho}(y_d)=h^{-1}(h(\rho^\star)-gy_d)=h^{-1}(-gy_d),
\eeq
which gives a well-defined, smooth, and strictly decreasing function $\bar{\rho}:[-b,0]\rightarrow[\rho^\star,\infty)$ if and only if
$$
gb\in h((\rho^\star,\infty)) \Longleftrightarrow 0<b<\displaystyle\frac{1}{g}\int_{\rho^\star}^{+\infty}\frac{P'(s)}{s}ds.
$$

We will assume in this paper that \eqref{viscosity} and \eqref{cd} hold, and our purpose is to establish the global well-posedness of the problem \eqref{NS} near the equilibrium  $(\rho,u,\eta)=(\bar{\rho},0,0)$.


\subsection{Related results.}
Free boundary problems in fluid mechanics have attracted much interest in the mathematical community. It would be impossible to present a thorough survey of the literature here, and we will mention only the works most relevant to our present setting, that is, the viscous surface wave problem for layers of viscous fluid.

 For the incompressible viscous surface wave problem, Beale \cite{beale_1} proved the local well-posedness for the case without surface tension  in Lagrangian coordinates, and, thereafter, the global well-posedness and long-time behavior of solutions to the  problem without surface tension have been intriguing questions, see Sylvester \cite{Sylvester} and Tani and Tanaka \cite{TT2}.
 Note that it is not  expected  the global well-posedness to be  available for the free boundary problems in fluid mechanics with the generic large initial data,
 and we refer to   Castro,  C\'ordoba, Fefferman, Gancedo and G\'omez-Serrano \cite{CCFGG,CCFGG2} and Coutand and Shkoller \cite{Coutand1,Coutand}
  for the formation of finite-time singularities starting from certain smooth initial data. When the effect of surface tension is included, Beale \cite{beale_2}  proved the global well-posedness of the problem with the small initial data in flattening coordinates. Moreover, Beale and Nishida \cite{beale_nishida}  proved that the solution of the horizontally infinite setting in $3D$ obtained in \cite{beale_2} decays at an algebraic rate provided $\norm{\eta_{0}}_{L^{1}\(\mathbb{R}^{2}\)} $ is sufficiently small,
 see also Hataya \cite{hataya2},
while Nishida, Teramoto and Yoshihara \cite{NTY}  showed that the solution
of the  horizontally periodic setting  decays at an exponential rate.
For the case without surface tension, in the horizontally periodic setting,
 Hataya \cite{hataya} proved the global existence of the small solution with an algebraic decay rate,
 Guo and Tice \cite{GT_per} showed that the solution decays at an almost exponential rate, and Tan and Wang \cite{TW} further justified
the global-in-time vanishing surface tension limit of the problem.
In the horizontally infinite setting,  Hataya and Kawashima \cite{hataya3} announced a global existence of the small solution with an
algebraic decay in $3D$ provided $\norm{\eta_{0}}_{L^{1}(\mathbb{R}^{2})} $ is sufficiently small,
and Guo and Tice \cite{GT_inf} proved the global existence of the small solution with an  algebraic decay rate in $3D$
provided some negative Sobolev norm of the initial data is sufficiently small, see also Gui \cite{G1} for a similar result in Lagrangian coordinates.
Very recently, the second author \cite{WYJ1} removed the low frequency assumption of the initial data ($i.e.$, the assumptions that the initial data belongs to negative Sobolev spaces in \cite{GT_inf} and that $\eta_{0}\in L^{1}(\mathbb{R}^{2}) $ in \cite{hataya3}) and proved the global well-posedness of the problem in both $2D$ and $3D$ by exploiting a crucial nonlinear cancellation and the anisotropic decay estimates.

For the compressible viscous surface wave problem, we focus only on the case away from vacuum.
In the  horizontally periodic setting,
Jin and Padula \cite{JP} proved the global well-posedness for a layer of barotropic fluid with surface tension
   in Lagrangian coordinates,
  Jang, Tice and Wang \cite{WYJ2} showed the global well-posedness for multiple layers of
barotropic fluid with and without surface tension and established the global-in-time vanishing surface tension limit in flattening coordinates,
and  Huang and Luo \cite{HL} obtained the global well-posedness for a layer of heat conducting fluid without surface tension.
In the horizontally infinite setting,
Tani and Tanaka \cite{TT} proved the  global well-posedness for a layer of heat conducting fluid with surface tension,
and Gui and Zhang \cite{GZ} proved the global well-posedness for a layer of barotropic fluid without surface tension in $3D$
in Lagrangian coordinates provided some negative Sobolev norm of the initial data is sufficiently small as in \cite{G1}.
Inspired by these works, especially the second author \cite{WYJ1},
our goal of this paper is to prove the global well-posedness of the problem \eqref{NS} without surface tension in both $2D$ and $3D$,
 without any low frequency assumption of the initial data mentioned above.

\section{Main results}

\subsection{Reformulation in flattening coordinates.}
The movement of the free boundary $\Sigma(t)$
and the subsequent change of the domain $\Omega(t)$ create numerous mathematical difficulties. As Beale \cite{beale_2}, we will flatten the coordinate domain via the mapping
$$
\Omega\ni x\mapsto(x_{h},x_d+\varphi(t,x)):=\Phi(t,x)=(y_{h},y_{d})\in \Omega(t),
$$
where $\varphi(t,x)=(1+x_d/b)\bar{\eta}$ for $\bar{\eta}=\mathcal{P}\eta$ the harmonic extension of $\eta$ defined by \eqref{pp2}.

 If $\eta$ is sufficiently small and regular, then the mapping $\Phi$ is
a diffeomorphism, which allows us to transform  \eqref{NS} defined in $\Omega(t)$ to be one in $\Omega$. To this end, we define the following differential operators:
$$
\dt^\a:=\dt-J^{-1}\dt \varphi \p_d ,\ \partial_i^\a=(\naba )_i:=(\a\nabla)_i ,\ \diva  := \naba \cdot, \text{ and }\da  := \diva \naba ,
$$
where we have denoted the matrix $\mathcal{A}:=\(\nabla \Phi^{-1}\)^T$
and $J := \det{\nab \Phi}=1+\p_d\varphi$ the Jacobian of the coordinate transformation,
and we write
$$
\S_{\a} u : =\mu \sgz_{\a} u+ \mu' \diva u I, \  \sgz_{\a} u = \sg_{\a} u - \frac{2}{d} \diva u I,
 \text{ and }  \sg_{\a} u  =  \naba u+ (\naba u)^T.
$$
 Note that by \eqref{hdef},
 \begin{align}\label{qq}
 \nabla_\mathcal{A} P(\rho)+\rho g e_{d}
 = \rho\nabla_\mathcal{A}\( h(\rho)+g(x_d+  \varphi) \)
 = \rho\nabla_\mathcal{A}\( h(\rho)-h(\bar\rho)+g  \varphi \).
\end{align}
This motivates us to define the special perturbation
\beq\label{tsg}
q:= h(\rho)-h(\bar\rho)+g\varphi= h(\rho)+g(x_d+\varphi).
\eeq
Then
\beq\label{ho}
\rho=
\bar{\rho}+\frac{1}{h'(\bar{\rho})}(q-g\varphi) +\mathcal{R}_{  h^{-1}},
\eeq
where
\beq\label{Rh}
\mathcal{R}_{  h^{-1}}
= (q-g\varphi)^2\int_0^1\left(h^{-1}\right)''(h(\bar\rho)+s(q-g\varphi))(1-s)ds
,
\eeq
and on the upper boundary $\Sigma:=\mathbb{R}^{d-1}\times\{0\}$, we have $\rho =  h^{-1} \(q-g\eta\)$ and hence,
\beq\label{deqdf}
P(\rho)-p_{atm}
=P \circ h^{-1} \(q-g\eta\) -P \circ h^{-1}(0)
=\rho^\star(q-g\eta)+\mathcal{R}_{P\circ h^{-1}},
\eeq
where
\beq\label{Rp}
\mathcal{R}_{P\circ h^{-1}}
=(q-g\eta)^2\int_0^1\left(P\circ h^{-1}\right)''(s(q-g\eta))(1-s)ds
.
\eeq
Note that the first equation in \eqref{NS} is transformed to be
\beq\label{deq}
\partial_t^\mathcal{A}\rho
+\diverge_\mathcal{A}(\rho u)=0 ,
\eeq
and hence, by \eqref{tsg},
\begin{align}\label{deq2}
\partial_t^\mathcal{A} q
&=\partial_{t}^\mathcal{A} \( h(\rho) +g(x_d+  \varphi)\)
=\partial_{t}^\mathcal{A}  h(\rho)
= h'(\rho)\partial_t^\mathcal{A}\rho
=-h'(\rho)\diverge_\mathcal{A}(\rho u).
\end{align}
Then using \eqref{qq}, \eqref{deqdf} and \eqref{deq2}, the system in new coordinates reads as
\begin{equation}\label{neweq}
\begin{cases}
 \partial_t^\mathcal{A} q+h'(\rho)\diverge_\mathcal{A}(\rho u)= 0    &\text{in }\Omega
\\\rho \(\partial_{t}^\mathcal{A}u+u\cdot \nabla_\mathcal{A} u\)
+ \rho \nabla_\mathcal{A}q-\diverge_\mathcal{A}\mathbb{S}_\mathcal{A} u =0  &\text{in }\Omega
\\ \partial_t\eta=u\cdot \mathcal{N}   &\text{on }\Sigma
\\ \(\rho^\star qI-\mathbb{S}_\mathcal{A} u \)\mathcal{N}=\left(\rho^\star g\eta  -\mathcal{R}_{P\circ h^{-1}}\right)\mathcal{N}
 &\text{on }\Sigma
\\ u=0     &\text{on }\Sigma_b
\\ (q,u,\eta)|_{t=0}=(q_0,u_0,\eta_0).
\end{cases}
\end{equation}

\subsection{Statement of the results}

 We will work in a high-regularity context, essentially with regularity up to $2N$ temporal derivatives for an integer $N\ge 4$.  This requires us to use the initial data $(q_0,u_0,\eta_0)$ to construct the initial data
 $\(\dt^j q(0),\dt^j u(0),\dt^j \eta(0)\)$ for $j=1,\dotsc,2N$
   by using the equations \eqref{neweq}. These data must then satisfy various boundary conditions, which in turn require $(q_0,u_0,\eta_0)$ to satisfy $2N$ compatibility conditions.  We refer the reader to \cite{JTW} for the construction of those initial data and the precise description of the necessary
 $2N$ compatibility conditions.

We write $H^{k}(\Omega)$ with $k\geq 0$ and $H^{s}(\Sigma)$ with $s\in \mathbb{R}$ for the usual Sobolev spaces,
with norms denoted by $\norm{\cdot}_{k}$ and $\abs{\cdot}_{s}$, respectively. For a vector-valued function $v\in \mathbb{R}^{d}$, we write $v=(v_{h},v_{d})$ for $v_{h}$ the horizontal component of $v$ and $v_d$ the vertical one. Let $d = 2$ or $ 3$ and $N \geq 4$.
We define the high-order energy as
\begin{align}\label{e}
\mathcal{E}_{2N}:= \sum_{j=0}^{2N}\norm{\partial_t^ju}_{4N-2j}^2
 +\sum_{j=0}^{2N}\norm{\partial_t^j q}_{4N-2j }^2
 +\sum_{j=0}^{2N}\abs{\partial_t^j\eta}_{4N-2j }^2,
\end{align}
the high-order dissipation as
\begin{align}\label{d}
\mathfrak{D}_{2N}:=&\norm{u_h}_{4N}^{2} +\norm{\nabla_\a u_h}_{4N}^{2}
+\norm{u_d}_{4N+1}^{2}+\sum_{j=1}^{2N}\norm{\partial_t^ju}_{4N-2j+1}^2
\nonumber\\&
+\norm{\nabla q }_{4N-1}^2 +\norm{\partial_tq}_{4N-1}^2+\sum_{j=2}^{2N+1}\norm{\partial_t^jq}_{4N-2j+2}^2
\nonumber\\&
+\abs{D\eta}_{4N-3/2}^2+\abs{\partial_t\eta}_{4N-1}^2+\sum_{j=2}^{2N+1}\abs{\partial_t^j\eta}_{4N-2j+5/2}^2
\end{align}
and
\begin{align} \label{F}
\mathcal{F}_{2N}:=\abs{\eta}_{4N+1/2}^{2}.
\end{align}
We also define the low-order energy as
\begin{align}\label{le}
\mathcal{E}_{N+2,1}:=&
\sum_{j=0}^{N+2}\norm{\partial_t^ju}_{2(N+2)-2j}^2
+\norm{\nabla q}_{2(N+2)-1}^2
+\sum_{j=1}^{N+2}\norm{ \partial_t^jq}_{2(N+2)-2j}^2
\nonumber\\&
+\abs{D\eta}_{2(N+2)-1}^2
+\sum_{j=1}^{N+2}\abs{\partial_t^j\eta}_{2(N+2)-2j}^2
\end{align}
and the corresponding low-order dissipation as
\begin{align} \label{ld}
\mathcal{D}_{N+2,1}:= & \ns{D u_h }_{2(N+2) } +\abs{\partial_du_h}_{2(N+2)-1/2}^2+ \ns{  u_d }_{2(N+2)+1}
+\sum_{j=1}^{N+2}  \ns{\dt^j u}_{2(N+2)-2j+1}\nonumber
 \\& + \ns{D^2 q}_{2(N+2)-2}  + \ns{\p_d q}_{2(N+2)-1}
+\sum_{j=1}^{N+3}\norm{ \partial_t^jq }_{2(N+2)-2j+2}^2\nonumber
   \\& +\as{D^2 \eta}_{2(N+2)-5/2}   + \sum_{j=1}^{N+3} \as{\dt^j \eta}_{2(N+2)-2j+5/2}.
\end{align}
Here the subscripts ``$,1$" of $\mathcal{E}_{N+2,1}$ and $\mathcal{D}_{N+2,1}$ stem from that of $\bar{\mathcal{E}}_{N+2,1}$ (defined by \eqref{bare}), which refers to that the solution has the ``minimal derivative count 1" in the definition \eqref{bare}, $i.e.$, the solution itself without derivatives is not included in $\bar{\mathcal{E}}_{N+2,1}$.

The main result of this paper is stated as follows.
\begin{thm}\label{thm1}
Let $d=2$ or $3$ and $N\geq 4$. Assume that the initial data $ q_0, u_{0}\in H^{4N}(\Omega)$ and $\eta_{0}\in H^{4N+1/2}(\Sigma)$ are given such that the necessary $2N$ compatibility conditions for the local well-posedness of \eqref{neweq}
are satisfied. There exists an $\varepsilon_{0}>0$ so that if
$\mathcal{E}_{2N}(0)+\mathcal{F}_{2N}(0) \leq \varepsilon_{0}$, then there exists a unique solution $(q, u, \eta)$ to \eqref{neweq}
on $[0,\infty)$ which obeys the estimate
\begin{align}\label{obey}
&
\mathcal{E}_{2N}(t)+\int_0^t\mathfrak{D}_{2N}(r)dr
 +\frac{\mathcal{F}_{2N}(t)}{(1+t)}
+\int_0^t\frac{\mathcal{F}_{2N}(r)}{(1+r)^{2}}dr
+(1+t)\mathcal{E}_{N+2,1}(t)
+\int_0^t(1+r)\mathcal{D}_{N+2,1}(r)dr
\nonumber\\
&\quad
\lesssim \mathcal{E}_{2N}(0)+\mathcal{F}_{2N}(0), \quad~\forall~ t>0.
\end{align}

\end{thm}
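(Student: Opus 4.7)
My plan is a standard continuity/bootstrap argument. I would invoke local well-posedness of \eqref{neweq} under the $2N$ compatibility conditions, which is furnished by the construction in \cite{JTW}. Then, under the bootstrap hypothesis that $\sup_{t\le T}[\mathcal{E}_{2N}(t)+\mathcal{F}_{2N}(t)/(1+t)]$ is small on $[0,T]$, I would upgrade this to the full estimate \eqref{obey} with constants depending only on $\mathcal{E}_{2N}(0)+\mathcal{F}_{2N}(0)$. Continuity of $\mathcal{E}_{2N}$ in $t$ combined with smallness of $\varepsilon_0$ then closes the bootstrap and yields global existence with \eqref{obey}.

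I would split the \emph{a priori} estimate into three coupled blocks. For the high-order energy--dissipation bound on $\mathcal{E}_{2N}(t)+\int_0^t\mathfrak{D}_{2N}$, I would apply $\partial_t^j$ to \eqref{neweq} for $j=0,\dots,2N$, test the momentum equation against $\partial_t^j u$ in $L^2(\rho\,dx)$, and integrate by parts. The dynamic boundary condition paired with the kinematic condition $\partial_t\eta=u\cdot\mathcal{N}$ produces the exact time derivative $\tfrac{g\rho^\star}{2}\tfrac{d}{dt}\int_\Sigma \abs{\partial_t^j\eta}^2$, while $\diverge_\mathcal{A}\mathbb{S}_\mathcal{A} u$ furnishes the Korn-type viscous dissipation. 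The transport relation \eqref{deq2} together with the definition \eqref{tsg} of $q$ then converts this into control of $\norm{\partial_t^j q}^2$. Spatial regularity is recovered via elliptic/Stokes estimates in $\mathcal{A}$-coordinates, trading temporal derivatives for spatial ones. The central innovation, following \cite{WYJ1}, is to perform the highest spatial derivatives as \emph{Eulerian} derivatives $\nabla_\mathcal{A}$ (hence the quantities $\norm{\nabla_\a u_h}_{4N}$ and $\norm{u_d}_{4N+1}$ present in $\mathfrak{D}_{2N}$) rather than the pulled-back $\nabla$; this prevents $\abs{\eta}_{4N+1/2}$ from appearing inside the dissipation. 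For the second block, since $\mathcal{F}_{2N}=\abs{\eta}_{4N+1/2}^2$, integrating $\partial_t\eta=u\cdot\mathcal{N}$ in the $H^{4N+1/2}(\Sigma)$ trace norm yields $\mathcal{F}_{2N}(t)\lesssim\mathcal{F}_{2N}(0)+t\sup_{[0,t]}\mathcal{E}_{2N}$, from which both $\mathcal{F}_{2N}(t)/(1+t)$ and $\int_0^t\mathcal{F}_{2N}(r)/(1+r)^2\,dr$ are under control. For the third block, the low-order weighted piece $(1+t)\mathcal{E}_{N+2,1}+\int_0^t(1+r)\mathcal{D}_{N+2,1}$, I would exploit the subscript-$1$ convention excluding the zero-derivative mode of $(q,u,\eta)$ from $\mathcal{E}_{N+2,1}$, so that a Poincar\'e/trace argument delivers $\mathcal{E}_{N+2,1}\lesssim\mathcal{D}_{N+2,1}$. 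Then the differential inequality $\frac{d}{dt}\mathcal{E}_{N+2,1}+\mathcal{D}_{N+2,1}\lesssim\sqrt{\mathcal{E}_{2N}}\,\mathcal{D}_{N+2,1}$ can be multiplied by $(1+t)$ and the linear-growth term absorbed by the dissipation.

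The hard part will be the nonlinear cancellation at the top spatial order. Without surface tension, the dynamic condition $(\rho^\star qI-\mathbb{S}_\mathcal{A} u)\mathcal{N}=(\rho^\star g\eta-\mathcal{R}_{P\circ h^{-1}})\mathcal{N}$ is of order zero in $\eta$, so taking $4N$ tangential derivatives of the momentum equation produces a boundary contribution of schematic form $\int_\Sigma g\rho^\star D^{4N}\eta\cdot D^{4N}(u\cdot\mathcal{N})+\text{commutators}$, whose commutators naively sit at the level of $\abs{\eta}_{4N+1/2}$ and exceed the $\abs{D\eta}_{4N-3/2}$ that $\mathfrak{D}_{2N}$ affords. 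The remedy is to carry out these $4N$ derivatives as Eulerian derivatives and then rearrange the commutators so that the top-order $\eta$ contribution assembles into a perfect time derivative paired with the kinematic equation, leaving a residual genuinely of lower order. This cancellation is absent in the earlier Lagrangian formulations and is precisely what removes the low-frequency assumption ($\eta_0\in L^1$ or a negative Sobolev-norm smallness) required in \cite{GT_inf,hataya3,GZ}. Once this cancellation is secured and the three blocks combine, choosing $\varepsilon_0$ small enough closes the bootstrap and \eqref{obey} follows.
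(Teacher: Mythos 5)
Your overall architecture (bootstrap via local well-posedness, three coupled blocks, Eulerian derivatives $D_\mathcal{A}$ at top order) matches the paper, but two of your three blocks rest on steps that fail. First, the $\mathcal{F}_{2N}$ block: the claimed bound $\mathcal{F}_{2N}(t)\lesssim\mathcal{F}_{2N}(0)+t\sup_{[0,t]}\mathcal{E}_{2N}$ is not available, because $\mathcal{E}_{2N}$ only controls $\norm{u}_{4N}$, whose trace lies in $H^{4N-1/2}(\Sigma)$ — half a derivative short of the $\abs{u\cdot\mathcal{N}}_{4N+1/2}$ you need when integrating the kinematic condition in $H^{4N+1/2}(\Sigma)$ — and the transport term $u_h\cdot D\eta$ carries $D\eta$ at order $4N+1/2$, i.e.\ $\mathcal{F}_{2N}$ itself. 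The missing half derivative must come from the dissipation ($\mathfrak{D}_{2N}$ contains $\norm{u_d}_{4N+1}^2$ and $\norm{\nabla_\mathcal{A}u_h}_{4N}^2$), which is what yields $\frac{d}{dt}\mathcal{F}_{2N}\lesssim\sqrt{\mathfrak{D}_{2N}}\sqrt{\mathcal{F}_{2N}}+\sqrt{\mathcal{D}_{N+2,1}}\,\mathcal{F}_{2N}$ (Proposition \ref{pf}); the stated bounds on $\mathcal{F}_{2N}/(1+t)$ then follow only after multiplying by $(1+t)^{-1}$ and using the time integrability of $\mathfrak{D}_{2N}$ and of $(1+r)\mathcal{D}_{N+2,1}$. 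Relatedly, your high-order block cannot close by itself under your bootstrap hypothesis: boundary terms with few derivatives such as $\int_\Sigma\abs{q}^2\partial_t\eta$ are only bounded by $\mathcal{E}_{2N}^{7/8}\mathcal{D}_{N+2,1}^{5/8}$, and the top-order boundary errors produce $\mathcal{D}_{N+2,1}\mathcal{F}_{2N}$ (Proposition \ref{Sy1}); their time integrals are finite only through the weighted quantities in \eqref{obey}, so all blocks must be closed simultaneously in $\mathcal{G}_{2N}$.

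Second, the decay block: the assertion that Poincar\'e/trace gives $\mathcal{E}_{N+2,1}\lesssim\mathcal{D}_{N+2,1}$ is false in the horizontally infinite setting. By \eqref{ld}, $\mathcal{D}_{N+2,1}$ has minimal horizontal derivative count two on $\eta$ and $q$ (only $D^2\eta$, $D^2q$) and count one on $u_h$, and there is no horizontal Poincar\'e inequality on $\Sigma=\mathbb{R}^{d-1}$; hence $\abs{D\eta}_0$, $\norm{Dq}_0$ and the low-frequency part of $u_h$ appearing in $\mathcal{E}_{N+2,1}$ are not dominated by $\mathcal{D}_{N+2,1}$ — this is exactly why the interpolation exponents in Lemma \ref{q,u} are $1/2$ rather than $1$. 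Consequently the term $\mathcal{E}_{N+2,1}$ produced by multiplying $\frac{d}{dt}\mathcal{E}_{N+2,1}+\mathcal{D}_{N+2,1}\le0$ by $(1+t)$ cannot be absorbed into $(1+t)\mathcal{D}_{N+2,1}$; the paper instead uses $\mathcal{E}_{N+2,1}\lesssim\mathfrak{D}_{2N}$ and the already established bound on $\int_0^t\mathfrak{D}_{2N}$ to control $\int_0^t\mathcal{E}_{N+2,1}$ (Step 3 of Theorem \ref{main}). Two further points: the cancellation is mis-attributed — the Eulerian derivatives are used to cancel the \emph{bulk} commutators (where only $\norm{\nabla u}_{L^\infty(\Omega)}$ is available, which is insufficient), while the boundary terms containing $D^{4N}\eta$ are not cancelled but estimated via the anisotropic bound $\norm{\nabla u}_{C^1(\Sigma)}^2\lesssim\mathcal{D}_{N+2,1}$, leaving the admissible error $\mathcal{D}_{N+2,1}\mathcal{F}_{2N}$; and the compressible problem requires ingredients absent from your sketch, notably the damped evolution equation for $\partial_dq$ (and for $\partial_d\partial_i^\mathcal{A}q$), the Lam\'e/Stokes elliptic estimates, and the separate argument producing $\norm{\partial_tq}_0^2$ in the low-order dissipation.
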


\begin{rem}
Note that the estimate \eqref{obey} does not imply the time integrability of $\norm{u_h}_{4N+1}^2$. Indeed, by \eqref{obey}, we can prove
\begin{align}\label{Hey}
\int_0^t\frac{\norm{u_h(r)}_{4N+1}^2}{(1+r)^{\kappa_d}}dr
\lesssim
\mathcal{E}_{2N}(0)+\mathcal{F}_{2N}(0),
\end{align}
where $\kappa_2=1/2$ when $d=2$ and $\kappa_3>0$ is any sufficiently small constant when $d=3$. Moreover, following the way of proving that for $\mathcal{E}_{N+2,1}$ and $\mathcal{D}_{N+2,1}$, we could also prove
\begin{align}
(1+t)^2\mathcal{E}_{N+2,2}(t)
+\int_0^t(1+r)^2\mathcal{D}_{N+2,2}(r)\,dr
\lesssim\mathcal{E}_{2N}(0)+\mathcal{F}_{2N}(0),
\end{align}
where $``,2"$ means the ``minimal derivative count 2" and we omit to present the definitions of $\mathcal{E}_{N+2,2}$ and $\mathcal{D}_{N+2,2}$.
If we would have done this, then we could refine the estimate \eqref{Hey} in $3D$ to be with $\kappa_3=0$. We have forgone this improvement as
 the derivation of the estimates of $\mathcal{E}_{N+2,1}$ and $\mathcal{D}_{N+2,1}$ is much simpler, which makes the presentation much more clear.
\end{rem}

\begin{rem}
The argument of proving Theorem \ref{thm1} works also for the incompressible case, which yields an improvement of the result in the second author \cite{WYJ1} in which $\mathcal{F}_{2N}$ and $\abs{\eta}_{4N}^{2}$ may grow at the rate $(1+t)^{1+\vartheta}$ and $(1+t)^{\vartheta}$ for any small $\vartheta>0$, respectively; here we have $\vartheta=0$. This is due to the new control of $\norm{D_\a u}_{4N}^{2}$ for $D_\a:=(\nabla_\a)_h$, which benefits the cancelation ``earlier" comparing with the good unknown $D^{4N}u-\p_d^\a u D^{4N}\varphi$ considered in \cite{WYJ1}.
\end{rem}

\subsection{Strategy of the proof}\label{sj}
The local well-posedness of \eqref{neweq} in our functional
framework under the assumption of Theorem \ref{thm1} can be obtained by taking the one-phase problem here as a special case of the two-phase problem considered in \cite{JTW}; even \cite{JTW} deals with the horizontally periodic case, but the analysis holds also for the horizontally infinite case here. Therefore, by a standard continuity argument as in \cite{GT_inf} for instance, to prove Theorem \ref{thm1} it suffices to derive the {\it a priori} estimates for the solution to \eqref{neweq} as recorded in Theorem \ref{main}.
The proof of Theorem \ref{main} is inspired by those works \cite{GT_inf,WYJ1,WYJ2}. In the below, we will sketch the main steps, and explain the key new difficulties and ideas.

Let us denote $\mathcal{G}_{2N}(t)$
by the supremum in $[0,t]$ of the left hand side of \eqref{obey}.
Note that the physical energy-dissipation identity in terms of the density $\rho$ ($i.e.,$ (2.34) in \cite{WYJ2}) can not be used directly for the horizontally infinite setting here, and we need to utilize a variant in terms of the perturbation $q$ obtained from linearizing around the equilibrium: by \eqref{neweq},
\begin{align}\label{intro0}
&\frac{1}{2}\frac{d}{dt}\left(
\int_{\Omega}J\(\frac{1}{h'(\rho)} \abs{q}^2
+ \rho   \abs{u}^2\)
+\int_{\Sigma}\rho^\star g\abs{ \eta}^2\right)
\nonumber\\&\quad+\int_{\Omega}J\(\frac{\mu}{2}\abs{\mathbb{D}_\mathcal{A}^0u}^2
+\mu' \abs{\diverge_\mathcal{A}u}^2\)=h.o.t..
\end{align}
However, differently from the incompressible case \cite{GT_inf,WYJ1} or the compressible case of the horizontally periodic setting \cite{WYJ2}, the nonlinear terms $h.o.t.$ in the right hand side of \eqref{intro0} can not be bounded directly by $\sqrt{\se{2N}}\fd{2N}$; there are some exceptional terms with fewer derivatives like $\int_\Sigma  |q|^2\dt\eta$, etc., which are innocently controlled by
$\se{2N}\sqrt{\fd{2N}}$ that is harmful for the global-in-time stability analysis. The key ingredient here is to refine these nonlinear estimates by making use of the time weighted dissipation estimates: for instance, by using interpolation estimates in \eqref{ei},
\begin{align}\label{intro1}
\int_\Sigma  |q|^2\dt\eta\ls \norm{q}_{L^\infty(\Sigma)}\abs{q}_0\abs{\dt\eta}_0
&\ls\sqrt{\mathcal{D}_{N+2,1}^{1/4}\mathcal{E}_{2N}^{3/4}}\sqrt{ \mathcal{E}_{2N} }\sqrt{\mathcal{D}_{N+2,1} }
\nonumber\\&= \mathcal{E}_{2N}^{7/8}((1+t)\mathcal{D}_{N+2,1})^{5/8}(1+t)^{-5/8},
\end{align}
which is time integrable in our functional framework.

Basing on the energy-dissipation structure of \eqref{intro0}, we next derive the higher order tangential energy evolution estimates by applying the horizontal spatial and time derivatives to \eqref{neweq}. However, as for the incompressible case \cite{GT_inf,WYJ1}, when estimating the $4N$ order horizontal spatial derivatives of the solution there are some other nonlinear terms of higher regularity that can not be controlled by $\sqrt{\se{2N}}\fd{2N}$ but rather with the estimates involving $\f$; the trouble lies in the control of $\f$, since the only way to estimate it is through the kinematic boundary condition, which leads to a potential time growth as already seen in \eqref{obey}. Recall here that those nonlinear terms are  the ones in the bulk related to the commutators between $D^{4N}$ and $\partial_i^\a u$ in the viscous stress tensor, which lead to the nonlinear estimates $\sqrt{\norm{\nabla u}_{L^\infty(\Omega)}^2\f\fd{2N}}$, and the ones on the boundary related to the commutators when using the dynamic boundary condition, which lead to the nonlinear estimates $\sqrt{\norm{\nabla u}_{C^1(\Sigma)}^2\f\fd{2N}}$. Note that, thanks to the crucial anisotropy, $\norm{\nabla u}_{C^1(\Sigma)}^2\ls \mathcal{D}_{N+2,1}$ and hence by making full use of the time weighted estimates in the definition of $\mathcal{G}_{2N}$ the time integration of the latter nonlinear estimates is bounded by $(\mathcal{G}_{2N}(t))^{3/2}$, however, we have only $\norm{\nabla u}_{L^\infty(\Omega)}^2\ls \mathcal{D}_{N+2,1}^{1-\gamma}\se{2N}^{\gamma}$ for some $\gamma>0$, whose time weighted estimate is not sufficient for controlling the former nonlinear estimates. The key point here is, motivated by \cite{WYJ1}, to  estimate instead the $4N-1$ order horizontal spatial derivatives of the ``Eulerian horizontal spatial derivatives" $D_\mathcal{A}$ of the solution;
since $\partial_j^\mathcal{A}$ commutes with $\partial_i^\mathcal{A}$, the former nonlinear terms in the bulk are then canceled out. Such idea dates back to the work of Alinhac \cite{Alinhac}, see also Masmoudi and Rousset \cite{Masmoudi} and Wang and Xin \cite{WX}.

After having controlled the horizontal spatial and time derivatives of the solution, we then turn to control the full derivatives by employing the elliptic estimates. However, unlike the incompressible case \cite{GT_inf,WYJ1}, the tangential energy and dissipation here do not control everything necessary for these elliptic estimates. More precisely, as \cite{WYJ2} we will use the Lam\'e system for the energy estimates, which requires the control of $\p_d  q$, and use the Stokes system for the dissipation estimates, which requires the control of $\p_d \dt q$. These estimates of $\p_d  q$ and $\p_d \dt q$ follow from the energy evolution estimates for $\p_d q$, basing on an evolution equation for $\p_d q$ with the damping of dissipative structure that resembles the ODE $\dt f+f=g$, which arises by taking an appropriate linear combination of the continuity equation and the vertical component of the momentum equations, see also Kanel' \cite{Kanel} and Matsumura and Nishida \cite{Matsumura} for the case with $\bar{\rho}$ a positive constant.
Again, to avoid the appearance of $\f$ in the nonlinear estimates here, we will need to consider instead the evolution equation for $\partial_d \partial_i^\mathcal{A} q$ (involving $ \partial_i^\mathcal{A} u$) when estimating for the $4N-1$ order spatial derivatives of $\p_d q$.
By combining these energy evolution estimates and elliptic regularity estimates in a recursive way in terms of the number of vertical derivatives, together with using the equations in \eqref{neweq}, we get the desired full energy-dissipation estimates of $\mathcal{E}_{2N}$  and $\mathfrak{D}_{2N}$. Then a time weighted argument basing on the transport estimates for the kinematic boundary condition leads to the desired time weighted (growth) estimates of $\mathcal{F}_{2N}$.

Now, the remaining thing is to show the low-order decay estimates of $\mathcal{E}_{N+2,1}$ and $\mathcal{D}_{N+2,1}$.
The strategy is mostly similar to that of high-order energy estimates, but with the nonlinear estimates needed to be controlled by $\sqrt{\se{2N}}\mathcal{D}_{N+2,1}$ (or $\se{2N}\mathcal{E}_{N+2,1}$ for the estimates in the energy).
The trouble lies in that $\mathcal{D}_{N+2,1}$ has a minimal count of derivatives, but we need to estimate terms with fewer derivatives. We must then resort to various interpolation estimates as recorded in Lemma \ref{q,u} and make full use of the anisotropy  of $\mathcal{D}_{N+2,1}$ and the structure of the nonlinear terms. Furthermore, another new difficulty compared to the high-order energy estimates is that the crucial needed dissipation estimate of $\norm{\dt q}_0^2$ here can not be derived from the tangential dissipation estimate of $\norm{Du}_1^2$
(with minimum derivatives count 1). Our way here is to apply the time derivative to the vertical component of the momentum equations and then test the resulting by $u_d$, which yields the desired estimates of $\norm{\dt q}_0^2$. We thus conclude that
$
 \frac{d}{dt}\mathcal{E}_{N+2,1}
+ \mathcal{D}_{N+2,1}
\le 0,
$
which together with a time weighted argument leads to the desired time weighted (decay) estimates of $\mathcal{E}_{N+2,1}$ and $\mathcal{D}_{N+2,1}$.

Consequently, summing over these estimates closes the a priori estimates \eqref{main1} provided that $\mathcal{E}_{2N}(0) +\mathcal{F}_{2N}(0)$ is sufficiently small. The proof of Theorem \ref{main} is thus completed.

\subsection{Notation}\label{nota}

We write $\mathbb{N }= \{0, 1, 2, \ldots \}$ for the collection of non-negative integers. When using
space-time differential multi-indices, we write $\mathbb{N }^{1+m} = \{\alpha = (\alpha_0, \alpha_1, \ldots , \alpha_m)\}$ to emphasize that the 0-index term is related to temporal derivatives. For just spatial derivatives we write $\mathbb{N }^m$. For $\alpha \in \mathbb{N }^{1+m} $, we write $\partial^{\alpha}=\partial_{t}^{\alpha_{0}}\partial_{1}^{\alpha_{1}}\ldots\partial_{m}^{\alpha_{m}}$. We define the parabolic
counting of such multi-indices by writing $|\alpha|=2\alpha_{0}+\alpha_{1}+\ldots+\alpha_{m}$.
For $\alpha \in \mathbb{N }^{m} $ or $\mathbb{N }^{1+m} $, we denote $\alpha_h$ for its horizontal component.
For integers $k, \ell\geq 0$, we denote the anisotropic Sobolev norm
\begin{equation}\label{def3}
\norm{f}_{k,\ell} :=\sum_{\alpha\in\mathbb{N}^{d-1},|\alpha|\leq \ell}\norm{\partial^{\alpha}f}_{k}.
\end{equation}
We define the standard commutator
\begin{equation}
\[\partial^{\alpha},f\]g:=\partial^{ \alpha}(fg)-f\partial^{\alpha}g.
\end{equation}

We employ the Einstein convention of summing over repeated indices for vector operations.
$\delta_{ij}$ stands for the kronecker symbol. $C>0$ denotes a generic constant that can depend on the parameters of the problem,
$N\geq 4$, $d = 2 $ or 3, but does not depend on the data, etc. We refer to such constants as
``universal''. They are allowed to change from line to line. We employ the notation $A_{1}\lesssim A_{2}$
to mean that $A_{1}\leq C A_{2}$ for a universal constant  $C>0$, and $A_1\sim A_2$ means that $A_1\lesssim A_2$ and $A_2\lesssim A_1$.
To avoid the constants in various time differential inequalities, we employ the following two conventions:
\begin{align}\label{timeq1}
\partial_t A_1+A_2\lesssim A_3
~\text{ means }
\partial_t \tilde{A}_1+A_2\lesssim A_3
~
\text{ for ~any }
A_1\sim\tilde{A}_1
\end{align}
and
\begin{align}\label{timeq2}
\partial_t( A_1+A_2)+A_3\lesssim A_4
~\text{ means }
\partial_t (C_1 A_1+C_2A_2)+A_3\lesssim A_4
~
\text{ for~ constants }
C_1, C_2 > 0.
\end{align}

We omit the differential elements of the integrals over $\Omega $ and $\Sigma ${\color{blue}.}

\section{Preliminaries}
We assume throughout the rest of the paper that the solution to \eqref{neweq} is given on the time interval $[0,T]$ with $T>0$
and obey the a priori assumption
\begin{equation}\label{apriori1}
\mathcal{E}_{2N}(t)\leq  \delta,\quad\forall~ t\in[0,T]
\end{equation}
for an integer $N\geq 3$ and a sufficiently small constant $\delta> 0$. This implies in particular that
\begin{align}\label{aprii}
\frac{1}{2}\leq J(t,x)\leq \frac{3}{2}
\text{ and }
0<\frac{\rho^\star}{2} \leq \rho(t,x) \leq C <+\infty
, \quad\forall~ (t,x)\in[0,T]\times\bar{\Omega}.
\end{align}

\subsection{Perturbed linear form}
In order to use the linear structure of \eqref{neweq}, we rewrite it  as a perturbation of the linearized system:
\begin{equation}\label{q2}
\begin{cases}
\partial_t q+h'(\bar{\rho})\diverge(\bar{\rho}u)=G^1   &\text{in }\Omega
\\ \bar{\rho}\partial_tu+\bar{\rho}\nabla q
-\diverge\mathbb{S}u=G^2   &\text{in }\Omega
\\ \partial_t\eta=u_d+G^3  &\text{on }\Sigma
\\(\rho^\star qI-\mathbb{S}u)e_d=\rho^\star g\eta e_d+G^4  &\text{on }\Sigma
\\ u=0     &\text{on }\Sigma_b.
\end{cases}
\end{equation}
Here we have written the function  $G^1=G^{1,1}+G^{1,2}$ for
\begin{align}
&G^{1,1}=J^{-1}\partial_t\varphi\partial_dq-u_l\mathcal{A}_{lk}\partial_kq,\label{G11}\\
&G^{1,2}=gu_l\mathcal{A}_{lk}\partial_k\varphi
-h'(\bar{\rho})(\mathcal{A}_{lk}-\delta_{lk})\partial_k(\bar{\rho}u_l)
-(P'(\rho)-P'(\bar{\rho}))\mathcal{A}_{lk}\partial_ku_l,\label{G12}
\end{align}
the vector $G^2=G^{2,1}+G^{2,2}+G^{2,3}+G^{2,4}$ for $i=1,d-1,d$,
\begin{align}
&G_i^{2,1}=-\bar{\rho}(\mathcal{A}_{il}-\delta_{il})\partial_lq
           -(\rho-\bar{\rho})\mathcal{A}_{il}\partial_lq,\label{G21}\\
&G_i^{2,2}=\mu(\mathcal{A}_{lk}\mathcal{A}_{lm}-\delta_{lk}\delta_{lm})\partial_{km}u_i
+\left(\frac{d-2}{d}\mu+\mu'\right)(\mathcal{A}_{ik}\mathcal{A}_{lm}-\delta_{ik}\delta_{lm})\partial_{km}u_l,\label{G22}\\
&G_i^{2,3}=\mu\mathcal{A}_{lk}\partial_k\mathcal{A}_{lm}\partial_mu_i
+\left(\frac{d-2}{d}\mu+\mu'\right)\mathcal{A}_{ik}\partial_k\mathcal{A}_{lm}\partial_mu_l,\label{G23}\\
&G_i^{2,4}=-(\rho-\bar{\rho})\partial_tu_i
-\rho\left(-J^{-1}\partial_t\varphi\partial_du_i+u_l\mathcal{A}_{lk}\partial_ku_i\right),\label{G24}
\end{align}
the function $G^3$ for
\begin{align}\label{G4}
G^3=-u_h\cdot D\eta,
\end{align}
and
the vector $G^4$ for $i=1,d-1$,
\begin{align}
G_i^4
=&-\mu D \eta \cdot\partial_i u_h-\mu D \eta \cdot D u_i
-\mu\partial_i \eta \mathcal{N}_k(\partial_k u_d+\partial_d u_k)
\nonumber\\&
+\mu((\mathcal{A}_{km}-\delta_{km})\partial_m u_l
+(\mathcal{A}_{lm}-\delta_{lm})\partial_m u_k)\mathcal{N}_l\tau_k^i,
\end{align}
where $\tau^i=e_i+\partial_i\eta e_d$, and
\begin{align}\label{g4}
G_d^4&=
\mu ((\mathcal{A}_{km}-\delta_{km})\partial_m u_l
+(\mathcal{A}_{lm}-\delta_{lm})\partial_m u_k)\mathcal{N}_l\mathcal{N}_k|\mathcal{N}|^{-2}
\nonumber\\&\quad
+\mu(\partial_m u_l +\partial_l u_m )\mathcal{N}_m\mathcal{N}_l\left(|\mathcal{N}|^{-2}-1\right)
+\mu \sum_{k+j< 2d}(\partial_k u_j +\partial_j u_k )\mathcal{N}_k\mathcal{N}_j
\nonumber\\&\quad
+\left(\mu'-\frac{2\mu}{d}\right)(\mathcal{A}_{lm}-\delta_{lm})\partial_m u_l
-\mathcal{R}_{P\circ h^{-1}}.
\end{align}

\subsection{Interpolation estimates}\label{pola}
Since $\mathcal{E}_{N+2,1}$ and $\mathcal{D}_{N+2,1}$ have a minimal count of derivatives, it may cause the trouble
when we need to estimate terms with fewer derivatives in terms of $\mathcal{E}_{N+2,1}$ and $\mathcal{D}_{N+2,1}$.
To handle this,
 we will prove various interpolation estimates of the form
$$
  \norm{X}^{2}\lesssim(\mathcal{E}_{N+2,1})^{\theta}(\mathcal{E}_{2N})^{1-\theta} \text{ and }
  \norm{X}^{2}\lesssim(\mathcal{D}_{N+2,1})^{\theta}(\mathcal{E}_{2N})^{1-\theta},
$$
where $\theta \in[0,1]$ and  $\norm{\cdot}$ is $L^2$ or $L^\infty$ norm.
For brevity, we will record these interpolation estimates in tables that only list the value of $\theta$; for example,
$$
\begin{array}{| c | c  | c |}
\hline
 L^\infty   \mid  \mathcal{E}_{N+2,1}   &    2D \\ \hline
\eta & 1/2\\ \hline
\end{array}
\text{~ means that~}
\|\eta\|_{L^\infty(\Sigma)}^{2}\lesssim(\mathcal{E}_{N+2,1})^{1/2}(\mathcal{E}_{2N})^{1/2} \text{ in } 2D.
$$

We shall record the interpolation estimates in the following lemma, where the norms for $q,u,\bar{\eta},G^{1}$ and $G^{2}$ are on $\Omega$
and the norms for $\eta $, $G^{3} $ and $G^{4}$ are on $\Sigma$. In the below $r>0$ denotes for any small positive constant.

\begin{lem}\label{q,u}

Let $(q,u,\eta)$ be the solution to \eqref{q2} and $G^i$ be defined in \eqref{G11}--\eqref{g4}.

\begin{itemize}

\item[(1)] The following tables encode the $L^2$ and $L^\infty$ interpolation estimates for the solution and their derivatives:
\beq \label{ei}
\begin{split}
&\,\,\,\begin{array}[t]{| c | c  | c |}
\hline
 L^2   \mid  \mathcal{E}_{N+2,1}   &    2D & 3D \\ \hline
\eta, \bar\eta & 0 & 0\\ \hline
q & 0 & 0\\ \hline
\end{array}
\quad
\begin{array}[t]{| c | c  | c |}
\hline
 L^\infty   \mid \mathcal{E}_{N+2,1}      &     2D &3D \\ \hline
\eta, \bar\eta & 1/2 & 1/(1+r)\\ \hline
q & 1/2 & 1/(1+r)\\ \hline
\end{array}
\\&
\begin{array}[t]{| c | c | c |}
\hline
 L^2   \mid \mathcal{D}_{N+2,1}  & 2D &3D      \\ \hline
\eta, \bar\eta & 0 & 0\\ \hline
D\eta, \nabla\bar\eta & 1/2 & 1/2\\ \hline
q & 0 & 0\\ \hline
Dq & 1/2 & 1/2\\ \hline
\partial_d^k u_h    & 1/2  & 1/2  \\ \hline
\end{array}
 \quad
\begin{array}[t]{| c | c  | c |}
\hline
 L^\infty   \mid \mathcal{D}_{N+2,1}  & 2D &3D      \\ \hline
\eta, \bar\eta & 1/4 & 1/2\\ \hline
D\eta, \nabla\bar\eta & 3/4 & 1/(1+r)\\ \hline
q & 1/4 & 1/2\\ \hline
Dq & 3/4 & 1/(1+r)\\ \hline
\partial_d^k u_h   & 3/4 & 1/(1+r)\\ \hline
\end{array}
\end{split}
\eeq
Here $k=0,1,\ldots,2(N+2)+1 $.

\item[(2)] The following tables encode the $L^2$ and $L^\infty$ interpolation estimates for the nonlinear terms $G^i$ and their derivatives:
\beq\label{Ge}
\begin{split}
&\begin{array}[t]{| c | c  | c |}
\hline
 L^2   \mid  \mathcal{E}_{N+2,1}   &    2D & 3D \\ \hline
 G^{1}   & 1 & 1   \\ \hline
  G^{2}  & 1 & 1   \\ \hline
 G^{3}  & 1 & 1\\  \hline
  G_h^{4}  & 1 & 1\\  \hline
  G_d^{4}  & 1/2 & 1/(1+r)\\  \hline
 D G_d^{4}  & 1 & 1\\  \hline
 \partial_t G_d^{4}  & 1 & 1\\  \hline
\end{array}
 \quad
 \begin{array}[t]{| c | c  | c |}
\hline
 L^\infty   \mid \mathcal{E}_{N+2,1}   &    2D & 3D  \\ \hline
 G^{1}   & 1  & 1   \\ \hline
  G^{2}  & 1  & 1   \\ \hline
  G^{3}  & 1& 1  \\  \hline
 G^{4}  & 1 & 1 \\  \hline
\end{array}
\\& \quad\quad\,
\begin{array}[t]{| c | c  | c |}
\hline
 L^2  \mid  \mathcal{D}_{N+2,1}   &    2D & 3D   \\ \hline
 G^{1}   &1  & 1 \\ \hline
  G_h^{2}  & 3/4   & 1 \\ \hline
D G_h^{2}  & 1   & 1 \\ \hline
 \partial_t G_h^{2}  & 1 & 1\\  \hline
G_d^{2}  & 1   & 1 \\ \hline
 G^{3}    & 1 & 1\\  \hline
  G_h^{4}  & 1& 1\\  \hline
 G_d^{4}  & 1/4& 1/2\\  \hline
 DG_d^{4}  & 3/4& 1\\  \hline
 D^2G_d^{4}  & 1& 1\\  \hline
 \partial_t G_d^{4}  & 1 & 1\\  \hline
\end{array}
 \quad
\begin{array}[t]{| c | c  | c |}
\hline
 L^\infty   \mid  \mathcal{D}_{N+2,1}   &    2D & 3D   \\ \hline
 G^{1}   &1  & 1 \\ \hline
  G^{2}  & 1   & 1 \\ \hline
 G^{3}    & 1 & 1\\  \hline
  G_h^{4}  & 1& 1\\  \hline
 G_d^{4}  & 1/2& 1\\  \hline
 D G_d^{4}  & 1 & 1\\  \hline
 \partial_t G_d^{4}  & 1 & 1\\  \hline
\end{array}
\end{split}
\eeq
\end{itemize}
\end{lem}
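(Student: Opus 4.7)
The plan is to prove every tabulated entry by combining three standard tools: (i) Sobolev interpolation $\norm{f}_{s} \lesssim \norm{f}_{s_1}^\theta \norm{f}_{s_2}^{1-\theta}$ on $\Omega$ (respectively $\abs{f}_s \lesssim \abs{f}_{s_1}^\theta \abs{f}_{s_2}^{1-\theta}$ on $\Sigma$) with $s = \theta s_1 + (1-\theta) s_2$, (ii) the Sobolev embedding $H^s \hookrightarrow L^\infty$ together with the harmonic-extension bound $\norm{\bar\eta}_{s} \lesssim \abs{\eta}_{s-1/2}$, and (iii) Moser-type product and composition estimates applied to the explicit formulas \eqref{G11}--\eqref{g4}, all under the smallness hypothesis \eqref{apriori1} and the consequent bounds \eqref{aprii}. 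For each entry the recipe is mechanical: read off from \eqref{e}, \eqref{le}, \eqref{ld} the two endpoint Sobolev indices $s_1$ (from the low-regularity norm $\mathcal{E}_{N+2,1}$ or $\mathcal{D}_{N+2,1}$) and $s_2$ (from $\mathcal{E}_{2N}$), and then solve for the exponent $\theta$ yielding the target norm.

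For part (1), the $L^2 \mid \mathcal{E}_{N+2,1}$ entries are immediate since $\eta,\bar\eta,q$ sit at index $0$ in $\mathcal{E}_{N+2,1}$, and adding one tangential derivative produces the $1/2$ entries in the $\mathcal{D}_{N+2,1}$ column. The $L^\infty$ entries in $2D$ use the embeddings $H^{1/2+\kappa}(\Sigma) \hookrightarrow L^\infty(\Sigma)$ and $H^{1+\kappa}(\Omega) \hookrightarrow L^\infty(\Omega)$ for any $\kappa > 0$; interpolating between the very high index carried by $\mathcal{E}_{2N}$ and the low index in $\mathcal{E}_{N+2,1}$ or $\mathcal{D}_{N+2,1}$ yields the clean $1/2,\,1/4,\,3/4$ exponents, because $\kappa$ can be chosen arbitrarily small relative to the large gap between $s_1$ and $s_2$. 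In $3D$ the critical embedding $H^1(\Sigma) \not\hookrightarrow L^\infty(\Sigma)$ fails, and I must use $H^{1+r}(\Sigma)$ for arbitrarily small $r>0$, producing $\theta = 1/(1+r)$. The $\partial_d^k u_h$ entries are handled using the horizontal control $\norm{Du_h}_{2(N+2)}$ and the trace $\abs{\partial_d u_h}_{2(N+2)-1/2}$ in $\mathcal{D}_{N+2,1}$ via the trace theorem combined with interpolation in the vertical variable against $\mathcal{E}_{2N}$.

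For part (2), each $G^i$ is an explicit polynomial in the basic quantities $(q,u,\nabla\bar\eta,\mathcal{A}-I,J-1,\partial_t\varphi)$ multiplied by smooth functions of $\rho$ and $\varphi$ that are uniformly bounded thanks to \eqref{aprii}. Since every term is at least quadratic in the perturbation, I can always pair one $L^\infty$ factor (controlled by part (1), possibly with $\theta = 1/2$ or $1/(1+r)$) with a second factor placed in $L^2$ at the matching regularity level; because the remaining high-regularity factor is free to be absorbed by $\sqrt{\mathcal{E}_{2N}}$, most entries come out with $\theta = 1$. The exceptions come from $G_d^4$, whose lowest-order contribution $\mathcal{R}_{P\circ h^{-1}} \sim (q-g\eta)^2$ is a genuine square of the low-regularity quantities: when $G_d^4$ itself is estimated, one factor must enter $L^\infty$, inheriting the $1/2$ or $1/(1+r)$ loss from the corresponding $q,\eta$ entries in part (1); taking one or more derivatives redistributes regularity so that the offending factor can be sent to $L^2$ at a higher index, restoring $\theta = 1$, which accounts for the improved exponents on $DG_d^4,D^2G_d^4,\partial_t G_d^4$.

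The main obstacle, and what demands the most careful bookkeeping, will be the $3D$ entries with exponent $1/(1+r)$: I must verify that the small regularity loss $r$ can be borrowed from $\mathcal{E}_{2N}$ without harming later closures, which reduces to checking $2(N+2)+1+r \le 4N$ (comfortable for $N \ge 4$) and that the same $r$ is tolerated by every downstream nonlinear bound. Once this check and the trace/anisotropic interpolation for $\partial_d^k u_h$ are in place, each entry of the tables follows by a short, routine combination of the three tools above, and the lemma follows.
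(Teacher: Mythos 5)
Your handling of the routine entries ($\eta$, $\bar\eta$, $q$, $Dq$, and the $G^i$ rows with exponent $1$) matches the paper's, but there is a genuine gap at the $\partial_d^k u_h$ rows, which are the only non-routine entries of part (1). You propose to obtain them from $\norm{Du_h}_{2(N+2)}$ and the trace $\abs{\partial_d u_h}_{2(N+2)-1/2}$ ``via the trace theorem combined with interpolation in the vertical variable against $\mathcal{E}_{2N}$''. This cannot work: $\mathcal{D}_{N+2,1}$ contains no bulk control of $u_h$, nor of any $\partial_d^k u_h$, without at least one horizontal derivative, and since $\Sigma=\mathbb{R}^{d-1}$ is horizontally infinite there is no low-frequency information to exploit; every interpolation inequality available (Lemmas \ref{pp10}--\ref{pp13}) puts the derivative-poor norm on the right-hand side, never on the left, so no vertical or horizontal interpolation can produce $\norm{\partial_d^k u_h}_0^2\lesssim \mathcal{D}_{N+2,1}^{1/2}\mathcal{E}_{2N}^{1/2}$ from those two pieces of the dissipation. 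The missing idea in the paper is structural: the horizontal components of the momentum equation in \eqref{q2} are used to write $\partial_d^2 u_h$ as $\bar\rho Dq+\bar\rho\partial_t u_h+D\nabla u+$ nonlinear terms (see \eqref{ro}), so the exponent for $\partial_d^2 u_h$ (and, because vertical derivatives only hit the $\bar\rho$ factor, for all $\partial_d^k u_h$ with $k\ge 2$) is inherited from that of $Dq$, which does interpolate with exponent $1/2$ precisely because $\mathcal{D}_{N+2,1}$ contains $D^2q$; the cases $k=0,1$ then follow from the vertical Poincar\'e inequality of Lemma \ref{poincare_b}, using $u_h=0$ on $\Sigma_b$ together with the boundary term $\abs{\partial_d u_h}_{2(N+2)-1/2}$ on $\Sigma$. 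Without this reduction to $Dq$ the tables in \eqref{ei} cannot be established, and since part (2) leans on them, the whole lemma is open in your scheme.

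A secondary inaccuracy: you attribute all fractional exponents in part (2) to $\mathcal{R}_{P\circ h^{-1}}$ inside $G_d^4$. That does not explain the entry $3/4$ for $G_h^2$ in the $L^2\mid\mathcal{D}_{N+2,1}$ table in $2D$: the worst terms there are $(\rho-\bar\rho)\mathcal{A}_{il}\partial_l q$ and $\mu(\mathcal{A}_{dd}^2-1)\partial_d^2 u_h$, where the factors $\rho-\bar\rho$ and $\mathcal{A}_{dd}^2-1$ carry no positive power of $\mathcal{D}_{N+2,1}$ at all, forcing $Dq$ or $\partial_d^2 u_h$ into $L^\infty$ and hence inheriting the $3/4$ exponent from part (1) -- again through the structural estimate above. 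Your ``mechanical recipe'' would only detect this once the $\partial_d^k u_h$ entries are correctly in place, so fixing the first gap is the essential step.
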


\begin{proof}
First, the interpolation estimates of $\eta,\bar{\eta},D\eta,\nabla\bar{\eta}$ and $q, Dq$ as recorded in the tables of \eqref{ei} follow by Lemmas \ref{pp2}--\ref{pp13} and the definitions \eqref{le} of $\mathcal{E}_{N+2,1}$ and \eqref{ld} of $\mathcal{D}_{N+2,1}$, which could not be improved.

Next, we estimate for $u_h$.
The horizontal component of the second equation in \eqref{q2} implies
\begin{align}\label{ro}
\partial_d^2u_h\sim \bar{\rho}Dq+\bar{\rho}\partial_t u_h+D\nabla u+G_h^{2,1}+D\bar{\eta}\partial_d^2 u_d+G_h^{2,3}+G_h^{2,4}+G_h^{2,5}.
\end{align}
It is then straightforward to check that the interpolation estimates of $\partial_d^2u_h$
in the tables of \eqref{ei} are determined by those of $Dq$, where \eqref{apriori1}, \eqref{aprii} and the fact that $\mathcal{E}_{N+2,1}$,
$\mathcal{D}_{N+2,1}\lesssim\mathcal{E}_{2N}$ have been implicitly used.
Note that due to the factor $\bar{\rho}$ in the first term $\bar{\rho}Dq$ of \eqref{ro},
the interpolation estimates of $\partial_d^{k}u_h$, $3\leq k\leq 2(N+2)+1$, are still determined by those of $Dq$.
On the other hand, since $u_h=0$ on $\Sigma_b$ and $\Omega$ is of finite depth $b$ in the vertical direction, by using the Poincar\'{e}-type inequalities of Lemma \ref{poincare_b}, we have
\begin{align}
&\norm{u_h}_0^2\lesssim\norm{\partial_du_h}_0^2\lesssim \abs{\partial_du_h}_0^2+\norm{\partial_d^2u_h}_0^2
\end{align}
and
\begin{align}
\norm{u_h}_{L^\infty(\Omega)}^2
\lesssim
\norm{\partial_du_h}_{L^\infty(\Omega)}^2
\lesssim \norm{\partial_du_h}_{L^\infty(\Sigma)}^2+\norm{\partial_d^2u_h}_{L^\infty(\Omega)}^2,
\end{align}
which imply that the interpolation estimates of $u_h$ and  $\partial_du_h$ in the tables of \eqref{ei} are determined by those of $\partial_d^{2}u_h$.
Hence, the proof of \eqref{ei} is concluded.

With the estimate \eqref{ei} in hand, it is then fairly routine to prove these tables of \eqref{Ge}.
We take the derivation of the estimate of $G_h^2$ for $2D$ in the first table of the second line in \eqref{Ge} for example.
Note that the worst terms are $\(\rho-\bar{\rho}\)\mathcal{A}_{il}\partial_lq$ for $i,l=1,d-1$ in $G_h^{2,1}$
and $\mu\(\mathcal{A}_{dd}^2-1\)\partial_d^2 u_h$ in $G_h^{2,2}$. By \eqref{ei}, we have
\begin{align}
\norm{(\rho-\bar{\rho})\mathcal{A}_{il}\partial_lq}_0^2
\lesssim
\(\norm{q}_0^2+\norm{\bar{\eta}}_0^2\)\norm{Dq}_{L^\infty(\Omega)}^2
\lesssim
\mathcal{E}_{2N}\mathcal{D}_{N+2,1}^{3/4}\mathcal{E}_{2N}^{1/4}
\end{align}
and
\begin{align}
\norm{\mu (\mathcal{A}_{dd}^2-1 )\partial_d^2 u_h}_0^2
\lesssim
\norm{\bar{\eta}}_1^2\norm{\partial_d^2 u_h}_{L^\infty(\Omega)}^2
\lesssim
\mathcal{E}_{2N}\mathcal{D}_{N+2,1}^{3/4}\mathcal{E}_{2N}^{1/4}.
\end{align}
It is remarked that, again, due to the factor $\rho-\bar{\rho}$ (cf. \eqref{ho}) the interpolation estimates of $\partial_d^kG_h^{2}$, $k\geq 1$ are same as $G_h^{2}$.
\end{proof}

\begin{rem}
It is easy to see that if the value $\theta$ of the interpolation estimate for a nonlinear term
is 1, then so is the one for its derivative
provided the derivative count does not exceed those of $\mathcal{E}_{N+2,1}$ or $\mathcal{D}_{N+2,1}$.
\end{rem}


\subsection{Nonlinear estimates}
We now present the estimates of the nonlinear terms $G^i$.
We first record the estimates  at the $N+2$ level.
\begin{lem}\label{lem1}
It holds that
\begin{align}\label{ne1}
&\sum_{j=0}^{N+1}\norm{\partial_t^jG^1}_{2(N+2)-2j-2}^2
+\sum_{j=0}^{N+1}\norm{\partial_t^jG^2}_{2(N+2)-2j-2}^2
+\sum_{j=0}^{N+1}\abs{\partial_t^jG^3}_{2(N+2)-2j-3/2}^2
\nonumber\\&\quad
+\abs{G_h^4}_{2(N+2)-3/2}^2
+\abs{DG_d^4}_{2(N+2)-5/2}^2
+\sum_{j=1}^{N+1}\abs{\partial_t^jG^4}_{2(N+2)-2j-3/2}^2
\lesssim\mathcal{E}_{2N}\mathcal{E}_{N+2,1}
\end{align}
and
\begin{align}\label{ne2}
&
\sum_{j=0}^{N+2}\norm{\partial_t^jG^{1}}_{2(N+2)-2j}^2
+\norm{DG_h^2}_{2(N+2)-2}^2
+\norm{G_d^2}_{2(N+2)-1}^2
+\sum_{j=1}^{N+1}\norm{\partial_t^jG^2}_{2(N+2)-2j-1}^2
\nonumber\\
&\quad
+\sum_{j=0}^{N+2}\abs{\partial_t^jG^3}_{2(N+2)-2j+1/2}^2
+\abs{G_h^4}_{2(N+2)-1/2}^2
+\abs{D^2G_d^4}_{2(N+2)-5/2}^2
\nonumber\\
&\quad
+\sum_{j=1}^{N+1}\abs{\partial_t^jG^4}_{2(N+2)-2j-1/2}^2
\lesssim\mathcal{E}_{2N}\mathcal{D}_{N+2,1}.
\end{align}
\end{lem}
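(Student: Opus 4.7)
\textbf{Proof plan for Lemma \ref{lem1}.} The plan is to decompose each $G^i$ according to its defining formulas \eqref{G11}--\eqref{g4} into a finite sum of products of the form $F_1 \cdot F_2$ (or, after differentiation, a sum of such products via the Leibniz rule), where $F_1$ is a ``coefficient'' factor built from $\bar\eta, q, u, \rho-\bar\rho, \mathcal{A}-I, J^{-1}-1, \mathcal{R}_{P\circ h^{-1}}$, and $F_2$ is a factor involving a derivative of the solution. For each such product I would apply the standard Sobolev product inequalities (Moser-type estimates of the form $\|fg\|_{H^k}\lesssim \|f\|_{H^k}\|g\|_{L^\infty}+\|f\|_{L^\infty}\|g\|_{H^k}$, together with the trace theorem for the boundary pieces), allocating the available derivatives so that one factor is estimated in a norm controlled by $\mathcal{E}_{2N}$ while the other factor is estimated in $L^2$ or $L^\infty$ through the interpolation tables \eqref{ei}--\eqref{Ge} of Lemma \ref{q,u}.

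More concretely, I would first handle \eqref{ne1}: for each of $\partial_t^j G^1$, $\partial_t^j G^2$ with $j \le N+1$, I expand via the Leibniz rule, noting that all mixed space--time derivatives of order up to $2(N+2)$ of $q, u, \bar\eta, \rho-\bar\rho, J^{-1}, \mathcal{A}-I$ are controlled by $\mathcal{E}_{2N}$ (using \eqref{ho}, \eqref{Rh}, the harmonic extension bounds of $\bar\eta$ in terms of $\eta$, and the fact that $\mathcal{A}-I$, $J-1$ are smooth nonlinear functions of $D\bar\eta,\partial_d\bar\eta$ that vanish at $0$). Then after placing the higher-derivative factor in $\mathcal{E}_{2N}$, the remaining factor has a derivative count compatible with the definition \eqref{le} of $\mathcal{E}_{N+2,1}$, so the interpolation estimates of Lemma \ref{q,u} column ``$L^\infty/L^2 \mid \mathcal{E}_{N+2,1}$'' yield the bound $\mathcal{E}_{2N}\mathcal{E}_{N+2,1}$. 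The boundary terms $\partial_t^j G^3, \partial_t^j G^4$ are treated similarly by combining the product estimate on $\Sigma$ with the trace of the interior factors; in particular, the quadratic remainder $\mathcal{R}_{P\circ h^{-1}}$ appearing in $G_d^4$ is quadratic in $q-g\eta$, so its norms are trivially bounded by products of two $\mathcal{E}_{N+2,1}$-type quantities.

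For \eqref{ne2} the scheme is the same, but now the target bound is $\mathcal{E}_{2N}\mathcal{D}_{N+2,1}$, so one has to exploit the anisotropy in the dissipation \eqref{ld}: the factor of ``low'' regularity must be placed in a norm actually bounded by $\mathcal{D}_{N+2,1}$. This is where the structural distinction between horizontal and vertical components matters: for example, in the bulk pieces $G_h^2$ one allocates horizontal derivatives to $u_h, Du_h$ (controlled via \eqref{ld}), while the vertical component $G_d^2$ and the normal piece $G_d^4$ are controlled using the better regularity $\|u_d\|_{2(N+2)+1}$ and the $L^\infty$ bounds on $\partial_d^k u_h$ coming from the last line of the dissipation table in \eqref{ei}. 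The derivatives $\partial_t^j$ are distributed analogously: the highest-order time derivatives that appear in $\mathcal{D}_{N+2,1}$ (namely $\partial_t^{N+3}q$ and $\partial_t^{N+3}\eta$) only enter linearly, so one can always place $\partial_t^j F_2$ with $j$ large into the $\mathcal{D}_{N+2,1}$-controlled factor and the remaining coefficient into the $\mathcal{E}_{2N}$-controlled factor.

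The main obstacle is the careful allocation of derivatives in the terms $G^{2,2}, G^{2,3}$ of the viscous-stress-commutator type: they involve second spatial derivatives of $u$, and the naive Moser estimate would cost one derivative too many when computing $\partial_t^j G^2$ for $j$ close to $N+1$, forcing us to estimate $\partial_t^j D u$ or $\partial_t^j D^2 u$ in $L^\infty$. To bypass this one has to use the anisotropic interpolation estimate for $\partial_d^k u_h$ (the last row of the $\mathcal{D}_{N+2,1}$ tables in \eqref{ei}) and the extra regularity of $u_d$, exactly as in Remark (after the proof of Lemma \ref{q,u}) on propagating the value $\theta=1$ under differentiation. Once these borderline cases are handled, all remaining terms fall under routine product and trace estimates, and summation over the finite number of products closes both \eqref{ne1} and \eqref{ne2}.
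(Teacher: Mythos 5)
Your general scheme — expand by Leibniz, apply Moser-type product and trace estimates, and allocate the factors so one is controlled by $\mathcal{E}_{2N}$ while the other falls into the relevant interpolation column — is the right framework and matches the paper for estimate \eqref{ne1}. However, your identification of the critical borderline cases in \eqref{ne2} is wrong, and this is precisely where the nontrivial content of the proof lies.

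You locate the obstruction in $\partial_t^j G^{2,2}, G^{2,3}$ for $j$ close to $N+1$, claiming one is ``forced to estimate $\partial_t^j D u$ or $\partial_t^j D^2 u$ in $L^\infty$.'' But that never happens: for $j=N+1$ one needs $\|\partial_t^{N+1}G^2\|_1$, the worst Leibniz term is a bounded coefficient times $\partial_t^{N+1}\partial^2 u$, and since $\mathcal{D}_{N+2,1}$ contains $\|\partial_t^{N+1}u\|_3^2$ this is directly controlled by $\sqrt{\mathcal{E}_{2N}}\sqrt{\mathcal{D}_{N+2,1}}$. The anisotropic $L^\infty$ bounds on $\partial_d^k u_h$ are indeed used, but only as the \emph{easy} factor. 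The genuinely borderline terms in \eqref{ne2} are the $j=0$ pieces in which the top \emph{horizontal spatial} derivatives fall on $q$, $\bar\eta$, or $\eta$: namely $\|\nabla^{2(N+2)+1}q\,u_h\|_0^2$ from $G^{1,1}$, $\|\nabla^{2(N+2)+1}\bar\eta\,(u_h,\partial_du_h)\|_0^2$ from $G^{1,2}$ and $G^2$, and $|D^{2(N+2)}\eta\,u_h|_{1/2}^2$, $|D^{2(N+2)+1}\eta\,u_h|_{1/2}^2$ from $G^3$. These are problematic because $\mathcal{D}_{N+2,1}$ controls only $\|D^2q\|_{2(N+2)-2}^2$ and $|D^2\eta|_{2(N+2)-5/2}^2$, which is \emph{not} enough to directly absorb $\nabla^{2(N+2)+1}q$ or $D^{2(N+2)+1}\eta$. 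One must therefore Sobolev-interpolate these quantities between $\mathcal{D}_{N+2,1}$ and $\mathcal{E}_{2N}$ with explicit exponents (cf. \eqref{POP}, \eqref{sob1}, \eqref{sobbb}), and then check that the resulting exponents, multiplied against the $\mathcal{D}_{N+2,1}^{3/4}\mathcal{E}_{2N}^{1/4}$ coming from the $L^\infty$ bound on $u_h$, add up to at least $\mathcal{D}_{N+2,1}\mathcal{E}_{2N}$ — a computation that only closes because $N\ge 4$. Your proposal omits this interpolation step and the exponent bookkeeping entirely, so as written it would not close \eqref{ne2}.
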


\begin{proof}
 The estimates of these nonlinearities are fairly routine to derive: we note that all
$G^i$ terms are quadratic or of higher order; then we apply the differential operator and
expand by using the Leibniz rule; each term in the resulting sum is also at least quadratic,
and we estimate one factor in $L^2$ or $H^{1/2}$ (depending on $G^i$)
and other factors in $L^\infty$ or $C^1$, respectively,
using Sobolev embeddings, the trace theory and Lemmas \ref{pp2}--\ref{pp13} and  \ref{va1}.

 Precisely, by using the interpolation estimates
in the $L^{2}$ tables of \eqref{Ge} in Lemma \ref{q,u},
the estimate \eqref{ne1} follows directly
since there are  no terms whose derivative count involved exceed those of $\mathcal{E}_{N+2,1}$.
For the estimate \eqref{ne2}, we note that the derivative count of some terms exceeds those of $\mathcal{D}_{N+2,1}$. For these terms, we estimate one factor which is included in $\mathcal{D}_{N+2,1}$ by $\mathcal{D}_{N+2,1}$
and other factors multiplying it by $\mathcal{E}_{2N}$,
with the following exceptions when estimating the highest order spatial derivatives:
$\norm{\nabla^{2(N+2)+1}qu_h}_{0}^{2}$ when estimating $G^{1,1}$,
$\norm{\nabla^{2(N+2)+1}\bar{\eta}(u_h,\partial_d u_h)}_{0}^{2}$
when estimating  $G^{1,2}$ and $G^2$,
and $\abs{D^{2(N+2)}\eta u_h}_{1/2}^{2}$ and  $\abs{D^{2(N+2)+1}\eta u_h}_{1/2}^{2}$ when estimating $G^{3}$.
To control them, by the Sobolev interpolation, we have
\begin{align}\label{POP}
\norm{\nabla^{2(N+2)+1}q}_{0}^{2}
 \leq
\norm{\nabla^{2}q}_{2(N+2)-1}^{2}
 &\leq
 \left( \norm{\nabla^{2}q}_{2(N+2)-2}^{2}\right)^{(2N-5)/(2N-4)}
 \left(\norm{\nabla^{2}q}_{4N-2}^{2}\right)^{1/(2N-4)} \nonumber
 \\&\lesssim \mathcal{D}_{N+2,1}^{(2N-5)/(2N-4)}\mathcal{E}_{2N}^{1/(2N-4)},
\end{align}
\begin{align}\label{sob1}
\abs{D^{2(N+2)}\eta}_{1/2}^{2}
 \leq
  \abs{D^{2}\eta}_{2(N+2)-3/2}^{2}
 &\leq
 \left(\abs{D^2\eta}_{2(N+2)-5/2}^{2}\right)^{(4N-9)/(4N-7)}
 \left(\abs{D^2\eta}_{4N-2}^{2}\right)^{2/(4N-7)} \nonumber
 \\&\leq \mathcal{D}_{N+2,1}^{(4N-9)/(4N-7)}\mathcal{E}_{2N}^{2/(4N-7)}
\end{align}
and
\begin{align}\label{sobbb}
 \abs{D^{2(N+2)+1}\eta}_{1/2}^{2}
 \leq
 \abs{D^{2}\eta}_{2(N+2)-1/2}^{2}
 &\leq
 \left(\abs{D^2\eta}_{2(N+2)-5/2}^{2}\right)^{(4N-11)/(4N-7)}
 \left(\abs{D^2\eta}_{4N-2}^{2}\right)^{4/(4N-7)}
 \nonumber\\&
 \leq
 \mathcal{D}_{N+2,1}^{(4N-11)/(4N-7)}\mathcal{E}_{2N}^{4/(4N-7)},
\end{align}
which together with the $L^{\infty}$ table in \eqref{ei} in terms of $\mathcal{D}_{N+2,1}$ imply that
\begin{align}
\norm{\nabla^{2(N+2)+1}q u_h}_{0}^{2}
&\lesssim
\norm{\nabla^{2(N+2)+1}q}_{0}^{2}
\norm{u_h}_{L^\infty(\Omega)}^{2}
\nonumber
 \\&
\lesssim
\mathcal{D}_{N+2,1}^{(2N-5)/(2N-4)}\mathcal{E}_{2N}^{1/(2N-4)}\mathcal{D}_{N+2,1}^{3/4}\mathcal{E}_{2N}^{1/4}
\lesssim \mathcal{D}_{N+2,1}\mathcal{E}_{2N},
\end{align}
by  Lemma \ref{pp2},
\begin{align}\label{sob5}
\norm{\nabla^{2(N+2)+1}\bar{\eta}(u_h,\partial_d u_h)}_{0}^{2}
 &\lesssim \norm{\nabla^{2(N+2)+1}\bar{\eta}}_{0}^{2}
\norm{ u_h}_{C^1(\Omega)}^{2}
  \lesssim
  \abs{D^{2(N+2)}\eta}_{1/2}^{2} \norm{u_h}_{C^1(\Omega)}^{2}
 \nonumber
 \\&
 \lesssim \mathcal{D}_{N+2,1}^{(4N-9)/(4N-7)}\mathcal{E}_{2N}^{2/(4N-7)}\mathcal{D}_{N+2,1}^{3/4}\mathcal{E}_{2N}^{1/4}
 \lesssim \mathcal{D}_{N+2,1}\mathcal{E}_{2N},
 \end{align}
 and by Lemma \ref{va1},
 \begin{align}
\abs{D^{2(N+2)}\eta u_h}_{1/2}^{2}
  &\lesssim
  \abs{D^{2(N+2)}\eta}_{1/2}^{2} \norm{u_h}_{C^1(\Sigma)}^{2}
 \lesssim \mathcal{D}_{N+2,1}\mathcal{E}_{2N}
 \end{align}
and
 \begin{align}\label{sob3}
\abs{D^{2(N+2)+1}\eta u_h}_{1/2}^{2}
  &\lesssim
 \abs{D^{2(N+2)+1}\eta }_{1/2}^{2} \norm{ u_h}_{C^1(\Sigma)}^{2}
  \nonumber
 \\&\lesssim \mathcal{D}_{N+2,1}^{(4N-11)/(4N-7)}\mathcal{E}_{2N}^{4/(4N-7)}\mathcal{D}_{N+2,1}^{3/4}\mathcal{E}_{2N}^{1/4}
 \lesssim\mathcal{D}_{N+2,1}\mathcal{E}_{2N},
 \end{align}
 where $N\geq 4$ has been used.
 The proof of \eqref{ne2} is thus completed.
\end{proof}
Now we record the estimates  at the $2N$ level.
\begin{lem}\label{N2}
 It holds that
\begin{align}\label{ne3}
&\sum_{j=0}^{2N-1} \norm{\partial_t^jG^1}_{4N-2j-2}^2
+\sum_{j=0}^{2N-1} \norm{\partial_t^jG^2}_{4N-2j-2}^2
+\sum_{j=0}^{2N-1}\abs{\partial_t^jG^3}_{4N-2j-3/2}^2
\nonumber\\&\quad
+\sum_{j=0}^{2N-1}\abs{\partial_t^jG^4}_{4N-2j-3/2}^2
\lesssim\left(\mathcal{E}_{2N}\right)^2
\end{align}
and
\begin{align}\label{ne5}
& \norm{G^1}_{4N-1}^2
+ \norm{G^2}_{4N-2}^2
+\abs{G^3}_{4N-1}^2
+\abs{DG^4}_{4N-5/2}^2
\nonumber\\&\quad
+\sum_{j=1}^{2N} \norm{\partial_t^jG^1}_{4N-2j}^2
+\sum_{j=1}^{2N-1} \norm{\partial_t^jG^2}_{4N-2j-1}^2
+\sum_{j=1}^{2N}\abs{\partial_t^jG^3}_{4N-2j+1/2}^2
\nonumber\\&\quad
+\sum_{j=1}^{2N-1}\abs{\partial_t^jG^4}_{4N-2j-1/2}^2
\lesssim\mathcal{E}_{2N}\mathfrak{D}_{2N}.
\end{align}
\end{lem}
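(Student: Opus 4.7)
The plan is to mimic the proof of Lemma \ref{lem1} at the higher $2N$ regularity level. Each $G^i$ defined by \eqref{G11}--\eqref{g4} is at least quadratic in the perturbation variables $(q,u,\bar\eta,\eta)$. For any multi-index $\alpha$ appearing on the left-hand side of \eqref{ne3}--\eqref{ne5}, I would apply $\partial^\alpha$, expand by Leibniz, and estimate each resulting product by placing one factor in $L^2$ (on $\Omega$) or $H^{\pm 1/2}$ (on $\Sigma$) and the others in $L^\infty$ or $C^1$, using Sobolev embedding, the trace theorem, the harmonic extension bounds of Lemmas \ref{pp2}--\ref{pp13}, the product-in-Sobolev estimate of Lemma \ref{va1}, and the a priori assumption \eqref{apriori1}--\eqref{aprii}.

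For \eqref{ne3}, every factor arising after the Leibniz expansion has derivative count within what $\mathcal{E}_{2N}$ controls; placing one factor in the $L^2$-type norm and the other(s) in $L^\infty$ (or $C^1$) yields the desired $(\mathcal{E}_{2N})^2$ bound directly, and this step parallels the $\mathcal{E}_{2N}\mathcal{E}_{N+2,1}$ estimate of Lemma \ref{lem1}. For \eqref{ne5}, the strategy is to identify in each Leibniz summand one distinguished factor whose norm sits inside $\mathfrak{D}_{2N}$ and to estimate the remaining factors by $\mathcal{E}_{2N}$. Most terms are handled this way without subtlety, exploiting for example $\norm{\nabla q}_{4N-1}^2\lesssim\mathfrak{D}_{2N}$, $\norm{\bar\eta}_{4N}^2\lesssim\abs{\eta}_{4N-1/2}^2\lesssim\mathfrak{D}_{2N}$ via harmonic extension, and $\norm{u_h}_{4N}^2+\norm{u_d}_{4N+1}^2\lesssim\mathfrak{D}_{2N}$.

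The main obstacle will be the top-order spatial regularity contributions in which the naive distribution of derivatives pushes the count beyond what $\mathfrak{D}_{2N}$ controls. These include terms in $\norm{G^1}_{4N-1}^2$ and $\norm{G^2}_{4N-2}^2$ producing $\nabla^{4N}\bar\eta$ paired with a $u$-factor, as well as boundary contributions in $\abs{G^3}_{4N-1}^2$ and $\abs{DG^4}_{4N-5/2}^2$ where all derivatives fall on $\eta$ producing $D^{4N}\eta$, which belongs to $\mathcal{E}_{2N}$ but not to $\mathfrak{D}_{2N}$. I would handle these via the same kind of Sobolev interpolation used in \eqref{POP}--\eqref{sob3}: interpolate the offending norm between a $\mathfrak{D}_{2N}$-norm (e.g. $\norm{\nabla q}_{4N-1}$, $\abs{D\eta}_{4N-3/2}$, $\norm{\bar\eta}_{4N}$) and an $\mathcal{E}_{2N}$-norm (e.g. $\norm{q}_{4N}$, $\abs{\eta}_{4N}$, $\norm{\bar\eta}_{4N+1/2}$), producing a bound of the form $\mathfrak{D}_{2N}^{\theta}\mathcal{E}_{2N}^{1-\theta}$; simultaneously flip the accompanying $u$-factor into an $L^\infty$ norm that is itself controlled by the dissipation through trace and Sobolev embedding (for instance $\norm{u_h}_{L^\infty(\Sigma)}^2\lesssim\mathfrak{D}_{2N}$ from $\norm{u_h}_{4N}^2\lesssim\mathfrak{D}_{2N}$ since $N\geq 4$), so that the two complementary factors multiply to give $\lesssim\mathcal{E}_{2N}\mathfrak{D}_{2N}$.

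Time derivatives are treated identically under the parabolic counting convention $\abs{\alpha}=2\alpha_0+\alpha_1+\ldots+\alpha_m$: each $\partial_t^j$ is distributed by Leibniz and each summand is allocated to $\mathcal{E}_{2N}$ or $\mathfrak{D}_{2N}$ according to its temporal and spatial indices, with the summation ranges in \eqref{ne3} and \eqref{ne5} precisely matched to the time-derivative sums appearing in $\mathcal{E}_{2N}$ and $\mathfrak{D}_{2N}$. Outside of the top-order interpolation cases described above, the rest is routine book-keeping following the pattern of Lemma \ref{lem1}, and summing all contributions yields \eqref{ne3} and \eqref{ne5}.
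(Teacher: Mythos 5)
Your overall Leibniz-plus-Sobolev strategy is the same as the paper's, and \eqref{ne3} is fine, but the step you single out as the ``main obstacle'' in \eqref{ne5} is handled by a mechanism that does not close. You propose to treat the top-order factors (e.g. $\nabla^{4N}\bar\eta$, $D^{4N}\eta$) by interpolating between a $\mathfrak{D}_{2N}$-norm and an $\mathcal{E}_{2N}$-norm, obtaining $\mathfrak{D}_{2N}^{\theta}\mathcal{E}_{2N}^{1-\theta}$, and then multiplying by an $L^\infty$ factor bounded by $\mathfrak{D}_{2N}$. That product is $\mathfrak{D}_{2N}^{1+\theta}\mathcal{E}_{2N}^{1-\theta}$, and for $\theta>0$ this is \emph{not} $\lesssim\mathcal{E}_{2N}\mathfrak{D}_{2N}$: the trade of the excess power of the dissipation against the energy is exactly what makes the analogous estimates \eqref{POP}--\eqref{sob3} work in Lemma \ref{lem1}, but there it relies on $\mathcal{D}_{N+2,1}\lesssim\mathcal{E}_{2N}$, while at the $2N$ level $\mathfrak{D}_{2N}$ is not dominated by $\mathcal{E}_{2N}$ (it contains $\norm{u_d}_{4N+1}^2$, $\sum_{j=1}^{2N}\norm{\partial_t^ju}_{4N-2j+1}^2$, $\norm{\partial_t^{2N+1}q}_0^2$, etc.). So the analogy with Lemma \ref{lem1} is false, and as written this step is a genuine gap. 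A related bookkeeping slip is the claim $\norm{\bar\eta}_{4N}^2\lesssim\abs{\eta}_{4N-1/2}^2\lesssim\mathfrak{D}_{2N}$: the dissipation only contains $\abs{D\eta}_{4N-3/2}^2$, not $\abs{\eta}_0^2$, so these full norms are energy-controlled, not dissipation-controlled.

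The repair is simpler than what you propose, and it is exactly the paper's (one-line) argument: at the $2N$ level no interpolation is needed at all. Every ``offending'' factor has at most $4N$ derivatives falling on $\eta$, $\bar\eta$ or $q$, hence is bounded outright by $\mathcal{E}_{2N}$ (e.g. $\abs{D^{4N}\eta}_0^2\le\abs{\eta}_{4N}^2\le\mathcal{E}_{2N}$, $\norm{\nabla^{4N}\bar\eta}_0^2\lesssim\abs{\eta}_{4N-1/2}^2\le\mathcal{E}_{2N}$ by Lemma \ref{pp2}); the companion factor in each quadratic term always involves $u$, $\nabla u$, $\nabla q$ or a time derivative, whose $L^\infty$ or $C^1$ norm is controlled by $\mathfrak{D}_{2N}$ through Sobolev embedding and trace (e.g. $\norm{u_h}_{C^1(\Sigma)}^2\lesssim\norm{u_h}_{4N}^2\lesssim\mathfrak{D}_{2N}$). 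Placing the high-order factor in $\mathcal{E}_{2N}$ and the low-order one in $\mathfrak{D}_{2N}$ gives $\mathcal{E}_{2N}\mathfrak{D}_{2N}$ directly for every term, which is how the paper disposes of the cases where $\eta$, $\bar\eta$, $q$ and $D^{4N}\eta$ are not in $\mathfrak{D}_{2N}$.
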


\begin{proof}
The proof of the estimate \eqref{ne3} is straightforward since there are  no terms whose derivative count involved exceed those of
$\mathcal{E}_{2N}$. There is also no problem for the estimate \eqref{ne5} even $\eta$, $\bar{\eta}$, $q$ and $D^{4N}\eta$ are not included in $\mathfrak{D}_{2N}$,
since we can always
control them by $\mathcal{E}_{2N}$ and other terms multiplying them by $\mathfrak{D}_{2N}$.
\end{proof}

\section{Tangential energy evolution}
In this section, we will derive the tangential energy evolution estimates for the solution to \eqref{neweq}.
Recall the conventions in the time differential inequalities \eqref{timeq1} and \eqref{timeq2}.
\subsection{Basic $L^2$ energy estimate}
We start with the estimate of the solution itself.
\begin{lem}\label{lemm}
It holds that
\begin{align}\label{L2e}
&\frac{d}{dt}
\left(\norm{q}_{0}^{2}
+\norm{u}_{0}^{2}
+\abs{\eta}_{0}^{2}\right)
+\norm{ u}_{1}^{2}
\lesssim\sqrt{\mathcal{E}_{2N}}\mathfrak{D}_{2N}+\mathcal{E}_{2N}^{7/8}\mathcal{D}_{N+2,1}^{5/8}.
\end{align}
\end{lem}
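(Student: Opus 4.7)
The plan is to carry out the physical energy--dissipation identity obtained by linearizing \eqref{neweq} around the equilibrium, and to control the resulting nonlinear remainders via standard Sobolev--trace bounds except for one exceptional boundary term, for which the interpolation strategy of \eqref{intro1} is required.

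First I would test the first equation of \eqref{neweq} by $Jq/h'(\rho)$ and the momentum equation by $Ju$, sum the two identities, and integrate by parts over $\Omega$. Using the Piola identity $\pa_k(J\a_{lk})=0$, the no-slip condition on $\Sigma_b$, and the geometric relations $J\pa_t^\a f=\pa_t(Jf)-\pa_d(f\pa_t\varphi)$ together with $\pa_t\varphi|_\Sigma=\pa_t\eta$, the principal parts telescope into $\tfrac{1}{2}\tfrac{d}{dt}\int_\Omega J(\abs{q}^2/h'(\rho)+\rho\abs{u}^2)$ plus the coercive dissipation $\int_\Omega J(\tfrac{\mu}{2}\abs{\sgz_\a u}^2+\mu'\abs{\diva u}^2)$, together with a single surface integral and a collection of cubic bulk remainders generated by $\pa_tJ$, $\pa_t^\a-\pa_t$, $\pa_i^\a-\pa_i$, $\rho-\bar\rho$, etc. Substituting the dynamic boundary condition of \eqref{neweq} together with \eqref{deqdf}, and then the kinematic condition $\pa_t\eta=u\cdot\mathcal{N}$, the surface integral reduces to $\tfrac{1}{2}\tfrac{d}{dt}\int_\Sigma \rho^\star g\abs{\eta}^2$ minus the exceptional surface term $\int_\Sigma \mathcal{R}_{P\circ h^{-1}}\pa_t\eta$.

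Next I would recover $\norm{u}_1^2$ from the coercive part by invoking Korn's inequality applied to the unweighted $\sgz u$, absorbing the commutator error $\sgz_\a - \sgz$ of size $\sqrt{\mathcal{E}_{2N}}\norm{u}_1^2$ under the smallness assumption \eqref{apriori1}. The a priori bound \eqref{aprii} makes the weighted $L^2$ integrals equivalent to $\norm{q}_0^2$, $\norm{u}_0^2$ and $\abs{\eta}_0^2$, so the convention \eqref{timeq1} then delivers the LHS of \eqref{L2e}.

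For the right-hand side, every cubic bulk remainder as well as the horizontal surface contributions from $G^4$ is handled by placing the ``small'' factor (in $L^\infty$ or $C^1$) inside $\mathcal{E}_{2N}$ via Sobolev embedding and the trace theorem, and the remaining two factors inside $\mathfrak{D}_{2N}$, producing a clean $\sqrt{\mathcal{E}_{2N}}\mathfrak{D}_{2N}$ contribution; this is entirely analogous to, but much simpler than, the nonlinear estimates of Lemmas~\ref{lem1}--\ref{N2}. The genuine obstacle is the exceptional surface term $\int_\Sigma \mathcal{R}_{P\circ h^{-1}}\pa_t\eta$: by \eqref{Rp} it is a quadratic form in $(q,\eta)$ multiplied by $\pa_t\eta$, each factor living at the $L^2$ level where $\mathfrak{D}_{2N}$ offers no control, so the naive estimate yields only the harmful $\mathcal{E}_{2N}\sqrt{\mathfrak{D}_{2N}}$. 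Following \eqref{intro1}, I would instead bound
\begin{equation*}
\int_\Sigma \abs{q}^2\pa_t\eta \ls \norm{q}_{L^\infty(\Sigma)}\abs{q}_0\abs{\pa_t\eta}_0,
\end{equation*}
apply the $L^\infty$--$\mathcal{D}_{N+2,1}$ row of \eqref{ei} for $q$ in $2D$ (with a strictly stronger exponent available in $3D$), and use $\abs{q}_0^2\ls\mathcal{E}_{2N}$ together with $\abs{\pa_t\eta}_0^2\ls\mathcal{D}_{N+2,1}$ from the definition \eqref{ld}, which combine to $\mathcal{E}_{2N}^{7/8}\mathcal{D}_{N+2,1}^{5/8}$. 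The $\abs{\eta}^2\pa_t\eta$ and cross contributions are treated identically and are no worse. The main difficulty is precisely identifying this exceptional term and allocating it to the interpolation strategy rather than to the standard pattern; the exponent $5/8<1$ is exactly what guarantees the time integrability required in the global framework of Theorem~\ref{thm1}.
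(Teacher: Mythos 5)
Your overall route coincides with the paper's: the exact weighted $L^2$ identity (the paper's \eqref{youd}), Korn's inequality after comparing $\sgz_{\a}u$ with $\sgz u$, and the interpolation trick \eqref{intro1} for the low-derivative surface terms, with exactly the exponent $\mathcal{E}_{2N}^{7/8}\mathcal{D}_{N+2,1}^{5/8}$. Your handling of the surface contributions is fine in spirit: although you list only $\int_\Sigma \mathcal{R}_{P\circ h^{-1}}\partial_t\eta$ as the exceptional surface term (the identity also produces $\tfrac12\int_\Sigma \frac{1}{h'(\rho)}|q|^2\partial_t\eta$ from integrating $q\,\partial_t^{\a}q$ by parts, and $\int_\Sigma\bigl(\tfrac{1}{h'(\rho^\star)}(q-g\eta)+\mathcal{R}_{h^{-1}}\bigr)q\,\partial_t\eta$ from $\rho|_\Sigma-\rho^\star$), these are precisely of the model form $\int_\Sigma|q|^2\partial_t\eta$ that you then estimate, so that omission is only bookkeeping.

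The genuine flaw is your claim that \emph{every} cubic bulk remainder can be closed as $\sqrt{\mathcal{E}_{2N}}\mathfrak{D}_{2N}$ by putting one factor in $L^\infty$ and ``the remaining two factors inside $\mathfrak{D}_{2N}$.'' The identity unavoidably produces a bulk remainder quadratic in \emph{undifferentiated} $q$, namely $\tfrac12\int_\Omega J\,\partial_t^{\a}\bigl(1/h'(\rho)\bigr)|q|^2$ (and, in your version where $\partial_t J$, $\rho-\bar\rho$ remainders are kept separately, further terms of the same type such as $\partial_t J\,|q|^2$). For these the scheme fails: $\norm{q}_0^2$ is \emph{not} controlled by $\mathfrak{D}_{2N}$ — by \eqref{d} the dissipation contains only $\nabla q$ and time derivatives of $q$ (and only $D\eta$, $\partial_t\eta$, not $\eta$) — so the best the ``standard pattern'' gives is the harmful $\mathcal{E}_{2N}\sqrt{\mathfrak{D}_{2N}}$ flagged in the introduction. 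These bulk terms must be sent through the same time-weighted interpolation route you reserve for the boundary: estimate them as $\bigl(\norm{\partial_t\rho}_0+\norm{\partial_t\bar\eta}_0\bigr)\norm{q}_0\norm{q}_{L^\infty(\Omega)}$, use $\norm{\partial_t\rho}_0^2+\norm{\partial_t\bar\eta}_0^2\lesssim\mathcal{D}_{N+2,1}$, $\norm{q}_0^2\lesssim\mathcal{E}_{2N}$ and the $L^\infty$--$\mathcal{D}_{N+2,1}$ row of \eqref{ei} for $q$, which again yields $\mathcal{E}_{2N}^{7/8}\mathcal{D}_{N+2,1}^{5/8}$; this is exactly the paper's estimate \eqref{mi2}. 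With that reallocation your argument closes; without it, the stated step would not.
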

\begin{proof}
Taking the $L^2(\Omega)$ inner product of the second equation in \eqref{neweq} with $Ju$ yields
\begin{align}\label{ps1}
\int_{\Omega}J\rho  \left(\partial_t^\mathcal{A}u +u\cdot \nabla _\mathcal{A} u\right)\cdot u
+\int_{\Omega}J\rho  \nabla _\mathcal{A} q\cdot u-\int_{\Omega}J\diverge_\mathcal{A}\mathbb{S}_\mathcal{A}u\cdot u=0.
\end{align}
By the integration by parts and using \eqref{deq} and the third and fifth equations in \eqref{neweq}, we find
\begin{align}\label{jp1}
&\int_{\Omega}J\rho   \left(\partial_t^\mathcal{A}u +u\cdot \nabla _\mathcal{A} u\right)\cdot u
=
\frac{1}{2}\int_{\Omega}J\rho   \left(\partial_t^\mathcal{A}  +u\cdot \nabla _\mathcal{A}  \right) \abs{u}^2
\nonumber\\&\quad
=
\frac{1}{2}\frac{d}{dt}
\int_{\Omega} J \rho \abs{u}^2
-\frac{1}{2}\int_{\Omega}J\left(\partial_t^\mathcal{A}\rho+\diverge_{\mathcal{A}}(\rho u)\right)\abs{u}^2
-\frac{1}{2}\int_{\Sigma} \rho\left(\partial_t\eta-u\cdot \mathcal{N}\right)\abs{u}^2
\nonumber\\&\quad=
\frac{1}{2}\frac{d}{dt}
\int_{\Omega}J\rho \abs{u}^2.
\end{align}
By integrating by parts and  using the  fifth equation in \eqref{neweq}, we obtain
\begin{align}\label{jp2}
\int_{\Omega}J\rho \nabla _\mathcal{A} q\cdot u
&
=\int_{\Sigma}\rho q\mathcal{N}\cdot u
-\int_{\Omega}J q \diverge_\mathcal{A}(\rho u),
\end{align}
and using the first equation in \eqref{neweq},
\begin{align}\label{ps5}
&-\int_{\Omega}J q \diverge_\mathcal{A}(\rho u)
=\int_{\Omega}\frac{J}{h'(\rho)}q\partial_t^\mathcal{A} q
=\frac{1}{2}\int_{\Omega}\frac{J}{h'(\rho)}\partial_t^\mathcal{A} \abs{q}^2
\nonumber\\&\quad
=\frac{1}{2}\frac{d}{dt}
\int_{\Omega}\frac{J}{h'(\rho)} \abs{q}^2
-\frac{1}{2}\int_{\Omega}J\partial_t^\mathcal{A}\left(\frac{1}{h'(\rho)}\right) \abs{q}^2
-\frac{1}{2}\int_{\Sigma}\frac{1}{ h'(\rho)} \abs{q}^2\partial_t \eta.
\end{align}
By integrating by parts  and using the fifth equation in \eqref{neweq},
\begin{align}\label{wxm}
-\int_{\Omega}J\diverge_\mathcal{A}\mathbb{S}_\mathcal{A}u\cdot u
=-\int_{\Sigma}\mathbb{S}_\mathcal{A}u\mathcal{N}\cdot u
+\int_{\Omega} J\mathbb{S}_\mathcal{A}u:\nabla_\mathcal{A}u,
\end{align}
and noting that
\begin{align}
\mathbb{S}_\mathcal{A}u:\nabla_\mathcal{A}u
=\frac{\mu}{2}\abs{\mathbb{D}_\mathcal{A}u}^2-\frac{2\mu}{d}\abs{\diverge_\mathcal{A}u}^2
+\mu'\abs{\diverge_\mathcal{A}u}^2
=\frac{\mu}{2}\abs{\mathbb{D}_\mathcal{A}^0u}^2
+\mu'\abs{\diverge_\mathcal{A}u}^2,
\end{align}
we get
\begin{align}\label{jp3}
-\int_{\Omega}J\diverge_\mathcal{A}\mathbb{S}_\mathcal{A}u\cdot u
=-\int_{\Sigma}\mathbb{S}_\mathcal{A}u\mathcal{N}\cdot u
+\int_{\Omega}J\left(\frac{\mu}{2}\abs{\mathbb{D}_\mathcal{A}^0u}^2
+\mu'\abs{\diverge_\mathcal{A}u}^2\right).
\end{align}
On the other hand, by using \eqref{ho} and the third  and fourth  equations of \eqref{neweq}, we have
\begin{align}\label{jp4}
&\int_{\Sigma}\rho  q \mathcal{N}\cdot u-\int_{\Sigma}\mathbb{S}_\mathcal{A}u\mathcal{N}\cdot u
\nonumber\\&\quad=\int_{\Sigma}(\rho^\star q I- \mathbb{S}_\mathcal{A}u)\mathcal{N}\cdot u
+ \left(\frac{1}{h'(\rho^\star)}(q-g\eta)+\mathcal{R}_{h^{-1}}\right)q\mathcal{N}\cdot u
\nonumber\\&\quad= \int_{\Sigma}\left(\rho^\star g \eta  -\mathcal{R}_{P\circ h^{-1}}+\left(\frac{1}{h'(\rho^\star)}(q-g\eta)+\mathcal{R}_{h^{-1}}\right)q\right)\partial_t\eta
\nonumber\\&\quad
=\hal\frac{d}{dt}\int_{\Sigma}\rho^\star g|\eta |^2
-  \int_{\Sigma}\left(\mathcal{R}_{P\circ h^{-1}}-\left(\frac{1}{h'(\rho^\star)}(q-g\eta)+\mathcal{R}_{h^{-1}}\right)q
 \right)\partial_t\eta.
\end{align}
Hence, by \eqref{jp1}, \eqref{ps5}, \eqref{jp3} and \eqref{jp4}, we deduce from \eqref{ps1} that
\begin{align}\label{youd}
&\frac{1}{2}\frac{d}{dt}\left(
\int_{\Omega}J\(\frac{1}{h'(\rho)} \abs{q}^2
+ \rho  \abs{u}^2\)
+\int_{\Sigma}\rho^\star g| \eta|^2\right)
+\int_{\Omega}J\(\frac{\mu}{2}\abs{\mathbb{D}_\mathcal{A}^0u}^2
+\mu' \abs{\diverge_\mathcal{A}u}^2\)
\nonumber\\&\quad
=
\frac{1}{2}\int_{\Omega}J\partial_t^\mathcal{A}\left(\frac{1}{h'(\rho)}\right) |q|^2
+ \int_{\Sigma}\left(\frac{1}{2 h'(\rho)} |q|^2+\mathcal{R}_{P\circ h^{-1}}-\left(\frac{1}{h'(\rho^\star)}(q-g\eta)+\mathcal{R}_{h^{-1}}\right)q
 \right)\partial_t\eta,
\end{align}
 where, by \eqref{ei} and the definitions \eqref{Rh} of $\mathcal{R}_{h^{-1}}$ and \eqref{Rp} of $\mathcal{R}_{P\circ h^{-1}}$,
 the right hand side can be bounded by
\begin{align}\label{mi2}
&\(\norm{\partial_t\rho}_{0}+\norm{\partial_t\bar{\eta}}_{0}\)\norm{q}_{0}\norm{q}_{L^\infty(\Omega)}
+\(\norm{q}_{L^\infty(\Sigma)}+\norm{\eta}_{L^\infty(\Sigma)}\)\(\abs{q}_{0}+\abs{\eta}_0\)\abs{\partial_t\eta}_{0}
\nonumber\\&\quad
\lesssim
\sqrt{\mathcal{D}_{N+2,1} }\sqrt{ \mathcal{E}_{2N} }\sqrt{\mathcal{D}_{N+2,1}^{1/4}\mathcal{E}_{2N}^{3/4}}
+\sqrt{\mathcal{D}_{N+2,1}^{1/4}\mathcal{E}_{2N}^{3/4}}\sqrt{ \mathcal{E}_{2N} }\sqrt{\mathcal{D}_{N+2,1} }
=\mathcal{E}_{2N}^{7/8}\mathcal{D}_{N+2,1}^{5/8}.
\end{align}
On the other hand,
\begin{align}\label{fzd}
&\int_{\Omega}J\left(\frac{\mu}{2}\abs{\mathbb{D}_\mathcal{A}^0u}^2
+\mu'\abs{\diverge_\mathcal{A}u}^2\right)
\nonumber\\&\quad
\geq \hal
\int_{\Omega}\(\frac{\mu}{2}\abs{\mathbb{D}^0u}^2
+\mu'\abs{\diverge u}^2\)
-C\int_{\Omega}\(\abs{\mathbb{D}_\mathcal{A}^0u-\mathbb{D}^0u}^2
+\abs{\diverge_\mathcal{A}u-\diverge u}^2\)
\nonumber\\&\quad
\geq \hal
\int_{\Omega}\frac{\mu}{2}\abs{\mathbb{D}^0u}^2
+\mu'\abs{\diverge u}^2
-C\sqrt{\mathcal{E}_{2N}}\mathfrak{D}_{2N}.
\end{align}

Consequently, by \eqref{mi2} and \eqref{fzd},
we deduce from \eqref{youd} that
\begin{align}\label{L22e}
&
\frac{d}{dt}
\left(
\int_{\Omega}J\(\frac{1}{h'(\rho)} \abs{q}^2
+ \rho   \abs{u}^2\)
+\int_{\Sigma}\rho^\star g\abs{ \eta}^2\right)+\int_{\Omega}\(\frac{\mu}{2}\abs{\mathbb{D}^0u}^2
+\mu' \abs{\diverge u}^2\)
\nonumber\\&\quad
\lesssim\sqrt{\mathcal{E}_{2N}}\mathfrak{D}_{2N}+\mathcal{E}_{2N}^{7/8}\mathcal{D}_{N+2,1}^{5/8}.
\end{align}
Hence, since $u=0$ on $\Sigma_b$, by using \eqref{viscosity} and Korn's inequality of Lemma \ref{xm5}, we conclude \eqref{L2e}.
\end{proof}

\subsection{Energy evolution in perturbed linear form}
It is more convenient to derive the energy evolution of certain horizontal space-time derivatives of the solution
by utilizing the perturbed linear form \eqref{q2}.

We first record the estimates  at the $2N$ level. Recall the norm notation \eqref{def3}.
\begin{prop}\label{prop1}
It holds that
\begin{align}\label{tan1}
&\frac{d}{dt}
\sum_{j=1}^{2N-1}\left(\norm{\partial_t^jq}_{0,4N-2j}^{2}
+\norm{\partial_t^j u}_{0,4N-2j}^{2}
+\abs{\partial_t^j\eta}_{4N-2j}^{2}\right)
+\sum_{j=1}^{2N-1}\norm{\partial_t^j u}_{1,4N-2j}^{2}
\nonumber\\
&\quad
\lesssim\sqrt{\mathcal{E}_{2N}}\mathfrak{D}_{2N}.
\end{align}
\end{prop}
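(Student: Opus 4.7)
The plan is to apply the tangential differential operator $\partial_t^j\partial^\alpha$, with $j\in\{1,\dots,2N-1\}$, $\alpha\in\mathbb{N}^{d-1}$ and $|\alpha|\le 4N-2j$, to the perturbed linear form \eqref{q2} and to mimic, at the linearized level, the $L^2$ energy identity in the proof of Lemma \ref{lemm}. Because $\partial_t^j\partial^\alpha$ differentiates only in time and horizontal directions, it commutes with the equilibrium coefficients $\bar\rho(y_d)$ and $h'(\bar\rho)$ and is tangential to the flat boundaries $\Sigma$ and $\Sigma_b$, so no commutator terms are generated. The triple $(Q,U,H):=(\partial_t^j\partial^\alpha q,\partial_t^j\partial^\alpha u,\partial_t^j\partial^\alpha\eta)$ therefore solves exactly the system \eqref{q2} with source terms $\partial_t^j\partial^\alpha G^i$ in place of $G^i$.

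I would then repeat the derivation \eqref{ps1}--\eqref{jp4} for this triple: take the $L^2(\Omega)$ inner product of the momentum equation with $U$, multiply the continuity equation by $Q/h'(\bar\rho)$, and integrate by parts. The interior contributes $\frac12\frac{d}{dt}\int_\Omega\bigl(\bar\rho|U|^2+|Q|^2/h'(\bar\rho)\bigr)$ together with the viscous dissipation $\int_\Omega\bigl(\tfrac{\mu}{2}|\mathbb{D}^0U|^2+\mu'|\diverge U|^2\bigr)$. On $\Sigma$, the pressure boundary contribution $\rho^\star QU_d$ cancels exactly with the corresponding piece produced by the viscous stress via the differentiated dynamic condition $(\rho^\star QI-\mathbb{S}U)e_d=\rho^\star gHe_d+\partial_t^j\partial^\alpha G^4$, leaving only $\rho^\star gHU_d+U\cdot\partial_t^j\partial^\alpha G^4$. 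Substituting $U_d=\partial_tH-\partial_t^j\partial^\alpha G^3$ from the differentiated kinematic condition turns the first piece into $\frac12\frac{d}{dt}\int_\Sigma\rho^\star g|H|^2$ plus a nonlinear remainder. Korn's inequality (Lemma \ref{xm5}), applicable thanks to $U|_{\Sigma_b}=0$ and \eqref{viscosity}, upgrades the dissipation to $\norm{U}_1^2$.

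Summing over $j$ and $\alpha$ produces the left-hand side of \eqref{tan1} plus a nonlinear remainder
\begin{equation*}
\mathcal{R}:=\sum_{j,\alpha}\left(\int_\Omega\partial_t^j\partial^\alpha G^1\,Q+\int_\Omega\partial_t^j\partial^\alpha G^2\cdot U+\int_\Sigma\partial_t^j\partial^\alpha G^4\cdot U+\int_\Sigma\rho^\star gH\,\partial_t^j\partial^\alpha G^3\right).
\end{equation*}
By Cauchy--Schwarz, duality and the trace inequality, $\mathcal{R}$ is controlled by a sum of terms of the form $\norm{\partial_t^j G^1}_{0,4N-2j}\norm{Q}_0$, $\norm{\partial_t^j G^2}_{0,4N-2j}\norm{U}_0$, $\abs{\partial_t^j G^4}_{-1/2,4N-2j}\norm{\partial_t^j u}_{4N-2j+1}$ and $\abs{\partial_t^j G^3}_{0,4N-2j}\abs{H}_0$. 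Since $j\ge 1$, each factor involving $(Q,U,H,\partial_t^j u)$ is covered by $\mathfrak{D}_{2N}$ by inspection of \eqref{d}, while each $\partial_t^j G^i$ factor is bounded by $\sqrt{\mathcal{E}_{2N}\mathfrak{D}_{2N}}$ via estimate \eqref{ne5} of Lemma \ref{N2}. This yields $\mathcal{R}\lesssim\sqrt{\mathcal{E}_{2N}}\mathfrak{D}_{2N}$, giving \eqref{tan1}. The principal obstacle is bookkeeping: checking that the fractional and anisotropic indices of each $\partial_t^j G^i$ match cleanly with a dissipation norm from \eqref{d}. The borderline case $j=0$, $|\alpha|=4N$ is deliberately excluded here, since in that regime the bulk commutators between $D^{4N}$ and the Eulerian coefficients escape the $\sqrt{\mathcal{E}_{2N}}\mathfrak{D}_{2N}$ bound and must instead be handled via the \emph{Eulerian horizontal derivative} reformulation sketched in Section \ref{sj}.
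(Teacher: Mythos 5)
Your overall strategy coincides with the paper's: apply time--horizontal derivatives $\partial^\alpha$ (with $1\le\alpha_0\le 2N-1$) to the perturbed linear form \eqref{q2}, run the $L^2$ energy identity as in Lemma \ref{lemm} to get \eqref{e1}, use Korn's inequality, and absorb the nonlinear remainder via Lemma \ref{N2}. Your treatment of the $G^1$, $G^3$ terms and of the boundary term $G^4$ (via the $H^{-1/2}$--$H^{1/2}$ pairing, which matches the norm $\abs{\partial_t^jG^4}_{4N-2j-1/2}$ available in \eqref{ne5} and is essentially the paper's integration by parts on $\Sigma$) is fine.

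However, there is a genuine gap in your bound for the interior $G^2$ term at the top horizontal order. You claim to control $\int_\Omega \partial_t^j\partial^\alpha G^2\cdot U$ by $\norm{\partial_t^jG^2}_{0,4N-2j}\norm{U}_0$ and to bound the first factor by $\sqrt{\mathcal{E}_{2N}\mathfrak{D}_{2N}}$ ``via \eqref{ne5}''. But \eqref{ne5} only provides $\norm{\partial_t^jG^2}_{4N-2j-1}$, one derivative short, and the full-order quantity $\norm{\partial_t^jG^2}_{0,4N-2j}$ is genuinely out of reach: $G^2$ is second order in $u$ (see $G^{2,2}$, $G^{2,3}$ in \eqref{G22}--\eqref{G23}), so a term such as $(\mathcal{A}_{lk}\mathcal{A}_{lm}-\delta_{lk}\delta_{lm})\partial_{km}u_i$ hit with $\partial_t^jD^{4N-2j}$ produces $\partial_t^jD^{4N-2j}\partial_{km}u$, i.e.\ $4N-2j+2$ spatial derivatives of $\partial_t^ju$, while $\mathfrak{D}_{2N}$ only controls $\norm{\partial_t^ju}_{4N-2j+1}$; the smallness of the coefficient does not recover the missing derivative. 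This is exactly why the paper's proof splits into $|\alpha|\le 4N-1$ and $|\alpha|=4N$: in the latter case, since $\alpha_0\le 2N-1$ forces $|\alpha_h|\ge 2$, one integrates by parts in one horizontal direction ($\beta$ with $|\beta|=1$), pairing $\norm{\partial^{\alpha+\beta}u}_0\le\norm{\partial_t^ju}_{4N-2j+1}\lesssim\sqrt{\mathfrak{D}_{2N}}$ with $\norm{\partial^{\alpha-\beta}G^2}_0\le\norm{\partial_t^jG^2}_{4N-2j-1}\lesssim\sqrt{\mathcal{E}_{2N}\mathfrak{D}_{2N}}$ (and does the analogous transfer for $G^4$). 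Your argument needs this derivative transfer (or an $H^{-1}$--$H^1$ duality in $\Omega$ playing the same role) in the case $|\alpha_h|=4N-2j$; without it the claimed estimate of $\mathcal{R}$ does not follow.
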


\begin{proof}
Let $\alpha \in \mathbb{N}^{1+(d-1)}$ be so that $ |\alpha| \leq 4N$ and $1\leq \alpha_0 \leq 2N-1$.
Applying $\partial^\alpha$ to the second equation of \eqref{q2} and then taking the $L^2(\Omega)$ inner product of the resulting with
$\partial^{\alpha}u$, similarly as the derivation of  \eqref{youd}, we obtain
\begin{align}\label{e1}
&\frac{1}{2}\frac{d}{dt}\left(\int_{\Omega}\left(\frac{1}{h'(\bar{\rho})}\abs{\partial^\alpha q}^2
+\bar{\rho}\abs{\partial^\alpha u}^2\right)
+\int_{\Sigma}\rho^\star g\abs{\partial^\alpha \eta}^2\right)
+\int_{\Omega}\left(\frac{\mu}{2}\abs{\mathbb{D}^0\partial^\alpha u}^2
+\mu'\abs{\diverge\partial^\alpha u}^2\right)
\nonumber\\
&\quad =\int_{\Omega}\frac{1}{h'(\bar{\rho})}\partial^\alpha q\partial^\alpha G^1+\int_{\Omega}\partial^\alpha u\cdot\partial^\alpha G^2
+\int_{\Sigma}\rho^\star g\partial^\alpha \eta \partial^\alpha G^3-\int_{\Sigma}\partial^\alpha u\cdot\partial^\alpha G^4.
\end{align}

We now estimate the right hand side of \eqref{e1}. For the $G^1$ and $G^3$ terms, by \eqref{ne5}, we directly have
\begin{align}\label{e4}
\int_{\Omega}\frac{1}{h'(\bar{\rho})}\partial^\alpha q\partial^\alpha G^1
+\int_{\Sigma}\rho^\star g\partial^\alpha \eta \partial^\alpha G^3
&\lesssim
\norm{\partial^\alpha q}_0\norm{\partial^\alpha G^1}_0
+\abs{\partial^\alpha \eta}_0\abs{\partial^\alpha G^3}_0
\nonumber\\
&
\lesssim  \sqrt{\mathfrak{D}_{2N}}\sqrt{\mathcal{E}_{2N}\mathfrak{D}_{2N}}.
\end{align}
For $G^2$ and $G^4$ terms, we must split into two cases: $|\alpha|\leq4N-1$ and $|\alpha|=4N$.
For the former case, by \eqref{ne5}, we get
\begin{align}\label{sur1}
\int_{\Omega}\partial^\alpha u\cdot\partial^\alpha G^2
\lesssim
\norm{\partial^\alpha u}_0\norm{\partial^\alpha G^2}_0
\lesssim\sqrt{\mathfrak{D}_{2N}}\sqrt{\mathcal{E}_{2N}\mathfrak{D}_{2N}},
\end{align}
and by using additionally the trace theory,
\begin{align}
-\int_{\Sigma}\partial^\alpha u\cdot\partial^\alpha G^4
\lesssim
\abs{\partial^\alpha u}_0\abs{\partial^\alpha G^4}_0
\lesssim
\norm{\partial^\alpha u}_1\abs{\partial^\alpha G^4}_0
\lesssim\sqrt{\mathfrak{D}_{2N}}\sqrt{\mathcal{E}_{2N}\mathfrak{D}_{2N}}.
\end{align}
For the latter case, since $\alpha_0\leq 2N-1$, $|\alpha_h|\geq 2$ and
we may write $\alpha=\beta+(\alpha-\beta)$ for some $\beta$ with $|\beta|=1$, which implies $|\alpha-\beta|\leq4N-1$.
Integrating by parts, by \eqref{ne5}, we then find
\begin{align}\label{e6}
\int_{\Omega}\partial^\alpha u\cdot\partial^\alpha G^2
&= -\int_{\Omega}\partial^{\alpha+\beta }u\cdot\partial^{\alpha-\beta} G^2
\lesssim\norm{\partial^{\alpha+\beta }u}_0\norm{\partial^{\alpha-\beta} G^2}_0
\lesssim\sqrt{\mathfrak{D}_{2N}}\sqrt{\mathcal{E}_{2N}\mathfrak{D}_{2N}},
\end{align}
and by the trace theory,
\begin{align}\label{sur2}
-\int_{\Sigma}\partial^\alpha u\cdot\partial^\alpha G^4
=
\int_{\Sigma}\partial^{\alpha+\beta }u\cdot\partial^{\alpha-\beta} G^4
&\lesssim
\abs{\partial^{\alpha+\beta }u}_{-1/2}\abs{\partial^{\alpha-\beta }G^4}_{1/2}
\nonumber\\
&
\lesssim
\norm{\partial^{\alpha}u}_{1}\abs{\partial^{\alpha-\beta }G^4}_{1/2}
\lesssim
\sqrt{\mathfrak{D}_{2N}}\sqrt{\mathcal{E}_{2N}\mathfrak{D}_{2N}}.
\end{align}

Consequently, by \eqref{e4}--\eqref{sur2}, summing \eqref{e1} over such $\alpha$ and  using \eqref{viscosity} and Korn's inequality of Lemma \ref{xm5},
since $\partial^\alpha u=0$ on $\Sigma_b$,
we then conclude \eqref{tan1}.
\end{proof}

We then record the estimates at the $N+2$ level.
\begin{prop}\label{prop2}
It holds that
\begin{align}\label{tan2}
&\frac{d}{dt}
\Bigg(
\norm{Dq}_{0,2(N+2)-1}^{2}
+\norm{Du}_{0,2(N+2)-1}^{2}
+\abs{D\eta}_{2(N+2)-1}^{2}
\Bigg.
\nonumber\\&\quad
\Bigg.
+\sum_{j=1}^{N+1}\left(\norm{\partial_t^jq}_{0,2(N+2)-2j}^{2}
+\norm{\partial_t^j u}_{0,2(N+2)-2j}^{2}
+\abs{\partial_t^j\eta}_{2(N+2)-2j}^{2}\right)
\Bigg)
\nonumber\\&\quad
+\norm{Du}_{1,2(N+2)-1}^{2}+\sum_{j=1}^{N+1}\norm{\partial_t^j u}_{1,2(N+2)-2j}^{2}
\lesssim \sqrt{\mathcal{E}_{2N}}\mathcal{D}_{N+2,1}.
\end{align}
\end{prop}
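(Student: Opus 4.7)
The plan is to mirror the proof of Proposition \ref{prop1}, replacing the high-regularity nonlinear estimates \eqref{ne5} with the low-regularity estimates \eqref{ne2} of Lemma \ref{lem1}, and enlarging the range of admissible multi-indices to include pure spatial tangential derivatives. Concretely, I would apply $\partial^\alpha$ to the second equation of \eqref{q2} for each $\alpha\in\mathbb{N}^{1+(d-1)}$ in the index set
\begin{equation*}
\mathcal{S}=\{\alpha:\alpha_0=0,\ 1\leq|\alpha_h|\leq 2(N+2)\}\cup\{\alpha:1\leq\alpha_0\leq N+1,\ |\alpha|\leq 2(N+2)\},
\end{equation*}
take the $L^2(\Omega)$ inner product with $\partial^\alpha u$, and integrate by parts as in \eqref{jp1}--\eqref{jp4}, also applying $\partial^\alpha$ to the kinematic and dynamic boundary conditions. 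This reproduces the energy identity \eqref{e1} for each $\alpha\in \mathcal{S}$, with the four familiar $G^i$ terms on the right-hand side.

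The next step is to bound each right-hand side term by $\sqrt{\mathcal{E}_{2N}}\mathcal{D}_{N+2,1}$. The constraint $|\alpha_h|\geq 1$ whenever $\alpha_0=0$ (so that the undifferentiated solution is never tested) is exactly what guarantees that the linear factors are controlled by $\mathcal{D}_{N+2,1}$:
\begin{equation*}
\norm{\partial^\alpha q}_0^2+\norm{\partial^\alpha u}_0^2+\abs{\partial^\alpha \eta}_0^2\lesssim \mathcal{D}_{N+2,1}\quad\text{for all }\alpha\in\mathcal{S}\text{ with }|\alpha|\leq 2(N+2)-1,
\end{equation*}
via $\norm{Dq}_{2(N+2)-1}^2\lesssim\norm{D^2 q}_{2(N+2)-2}^2+\norm{\partial_d q}_{2(N+2)-1}^2$ and the analogous decompositions of $Du$ and $D\eta$, together with the time-differentiated entries of \eqref{ld}. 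For $|\alpha|\leq 2(N+2)-1$ the nonlinear factors are read off directly from \eqref{ne2}; at the top order $|\alpha|=2(N+2)$, which forces $|\alpha_h|\geq 2$ since $\alpha_0\leq N+1$, I would split $\alpha=(\alpha-\beta)+\beta$ with horizontal $|\beta|=1$ and integrate one derivative back onto $u$, as in \eqref{e6}--\eqref{sur2}. In the bulk this shifts the derivative count on $G^2_h$ down to $\norm{DG^2_h}_{2(N+2)-2}$, while on the boundary, after using trace theory $\abs{\partial^{\alpha+\beta}u}_{-1/2}\lesssim\norm{Du}_{1,2(N+2)-1}$, it shifts $G^4_h$ down to $\abs{G^4_h}_{2(N+2)-1/2}$, both of which are supplied by \eqref{ne2}.

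The main obstacle is the top-order boundary term involving $G^4_d$, for which \eqref{ne2} provides only the two-derivative bound $\abs{D^2 G^4_d}_{2(N+2)-5/2}^2\lesssim\mathcal{E}_{2N}\mathcal{D}_{N+2,1}$. Because $|\alpha-\beta|=2(N+2)-1\geq 2$, I would peel off a second horizontal derivative and estimate
\begin{equation*}
\abs{\partial^{\alpha-\beta}G^4_d}_{1/2}\lesssim\abs{D^2 G^4_d}_{|\alpha-\beta|-3/2}=\abs{D^2 G^4_d}_{2(N+2)-5/2}\lesssim\sqrt{\mathcal{E}_{2N}\mathcal{D}_{N+2,1}},
\end{equation*}
while $\abs{\partial^{\alpha+\beta}u_d}_{-1/2}\lesssim\norm{Du}_{1,2(N+2)-1}\lesssim\sqrt{\mathcal{D}_{N+2,1}}$ by trace theory. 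Once every right-hand side term is controlled by $\sqrt{\mathcal{E}_{2N}}\mathcal{D}_{N+2,1}$, summing over $\alpha\in\mathcal{S}$ and invoking \eqref{viscosity} together with Korn's inequality (using $\partial^\alpha u=0$ on $\Sigma_b$) coerces the viscous terms into $\norm{Du}_{1,2(N+2)-1}^2+\sum_{j=1}^{N+1}\norm{\partial_t^j u}_{1,2(N+2)-2j}^2$, delivering \eqref{tan2}.
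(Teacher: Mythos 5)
Your skeleton (the identity \eqref{e1} over the stated index range, with \eqref{ne2} replacing \eqref{ne5}, and moving one horizontal derivative onto $u$ at top order for the $G^2$, $G^4$ terms) is the paper's, but two genuine gaps remain. First, your claim that the linear factors obey $\norm{\partial^\alpha q}_0^2+\abs{\partial^\alpha\eta}_0^2\lesssim \mathcal{D}_{N+2,1}$ for all admissible $\alpha$ with $|\alpha|\le 2(N+2)-1$, via $\norm{Dq}_{2(N+2)-1}^2\lesssim\norm{D^2q}_{2(N+2)-2}^2+\norm{\partial_d q}_{2(N+2)-1}^2$, fails at the bottom end: $\norm{Dq}_0$ and $\abs{D\eta}_0$ are \emph{not} controlled by $\mathcal{D}_{N+2,1}$ (the layer is horizontally infinite, there is no horizontal Poincar\'e inequality, and $q,\eta$ carry no vanishing boundary data; the paper's $L^2$ table \eqref{ei} records only $\theta=1/2$ for $Dq$ and $D\eta$ — only $Du$ is safe, via $u=0$ on $\Sigma_b$). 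Hence the case $\alpha_0=0$, $|\alpha_h|=1$ cannot be closed by the direct pairing: $\int_\Omega Dq\,DG^1\lesssim\norm{Dq}_0\norm{DG^1}_0\lesssim\mathcal{E}_{2N}^{3/4}\mathcal{D}_{N+2,1}^{3/4}$, which has dissipation power below $1$ and cannot be absorbed; likewise $\abs{DG_d^4}_0$ is not supplied with full power by \eqref{ne2} or \eqref{Ge} (only $\theta=3/4$). The paper closes exactly this case by an additional integration by parts, \eqref{s8} (moving $D$ so as to pair $D^2q$ with $G^1$) and \eqref{s9} (pairing $u_d$ with $D^2G_d^4$), which your proposal omits entirely.

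Second, at the top order $|\alpha|=|\alpha_h|=2(N+2)$ you only treat the terms paired with $u$; the pairings $\int_\Omega\partial^\alpha q\,\partial^\alpha G^{1,1}$ and $\int_\Sigma\partial^\alpha\eta\,\partial^\alpha G^3$ are not addressed, and the "integrate one derivative back onto $u$" device does not apply to them. For the $G^3$ term the direct $L^2\times L^2$ estimate fails because $\abs{D^{2(N+2)}\eta}_0$ exceeds what $\mathcal{D}_{N+2,1}$ controls (it only contains $\abs{D^2\eta}_{2(N+2)-5/2}$, half a derivative short), and moving a horizontal derivative onto $\eta$ only makes this worse. The paper handles $G^{1,1}$ by splitting off its transport part and integrating by parts against $\abs{\partial^\alpha q}^2$ using the kinematic and bottom boundary conditions (\eqref{s3}--\eqref{s6}), and handles the $\eta$--$G^3$ pairing by the interpolation estimates \eqref{sob1}, \eqref{sobbb} together with the $L^\infty$ table (\eqref{s7}); alternatively one could estimate $\abs{D^{2(N+2)}\eta}_{-1/2}\abs{D^{2(N+2)}G^3}_{1/2}$ using $\abs{G^3}_{2(N+2)+1/2}$ from \eqref{ne2}, but some such device is needed and none appears in your write-up.
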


\begin{proof}
Let $\alpha \in \mathbb{N}^{1+(d-1)}$ be so that $1\leq |\alpha| \leq 2(N+2)$ and $\alpha_0\leq N+1$. Note that \eqref{e1} still holds,
and we will bound the right hand side of \eqref{e1} at this time by $\sqrt{\mathcal{E}_{2N}}\mathcal{D}_{N+2,1}$.

First, for $ 2\leq|\alpha|\leq 2(N+2)$, the estimates follow similarly as in Proposition \ref{prop1} by using \eqref{ne2} in place of \eqref{ne5},
except the following terms:
\begin{align}\label{s1}
\int_{\Omega}\frac{1}{h'(\bar{\rho})}\partial^\alpha q\partial^\alpha G^{1,1}
\text{ and }
\int_{\Sigma}\rho^\star g\partial^{\alpha}\eta\partial^{\alpha}G^{3}      \text{ when } |\alpha|=|\alpha_h|=2(N+2).
\end{align}
For the $G^{1,1}$ term, we write
\begin{align}\label{s3}
\partial^\alpha G^{1,1}
=J^{-1}\partial_t\varphi\partial_d \partial^\alpha q-u_i\mathcal{A}_{ij}\partial_j\partial^\alpha q
+\left[\partial^\alpha,J^{-1}\partial_t\varphi\partial_d\right] q+\left[\partial^\alpha,u_i\mathcal{A}_{ij}\partial_j\right]q.
\end{align}
Similarly as \eqref{ne2},
\begin{align}\label{s4}
&\int_{\Omega}\frac{1}{h'(\bar{\rho})}\partial^\alpha q
\big(\left[\partial^\alpha,J^{-1}\partial_t\varphi\partial_d\right] q+\[\partial^\alpha,u_i\mathcal{A}_{ij}\partial_j\]q\big)
\nonumber\\
&\quad\lesssim \norm{\partial^\alpha q}_0
\big(\norm{\[\partial^\alpha,J^{-1}\partial_t\varphi\partial_d \]q}_0
+\norm{\[\partial^\alpha,u_i\mathcal{A}_{ij}\partial_j\]q}_0\big)
\lesssim \sqrt{\mathcal{D}_{N+2,1}}\sqrt{\mathcal{E}_{2N}\mathcal{D}_{N+2,1}},
\end{align}
and by integrating by parts and using the third and fifth equations in \eqref{neweq},
\begin{align}\label{s5}
&\int_{\Omega}\frac{1}{h'(\bar{\rho})}\partial^\alpha q
\left(J^{-1}\partial_t\varphi\partial_d \partial^\alpha q-u_i\mathcal{A}_{ij}\partial_j\partial^\alpha q\right)
=\frac{1}{2}\int_{\Omega}\frac{1}{h'(\bar{\rho})}
\left(J^{-1}\partial_t\varphi\partial_d-u_i\mathcal{A}_{ij}\partial_j\right)\abs{\partial^\alpha q}^2
\nonumber\\
&\quad=
-\frac{1}{2}\int_{\Omega}\left(\partial_d\left(\frac{1}{h'(\bar{\rho})}J^{-1}\partial_t\varphi\right)
-\partial_j\left(\frac{1}{h'(\bar{\rho})}u_i\mathcal{A}_{ij}\right)\right)\abs{\partial^\alpha q}^2
\lesssim \sqrt{\mathcal{E}_{2N}}\mathcal{D}_{N+2,1},
\end{align}
hence, we have
\begin{align}\label{s6}
\int_{\Omega}\frac{1}{h'(\bar{\rho})}\partial^\alpha q\partial^\alpha G^{1,1}
\lesssim \sqrt{\mathcal{E}_{2N}}\mathcal{D}_{N+2,1}.
\end{align}
For the $G^3$ term, by \eqref{Pe2},
\eqref{sob1}, \eqref{sobbb}, the $L^\infty$ table in \eqref{ei} in terms of $\mathcal{D}_{N+2,1}$ and the trace theory, for $N\geq4$, we have
\begin{align}\label{s7}
&\int_{\Sigma}\rho_1g\partial^{\alpha}\eta\partial^{\alpha}G^{4}
\lesssim
 \abs{D^{2(N+2)}\eta}_0 \abs{D^{2(N+2)}G^4}_0
 \nonumber\\
& \quad
\lesssim \abs{D^{2(N+2)}\eta}_0\left(\abs{D^{2(N+2)+1}\eta}_0\norm{u_h}_{L^\infty(\Sigma)}
+\abs{D^{2(N+2)}u_h}_0\norm{D\eta}_{L^\infty(\Sigma)}\right)
\nonumber\\
& \quad
\lesssim\sqrt{\mathcal{D}_{N+2,1}^{(4N-9)/(4N-7)}\mathcal{E}_{2N}^{2/(4N-7)}}
\sqrt{\mathcal{D}_{N+2,1}^{3/4}\mathcal{E}_{2N}^{1/4}}
\times
\sqrt{\mathcal{D}_{N+2,1}^{(4N-11)/(4N-7)}\mathcal{E}_{2N}^{4/(4N-7)}+\mathcal{D}_{N+2,1}}
\nonumber\\
& \quad
\lesssim \sqrt{\mathcal{E}_{2N}}\mathcal{D}_{N+2,1}.
\end{align}
Hence, the terms in \eqref{s1} are bounded by $\sqrt{\mathcal{E}_{2N}}\mathcal{D}_{N+2,1}$.

Now, we estimate for the remaining case $|\alpha|=|\alpha_h|=1$.
It follows directly from \eqref{Ge} that
\begin{align}\label{sss}
&\int_{\Omega}D u\cdot D G^2
\lesssim \norm{D u}_0\norm{D G^2}_0
\lesssim\sqrt{\mathcal{D}_{N+2,1}}\sqrt{\mathcal{E}_{2N}\mathcal{D}_{N+2,1}} .
\end{align}
For the $G^3$ term, by the $L^2$ and $L^\infty$ tables in \eqref{ei} in terms of $\mathcal{D}_{N+2,1}$ and the trace theory,
\begin{align}
\int_{\Sigma}\rho ^\star gD\eta DG^{3}
&=\int_{\Sigma}\rho ^\star gD\eta\(Du_h\cdot D\eta+u_h\cdot D(D\eta)\)
\nonumber\\&
\lesssim
\abs{D\eta}_0\left(\abs{Du_h}_0\norm{D\eta}_{L^\infty(\Sigma)}+\norm{u_h}_{L^\infty(\Sigma)}\abs{D^2\eta}_0\right)
\lesssim
 \sqrt{\mathcal{E}_{2N}}\mathcal{D}_{N+2,1} .
\end{align}
Note that the value $\theta$ in the interpolation estimate of $Dq$ in the $L^2$ table of \eqref{ei}
in terms of $\mathcal{D}_{N+2,1}$ is $1/2$, and the one of $DG^{1}$ can not be greater than $3/2$.
To get around this, our idea is to integrate by parts to have, by \eqref{Ge},
\begin{align}\label{s8}
&\int_{\Omega}\frac{1}{h'(\bar{\rho})}Dq DG^{1}
=-\int_{\Omega}\frac{1}{h'(\bar{\rho})}D^2q G^{1}
\lesssim \norm{D^2q}_0\norm{G^1}_0
\lesssim\sqrt{\mathcal{D}_{N+2,1}}\sqrt{\mathcal{E}_{2N}\mathcal{D}_{N+2,1}}.
\end{align}
Now for the $G^4$ term, by \eqref{Ge} and the trace theory, we directly have
\begin{align}
-\int_{\Sigma}D u_h\cdot D G_h^4
&\lesssim
\abs{D u_h }_0\abs{D G_h^4}_0
\lesssim
\norm{D u_h }_1\abs{D G_h^4}_0
\nonumber\\&
\lesssim
\sqrt{\mathcal{D}_{N+2,1}}\sqrt{\mathcal{E}_{2N}\mathcal{D}_{N+2,1}}.
\end{align}
Since the value $\theta$ in the interpolation estimate of $G_d^4$ in the $L^2$ table of \eqref{Ge} in terms of $\mathcal{D}_{N+2,1}$ is not 1,
we again integrate by parts and use \eqref{Ge}
and the trace theory to obtain
\begin{align}\label{s9}
-\int_{\Sigma}Du_d D G_d^4
=
\int_{\Sigma}u_d D^2 G_d^4
&\lesssim
\abs{u_d}_0\abs{D^2 G_d^4}_0
\lesssim
\norm{u_d}_1\abs{D^2 G_d^4}_0
\nonumber\\&
\lesssim
\sqrt{\mathcal{D}_{N+2,1}}\sqrt{\mathcal{E}_{2N}\mathcal{D}_{N+2,1}}.
\end{align}
Therefore, we have
\begin{align}
-\int_{\Sigma}D u\cdot D G^4
\lesssim
\sqrt{\mathcal{D}_{N+2,1}}\sqrt{\mathcal{E}_{2N}\mathcal{D}_{N+2,1}}.
\end{align}
Hence, in this case the right hand side of \eqref{e1} is also bounded by $\sqrt{\mathcal{E}_{2N}}\mathcal{D}_{N+2,1}$.
\end{proof}


\subsection{Energy evolution of highest order temporal derivatives}
The energy evolution estimates of the highest order temporal derivatives can not be obtained by using the linear form \eqref{q2};
indeed, if we would use \eqref{e1} with $\alpha_{0}=2N$ or $\alpha_{0}=N+2$,
then the term $\partial_t^{\alpha_{0}}\nabla^2 u$ of $G^{2}$ and the term $\partial_t^{\alpha_{0}}\nabla u$ of $G^{3}$ in  the right hand side of \eqref{e1} are out of  control. Motivated by \cite{GT_inf},
we will use the original form \eqref{neweq} and apply $\partial_t^{\ell}$, $\ell\geq 1$, to  find
\begin{equation}\label{q3}
\begin{cases}
\partial_t (\partial_t^{\ell}q)+h'(\bar{\rho})\diverge_\mathcal{A}(\bar{\rho}\partial_t^{\ell}u)=F^{1,\ell}
 &\text{in }\Omega
\\ \rho\partial_t(\partial_t^{\ell}u)
+\bar{\rho}\nabla_\mathcal{A}(\partial_t^{\ell}q)
-\diverge_\mathcal{A}\mathbb{S}_\mathcal{A}(\partial_t^{\ell}u)=F^{2,\ell}   &\text{in }\Omega
\\ \partial_t(\partial_t^{\ell}\eta)=\partial_t^{\ell}u\cdot \mathcal{N}+ F^{3,\ell}   &\text{on }\Sigma
\\ \left(\rho^\star\partial_t^{\ell}qI-\mathbb{S}_\mathcal{A}(\partial_t^{\ell}u)\right)\mathcal{N}
=\rho^\star g\partial_t^{\ell}\eta\mathcal{N}+F^{4,\ell}  &\text{on }\Sigma
\\ \partial_t^{\ell}u=0       &\text{on }\Sigma_b,
\end{cases}
\end{equation}
where
\begin{align}\label{F1a}
F^{1,\ell}=&\partial_t^{\ell}\left(J^{-1}\partial_t\varphi\partial_dq\right)
-\left[\partial_t^{\ell}, h'(\rho)\mathcal{A}_{lk}\right]\partial_k(\rho u_l)
-h'(\rho)\mathcal{A}_{lm}\partial_m\left(\left[\partial_t^{\ell},\rho\right]u_l\right)
\nonumber\\&
-\(h'(\rho)-h'(\bar{\rho})\)\mathcal{A}_{lm}\partial_m\left(\rho\partial_t^{\ell} u_l\right)
-h'(\bar{\rho})\mathcal{A}_{lm}\partial_m\left((\rho-\bar{\rho} )\partial_t^{\ell}u_l\right),
\end{align}
\begin{align}\label{F2a}
F_i^{2,\ell}=&
-\left[\partial_t^{\ell}, \rho\right]\partial_tu_i
-\partial_t^{\ell}\left(-\rho J^{-1}\partial_t\varphi\partial_du_i+\rho u_l\mathcal{A}_{lm}\partial_mu_i\right)
-(\rho-\bar{\rho})\mathcal{A}_{ik}\partial_k\partial_t^{\ell}q
\nonumber\\&
-\left[\partial_t^{\ell}, \rho\mathcal{A}_{ik}\right]\partial_k q
+\mu\left(\left[\partial_t^{\ell},\mathcal{A}_{lk} \right]\partial_k(\mathcal{A}_{lm}\partial_mu_i)
+\mathcal{A}_{lk} \partial_k\left(\left[\partial_t^{\ell},\mathcal{A}_{lm}\right]\partial_mu_i\right)\right)
\nonumber\\&
+\left(\left(\frac{d-2}{d}\mu+\mu'\right)\left[\partial_t^{\ell},\mathcal{A}_{ik} \right]\partial_k(\mathcal{A}_{lm}\partial_mu_l)
+\mathcal{A}_{ik} \partial_k\left(\left[\partial_t^{\ell},\mathcal{A}_{lm}\right]\partial_mu_l\right)\right),
\end{align}
\begin{align}\label{F4a}
F^{3,\ell}=\left[\partial_t^\ell, D\eta\cdot\right]u_h
\end{align}
and
\begin{align}\label{F3a}
F_i^{4,\ell}=&
-\left[\partial_t^\ell,\mathcal{N}_i\right](\rho ^\star q-\rho ^\star g\eta)
+\mu\left[\partial_t^\ell,\mathcal{N}_l\mathcal{A}_{ik}\right]\partial_ku_l
+\mu\left[\partial_t^\ell,\mathcal{N}_l\mathcal{A}_{lk}\right]\partial_ku_i
\nonumber\\&
+\left(\mu'-\frac{2\mu}{d}\right)\left[\partial_t^\ell,\mathcal{N}_i\mathcal{A}_{lk}\right]\partial_ku_l
-\partial_t^\ell\left(\mathcal{R}_{P\circ h^{-1}}\mathcal{N}_i\right).
\end{align}

The estimates of these nonlinearities when $\ell=2N$ and $N+2$ are presented in the following.
\begin{lem}\label{t1}
 It holds that
\begin{align}\label{t2}
\norm{F^{1,2N}}_{0}^{2}
+\norm{F^{2,2N}}_{0}^{2}+\abs{F^{3,2N}}_{0}^{2}
+\abs{F^{4,2N}}_{0}^{2}
\lesssim \mathcal{E}_{2N}\mathfrak{D}_{2N}
\end{align}
and
\begin{align}\label{t3}
\norm{F^{1,N+2}}_{0}^{2}
+\norm{F^{2,N+2}}_{0}^{2}+\abs{F^{3,N+2}}_{0}^{2}
+\abs{F^{4,N+2}}_{0}^{2}
\lesssim \mathcal{E}_{2N}\mathcal{D}_{N+2,1}.
\end{align}
\end{lem}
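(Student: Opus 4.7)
The proof follows the pattern of Lemmas \ref{lem1} and \ref{N2}, carried out at the maximal temporal derivative counts $\ell = 2N$ and $\ell = N+2$. The plan is to expand every commutator and time derivative in \eqref{F1a}--\eqref{F3a} by the Leibniz rule, producing a finite sum of at least quadratic products, and then to bound each product by H\"older's inequality: one factor in $L^2$ (or $H^{\pm 1/2}$ on $\Sigma$) and the remaining factors in $L^\infty$ (or $C^1$), invoking the interpolation tables of Lemma \ref{q,u}, standard Sobolev embeddings, the harmonic extension estimates of Lemmas \ref{pp2}--\ref{pp13}, and the trace theory.

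First I would treat the bulk nonlinearities $F^{1,\ell}$ and $F^{2,\ell}$. A typical expansion term is $\partial_t^j(h'(\rho)\mathcal{A}_{lk})\,\partial_t^{\ell-j}\partial_k(\rho u_l)$ with $1\le j\le\ell$. When $j$ is small, the first factor is placed in $L^\infty$ (controlled via $\mathcal{E}_{2N}$ for \eqref{t2}, or $\mathcal{E}_{N+2,1}$ for \eqref{t3}) and the second in $L^2$ (controlled via $\mathfrak{D}_{2N}$ or $\mathcal{D}_{N+2,1}$, since the spatial derivative lowers the regularity index by one); when $j$ is close to $\ell$, the roles swap. The distinguished terms $(\rho-\bar{\rho})\mathcal{A}_{ik}\partial_k\partial_t^\ell q$ and $(h'(\rho)-h'(\bar\rho))\mathcal{A}_{lm}\partial_m(\rho\partial_t^\ell u_l)$ exploit the smallness of their first factor (via \eqref{ho}) to supply the $\sqrt{\mathcal{E}_{2N}}$, while the differentiated factor sits in $L^2$ and contributes the $\sqrt{\mathfrak{D}_{2N}}$ or $\sqrt{\mathcal{D}_{N+2,1}}$. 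The boundary terms $F^{3,\ell}$ and $F^{4,\ell}$ are then expanded analogously in $H^0(\Sigma)$; the remainder $\partial_t^\ell(\mathcal{R}_{P\circ h^{-1}}\mathcal{N}_i)$ is treated directly via the explicit quadratic representation \eqref{Rp} together with the $L^\infty(\Sigma)$ interpolation bounds for $q$ and $\eta$.

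The main subtlety lies in the term $\partial_t^\ell(J^{-1}\partial_t\varphi\,\partial_d q)$ inside $F^{1,\ell}$: when all time derivatives land on $\partial_t\varphi$ one obtains $J^{-1}\partial_t^{\ell+1}\varphi\cdot\partial_d q$, which requires control of $\partial_t^{\ell+1}\eta$ one order beyond the maximum appearing in the energy. This is precisely the highest-order piece included in the dissipation norms, namely the $j=2N+1$ contribution to $\mathfrak{D}_{2N}$ and the $j=N+3$ contribution to $\mathcal{D}_{N+2,1}$, while $\partial_d q$ sits comfortably in $L^\infty(\Omega)$ via the energy; so the bookkeeping closes, but only just. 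An analogous balance appears in $F^{4,\ell}$ when $\partial_t^\ell$ lands on $\mathcal{N}$, producing $\partial_t^\ell D\eta$ paired with the trace of $\rho^\star q-\rho^\star g\eta$, where again the highest-order factor is absorbed in the dissipation and the slowly-varying factor is controlled in $L^\infty(\Sigma)$ by \eqref{ei}. Summing all the resulting bilinear bounds yields \eqref{t2} and \eqref{t3}.
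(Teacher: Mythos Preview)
Your proof is correct and follows essentially the same approach as the paper's: expand each $F^{i,\ell}$ by the Leibniz rule and place the factor carrying the highest temporal derivative count into the dissipation ($\mathfrak{D}_{2N}$ or $\mathcal{D}_{N+2,1}$) while bounding the remaining factors by $\mathcal{E}_{2N}$. The paper's own proof is a two-line remark to this effect, whereas you have spelled out the bookkeeping (including the borderline terms involving $\partial_t^{\ell+1}\varphi$ and $\partial_t^\ell D\eta$) more explicitly; the content is the same.
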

\begin{proof}
The estimates \eqref{t2} and \eqref{t3} are straightforward; we control each term involving the highest order temporal derivative count
by $\mathfrak{D}_{2N}$ and $\mathcal{D}_{N+2,1}$, respectively, and other terms multiplying it by $\mathcal{E}_{2N}$.
\end{proof}

We first record the evolution estimate for $2N$ temporal derivatives.
\begin{prop}\label{prop3}
It holds that
\begin{align}\label{tan3}
&\frac{d}{dt}
\left(
\norm{\partial_t^{2N}q}_0^{2}
+\norm{\partial_t^{2N} u}_0^{2}
+\abs{\partial_t^{2N}\eta}_0^2
\right)
+\norm{\partial_t^{2N}u}_1^2
\lesssim \sqrt{\mathcal{E}_{2N}}\mathfrak{D}_{2N}.
\end{align}
\end{prop}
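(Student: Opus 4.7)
The plan is to derive an energy identity for the system \eqref{q3} at $\ell=2N$, paralleling the computation of Lemma \ref{lemm}. The crucial point, as the authors note just before \eqref{q3}, is to work with the geometric $\mathcal{A}$-form \eqref{q3} rather than the linearised form \eqref{q2}: applying $\partial_t^{2N}$ to \eqref{q2} would push $\partial_t^{2N}\nabla^2 u$ into $G^2$ and $\partial_t^{2N}\nabla u$ into $G^3$, neither of which is controlled by $\mathfrak{D}_{2N}$. In \eqref{q3} those top-order contributions remain inside the principal operators $\bar\rho\,\nabla_\mathcal{A}$, $\diverge_\mathcal{A}\mathbb{S}_\mathcal{A}$, and the boundary normal-stress operator, where they can be absorbed through integration by parts.

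Concretely, I would test the momentum equation of \eqref{q3} against $J\,\partial_t^{2N}u$ and the continuity equation against $\frac{J}{h'(\bar\rho)}\partial_t^{2N}q$. Using the Piola identity $\partial_j(J\mathcal{A}_{ij})=0$, integration by parts of the bulk pressure-velocity work yields
\begin{equation*}
\int_\Omega J\bar\rho\,\nabla_\mathcal{A}(\partial_t^{2N}q)\cdot\partial_t^{2N}u=\int_\Sigma \rho^\star\,\partial_t^{2N}q\,\mathcal{N}\cdot\partial_t^{2N}u-\int_\Omega J\,\partial_t^{2N}q\,\diverge_\mathcal{A}(\bar\rho\,\partial_t^{2N}u),
\end{equation*}
the second summand of which cancels exactly against the divergence contribution produced by the continuity multiplier. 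Integration by parts of the viscous term produces the $\mathcal{A}$-dissipation $\int_\Omega J\!\left(\tfrac\mu2|\mathbb{D}_\mathcal{A}^0\partial_t^{2N}u|^2+\mu'|\diverge_\mathcal{A}\partial_t^{2N}u|^2\right)$ plus a boundary piece $-\int_\Sigma\mathbb{S}_\mathcal{A}(\partial_t^{2N}u)\mathcal{N}\cdot\partial_t^{2N}u$. Combining this with the surviving pressure boundary term and invoking the dynamic condition (fourth line of \eqref{q3}) collapses the two into $\int_\Sigma\rho^\star g\,\partial_t^{2N}\eta\,\mathcal{N}\cdot\partial_t^{2N}u+\int_\Sigma F^{4,2N}\cdot\partial_t^{2N}u$. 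The kinematic condition (third line of \eqref{q3}) then rewrites $\mathcal{N}\cdot\partial_t^{2N}u=\partial_t(\partial_t^{2N}\eta)-F^{3,2N}$, turning the leading piece into $\tfrac12\tfrac{d}{dt}\int_\Sigma\rho^\star g|\partial_t^{2N}\eta|^2$ plus an $F^{3,2N}$ error.

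Collecting everything and keeping track of time derivatives falling on the weights $J, \rho, h'(\bar\rho)$, the resulting identity has the schematic form
\begin{align*}
&\tfrac12\tfrac{d}{dt}\!\left(\int_\Omega\tfrac{J}{h'(\bar\rho)}|\partial_t^{2N}q|^2+\int_\Omega J\rho|\partial_t^{2N}u|^2+\int_\Sigma\rho^\star g|\partial_t^{2N}\eta|^2\right)\\
&\qquad+\int_\Omega J\!\left(\tfrac\mu2|\mathbb{D}_\mathcal{A}^0\partial_t^{2N}u|^2+\mu'|\diverge_\mathcal{A}\partial_t^{2N}u|^2\right)=(\mathrm{RHS}),
\end{align*}
where $(\mathrm{RHS})$ is a sum of bulk/boundary pairings of $F^{i,2N}$ against $\partial_t^{2N}(q,u,\eta)$, together with commutator terms carrying $\partial_t J$, $\partial_t\rho$, etc. Using $\norm{\partial_t^{2N}(q,u)}_0+\abs{\partial_t^{2N}\eta}_0+\norm{\partial_t^{2N}u}_1\lesssim\sqrt{\mathfrak{D}_{2N}}$ (from \eqref{d}), the trace theorem for the boundary pairings, and Lemma \ref{t1} for $\norm{F^{i,2N}}_0\lesssim\sqrt{\mathcal{E}_{2N}\mathfrak{D}_{2N}}$, every pairing yields a contribution of size $\sqrt{\mathcal{E}_{2N}\mathfrak{D}_{2N}}\cdot\sqrt{\mathfrak{D}_{2N}}=\sqrt{\mathcal{E}_{2N}}\mathfrak{D}_{2N}$; the weight-commutator terms are similarly controlled since $\norm{\partial_t J}_{L^\infty}+\norm{\partial_t\rho}_{L^\infty}\lesssim\sqrt{\mathcal{E}_{2N}}$ by Sobolev embedding. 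Finally, the $\mathcal{A}$-dissipation is converted to $\norm{\partial_t^{2N}u}_1^2$ exactly as in \eqref{fzd}, using Korn's inequality (Lemma \ref{xm5}), \eqref{viscosity}, and the boundary condition $\partial_t^{2N}u|_{\Sigma_b}=0$ from the fifth line of \eqref{q3}.

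The main delicacy I anticipate is the precise choice of weights in the two test functions so that Piola's identity yields exact cancellation of the bulk pressure work; any surplus would reintroduce an uncontrolled $\partial_t^{2N}\nabla q$ or $\partial_t^{2N}\nabla u$ paired against a rough coefficient, defeating the purpose of switching from \eqref{q2} to \eqref{q3}. Once the matching choice $J\,\partial_t^{2N}u$ for momentum and $\tfrac{J}{h'(\bar\rho)}\partial_t^{2N}q$ for continuity is in place, the bookkeeping is structurally identical to that of Lemma \ref{lemm}, and the argument closes upon invoking Lemma \ref{t1}.
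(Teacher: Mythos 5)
Your proposal is correct and mirrors the paper's own argument: the paper tests the momentum equation of \eqref{q3} with $J\,\partial_t^{2N}u$, arrives at exactly the energy identity you describe with the $\tfrac{J}{h'(\bar\rho)}$ weight enforcing the Piola cancellation of the bulk pressure work, and then bounds the right side term-by-term using Lemma \ref{t1}, the trace theorem, the definition of $\mathfrak{D}_{2N}$, and Korn's inequality. The only cosmetic difference is that the paper folds the continuity equation into the computation of $\int_\Omega J\partial_t^{2N}q\,\diverge_\mathcal{A}(\bar\rho\,\partial_t^{2N}u)$ in the style of \eqref{ps5} rather than testing it separately, which is an equivalent presentation of the same cancellation.
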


\begin{proof}
Taking the $L^2(\Omega)$ inner product of the second equation in \eqref{q3} for $\ell=2N$ with $J\partial_t^{2N}u$,
similarly as the derivation of \eqref{youd}, we obtain
\begin{align}\label{t4}
&\frac{1}{2}\frac{d}{dt}\left(\int_{\Omega}J\left(\frac{1}{h'(\bar{\rho})}\abs{\partial_t^{2N}q}_{0}^{2}
+\rho \abs{\partial_t^{2N}u}^2\right)
+\int_{\Sigma}\rho ^\star g\abs{\partial_t^{2N}\eta}^2\right)
\nonumber\\
&\quad+\int_{\Omega}J\left(\frac{\mu}{2}\abs{\mathbb{D}_\mathcal{A}^0\partial_t^{2N}u}^2
+\mu'\abs{\diverge_\mathcal{A}\partial_t^{2N}u}^2\right)
\nonumber\\
&\quad=\frac{1}{2}\int_{\Omega}\left(\frac{1}{h'(\bar{\rho})}\partial_tJ\abs{\partial_t^{2N} q}_{0}^{2}
+\partial_t(J\rho )\abs{\partial_t^{2N}u}^2\right)
+\int_{\Omega}J\frac{1}{h'(\bar{\rho})}\partial_t^{2N} qF^{1,2N}
\nonumber\\
&\quad\quad
+\int_{\Omega}J\partial_t^{2N}u\cdot F^{2,2N}
+\int_{\Sigma}\rho ^\star g\partial_t^{2N}\eta F^{3,2N}
-\int_{\Sigma}\partial_t^{2N}u\cdot F^{4,2N}.
\end{align}

Now we estimate the right hand side of \eqref{t4}. It follows directly that
\begin{align}\label{tt1}
&\frac{1}{2}\int_{\Omega}\frac{1}{h'(\bar{\rho})}\partial_tJ\abs{\partial_t^{2N} q}_{0}^{2}
+\partial_t(J\rho )\abs{\partial_t^{2N}u}^2
\lesssim  \sqrt{\mathcal{E}_{2N}}
\left(\norm{\partial_t^{2N} q}_{0}^2+\norm{\partial_t^{2N} u}_{0}^2\right)
\lesssim \sqrt{\mathcal{E}_{2N}}\mathfrak{D}_{2N}.
\end{align}
By \eqref{t2}, we have
\begin{align}\label{t5}
&\int_{\Omega}J\frac{1}{h'(\bar{\rho})}\partial_t^{2N} qF^{1,2N}
+\int_{\Omega}J\partial_t^{2N}u\cdot F^{2,2N}+\int_{\Sigma}\rho^\star g\partial_t^{2N}\eta F^{3,2N}
\nonumber\\&\quad
\lesssim
\norm{\partial_t^{2N} q}_{0}\norm{F^{1,2N}}_{0}
+\norm{\partial_t^{2N} u}_{0}
\norm{ F^{2,2N}}_{0}+\abs{\partial_t^{2N} \eta}_{0}\abs{ F^{3,2N}}_{0}
\lesssim \sqrt{\mathfrak{D}}_{2N}\sqrt{\mathcal{E}_{2N}\mathfrak{D}_{2N}},
\end{align}
and by the trace theory,
\begin{align}
&
-\int_{\Sigma}\partial_t^{2N}u\cdot F^{4,2N}
\lesssim\abs{\partial_t^{2N} u}_{0}
\abs{F^{4,2N}}_{0}
\lesssim\norm{\partial_t^{2N} u}_{1}
\abs{F^{4,2N}}_{0}
\lesssim \sqrt{\mathfrak{D}_{2N}}\sqrt{\mathcal{E}_{2N}\mathfrak{D}_{2N}}.
\end{align}
Arguing similarly as \eqref{fzd}, we find
\begin{align}\label{fzd2}
&\int_{\Omega}J\left(\frac{\mu}{2}\abs{\mathbb{D}_\mathcal{A}^0\partial_t^{2N}u}^2
+\mu'\abs{\diverge_\mathcal{A}\partial_t^{2N}u}^2\right)
\nonumber\\&\quad
\geq \hal
\int_{\Omega}\left(\frac{\mu}{2}\abs{\mathbb{D}^0\partial_t^{2N}u}^2
+\mu'\abs{\diverge\partial_t^{2N} u}^2\right)
-C\sqrt{\mathcal{E}_{2N}}\mathfrak{D}_{2N}.
\end{align}

Consequently, by \eqref{tt1}--\eqref{fzd2}, since $\partial_t^{2N}u=0 $ on $\Sigma_b$,
we deduce \eqref{tan3} from \eqref{t4}
by using  \eqref{viscosity}  and Korn's inequality  of Lemma \ref{xm5}.
\end{proof}
We then record a similar result for   $N+2$ temporal derivatives.
\begin{prop}\label{prop4}
It holds that
\begin{align}\label{tan4}
&\frac{d}{dt}
\left(
\norm{\partial_t^{N+2}q}_0^{2}
+\norm{\partial_t^{N+2} u}_0^{2}
+\abs{\partial_t^{N+2}\eta}_0^2
\right)
+\norm{\partial_t^{N+2}u}_1^2
\lesssim \sqrt{\mathcal{E}_{2N}}\mathcal{D}_{N+2,1}.
\end{align}
\end{prop}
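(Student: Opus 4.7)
The plan is to mirror the proof of Proposition \ref{prop3}, only replacing $\ell = 2N$ by $\ell = N+2$ in the perturbed formulation \eqref{q3} and using the low-order nonlinear estimate \eqref{t3} in place of \eqref{t2}. More precisely, I would apply $\partial_t^{N+2}$ to \eqref{neweq} to obtain \eqref{q3} with $\ell = N+2$, take the $L^2(\Omega)$ inner product of the second equation there with $J \partial_t^{N+2} u$, and integrate by parts using the first, third, fourth and fifth equations of \eqref{q3} exactly as in the derivation of \eqref{youd} and \eqref{t4}. This yields the analogous identity
\begin{align*}
&\tfrac{1}{2}\tfrac{d}{dt}\!\left(\int_\Omega J\!\left(\tfrac{1}{h'(\bar\rho)}\abs{\partial_t^{N+2}q}^2+\rho\abs{\partial_t^{N+2}u}^2\right)+\int_\Sigma \rho^\star g\abs{\partial_t^{N+2}\eta}^2\right) \\
&\quad+\int_\Omega J\!\left(\tfrac{\mu}{2}\abs{\mathbb{D}_\mathcal{A}^0\partial_t^{N+2}u}^2+\mu'\abs{\diverge_\mathcal{A}\partial_t^{N+2}u}^2\right) \\
&\quad=\tfrac{1}{2}\int_\Omega\!\left(\tfrac{1}{h'(\bar\rho)}\partial_t J\abs{\partial_t^{N+2}q}^2+\partial_t(J\rho)\abs{\partial_t^{N+2}u}^2\right)+\int_\Omega J\tfrac{1}{h'(\bar\rho)}\partial_t^{N+2}q\,F^{1,N+2} \\
&\qquad+\int_\Omega J\partial_t^{N+2}u\cdot F^{2,N+2}+\int_\Sigma \rho^\star g\partial_t^{N+2}\eta\,F^{3,N+2}-\int_\Sigma\partial_t^{N+2}u\cdot F^{4,N+2}.
\end{align*}

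Next I would estimate each term on the right by $\sqrt{\mathcal{E}_{2N}}\mathcal{D}_{N+2,1}$. The first bracket is controlled via $|\partial_t J|,|\partial_t(J\rho)|\lesssim\sqrt{\mathcal{E}_{2N}}$ combined with the fact that $\norm{\partial_t^{N+2}q}_0^2+\norm{\partial_t^{N+2}u}_0^2\lesssim\mathcal{D}_{N+2,1}$, which follows directly from the definition \eqref{ld} (taking $j=N+2$ in the $q$ and $u$ sums, with indices $2$ and $1$ respectively). For the three $F^{i,N+2}$ bulk/boundary inner products I pair $\norm{\partial_t^{N+2}q}_0,\norm{\partial_t^{N+2}u}_0,\abs{\partial_t^{N+2}\eta}_0$ with $\norm{F^{i,N+2}}_0$ or $\abs{F^{i,N+2}}_0$, invoking \eqref{t3}; the factor $\sqrt{\mathcal{D}_{N+2,1}}\sqrt{\mathcal{E}_{2N}\mathcal{D}_{N+2,1}}=\sqrt{\mathcal{E}_{2N}}\mathcal{D}_{N+2,1}$ is then exactly what is required. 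The $F^{4,N+2}$ boundary integral is handled with the trace inequality $\abs{\partial_t^{N+2}u}_0\lesssim\norm{\partial_t^{N+2}u}_1$, again absorbable into $\mathcal{D}_{N+2,1}$.

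For the viscous term on the left, I would argue exactly as in \eqref{fzd2} by splitting
\begin{equation*}
\int_\Omega J\!\left(\tfrac{\mu}{2}\abs{\mathbb{D}_\mathcal{A}^0\partial_t^{N+2}u}^2+\mu'\abs{\diverge_\mathcal{A}\partial_t^{N+2}u}^2\right)\ge \tfrac{1}{2}\int_\Omega\!\left(\tfrac{\mu}{2}\abs{\mathbb{D}^0\partial_t^{N+2}u}^2+\mu'\abs{\diverge\partial_t^{N+2}u}^2\right)-C\sqrt{\mathcal{E}_{2N}}\mathcal{D}_{N+2,1},
\end{equation*}
since the difference between the $\mathcal{A}$-operators and the flat ones generates a factor of $\sqrt{\mathcal{E}_{2N}}$. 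Using $\partial_t^{N+2}u=0$ on $\Sigma_b$, the hypothesis \eqref{viscosity} and Korn's inequality (Lemma \ref{xm5}) then convert this lower bound into $\norm{\partial_t^{N+2}u}_1^2$. Applying convention \eqref{timeq2}, since $J,1/h'(\bar\rho),\rho,\rho^\star g\sim 1$ by \eqref{aprii}, the coefficients on $\norm{\partial_t^{N+2}q}_0^2,\norm{\partial_t^{N+2}u}_0^2,\abs{\partial_t^{N+2}\eta}_0^2$ inside the time derivative may be dropped, yielding \eqref{tan4}.

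I do not expect a genuine obstacle here: the only subtle point relative to Proposition \ref{prop3} is checking that each of the four norms produced on the right — namely $\norm{\partial_t^{N+2}q}_0$, $\norm{\partial_t^{N+2}u}_0$ (possibly lifted to $\norm{\partial_t^{N+2}u}_1$ by trace for the $F^{4,N+2}$ piece), and $\abs{\partial_t^{N+2}\eta}_0$ — is controlled by $\sqrt{\mathcal{D}_{N+2,1}}$, which is exactly what the definition \eqref{ld} provides at $j=N+2$. Once this bookkeeping is verified, the proof is a direct transcription of Proposition \ref{prop3} with \eqref{t3} substituted for \eqref{t2}.
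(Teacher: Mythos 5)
Your proposal is correct and coincides with the paper's own argument, which simply repeats the proof of Proposition \ref{prop3} with $\ell=N+2$ and the nonlinear bound \eqref{t3} in place of \eqref{t2}. Your bookkeeping check that $\norm{\partial_t^{N+2}q}_0^2$, $\norm{\partial_t^{N+2}u}_1^2$ and $\abs{\partial_t^{N+2}\eta}_0^2$ are all contained in $\mathcal{D}_{N+2,1}$ (at $j=N+2$ in \eqref{ld}) is exactly the point that makes the transcription go through.
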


\begin{proof}
The proof  is similar to that of  Proposition \ref{prop3} by using \eqref{t3} in place of \eqref{t2}.
\end{proof}


\subsection{Energy evolution of $4N$ order horizontal spatial derivatives}
As explained in Section \ref{sj}, when estimating directly the $4N$ horizontal spatial derivatives of the solution
 would lead to
the appearance of $\norm{\nabla u}_{L^\infty(\Omega)}\mathcal{F}_{2N}$, which causes the difficulty as the incompressible case \cite{WYJ1}.
To overcome this, we will estimate instead $\partial_i^\mathcal{A}u$ and $\partial_i^\mathcal{A}q$,
a variant of good unknowns used in \cite{WYJ1}.

We shall now derive the equations satisfied by $\partial_i^\mathcal{A}u$ and $\partial_i^\mathcal{A}q$, $i=1,d-1,d$.
\begin{lem}\label{Aq}
It holds that for $i=1,d-1,d$,
\begin{equation}\label{Aq1}
\begin{cases}
\partial_t  \partial_i^\mathcal{A}q
+h'(\bar{\rho})\diverge\left(\bar{\rho} \partial_i^\mathcal{A}u\right)=G^{1,i,\sharp}
                                                       -\partial_iP'(\bar{\rho})\diverge u
                                                        &\text{in }\Omega
\\ \bar{\rho}\partial_t\partial_i^\mathcal{A}u+\bar{\rho}\nabla \partial_i^\mathcal{A}q
-\diverge\mathbb{S}\partial_i^\mathcal{A}u=G^{2,i,\sharp}
 -\displaystyle\frac{\partial_i\bar{\rho}}{\bar{\rho}}\diverge\mathbb{S} u    &\text{in }\Omega,
\end{cases}
\end{equation}
where $G^{1,i,\sharp}$ and $G^{2,i,\sharp}$ are defined by \eqref{w11}--\eqref{w12} and \eqref{w2}, respectively.
\end{lem}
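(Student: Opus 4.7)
The plan is to apply $\partial_i^\mathcal{A}$ to both the continuity and momentum equations in \eqref{neweq} and organize the result as a linearized system with nonlinear source. The key structural input, highlighted in Section \ref{sj}, is the commutativity $[\partial_i^\mathcal{A},\partial_j^\mathcal{A}]=0$: it permits us to move $\partial_i^\mathcal{A}$ past every covariant spatial derivative, so that the principal viscous term $-\diverge\mathbb{S}\partial_i^\mathcal{A}u$ emerges as a bona fide leading term rather than as a commutator carrying $\nabla^2u$ into the remainders.

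For the continuity equation I would first rewrite
\begin{equation*}
\partial_t^\mathcal{A}q+P'(\rho)\diverge_\mathcal{A}u+h'(\rho)u\cdot\nabla_\mathcal{A}\rho=0
\end{equation*}
using $h'(\rho)\rho=P'(\rho)$, and then apply $\partial_i^\mathcal{A}$. The product rule on the second term produces $P'(\rho)\diverge_\mathcal{A}\partial_i^\mathcal{A}u+P''(\rho)\partial_i^\mathcal{A}\rho\,\diverge_\mathcal{A}u$; the first of these has principal part $h'(\bar\rho)\diverge(\bar\rho\,\partial_i^\mathcal{A}u)$ (since $h'(\bar\rho)\bar\rho=P'(\bar\rho)$), while the principal part of the second is $P''(\bar\rho)\partial_i\bar\rho\,\diverge u=\partial_i P'(\bar\rho)\diverge u$. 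The latter is linear in $u$ and cannot be absorbed into a quadratic remainder, so after being moved to the right-hand side it yields exactly $-\partial_i P'(\bar\rho)\diverge u$. Meanwhile $\partial_i^\mathcal{A}\partial_t^\mathcal{A}q$ is rewritten as $\partial_t(\partial_i^\mathcal{A}q)$ modulo $J^{-1}\partial_t\varphi\partial_d(\partial_i^\mathcal{A}q)$ and commutators between $\partial_t$ and $\mathcal{A}$, all of which are quadratic. Collecting the derivative of $h'(\rho)u\cdot\nabla_\mathcal{A}\rho$ together with every error coming from the replacements $\rho\to\bar\rho$, $\mathcal{A}\to I$, $J\to 1$ into $G^{1,i,\sharp}$ as in \eqref{w11}--\eqref{w12} yields the first line of \eqref{Aq1}.

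The momentum equation is handled in parallel. Applying $\partial_i^\mathcal{A}$ to the second line of \eqref{neweq} and using commutativity gives a principal contribution $\rho\partial_t^\mathcal{A}(\partial_i^\mathcal{A}u)+\rho\nabla_\mathcal{A}(\partial_i^\mathcal{A}q)-\diverge_\mathcal{A}\mathbb{S}_\mathcal{A}(\partial_i^\mathcal{A}u)$, plus quadratic errors whenever $\partial_i^\mathcal{A}$ lands on $\mathcal{A}$, $J$, or $\mathbb{S}_\mathcal{A}$. The genuinely new feature comes from the terms in which $\partial_i^\mathcal{A}$ hits the prefactor $\rho$, namely $-\partial_i^\mathcal{A}\rho\,\partial_t^\mathcal{A}u$ and $-\partial_i^\mathcal{A}\rho\,\nabla_\mathcal{A}q$. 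I would eliminate $\partial_t^\mathcal{A}u+\nabla_\mathcal{A}q$ by substituting the momentum equation in the form $\partial_t^\mathcal{A}u+\nabla_\mathcal{A}q=-u\cdot\nabla_\mathcal{A}u+\rho^{-1}\diverge_\mathcal{A}\mathbb{S}_\mathcal{A}u$, which rewrites these two contributions as $\partial_i^\mathcal{A}\rho\bigl(u\cdot\nabla_\mathcal{A}u-\rho^{-1}\diverge_\mathcal{A}\mathbb{S}_\mathcal{A}u\bigr)$. Its leading-order piece is $-\bar\rho^{-1}\partial_i\bar\rho\,\diverge\mathbb{S}u$, which is linear in $u$ and must be displayed separately on the right-hand side, while the remainder is quadratic and enters $G^{2,i,\sharp}$. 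Replacing $\rho$, $J$, $\mathcal{A}$, $\mathbb{S}_\mathcal{A}$, $\nabla_\mathcal{A}$, $\diverge_\mathcal{A}$ by their equilibrium counterparts in the principal part and packaging every difference into $G^{2,i,\sharp}$ as in \eqref{w2} produces the second line of \eqref{Aq1}.

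The main obstacle is really only book-keeping: one must confirm that every extra term generated by commutators, product-rule expansions, and coefficient replacements is genuinely quadratic (or higher) in the perturbation, and that $-\partial_i P'(\bar\rho)\diverge u$ and $-\bar\rho^{-1}\partial_i\bar\rho\,\diverge\mathbb{S}u$ are the only linear leftovers. This is precisely where the commutativity $[\partial_i^\mathcal{A},\partial_j^\mathcal{A}]=0$ is indispensable: without it, the viscous term would feed a commutator involving $\nabla^2 u$ into the remainders, spoiling the nonlinear cancellation that is to be exploited in the $4N$-order tangential estimates.
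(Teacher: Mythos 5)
Your overall route is the paper's: apply $\partial_i^\mathcal{A}$ to \eqref{neweq}, use the exact commutativity of the Eulerian derivatives so that the viscous and pressure terms come out as $-\diverge_\mathcal{A}\mathbb{S}_\mathcal{A}\partial_i^\mathcal{A}u$ and $\rho\nabla_\mathcal{A}\partial_i^\mathcal{A}q$ with no commutators, substitute the momentum equation to convert the $\partial_i^\mathcal{A}\rho$-weighted terms into $-\rho^{-1}\partial_i^\mathcal{A}\rho\,\diverge_\mathcal{A}\mathbb{S}_\mathcal{A}u$, and peel off the two linear leftovers $-\partial_iP'(\bar\rho)\diverge u$ and $-\bar\rho^{-1}\partial_i\bar\rho\,\diverge\mathbb{S}u$. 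The momentum half of your argument is essentially the paper's computation \eqref{yox1}--\eqref{yox4} (note only that to land exactly on \eqref{w2} you must substitute the full momentum equation, i.e.\ absorb the third product term $\partial_i^\mathcal{A}\rho\,u\cdot\nabla_\mathcal{A}u$ as well; that term is quadratic, so this is pure bookkeeping).

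There is, however, a concrete flaw in your continuity-equation bookkeeping. You claim that $P'(\rho)\diverge_\mathcal{A}\partial_i^\mathcal{A}u$ has principal part $h'(\bar\rho)\diverge\left(\bar\rho\,\partial_i^\mathcal{A}u\right)$ "since $h'(\bar\rho)\bar\rho=P'(\bar\rho)$", and that the entire $\partial_i^\mathcal{A}$-derivative of $h'(\rho)u\cdot\nabla_\mathcal{A}\rho$ can be dumped into $G^{1,i,\sharp}$. Neither statement is true as written: $h'(\bar\rho)\diverge\left(\bar\rho\,\partial_i^\mathcal{A}u\right)=P'(\bar\rho)\diverge\partial_i^\mathcal{A}u+h'(\bar\rho)\partial_d\bar\rho\,(\partial_i^\mathcal{A}u)_d$, so your "principal part" identification drops the \emph{linear} term $h'(\bar\rho)\partial_d\bar\rho\,(\partial_i^\mathcal{A}u)_d$, while $h'(\rho)u\cdot\nabla_\mathcal{A}\rho=u\cdot\nabla_\mathcal{A}q-gu_d$ is not quadratic, and its derivative contains the linear term $-g\,\partial_i^\mathcal{A}u_d$, which cannot be placed in a remainder that must obey the quadratic bounds \eqref{NW1}. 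These two linear pieces cancel exactly, because \eqref{hdef} gives $h(\bar\rho)=-gx_d$ and hence $h'(\bar\rho)\partial_d\bar\rho=-g$; but your write-up never invokes this identity (the identity $h'(\bar\rho)\bar\rho=P'(\bar\rho)$ you cite does not do the job), so the decomposition as described does not close and would leave linear terms in $G^{1,i,\sharp}$ that are not in \eqref{w11}--\eqref{w12}. The fix is either to record this cancellation explicitly, or to do what the paper does: keep the combination $h'(\rho)\diverge_\mathcal{A}\left(\rho\,\partial_i^\mathcal{A}u\right)$ intact and handle the term with $\partial_i^\mathcal{A}$ falling on $\rho$ via the identities \eqref{kea} and \eqref{lei}, so that only $u\cdot\nabla_\mathcal{A}\partial_i^\mathcal{A}q$ and $\partial_i^\mathcal{A}P'(\rho)\diverge_\mathcal{A}u$ appear and the comparison with $h'(\bar\rho)\diverge\left(\bar\rho\,\partial_i^\mathcal{A}u\right)$ produces only the quadratic differences listed in \eqref{w12}.
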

\begin{proof}
Let $i=1,d-1,d$. First, applying $\partial_i^\mathcal{A}$ to the first equation of \eqref{neweq}, we find
\begin{align}\label{alb}
\partial_t^\mathcal{A}\partial_i^\mathcal{A}q
+\partial_i^\mathcal{A}h'(\rho)\diverge_\mathcal{A}(\rho u)
+h'(\rho)\diverge_\mathcal{A}\left(\rho \partial_i^\mathcal{A}u\right)
+h'(\rho)\diverge_\mathcal{A}\left(\partial_i^\mathcal{A}\rho u\right)
=0.
\end{align}
Recalling \eqref{tsg}, we  have
\begin{align}\label{kea}
h'(\rho)\diverge_\mathcal{A}\left(\partial_i^\mathcal{A}\rho u\right)
&=\diverge_\mathcal{A}\left(\partial_i^\mathcal{A}h(\rho)u\right)-\partial_i^\mathcal{A}\rho u\cdot\nabla_\mathcal{A}h'(\rho)
\nonumber\\&
=u\cdot \nabla_\mathcal{A}\partial_i^\mathcal{A}h(\rho)
+\partial_i^\mathcal{A}h(\rho)\diverge_\mathcal{A} u
-\partial_i^\mathcal{A}h'(\rho)u\cdot\nabla_\mathcal{A}\rho
\nonumber\\&
=u\cdot \nabla_\mathcal{A}\partial_i^\mathcal{A}q
+\left(\partial_i^\mathcal{A}h(\rho)+\partial_i^\mathcal{A}h'(\rho)\rho\right)\diverge_\mathcal{A} u
-\partial_i^\mathcal{A}h'(\rho)\diverge_\mathcal{A} (\rho u).
\end{align}
Note that, by \eqref{bh},
\begin{align}\label{lei}
\partial_i^\mathcal{A}h(\rho)+\partial_i^\mathcal{A}h'(\rho)\rho
=\partial_i^\mathcal{A}P'(\rho).
\end{align}
It then follows from \eqref{alb}--\eqref{lei} that
\begin{align}\label{xw2}
\partial_t^\mathcal{A}\partial_i^\mathcal{A}q
+h'(\rho)\diverge_\mathcal{A}(\rho \partial_i^\mathcal{A}u)
+u\cdot \nabla_\mathcal{A}\partial_i^\mathcal{A}q
+\partial_i^\mathcal{A}P'(\rho)\diverge_\mathcal{A} u=0,
\end{align}
which yields  the first equation in \eqref{Aq1} with $G^{1,i,\sharp}:=G^{1,1,i,\sharp}+G^{1,2,i,\sharp}$ for
\begin{align}
G^{1,1,i,\sharp}:=&J^{-1}\partial_t\varphi\partial_d\partial_i^\mathcal{A}q-u\cdot \nabla_\mathcal{A}\partial_i^\mathcal{A}q,\label{w11}\\
G^{1,2,i,\sharp}:=&
                  -\left(h'(\rho)-h'(\bar{\rho})\right)\diverge_\mathcal{A}\left(\rho \partial_i^\mathcal{A}u\right)
                  -h'(\bar{\rho})(\diverge_\mathcal{A}-\diverge)\left(\rho \partial_i^\mathcal{A}u\right)
\nonumber\\&
                  -h'(\bar{\rho})\diverge\left((\rho-\bar{\rho})\partial_i^\mathcal{A}u\right)
                  -(\mathcal{A}_{ij}-\delta_{ij})\partial_j P'(\rho)\diverge_\mathcal{A} u
\nonumber\\&
-(\partial_i P'(\rho)-\partial_i P'(\bar{\rho}))\diverge_\mathcal{A} u
-\partial_i P'(\bar{\rho})(\diverge_\mathcal{A}-\diverge)u
.\label{w12}
\end{align}

Next, applying $\partial_i^\mathcal{A}$ to the second equation of \eqref{neweq}, we find
\begin{align}\label{yox1}
&\rho\left(\partial_t^\mathcal{A} \partial_i^\mathcal{A}u+u \cdot \nabla_\mathcal{A}  \partial_i^\mathcal{A}u\right)
+\rho\partial_i^\mathcal{A}u\cdot \nabla_\mathcal{A} u
+\partial_i^\mathcal{A}\rho\left(\partial_t^\mathcal{A}u+u \cdot \nabla_\mathcal{A} u\right)
\nonumber\\&\quad
+\rho \nabla_\mathcal{A}\partial_i^\mathcal{A}q
+\partial_i^\mathcal{A}\rho \nabla_\mathcal{A} q
-\diverge_\mathcal{A}\mathbb{S}_\mathcal{A}\partial_i^\mathcal{A}u
=0.
\end{align}
By using the second equation of \eqref{neweq}, we then have
\begin{align}\label{yox4}
&\rho\left(\partial_t^\mathcal{A} \partial_i^\mathcal{A}u+u \cdot \nabla_\mathcal{A}  \partial_i^\mathcal{A}u\right)
+\rho \nabla_\mathcal{A}\partial_i^\mathcal{A}q
-\diverge_\mathcal{A}\mathbb{S}_\mathcal{A}\partial_i^\mathcal{A}u
=
-\rho\partial_i^\mathcal{A}u\cdot \nabla_\mathcal{A} u
-\frac{1}{\rho}\partial_i^\mathcal{A}\rho\diverge_\mathcal{A}\mathbb{S}_\mathcal{A}u.
\end{align}
This yields that the second equation in \eqref{Aq1} with $G^{2,i,\sharp}$ defined by
\begin{align}\label{w2}
G^{2,i,\sharp}:=&-(\rho-\bar{\rho})\partial_t \partial_i^\mathcal{A}u
-\rho\left(-J^{-1}\partial_t\varphi\partial_d \partial_i^\mathcal{A}u+u \cdot \nabla_\mathcal{A}  \partial_i^\mathcal{A}u\right)
-(\rho-\bar{\rho})\nabla_\mathcal{A}\partial_i^\mathcal{A}q
\nonumber\\&
-\bar{\rho}\left(\nabla_\mathcal{A}-\nabla\right)\partial_i^\mathcal{A}q
+\left(\diverge_\mathcal{A}-\diverge\right)\mathbb{S}_\mathcal{A}\partial_i^\mathcal{A}u
+\diverge\left(\mathbb{S}_\mathcal{A}\partial_i^\mathcal{A}u-\mathbb{S}\partial_i^\mathcal{A}u\right)
-\rho\partial_i^\mathcal{A}u\cdot \nabla_\mathcal{A} u
\nonumber\\&
-\left(\frac{1}{\rho}-\frac{1}{\bar{\rho}}\right)\partial_i^\mathcal{A}\rho\diverge_\mathcal{A}\mathbb{S}_\mathcal{A} u
-\frac{1}{\bar{\rho}}(\mathcal{A}_{ij}-\delta_{ij})\partial_j \rho\diverge_\mathcal{A}\mathbb{S}_\mathcal{A} u
-\frac{1}{\bar{\rho}}(\partial_i\rho-\partial_i\bar{\rho})\diverge_\mathcal{A}\mathbb{S}_\mathcal{A} u
\nonumber\\&
-\frac{1}{\bar{\rho}}\partial_d\bar{\rho}(\diverge_\mathcal{A}-\diverge)\mathbb{S}_\mathcal{A} u
-\frac{1}{\bar{\rho}}\partial_d\bar{\rho}\diverge(\mathbb{S}_\mathcal{A} -\mathbb{S})u.
\end{align}
The lemma is thus concluded.
\end{proof}

The following records the boundary conditions for $\partial_i^\mathcal{A}u$ and $\partial_i^\mathcal{A}q$ when $i=1,d-1$.
\begin{lem}\label{Bq}
It holds that for $i=1,d-1$,
\begin{equation}\label{Bq1}
\begin{cases}
 \partial_t(\partial_i\eta)= \partial_i^\mathcal{A}u\cdot e_d+G^{3,i,\sharp}    &\text{on }\Sigma
\\ \left(\rho^\star\partial_i^\mathcal{A}q I-\mathbb{S} \partial_i^\mathcal{A}u\right)e_d
=\rho^\star g\partial_i\eta e_d+G^{4,i,\sharp}  &\text{on }\Sigma
\\ \partial_i^\mathcal{A}u=0     &\text{on }\Sigma_b,
\end{cases}
\end{equation}
where $G^{3,i,\sharp}$ and $G^{4,i,\sharp}$ are defined by \eqref{w4} and \eqref{w3}, respectively.
\end{lem}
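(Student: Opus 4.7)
The strategy is to apply $\partial_i$ for horizontal $i \in \{1,\dots,d-1\}$ directly to each of the boundary conditions in \eqref{neweq}, and then re-express every occurrence of $\partial_i$ in terms of $\partial_i^\mathcal{A}$ by using the identity
\begin{equation*}
\partial_i = \partial_i^\mathcal{A} + \frac{\partial_i\varphi}{J}\partial_d,\quad i=1,\dots,d-1,
\end{equation*}
which follows directly from $\mathcal{A}_{ij}=\delta_{ij}$ for $i,j$ horizontal, $\mathcal{A}_{id}=-\partial_i\varphi/J$, $\mathcal{A}_{dj}=0$ for $j$ horizontal, and $\mathcal{A}_{dd}=1/J$. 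Every discrepancy — whether coming from replacing $\partial_i$ by $\partial_i^\mathcal{A}$, replacing $\mathcal{N}=(-D\eta,1)$ by $e_d$, or replacing $\mathbb{S}_\mathcal{A}$ by $\mathbb{S}$ — will be swept into $G^{3,i,\sharp}$ or $G^{4,i,\sharp}$.

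For the kinematic condition, differentiating $\partial_t\eta=u\cdot \mathcal{N}=u_d-u_h\cdot D\eta$ in $\partial_i$ and using the above identity (with $\varphi|_\Sigma=\eta$) gives
\begin{equation*}
\partial_t\partial_i\eta=\partial_i^\mathcal{A}u_d+\frac{\partial_i\eta}{J}\partial_d u_d-\partial_i u_h\cdot D\eta-u_h\cdot D\partial_i\eta,
\end{equation*}
so that $G^{3,i,\sharp}$ is defined as the last three terms on the right. For the no-slip condition, since $u=0$ on $\Sigma_b$ the tangential derivative $\partial_i u$ vanishes there, and since $\varphi|_{\Sigma_b}=0$ also $\partial_i\varphi|_{\Sigma_b}=0$; hence $\partial_i^\mathcal{A} u=\partial_i u-(\partial_i\varphi/J)\partial_d u=0$ on $\Sigma_b$.

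The dynamic condition is where the bookkeeping concentrates. Applying $\partial_i$ to $(\rho^\star qI-\mathbb{S}_\mathcal{A}u)\mathcal{N}=(\rho^\star g\eta-\mathcal{R}_{P\circ h^{-1}})\mathcal{N}$ gives a principal part plus lower-order contributions from $\partial_i\mathcal{N}=-\partial_i D\eta\otimes e_h$ and from $\mathcal{R}_{P\circ h^{-1}}$. I would then carry out three successive conversions: (i) rewrite $\partial_i q=\partial_i^\mathcal{A} q+(\partial_i\varphi/J)\partial_d q$; (ii) replace $\mathcal{N}$ by $e_d$, discarding the tangential piece into the error; and (iii) pass from $\partial_i\mathbb{S}_\mathcal{A}u$ to $\mathbb{S}\partial_i^\mathcal{A}u$ by first commuting $\partial_i$ through $\mathcal{A}$ (generating $\partial_i\mathcal{A}$-commutators) and then replacing $\mathbb{S}_\mathcal{A}$ by $\mathbb{S}$ (generating $(\mathcal{A}-I)$-commutators). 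Collecting every error term gives the formula for $G^{4,i,\sharp}$, and the identity is then proven on $\Sigma$.

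The main obstacle is purely the algebraic bookkeeping in the dynamic condition: there are several families of remainder terms (normal-vector, pressure-remainder $\mathcal{R}_{P\circ h^{-1}}$, $\mathcal{A}$-commutators from $\mathbb{S}_\mathcal{A}$, and the $\partial_i\varphi/J$ correction in $\partial_i q$), and they must be organized so that the final shape of $G^{4,i,\sharp}$ in \eqref{w3} is respected. No analytical estimate is needed at this stage — it is solely an identity on $\Sigma$ — so the verification reduces to expanding each term and checking that the non-error pieces reassemble into $(\rho^\star\partial_i^\mathcal{A}q I-\mathbb{S}\partial_i^\mathcal{A}u)e_d-\rho^\star g\partial_i\eta e_d$.
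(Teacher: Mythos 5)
Your treatment of the kinematic condition and the bottom condition is correct and matches the paper's: applying $\partial_i$, using $\partial_i=\partial_i^\mathcal{A}+\frac{\partial_i\varphi}{J}\partial_d$ with $\varphi|_\Sigma=\eta$, and using $\varphi|_{\Sigma_b}=0$ gives the first and third equations of \eqref{Bq1} (your remainder for the kinematic condition is algebraically the same as \eqref{w4}, though since the lemma asserts the specific form \eqref{w4} you should make that one-line identification explicit). The genuine gap is in the dynamic condition, which you only sketch and explicitly defer. Two things are missing. First, your plan as stated will not reassemble into \eqref{w3}: when $\partial_i$ hits $\mathcal{N}$ you are left with the term $\left(\left(\rho^\star q-\rho^\star g\eta+\mathcal{R}_{P\circ h^{-1}}\right)I-\mathbb{S}_\mathcal{A}u\right)\partial_i\mathcal{N}$, and the paper's key step is to invoke the \emph{original} dynamic boundary condition itself, in the form
\begin{equation*}
\rho^\star q-\rho^\star g\eta+\mathcal{R}_{P\circ h^{-1}}=\frac{\mathbb{S}_\mathcal{A}u\,\mathcal{N}\cdot\mathcal{N}}{|\mathcal{N}|^2}\quad\text{on }\Sigma,
\end{equation*}
so that this coefficient becomes $\left(\mathbb{S}_\mathcal{A}u-\frac{\mathbb{S}_\mathcal{A}u\,\mathcal{N}\cdot\mathcal{N}}{|\mathcal{N}|^2}I\right)\partial_i\mathcal{N}$, i.e.\ is of size $O(\nabla u)$ rather than containing $q-g\eta$ linearly. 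This substitution is not cosmetic bookkeeping: it is exactly what makes the later estimate \eqref{NW2} close, since $\|\nabla u\|_{C^1(\Sigma)}^2\lesssim\mathcal{D}_{N+2,1}$ while $\|q-g\eta\|_{C^1(\Sigma)}^2$ is not controlled by $\mathcal{D}_{N+2,1}$ alone, so a remainder of the form $(q-g\eta)\,\partial_iD\eta$ multiplying $D^{4N}\eta$ would break the $\mathcal{D}_{N+2,1}\mathcal{F}_{2N}$ bound. Without this step your collected error terms are only equal to \eqref{w3} ``on shell'', and you never show that.

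Second, your route for the viscous term (``commuting $\partial_i$ through $\mathcal{A}$, generating $\partial_i\mathcal{A}$-commutators, then replacing $\mathbb{S}_\mathcal{A}$ by $\mathbb{S}$'') bypasses the structural identity the paper relies on: since $\partial_i^\mathcal{A}$ and $\partial_j^\mathcal{A}$ commute, one has $\partial_i^\mathcal{A}\big(\mathbb{S}_\mathcal{A}u\big)=\mathbb{S}_\mathcal{A}\partial_i^\mathcal{A}u$ exactly, and on $\Sigma$,
\begin{equation*}
\partial_i q=\partial_i^\mathcal{A}q+\partial_d^\mathcal{A}q\,\partial_i\eta,\qquad
\partial_i\big(\mathbb{S}_\mathcal{A}u\big)=\mathbb{S}_\mathcal{A}\partial_i^\mathcal{A}u+\partial_d^\mathcal{A}\big(\mathbb{S}_\mathcal{A}u\big)\partial_i\eta,
\end{equation*}
so no $\partial_i\mathcal{A}$-commutators arise at all; the only remaining conversions are $\mathcal{N}\to e_d$ and $\mathbb{S}_\mathcal{A}\to\mathbb{S}$ acting on $\partial_i^\mathcal{A}u$, which produce precisely the terms $-\partial_i\eta\,\mathbb{S}_\mathcal{A}\partial_d^\mathcal{A}u\,\mathcal{N}$ and $-(\mathbb{S}_\mathcal{A}-\mathbb{S})\partial_i^\mathcal{A}u\,\mathcal{N}$ in \eqref{w3}. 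If you instead generate $[\partial_i,\mathcal{A}]$-type terms you must still recombine them into this form, which is exactly the computation you have deferred. So as written the proposal establishes the first and third equations of \eqref{Bq1} but not the second with $G^{4,i,\sharp}$ given by \eqref{w3}.
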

\begin{proof}
Let $i=1,d-1$. First, applying $\partial_i$ to the third equation of \eqref{neweq}, we obtain
\begin{align}\label{lili}
\partial_t\partial_i\eta-\partial_i^\mathcal{A}u\cdot \mathcal{N}
=\partial_d^\mathcal{A}u\cdot \mathcal{N}\partial_i\eta-u_h\cdot D\partial_i\eta  \text{ on } \Sigma,
\end{align}
which implies the first equation in \eqref{Bq1} with $G^{3,i,\sharp}$ defined by
\begin{align}\label{w4}
G^{3,i,\sharp}=-u_h\cdot D\partial_i\eta+G^{3,1,i,\sharp},
\end{align}
where
\begin{align}\label{w41}
G^{3,1,i,\sharp}:=\partial_i^\mathcal{A}u\cdot (\mathcal{N}-e_d)
+\partial_d^\mathcal{A}u\cdot \mathcal{N}\partial_i\eta.
\end{align}

Next, applying $\partial_i$ to the fourth equation of \eqref{neweq}, we get
\begin{align}
&\left(\left(\rho^\star\partial_i q-\rho^\star g \partial_i\eta+\partial_i\mathcal{R}_{P\circ h^{-1}}\right)I
-\partial_i\mathbb{S}_\mathcal{A}u\right)\mathcal{N}
\nonumber\\&\quad
+\left(\left(\rho^\star q-\rho^\star g \eta+\mathcal{R}_{P\circ h^{-1}}\right)I
-\mathbb{S}_\mathcal{A}u\right)\partial_i\mathcal{N}=0 \text{ on } \Sigma.
\end{align}
Note that
\begin{align}
\partial_i q
=
\partial_i^\mathcal{A} q+\partial_d^\mathcal{A}q\partial_i\eta
\text{ and }
\partial_i\mathbb{S}_\mathcal{A}u
=
\mathbb{S}_\mathcal{A}\partial_i^\mathcal{A}u+\partial_d^\mathcal{A}\mathbb{S}_\mathcal{A} u\partial_i\eta,
\end{align}
and from the fourth equation in \eqref{neweq},
\begin{align}
\rho^\star q-\rho^\star g \eta+\mathcal{R}_{P\circ h^{-1}}
=
\frac{\mathbb{S}_\mathcal{A}u\mathcal{N}\cdot\mathcal{N}}{|\mathcal{N}|^2} \text{ on } \Sigma,
\end{align}
we  then obtain
\begin{align}\label{su}
&
\left(\left(\rho^\star\partial_i^\mathcal{A}q+\rho^\star\partial_d^\mathcal{A}q\partial_i\eta-\rho^\star g\partial_i\eta+\partial_i\mathcal{R}_{P\circ h^{-1}}\right)I  -\mathbb{S}_\mathcal{A}\partial_i^\mathcal{A}u-\partial_i\eta\partial_d^\mathcal{A}\mathbb{S}_\mathcal{A} u\mathcal{N}\right)\mathcal{N}
\nonumber\\&\quad
-\left(\mathbb{S}_\mathcal{A} u-\frac{\mathbb{S}_\mathcal{A}u\mathcal{N}\cdot\mathcal{N}}{|\mathcal{N}|^2}I\right)\partial_i\mathcal{N}
=0   \text{ on } \Sigma,
\end{align}
which implies the second equation in \eqref{Bq1} with $G^{4,i,\sharp}$ defined by
\begin{align}\label{w3}
G^{4,i,\sharp}:
=&
-\left(\left(\rho^\star \partial_i^\mathcal{A}q-\rho^\star g\partial_i\eta\right)I  -\mathbb{S}\partial_i^\mathcal{A}u\right)(\mathcal{N}-e_d)
\nonumber\\&
-\left(\rho^\star\partial_d^\mathcal{A}q\partial_i\eta+\partial_i\mathcal{R}_{P\circ h^{-1}}
-\partial_i\eta\mathbb{S}_\mathcal{A}\partial_d^\mathcal{A}u\right)\mathcal{N}
\nonumber\\&
+\left(\mathbb{S}_\mathcal{A} u-\frac{\mathbb{S}_\mathcal{A}u\mathcal{N}\cdot\mathcal{N}}{|\mathcal{N}|^2}I\right)\partial_i\mathcal{N}
-\left(\mathbb{S}_\mathcal{A}-\mathbb{S}\right)\partial_i^\mathcal{A}u\mathcal{N}.
\end{align}

Finally, we have
\begin{align}
\partial_i^\mathcal{A}u=\partial_iu-\partial_d^\mathcal{A}u\partial_i\varphi=0-0=0  ~~~\text{ on } \Sigma_b.
\end{align}
The lemma is thus concluded.
\end{proof}
The estimates for these nonlinearities are presented in the following.
\begin{lem}\label{NW}
 It holds that for $i=1,d-1,d$,
\begin{align}\label{NW1}
\norm{G^{1,1,i,\sharp}}_{4N-2}^2
+\norm{G^{1,2,i,\sharp}}_{4N-1}^2
+\norm{G^{2,i,\sharp}}_{4N-2}^2
\lesssim \mathcal{E}_{2N}\mathfrak{D}_{2N}
\end{align}
and for $i=1,d-1$,
\begin{align}\label{NW3}
\abs{G^{3,1,i,\sharp}}_{4N-1}^2
+
\abs{G^{4,i,\sharp}}_{4N-2}^2
\lesssim \mathcal{E}_{2N}\mathfrak{D}_{2N}
\end{align}
and
\begin{align}\label{NW2}
\abs{G^{3,1,i,\sharp}}_{4N-1/2}^2
+\abs{G^{4,i,\sharp}}_{4N-3/2}^2
\lesssim \mathcal{E}_{2N}\mathfrak{D}_{2N}+\mathcal{D}_{N+2,1}\mathcal{F}_{2N}.
\end{align}

\end{lem}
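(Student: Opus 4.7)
The overall strategy is to observe that every term appearing in $G^{1,1,i,\sharp}$, $G^{1,2,i,\sharp}$, $G^{2,i,\sharp}$, $G^{3,1,i,\sharp}$ and $G^{4,i,\sharp}$ is at least quadratic in the perturbation quantities $(q,u,\eta,\varphi)$ (recall that $\rho-\bar\rho$, $\mathcal{A}-I$, $J-1$, $\mathcal{N}-e_d$ and $\mathcal{R}_{P\circ h^{-1}}$ all vanish at the equilibrium by \eqref{ho}, \eqref{Rp}, and that $\partial_i^\mathcal{A}-\partial_i$ is linear in $D\varphi$ for $i=1,\dots,d-1$). After applying the relevant differential operators and expanding by the Leibniz rule, each resulting product will be handled by Sobolev multiplicative estimates, placing one factor in $L^\infty$ (or $C^1$ on $\Sigma$) and the complementary factor(s) in $L^2$ (or $H^{1/2}$), together with Lemmas \ref{pp2}--\ref{pp13} and the trace theorem.

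For the bulk bounds \eqref{NW1} and the $H^{4N-2}$-type boundary bound \eqref{NW3}, the argument is essentially a repetition of Lemma \ref{N2}: in every product either factor can carry derivative count $\le 4N-1$, which fits inside $\mathcal{E}_{2N}$ and $\mathfrak{D}_{2N}$. Concretely, for a term such as $(\rho-\bar\rho)\partial_t\partial_i^\mathcal{A}u$ inside $G^{2,i,\sharp}$ one bounds $\|\rho-\bar\rho\|_{L^\infty(\Omega)}$ (or lower-order Sobolev norms) by $\sqrt{\mathcal{E}_{2N}}$ via \eqref{ho} and the embedding, and $\|\partial_t\partial_i^\mathcal{A}u\|_{4N-2}$ by $\sqrt{\mathfrak{D}_{2N}}$; similarly for commutators such as $[\partial^\alpha,\mathcal{A}_{lk}]\partial_k(\mathcal{A}_{lm}\partial_m u)$ the lowest-derivative factors land in the $\mathcal{E}_{2N}$-slot. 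On the boundary, an analogous decomposition coupled with the trace inequality $|f|_s\lesssim \|f\|_{s+1/2}$ yields \eqref{NW3}; the worst competitor, $\partial_d^\mathcal{A}u\cdot\mathcal{N}\partial_i\eta$, contributes at most $|D^{4N-1}\eta|_{1/2}^{2}\|\partial_du\|_{L^\infty(\Sigma)}^2\lesssim \mathcal{E}_{2N}\mathfrak{D}_{2N}$.

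The genuinely delicate estimate is \eqref{NW2}. When we push the regularity up to $|\cdot|_{4N-1/2}$ and $|\cdot|_{4N-3/2}$, after Leibniz expansion there appear terms carrying $D^{4N+1/2}\eta$ (equivalently $D^{4N}\eta$ in $H^{1/2}(\Sigma)$), whose derivative count strictly exceeds the highest derivative count of $\eta$ allowed in $\mathfrak{D}_{2N}$; this is exactly the obstruction mentioned in Section \ref{sj}. For every such product, the top-order $\eta$-factor must be absorbed into $\mathcal{F}_{2N}$, and the complementary $u$-factor must therefore be placed in $C^1(\Sigma)$. Here the anisotropic dissipation becomes decisive: by the embeddings recorded in Lemma \ref{q,u} together with the trace theorem, $\|\nabla u\|_{C^1(\Sigma)}^{2}\lesssim \mathcal{D}_{N+2,1}$, so such a product is bounded by
\begin{equation*}
|D^{4N+1/2}\eta|_{0}^{2}\,\|\nabla u\|_{C^1(\Sigma)}^{2}\lesssim \mathcal{F}_{2N}\,\mathcal{D}_{N+2,1}.
\end{equation*}
All other products in the Leibniz expansion keep the $\eta$-factor at derivative count $\le 4N-1$, and they are handled as in the proof of \eqref{NW3}, producing the $\mathcal{E}_{2N}\mathfrak{D}_{2N}$ contribution. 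Summing the two types of contributions gives \eqref{NW2}.

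The main obstacle of the proof is precisely the bookkeeping in this last step: one must verify that for each summand of $G^{3,1,i,\sharp}$ and $G^{4,i,\sharp}$, whenever the $\eta$-factor (or $\mathcal{N}-e_d$-factor, or $\partial_i\mathcal{N}$-factor, or $\mathcal{R}_{P\circ h^{-1}}$-factor through $\eta$) is forced to absorb more than $4N-1$ derivatives, the paired factor is of the form $\nabla u$ (or lower-order) on $\Sigma$ and hence admits an $L^\infty(\Sigma)$-bound by $\mathcal{D}_{N+2,1}^{1/2}$. This is the point at which the structural feature of the problem, namely that $G^{3,1,i,\sharp}$ and $G^{4,i,\sharp}$ are linear in $D\eta$ or $\mathcal{N}-e_d$ at their most singular parts, is exploited; without it the right-hand side of \eqref{NW2} would have to involve $\|\nabla u\|_{L^\infty(\Omega)}^2\mathcal{F}_{2N}$, whose time integrability would not be compatible with the scheme of $\mathcal{G}_{2N}$ explained in Section \ref{sj}.
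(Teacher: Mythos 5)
Your proposal is correct and follows essentially the same route as the paper: \eqref{NW1} and \eqref{NW3} are treated exactly as in Lemma \ref{N2}, and for \eqref{NW2} you isolate precisely the paper's exceptional term, the product of $\nabla u$ with $D^{4N}\eta$ on $\Sigma$, and bound it via Lemma \ref{va1} by $\|\nabla u\|_{C^1(\Sigma)}^2\abs{D^{4N}\eta}_{1/2}^2\lesssim\mathcal{D}_{N+2,1}\mathcal{F}_{2N}$. Apart from minor notational looseness (e.g.\ writing $\abs{D^{4N+1/2}\eta}_0$ for $\abs{D^{4N}\eta}_{1/2}$), the argument matches the paper's proof.
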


\begin{proof}
The estimates \eqref{NW1} and \eqref{NW3} follow similarly as \eqref{ne5} in Lemma \ref{N2}. For the estimate \eqref{NW2},
all the terms can be controlled by $\mathcal{E}_{2N}\mathfrak{D}_{2N}$ except
$\abs{\nabla uD^{4N}\eta}_{1/2}^2$ when estimating  $G^{3,1,i,\sharp}$ and $G^{4,i,\sharp}$,
which is estimated as follows: by Lemma \ref{va1}, Sobolev's embedding and the trace theory,
\begin{align}\label{so2}
\abs{\nabla uD^{4N}\eta}_{1/2}^2
\lesssim \norm{\nabla u}_{C^1(\Sigma)}^2\abs{D^{4N}\eta}_{1/2}^2
\lesssim \mathcal{D}_{N+2,1}\mathcal{F}_{2N}.
\end{align}
The estimate  \eqref{NW2} then follows.
\end{proof}

We now record the energy evolution for $4N$ horizontal spatial derivatives.

\begin{prop}\label{prop5}
It holds that
\begin{align}\label{tan5}
&\frac{d}{dt}
\left(
\norm{D_\mathcal{A}q}_{0,4N-1}^{2}
+\norm{D_\mathcal{A}u}_{0,4N-1}^{2}
+\abs{D\eta}_{4N-1}^{2}
\right)
+\norm{ D_\mathcal{A}u}_{1,4N-1}^2
\nonumber\\&\quad
\lesssim \sqrt{\mathcal{E}_{2N}}\mathfrak{D}_{2N}
+ \sqrt{\mathfrak{D}_{2N}}
\sqrt{\mathcal{D}_{N+2,1}\mathcal{F}_{2N}}.
\end{align}
\end{prop}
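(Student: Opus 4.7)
The plan is to run the same tangential energy estimate as in Proposition \ref{prop1}, but applied to the ``Eulerian'' system \eqref{Aq1}--\eqref{Bq1} for $\partial_i^\mathcal{A}q$, $\partial_i^\mathcal{A}u$ with $i=1,\dots,d-1$. The structural gain is that since $\partial_i^\mathcal{A}$ commutes with $\partial_j^\mathcal{A}$, differentiating \eqref{neweq} by $\partial_i^\mathcal{A}$ produces no bulk commutator against $\diverge_\mathcal{A}\mathbb{S}_\mathcal{A}$, eliminating the interior forcing of type $\sqrt{\norm{\nabla u}_{L^\infty(\Omega)}^2 \mathcal{F}_{2N}\mathfrak{D}_{2N}}$ that would come from differentiating \eqref{q2} naively, as outlined in Section \ref{sj}. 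The price is paid only on the boundary, where $\mathcal{F}_{2N}$ still enters through $G^{4,i,\sharp}$ but only with the dissipative weight $\sqrt{\mathcal{D}_{N+2,1}\mathcal{F}_{2N}}$ thanks to the refined bound \eqref{NW2}.

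For the execution, fix $\alpha \in \mathbb{N}^{d-1}$ with $|\alpha| \le 4N-1$ and $i \in \{1,\dots,d-1\}$. Applying $\partial^\alpha$ to the momentum equation in \eqref{Aq1} and testing with $\partial^\alpha \partial_i^\mathcal{A}u$ in $L^2(\Omega)$, then integrating by parts, using the continuity equation in \eqref{Aq1} to convert the pressure divergence into a time derivative of $\norm{\partial^\alpha \partial_i^\mathcal{A}q}_0^2$, and combining the boundary trace of $\partial^\alpha \partial_i^\mathcal{A}u$ with $\rho^\star g \partial^\alpha \partial_i\eta$ via the kinematic and dynamic conditions in \eqref{Bq1} to generate the time derivative of $\abs{\partial^\alpha \partial_i\eta}_0^2$, yields a differential identity exactly analogous to \eqref{e1}. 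Its right-hand side $\mathcal{R}^\alpha_i$ collects the bulk integrals against $\partial^\alpha(G^{1,i,\sharp} - \partial_i P'(\bar\rho)\diverge u)$ and $\partial^\alpha(G^{2,i,\sharp} - \tfrac{\partial_i\bar\rho}{\bar\rho}\diverge \mathbb{S}u)$, together with the boundary integrals against $\partial^\alpha G^{3,i,\sharp}$ and $\partial^\alpha G^{4,i,\sharp}$.

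Bounding $\mathcal{R}^\alpha_i$ follows the pattern of Proposition \ref{prop1}. The coefficient-only terms $\partial_i P'(\bar\rho)\diverge u$ and $\tfrac{\partial_i\bar\rho}{\bar\rho}\diverge\mathbb{S}u$ are directly controlled by $\sqrt{\mathcal{E}_{2N}}\mathfrak{D}_{2N}$ via Cauchy--Schwarz. For $G^{1,i,\sharp}$, the transport-type top-order piece $J^{-1}\partial_t\varphi\partial_d(\partial^\alpha \partial_i^\mathcal{A}q) - u\cdot\nabla_\mathcal{A}\partial^\alpha \partial_i^\mathcal{A}q$ contained in $G^{1,1,i,\sharp}$ is handled as in \eqref{s3}--\eqref{s6}: by integration by parts it becomes $-\tfrac{1}{2}\int (\text{div of coefficient}) |\partial^\alpha \partial_i^\mathcal{A}q|^2$, bounded by $\sqrt{\mathcal{E}_{2N}}\mathfrak{D}_{2N}$, while the commutator remainders are controlled by \eqref{NW1}. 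The $G^{2,i,\sharp}$ integral is handled by the two-case split of \eqref{sur1} and \eqref{e6}: direct Cauchy--Schwarz when $|\alpha| \le 4N-2$, and an integration by parts to shift a derivative onto $\partial^\alpha \partial_i^\mathcal{A}u$ when $|\alpha|=4N-1$, both using \eqref{NW1}. The boundary term against $\partial^\alpha G^{3,i,\sharp}$ is handled by Cauchy--Schwarz at lower order and by the duality argument of \eqref{s7} at top order, using \eqref{NW3}.

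The main obstacle is the boundary integral against $\partial^\alpha G^{4,i,\sharp}$ at the top order $|\alpha|=4N-1$, since $G^{4,i,\sharp}$ contains the top-regularity term $\partial_i\eta\,\mathbb{S}_\mathcal{A}\partial_d^\mathcal{A}u$ whose $4N$-th horizontal derivative is controlled only by $\mathcal{F}_{2N}$. Here I pair via $H^{1/2}$--$H^{-1/2}$ duality, $\big|\int_\Sigma \partial^\alpha \partial_i^\mathcal{A}u \cdot \partial^\alpha G^{4,i,\sharp}\big| \lesssim \abs{\partial^\alpha \partial_i^\mathcal{A}u}_{1/2}\,\abs{\partial^\alpha G^{4,i,\sharp}}_{-1/2} \lesssim \norm{\partial^\alpha \partial_i^\mathcal{A}u}_1\,\abs{G^{4,i,\sharp}}_{4N-3/2}$, where the trace theorem supplies the first factor. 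By the refined estimate \eqref{NW2}, $\abs{G^{4,i,\sharp}}_{4N-3/2}^2 \lesssim \mathcal{E}_{2N}\mathfrak{D}_{2N} + \mathcal{D}_{N+2,1}\mathcal{F}_{2N}$; the $\mathcal{F}_{2N}$ piece appears precisely because the dangerous factor $\norm{\nabla u}_{C^1(\Sigma)}^2$ is controlled by $\mathcal{D}_{N+2,1}$ through the boundary anisotropy (cf. \eqref{so2}) rather than merely by $\mathcal{E}_{2N}$, which is the whole point of splitting the spatial norms this way. This yields the cross term $\sqrt{\mathfrak{D}_{2N}}\,\sqrt{\mathcal{D}_{N+2,1}\mathcal{F}_{2N}}$ of \eqref{tan5}. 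To close, I sum over $\alpha$ and $i$, absorb the error $\sqrt{\mathcal{E}_{2N}}\mathfrak{D}_{2N}$ from replacing $\mathbb{D}_\mathcal{A}^0$ and $\diverge_\mathcal{A}$ by $\mathbb{D}^0$ and $\diverge$ as in \eqref{fzd}, and apply Korn's inequality (Lemma \ref{xm5}) together with $\partial_i^\mathcal{A}u=0$ on $\Sigma_b$ from \eqref{Bq1} to upgrade the symmetric-gradient dissipation into $\norm{D_\mathcal{A}u}_{1,4N-1}^2$, completing \eqref{tan5}.
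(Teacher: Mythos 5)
Your skeleton coincides with the paper's proof (test the $\partial^\alpha$-differentiated momentum equation of \eqref{Aq1} with $\partial^\alpha\partial_i^\mathcal{A}u$, split $|\alpha|\le 4N-2$ versus $|\alpha|=4N-1$, and at top order pair the $G^{4,i,\sharp}$ boundary term in $H^{1/2}$--$H^{-1/2}$ with \eqref{NW2}), but your treatment of the kinematic nonlinearity has a genuine gap. By \eqref{w4}, $G^{3,i,\sharp}=-u_h\cdot D\partial_i\eta+G^{3,1,i,\sharp}$, and the estimates \eqref{NW3}, \eqref{NW2} you invoke cover only the $G^{3,1,i,\sharp}$ part. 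At top order $|\alpha|=4N-1$ the integral $\int_\Sigma\partial^\alpha\partial_i\eta\,\partial^\alpha\left(u_h\cdot D\partial_i\eta\right)$ contains $u_h\cdot D\partial^\alpha\partial_i\eta$, i.e.\ $4N+1$ horizontal derivatives of $\eta$, which is controlled by none of $\mathcal{E}_{2N}$, $\mathfrak{D}_{2N}$, $\mathcal{F}_{2N}=\abs{\eta}_{4N+1/2}^2$ in a norm strong enough for a direct Cauchy--Schwarz or duality pairing: placing it in $H^{-1/2}$ forces $\partial^\alpha\partial_i\eta$ into $H^{1/2}$ (order $4N+1/2$) and yields at best $\sqrt{\mathcal{D}_{N+2,1}}\,\mathcal{F}_{2N}$, which is not of the form allowed in \eqref{tan5}, while placing it in $L^2$ or $H^{1/2}$ would require $\abs{\eta}_{4N+1}$ or worse. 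The interpolation device behind \eqref{s7} (via \eqref{sob1}, \eqref{sobbb}) is specific to the $N+2$ level, where $2(N+2)+1\le 4N-2$ derivatives of $\eta$ are still dominated by $\mathcal{E}_{2N}$; it does not transfer to $|\alpha|=4N-1$. The missing idea is the antisymmetric transport structure: write $\partial^\alpha\left(u_h\cdot D\partial_i\eta\right)=u_h\cdot D\partial^\alpha\partial_i\eta+\left[\partial^\alpha,u_h\right]\cdot D\partial_i\eta$, integrate by parts on $\Sigma$ in the first piece so it becomes $\frac{1}{2}\int_\Sigma\abs{\partial^\alpha\partial_i\eta}^2\,D\cdot u_h$, bounded by $\abs{\partial^\alpha\partial_i\eta}_{-1/2}\abs{\partial^\alpha\partial_i\eta}_{1/2}\norm{Du_h}_{C^1(\Sigma)}\lesssim\sqrt{\mathfrak{D}_{2N}\mathcal{F}_{2N}\mathcal{D}_{N+2,1}}$ as in \eqref{NNm2}, and control the commutator with \eqref{Pe3} as in \eqref{NNm3} (the case $|\alpha|=0$ being handled separately as in \eqref{etta}). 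Without this step the estimate does not close.

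A secondary inaccuracy: you claim the terms $\partial_iP'(\bar{\rho})\diverge u$ and $\frac{\partial_i\bar{\rho}}{\bar{\rho}}\diverge\mathbb{S}u$ are ``directly controlled by $\sqrt{\mathcal{E}_{2N}}\mathfrak{D}_{2N}$ via Cauchy--Schwarz.'' These terms are linear in the solution, so Cauchy--Schwarz would only give $\mathfrak{D}_{2N}$ with no small prefactor, and it could not be absorbed since the left-hand side of \eqref{tan5} contains no dissipation of $\partial_i^\mathcal{A}q$. In fact, for the horizontal indices $i=1,\dots,d-1$ used in this proposition they vanish identically because $\bar{\rho}=\bar{\rho}(x_d)$; they are genuinely present only for $i=d$, which enters later (Sections 5--7), so this slip does not affect the final statement, but the justification as written is incorrect.
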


\begin{proof}
Let $\alpha\in  \mathbb{N}^{d-1}$ be so that $0\leq|\alpha|\leq4N-1$.
Applying $\partial^\alpha$ to the second equation of \eqref{Aq1} for $i=1,d-1$,
and then taking the $L^2(\Omega)$ inner product of the resulting with $ \partial^\alpha\partial_i^\mathcal{A}u$, similarly as \eqref{youd},
we obtain
\begin{align}\label{NN1}
&\frac{1}{2}\frac{d}{dt}\left(\int_\Omega\left( \frac{1}{h'(\bar{\rho})}\abs{\partial^\alpha \partial_i^\mathcal{A}q}^2
+ \bar{\rho} \abs{\partial^\alpha\partial_i^\mathcal{A}u}^2\right)
+\int_{\Sigma}\rho^\star g\abs{\partial^\alpha\partial_i\eta}^2\right)
\nonumber\\
&\quad
+\int_{\Omega}\left(\frac{\mu}{2}\abs{\mathbb{D}^0\partial^\alpha \partial_i^\mathcal{A}u}^2
+\mu'\abs{\diverge \partial^\alpha \partial_i^\mathcal{A}u}^2\right)
\nonumber\\
&\quad
=\int_\Omega \(\frac{1}{h'(\bar{\rho})}\partial^\alpha \partial_i^\mathcal{A}q \partial^\alpha G^{1,i,\sharp}
+ \partial^\alpha \partial_i^\mathcal{A}u \cdot \partial^\alpha G^{2,i,\sharp}\)
+\int_{\Sigma}\(g\rho^\star \partial^\alpha  \partial_i\eta \partial^\alpha G^{3,i,\sharp}
-\partial^\alpha \partial_i^\mathcal{A}u\cdot \partial^\alpha G^{4,i,\sharp}\).
\end{align}

Now we estimate the right hand side of \eqref{NN1}.
For the $G^{1,i,\sharp}$ term, using \eqref{NW1}, we first directly get
\begin{align}\label{NN3}
\int_\Omega \frac{1}{h'(\bar{\rho})}\partial^\alpha \partial_i^\mathcal{A}q \partial^\alpha G^{1,2,i,\sharp}
\lesssim \norm{\partial_i^\mathcal{A}q}_{0,4N-1}\norm{ G^{1,2,i,\sharp}}_{0,4N-1}
\lesssim \sqrt{\mathfrak{D}_{2N}}\sqrt{\mathcal{E}_{2N}\mathfrak{D}_{2N}},
\end{align}
and arguing similarly as \eqref{s6}, we can derive
\begin{align}\label{NN4}
\int_\Omega \frac{1}{h'(\bar{\rho})}\partial^\alpha \partial_i^\mathcal{A}q \partial^\alpha G^{1,1,i,\sharp}
\lesssim
\sqrt{\mathfrak{D}_{2N}}\sqrt{\mathcal{E}_{2N}\mathfrak{D}_{2N}},
\end{align}
hence,
\begin{align}\label{NN5}
\int_\Omega \frac{1}{h'(\bar{\rho})}\partial^\alpha \partial_i^\mathcal{A}q \partial^\alpha G^{1,i,\sharp}
\lesssim \sqrt{\mathcal{E}_{2N}}\mathfrak{D}_{2N}.
\end{align}
For the $G^{2,i,\sharp}$ and $G^{4,i,\sharp}$ terms, we split into two cases: $|\alpha|\leq 4N-2$ and $|\alpha|= 4N-1$.
For the former case, it follows directly from \eqref{NW1} that
\begin{align}\label{NNa1}
&\int_\Omega  \partial^\alpha \partial_i^\mathcal{A}u \cdot \partial^\alpha G^{2,i,\sharp}
\lesssim
\norm{ \partial^\alpha \partial_i^\mathcal{A}u}_{0}\norm{ \partial^\alpha G^{2,i,\sharp}}_{0}
\lesssim  \sqrt{\mathfrak{D}_{2N}}
\sqrt{\mathcal{E}_{2N}\mathfrak{D}_{2N}},
\end{align}
and from the trace theory and \eqref{NW3} that,
\begin{align}
-\int_{\Sigma} \partial^\alpha \partial_i^\mathcal{A}u\cdot \partial^\alpha G^{4,i,\sharp}
&\lesssim
\abs{ \partial^\alpha \partial_i^\mathcal{A}u}_{0}\abs{ \partial^\alpha G^{4,i,\sharp}}_{0}
\lesssim
\norm{ \partial^\alpha \partial_i^\mathcal{A}u}_{1}\abs{ \partial^\alpha G^{4,i,\sharp}}_{0}
\nonumber\\&
\lesssim  \sqrt{\mathfrak{D}_{2N}}\sqrt{\mathcal{E}_{2N}\mathfrak{D}_{2N}}.
\end{align}
For the latter case, by integrating by parts and using \eqref{NW1},  we get
\begin{align}\label{NN2}
&\int_\Omega  \partial^\alpha \partial_i^\mathcal{A}u \cdot \partial^\alpha G^{2,i,\sharp}
\lesssim
\norm{ \partial_i^\mathcal{A}u}_{0,4N}\norm{ G^{2,i,\sharp}}_{0,4N-2}
\lesssim  \sqrt{\mathfrak{D}_{2N}}
\sqrt{\mathcal{E}_{2N}\mathfrak{D}_{2N}},
\end{align}
and using \eqref{NW2} and the trace theory,
\begin{align}\label{moli}
-\int_{\Sigma} \partial^\alpha \partial_i^\mathcal{A}u\cdot \partial^\alpha G^{4,i,\sharp}
&\lesssim
\abs{ \partial_i^\mathcal{A}u}_{4N-1/2} \abs{  G^{4,i,\sharp}}_{4N-3/2}
\lesssim
\norm{\partial_i^\mathcal{A}u}_{1,4N-1} \abs{  G^{4,i,\sharp}}_{4N-3/2}
\nonumber\\&
\lesssim  \sqrt{\mathfrak{D}_{2N}}
\sqrt{\mathcal{E}_{2N}\mathfrak{D}_{2N}+\mathcal{D}_{N+2,1}\mathcal{F}_{2N}}.
\end{align}
For the $G^{3,i,\sharp}$ term, it follows directly from \eqref{NW2} that
\begin{align}\label{NN6}
\int_{\Sigma}g\rho^\star\partial^\alpha \partial_i\eta \partial^\alpha G^{3,1,i,\sharp}
&\lesssim  \abs{D \eta}_{4N-3/2} \abs{ G^{3,1,i,\sharp}}_{4N-1/2}
\nonumber\\&
\lesssim \sqrt{\mathfrak{D}_{2N}}
\sqrt{\mathcal{E}_{2N}\mathfrak{D}_{2N}+\mathcal{D}_{N+2,1}\mathcal{F}_{2N}}.
\end{align}
For the term related to $u_h\cdot D \partial_i\eta$, we split into two cases: $|\alpha|=0$ and $|\alpha|\geq 1$.
For the former case, by Sobolev's embedding and the trace theory,
\begin{align}\label{etta}
-\int_{\Sigma}g\rho^\star \partial_i\eta   u_h \cdot D \partial_i\eta
\lesssim
\abs{D\eta}_{0}\norm{u_h}_{L^\infty(\Sigma)}\abs{D^2\eta}_{0}
\lesssim
\abs{D\eta}_{0}\norm{u_h}_{2}\abs{D^2\eta}_{0}
\lesssim
\sqrt{\mathcal{E}_{2N}}\mathfrak{D}_{2N}.
\end{align}
For the latter case,
note that
\begin{align}\label{NNm1}
\partial^\alpha \left(u_h\cdot D \partial_i\eta\right)
= u_h \cdot D \partial^\alpha\partial_i\eta+\left[\partial^\alpha,u_h\right]\cdot D \partial_i\eta.
\end{align}
By integrating by parts and using Lemma \ref{va1}, Sobolev's embedding and the trace theory, we find
\begin{align}\label{NNm2}
-\int_{\Sigma}g\rho^\star\partial^\alpha \partial_i\eta D \partial^\alpha\partial_i\eta \cdot u_h
&=-\frac{1}{2}\int_{\Sigma}g\rho^\star  D\abs{\partial^\alpha\partial_i\eta}^2\cdot u_h
= \frac{1}{2}\int_{\Sigma}g\rho^\star  \abs{\partial^\alpha\partial_i\eta}^2D \cdot u_h
\nonumber\\&
\lesssim
\abs{\partial^\alpha\partial_i\eta}_{-1/2}\abs{\partial^\alpha\partial_i\eta}_{1/2}\norm{Du_h}_{C^1(\Sigma)}
\lesssim
\sqrt{\mathfrak{D}_{2N}\mathcal{F}_{2N}\mathcal{D}_{N+2,1}},
\end{align}
and by \eqref{Pe3}, it holds that
\begin{align}
\abs{\left[\partial^\alpha,u_h\right] \cdot D \partial_i\eta}_{1/2}^2
&\lesssim
\abs{D^{|\alpha|} u_h}_{1/2}^2\norm{D \partial_i\eta}_{C^1(\Sigma)}^2
+\abs{D^{|\alpha|} \partial_i\eta}_{1/2}^2\norm{Du_h}_{C^1(\Sigma)}^2
\nonumber\\&
\lesssim
\norm{ u_h}_{4N}^2\norm{D ^2\eta}_{C^1(\Sigma)}^2+\abs{\eta}_{4N+1/2}^2\norm{Du_h}_{C^1(\Sigma)}^2
\nonumber\\&
\lesssim
\mathcal{E}_{2N}\mathfrak{D}_{2N}+\mathcal{F}_{2N}\mathcal{D}_{N+2,1}
\end{align}
and
\begin{align}\label{NNm3}
-\int_{\Sigma}g\rho^\star\partial^\alpha\partial_i\eta \left[\partial^\alpha,u_h\right]\cdot D \partial_i\eta
&\lesssim
\abs{\partial^\alpha\partial_i\eta}_{-1/2}\abs{\left[\partial^\alpha,u_h\right]\cdot D \partial_i\eta}_{1/2}
\nonumber\\&
\lesssim
 \sqrt{\mathfrak{D}_{2N}}
\sqrt{\mathcal{E}_{2N}\mathfrak{D}_{2N}+\mathcal{F}_{2N}\mathcal{D}_{N+2,1}},
\end{align}
hence, we have
\begin{align}\label{NNm4}
-\int_{\Sigma}g\rho^\star\partial^\alpha \partial_i\eta \partial^\alpha \left(u_h\cdot D \partial_i\eta\right)
\lesssim
 \sqrt{\mathfrak{D}_{2N}}
\sqrt{\mathcal{E}_{2N}\mathfrak{D}_{2N}+\mathcal{F}_{2N}\mathcal{D}_{N+2,1}}.
\end{align}
In light of \eqref{NN6}, \eqref{etta} and \eqref{NNm4}, we deduce that
\begin{align}\label{NN8}
\int_{\Sigma}g\rho^\star\partial^\alpha \partial_i\eta \partial^\alpha G^{3,i,\sharp}
\lesssim \sqrt{\mathfrak{D}_{2N}}
\sqrt{\mathcal{E}_{2N}\mathfrak{D}_{2N}+\mathcal{D}_{N+2,1}\mathcal{F}_{2N}}.
\end{align}

Consequently, by \eqref{NN5}--\eqref{moli}  and \eqref{NN8}, summing \eqref{NN1} over such $\alpha$,
using \eqref{viscosity} and Korn's inequality of Lemma \ref{xm5},
since $\partial^\alpha D_\mathcal{A}u=0$ on $\Sigma_b$, we conclude \eqref{tan5}.
\end{proof}


\section{Energy evolution of $\partial_d q$}\label{pdq}
In order to go from the control of the tangential energy evolution estimates to the full one, we will employ the elliptic estimates.
However, unlike the incompressible case \cite{GT_inf,WYJ1}, as in \cite{WYJ2} we are forced to control  $\partial_d q$ first.
Therefore, in this section, we shall provide the energy evolution estimates for $\partial_d q$.

Recall the definitions \eqref{G11} of $G^{1,1}$ and \eqref{G12} of $G^{1,2}$.
We define
\begin{align}\label{dq1}
\mathbb{Q}:=\partial_tq-J^{-1}\partial_t\varphi\partial_dq+u\cdot \nabla_\mathcal{A}q
=\partial_tq-G^{1,1}=-h'(\bar{\rho})\diverge (\bar{\rho}u)+G^{1,2}.
\end{align}
Applying $\partial_d$ to \eqref{dq1} yields
\begin{align}\label{dq2}
\partial_d\mathbb{Q}+h'(\bar{\rho})\bar{\rho}\partial_d\diverge u
=-\partial_dP'(\bar{\rho})\diverge u+g\partial_d u_d+\partial_dG^{1,2}.
\end{align}
On the other hand, recall the vertical component of the second equation in \eqref{q2}:
\begin{align}\label{dq3}
\bar{\rho}\partial_tu_d+\bar{\rho}\partial_dq
-\mu \Delta u_d-\left(\frac{d-2}{d}\mu+\mu'\right)\partial_d\diverge u=G_d^2.
\end{align}
According to \eqref{dq2} and \eqref{dq3}, we then eliminate $\partial_{d}^2u_d$ to get
\begin{align}\label{dq4}
\frac{2(d-1)\mu+d\mu'}{dh'(\bar{\rho})\bar{\rho}^2}\partial_d\mathbb{Q}
+\partial_dq
=&
-\frac{2(d-1)\mu+d\mu'}{dh'(\bar{\rho})\bar{\rho}^2}\left(\partial_dP'(\bar{\rho})\diverge u-g\partial_d u_d\right)
-\partial_tu_d
\nonumber\\&
+\frac{\mu }{\bar{\rho}}(\Delta_hu_d-\partial_d\diverge_h u_h)
\frac{2(d-1)\mu+d\mu'}{dh'(\bar{\rho})\bar{\rho}^2}\partial_dG^{1,2}+\frac{1}{\bar{\rho}}G_d^2
,
\end{align}
where $\Delta_h $ denotes  the horizontal  Laplacian   and  $\diverge_h $ the  horizontal  divergence.
By the definition \eqref{dq1} of $\mathbb{Q}$, we may view \eqref{dq4} as an evolution equation with a damping term for  $\partial_dq$. This equation
resembles the ODE $\partial_t f+f=g$, up to some errors, and this ODE displays natural dissipation structure.

Again, if we would use \eqref{dq4}  to estimate  the $4N-1$ order spatial derivatives of $\partial_d q$,
then it would lead to the appearance of $\|\nabla u\|_{L^\infty(\Omega)}\mathcal{F}_{2N}$
in the nonlinear estimates. To overcome this, we consider instead the evolution of  $\partial_d\partial_i^\mathcal{A} q$.
More precisely, for $i=1,d-1,d$, recall the definitions \eqref{w11}  of $G^{1,1,i,\sharp}$ and \eqref{w12} of $G^{1,2,i,\sharp}$.
We define
\begin{align}\label{xx2}
\mathbb{Q}^{i,\sharp}
:&=\partial_t\partial_i^\mathcal{A}q-J^{-1}\partial_t\varphi\partial_d\partial_i^\mathcal{A}q+u\cdot \nabla_\mathcal{A}\partial_i^\mathcal{A}q
=\partial_t\partial_i^\mathcal{A}q-G^{1,1,i,\sharp}
\nonumber\\&
=-h'(\bar{\rho})\diverge\left(\bar{\rho} \partial_i^\mathcal{A}u\right)
-\partial_i P'(\bar{\rho})\diverge u
+G^{1,2,i,\sharp}
.
\end{align}
Similarly as \eqref{dq4}, we utilize the vertical component of the second equation
in \eqref{Aq1} and \eqref{xx2} by eliminating $\partial_d^2 \partial_i^\mathcal{A}u$, to find
\begin{align}\label{hj}
&\frac{2(d-1)\mu+d\mu'}{dh'(\bar{\rho})\bar{\rho}^2}
\partial_d\mathbb{Q}^{i,\sharp}
+\partial_d\partial_i^\mathcal{A}q
\nonumber\\&\quad
=
-\frac{2(d-1)\mu+d\mu'}{dh'(\bar{\rho})\bar{\rho}^2}\Big(\partial_dP'(\bar{\rho})\diverge \partial_i^\mathcal{A}u
-g \partial_d\partial_i^\mathcal{A}u_d\Big.
\Big.
                                                         +\partial_d(\partial_iP'(\bar{\rho})\diverge u)
                                                         \Big)
\nonumber\\&\quad\quad
-\partial_t\partial_i^\mathcal{A}u_d
-\frac{\partial_i\bar{\rho}}{\bar{\rho}^2}\(\diverge\mathbb{S} u\)_d
+\frac{\mu }{\bar{\rho}}\Big(\Delta_h\partial_i^\mathcal{A}u_d-\partial_d\diverge_h \partial_i^\mathcal{A}u_h\Big)
\nonumber\\&\quad\quad
+\frac{2(d-1)\mu+d\mu'}{dh'(\bar{\rho})\bar{\rho}^2}\partial_dG^{1,2,i,\sharp}
+\frac{1}{\bar{\rho}}G_d^{2,i,\sharp}
.
\end{align}

We first record the energy evolution of $\partial_dq$ at the $2N$ level.
\begin{prop}\label{prop6}
$(1)$ Fix $1\leq j\leq 2N-1$ and $0\leq k \leq 4N-2j-1$. It holds that
\begin{align}\label{v1}
&\frac{d}{dt}
\norm{\partial_d\partial_t^jq}_{k,4N-2j-k-1}^2
+\norm{\partial_d\partial_t^jq}_{k,4N-2j-k-1}^2
+\norm{\partial_d\partial_t^j\mathbb{Q}}_{k,4N-2j-k-1}^2
\nonumber\\&\quad
\lesssim \norm{\partial_t^{j+1} u}_{4N-2j-1}^2+\norm{\partial_t^j\mathbb{Q}}_{0,4N-2j-k-1}^2
+\norm{\partial_t^j u}_{k+1,4N-2j-k}^2
+\sqrt{\mathcal{E}_{2N}}\mathfrak{D}_{2N}.
\end{align}
$(2)$ Fix $0\leq k \leq 4N-2$. It holds that for $i=1,d-1,d$,
\begin{align}\label{vs}
&\frac{d}{dt}
\norm{\partial_d\partial_i^\mathcal{A}q}_{k,4N-k-2}^2
+\norm{\partial_d\partial_i^\mathcal{A}q}_{k,4N-k-2}^2
+\norm{\partial_d\mathbb{Q}^{i,\sharp}}_{k,4N-k-2}^2
\nonumber\\&\quad
\lesssim
\norm{\partial_t\partial_i^\mathcal{A}u}_{4N-2}^2
+\norm{\mathbb{Q}^{i,\sharp}}_{0,4N-k-2}^2
+\norm{\partial_i^\mathcal{A}u}_{k+1,4N-k-1}^2
+\delta_{id}\norm{u}_{4N}^2
+\sqrt{\mathcal{E}_{2N}}\mathfrak{D}_{2N}.
\end{align}

\end{prop}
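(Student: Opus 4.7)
The plan is to exploit the ODE-type damping structure of equations \eqref{dq4} and \eqref{hj}, both of which have the schematic form $a\,\partial_d\mathbb{Q}_{\star}+\partial_d q_{\star}=R_{\star}$ with the positive coefficient $a=(2(d-1)\mu+d\mu')/(dh'(\bar\rho)\bar\rho^2)$, where $q_\star$ stands for $\partial_t^j q$ in part (1) or $\partial_i^\mathcal{A}q$ in part (2), and $R_\star$ collects the remaining terms. Using \eqref{dq1} and \eqref{xx2} to rewrite $\partial_d\mathbb{Q}_\star=\partial_t\partial_d q_\star-\partial_d G^{1,1}_\star$, the relation becomes the dissipative ODE $a\,\partial_t(\partial_d q_\star)+\partial_d q_\star=\widetilde R_\star$ modulo nonlinear errors, which is exactly the $\dot f+f=g$ structure referenced in Section \ref{sj}.

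For part (1), fix $j,k$ in the stated range. For each horizontal multi-index $\alpha$ with $|\alpha|\le 4N-2j-k-1$ and each spatial multi-index $\gamma$ with $|\gamma|\le k$, apply $\partial_t^j\partial^\alpha\partial^\gamma$ to \eqref{dq4} and pair the resulting equation in $L^2(\Omega)$ with $\partial_t^j\partial^\alpha\partial^\gamma\partial_d q$. The damping contribution yields $\|\partial_t^j\partial^\alpha\partial^\gamma\partial_d q\|_0^2$, while the term $a\,\partial_d\mathbb{Q}$, after being identified with $\partial_t\partial_d q-\partial_d G^{1,1}$, produces $\tfrac{1}{2}\tfrac{d}{dt}\|\cdot\|_0^2$, up to commutators with $a$ and temporal/vertical derivatives of $G^{1,1}$, which are absorbed into the nonlinear remainder. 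Cauchy--Schwarz on the right and summation over $\alpha,\gamma$ then yield the first two terms on the left of \eqref{v1}. The third term $\|\partial_d\partial_t^j\mathbb{Q}\|_{k,4N-2j-k-1}^2$ is recovered algebraically by solving \eqref{dq4} for $a\,\partial_d\mathbb{Q}$ and taking the same norm. The source terms of \eqref{dq4} identify cleanly with the quantities on the right of \eqref{v1}: $\partial_t u_d$ produces $\|\partial_t^{j+1}u\|_{4N-2j-1}^2$; the first-order pieces $\Delta_h u_d,\partial_d\diverge_h u_h,g\partial_d u_d,\partial_d P'(\bar\rho)\diverge u$ are absorbed by $\|\partial_t^j u\|_{k+1,4N-2j-k}^2$; and the nonlinearities $\partial_d G^{1,2},G_d^2$ are bounded by $\sqrt{\mathcal{E}_{2N}}\mathfrak{D}_{2N}$ via Lemma \ref{N2}. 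The auxiliary term $\|\partial_t^j\mathbb{Q}\|_{0,4N-2j-k-1}^2$ on the right absorbs residual lower-order pieces that appear when reconstructing $\partial_d\partial_t^j\mathbb{Q}$ at the $H^k$ level through the equation.

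Part (2) proceeds identically starting from \eqref{hj}. The one structural novelty is the source $-(\partial_i\bar\rho/\bar\rho^2)(\diverge\mathbb{S}u)_d$: since $\bar\rho=\bar\rho(y_d)$, this vanishes for $i\in\{1,\dots,d-1\}$ and, for $i=d$, reduces to second spatial derivatives of $u$ that are controlled by $\|u\|_{4N}^2$, which accounts for the $\delta_{id}\|u\|_{4N}^2$ on the right of \eqref{vs}. The remaining linear terms in \eqref{hj} match one of $\|\partial_t\partial_i^\mathcal{A}u\|_{4N-2}^2$ or $\|\partial_i^\mathcal{A}u\|_{k+1,4N-k-1}^2$, while $\partial_d G^{1,2,i,\sharp}$ and $G_d^{2,i,\sharp}$ are bounded via Lemma \ref{NW}. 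Crucially, using the good unknowns $\partial_i^\mathcal{A}$ instead of plain $\partial_i$ prevents any appearance of $\mathcal{F}_{2N}$ in the top-order nonlinear estimate, exactly as explained in Section \ref{sj} and already exploited in Proposition \ref{prop5}.

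The main obstacle is the careful bookkeeping of derivative counts, so that every source term lands within the dissipation budget $\mathfrak{D}_{2N}$ or into one of the controlled subordinate norms appearing on the right-hand sides of \eqref{v1} and \eqref{vs}. In part (2) one must additionally verify that the cancellation achieved by the substitution $\partial_i\mapsto\partial_i^\mathcal{A}$ survives differentiation up to the top order $4N-k-2$ of $\partial_d G^{1,2,i,\sharp}$; this is precisely what keeps $\mathcal{F}_{2N}$ from appearing on the right-hand side of \eqref{vs}.
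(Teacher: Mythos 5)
Your overall strategy is the paper's: both parts are proved by an energy estimate that exploits the damped structure of \eqref{dq4} and \eqref{hj}, identifying $\partial_d\mathbb{Q}_\star$ with $\partial_t\partial_dq_\star-\partial_dG^{1,1}_\star$ so that the leading term yields $\tfrac{d}{dt}\|\cdot\|_0^2$ plus the damping $\|\cdot\|_0^2$, with the sources matched to $\|\partial_t^{j+1}u\|_{4N-2j-1}^2$, $\|\partial_t^ju\|_{k+1,4N-2j-k}^2$ (resp. $\|\partial_t\partial_i^\mathcal{A}u\|_{4N-2}^2$, $\|\partial_i^\mathcal{A}u\|_{k+1,4N-k-1}^2$, $\delta_{id}\|u\|_{4N}^2$) and the nonlinearities to Lemmas \ref{N2} and \ref{NW}. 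Two organizational deviations are harmless: the paper tests against $\partial^\alpha\partial_d^{k'+1}\bigl(\partial_d^\mathcal{A}q+\mathbb{Q}^{d,\sharp}\bigr)$ so that the quadratic term $\int a\,\abs{\partial^\alpha\partial_d^{k'+1}\mathbb{Q}^{d,\sharp}}^2$ delivers the $\mathbb{Q}$-dissipation directly, whereas you test only against $\partial_dq_\star$ and recover $\|\partial_d\mathbb{Q}_\star\|_{k,\cdot}^2$ algebraically from the equation — this is legitimate since the needed norm of the right-hand side is controlled; and the paper runs a recursion in the vertical count $k'$ with a ``suitable linear combination'' to absorb the commutators $[\partial_d^{k'},a]\partial_d\mathbb{Q}_\star$, which produce intermediate vertical derivatives of $\mathbb{Q}_\star$ that are \emph{not} covered by $\|\mathbb{Q}_\star\|_{0,\cdot}^2$; you only gesture at this (``residual lower-order pieces''), and these commutators are linear, not part of the $\sqrt{\mathcal{E}_{2N}}\mathfrak{D}_{2N}$ remainder, though they too can be handled by your algebraic-recovery device applied at each vertical level.

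There is, however, one step that fails as written. In part (2) you claim the ``temporal/vertical derivatives of $G^{1,1}$'' are ``absorbed into the nonlinear remainder.'' At the top derivative count ($|\alpha|+|\gamma|$ up to $4N-2$ applied to $\partial_dG^{1,1,i,\sharp}$) the transport piece $u\cdot\nabla_\mathcal{A}\partial^{\alpha}\partial^{\gamma}\partial_d\partial_i^\mathcal{A}q$ carries $4N+1$ derivatives of $q$, which is beyond what $\mathfrak{D}_{2N}$ (or $\mathcal{E}_{2N}$) controls, so a direct Cauchy--Schwarz bound by $\sqrt{\mathcal{E}_{2N}}\mathfrak{D}_{2N}$ is impossible; this is exactly why Lemma \ref{NW} only estimates $\|G^{1,1,i,\sharp}\|_{4N-2}$ and not one order higher. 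The paper handles this term by the antisymmetric transport cancellation: pairing $\partial^{\alpha}\partial_d^{k'+1}G^{1,1,d,\sharp}$ with $\partial^{\alpha}\partial_d^{k'+1}\partial_d^\mathcal{A}q$ and integrating by parts as in \eqref{s5}--\eqref{s6}, see \eqref{ti1}, so that the top-order contribution reduces to $\int\bigl(\text{divergence of coefficients}\bigr)\abs{w}^2$ plus controllable commutators. Your proof needs this cancellation spelled out (it is the very reason the good unknowns $\partial_i^\mathcal{A}$ are used here); once it is inserted, and the $a$-commutators are treated by the recursion or re-expressed through the equation, the rest of your bookkeeping (including the observation that $\partial_i\bar\rho=\partial_iP'(\bar\rho)=0$ for horizontal $i$, which is the source of the $\delta_{id}\|u\|_{4N}^2$ term) matches the paper's argument.
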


\begin{proof}
We may first focus on the derivation of the estimate \eqref{vs} for $i=d$. We first fix $0\leq k\leq 4N-2$,
and then let $0\leq k'\leq k$ and $\alpha\in \mathbb{N}^{d-1}$
so that $|\alpha|\leq 4N-2-k'$. Applying $\partial^\alpha\partial_d^{k'}$ to \eqref{hj}
and then taking the $L^2(\Omega)$ inner product of the resulting with $\partial^\alpha\partial_d^{k'+1}\left(\partial_d^\mathcal{A}q+\mathbb{Q}^{d,\sharp}\right)$,
we obtain
\begin{align}\label{v5}
\uppercase\expandafter{\romannumeral1}
+\uppercase\expandafter{\romannumeral2}
=\uppercase\expandafter{\romannumeral3},
\end{align}
where
\begin{align}\label{v6}
\uppercase\expandafter{\romannumeral1}
=\int_{\Omega}\partial^\alpha\partial_d^{k'}
\left(\frac{2(d-1)\mu+d\mu'}{dh'(\bar{\rho})\bar{\rho}^2}\partial_d\mathbb{Q}^{d,\sharp}\right)
\times \left(\partial^\alpha\partial_d^{k'+1}\partial_d^\mathcal{A}q
+\partial^\alpha\partial_d^{k'+1}\mathbb{Q}^{d,\sharp}\right),
\end{align}

\begin{align}\label{v7}
\uppercase\expandafter{\romannumeral2}
=\int_{\Omega}\abs{\partial^\alpha\partial_d^{k'+1} \partial_d^\mathcal{A}q}^2
+\int_{\Omega} \partial^\alpha\partial_d^{k'+1}\partial_d^\mathcal{A}q\partial^\alpha\partial_d^{k'+1}\mathbb{Q}^{d,\sharp}
\end{align}
and
\begin{align}\label{v8}
\uppercase\expandafter{\romannumeral3}
=&\int_{\Omega}\partial^\alpha\partial_d^{k'}
\left\{
-\frac{2(d-1)\mu+d\mu'}{dh'(\bar{\rho})\bar{\rho}^2}\left(\partial_dP'(\bar{\rho})\left(\diverge \partial_d^\mathcal{A}u
                                                +\partial_d\diverge u        \right)
                                                +\partial_d ^2P'(\bar{\rho})\diverge u
                                                -g \partial_d\partial_d^\mathcal{A}u_d
                                                \right)
\right.
\nonumber\\&\quad\quad\quad\quad\;\,\,
\left.
-\partial_t\partial_d^\mathcal{A}u_d
+\frac{\partial_i\bar{\rho}}{\bar{\rho}^2}(\diverge\mathbb{S} u)_d
+\frac{\mu }{\bar{\rho}}\left(\Delta_h\partial_d^\mathcal{A}u_d-\partial_d\diverge_h \partial_d^\mathcal{A}u_h\right)
\right.
\nonumber\\&\quad\quad\quad\quad\;\,\,
\left.
+\frac{2(d-1)\mu+d\mu'}{dh'(\bar{\rho})\bar{\rho}^2}\partial_dG^{1,2,d,\sharp}
+\frac{1}{\bar{\rho}}G_d^{2,d,\sharp}
\right\}
\times
\left(\partial^\alpha\partial_d^{k'+1}\partial_d^\mathcal{A}q
+\partial^\alpha\partial_d^{k'+1}\mathbb{Q}^{d,\sharp}\right)
 .
\end{align}
Recall the definition of $\mathbb{Q}^{d,\sharp}$, we find
\begin{align}\label{I2}
\uppercase\expandafter{\romannumeral2}
=\frac{1}{2}\frac{d}{dt}\int_{\Omega}\abs{\partial^\alpha\partial_d^{k'+1}\partial_d^\mathcal{A}q}^2
+\int_{\Omega}\abs{\partial^\alpha\partial_d^{k'+1}\partial_d^\mathcal{A}q}^2
-\int_{\Omega}\partial^\alpha\partial_d^{k'+1}\partial_d^\mathcal{A}q\partial^\alpha\partial_d^{k'+1}G^{1,1,d,\sharp}
\end{align}
and
\begin{align}\label{I3}
\uppercase\expandafter{\romannumeral1}
\geq&
\frac{1}{2}\frac{d}{dt}\int_{\Omega}\frac{2(d-1)\mu+d\mu'}{dh'(\bar{\rho})\bar{\rho}^2}
\abs{\partial^\alpha\partial_d^{k'+1}\partial_d^\mathcal{A}q}^2
+\int_{\Omega}\frac{2(d-1)\mu+d\mu'}{dh'(\bar{\rho})\bar{\rho}^2}\abs{\partial^\alpha\partial_d^{k'+1}\mathbb{Q}^{d,\sharp}}^2
\nonumber\\&
-C\sum_{k''\leq k'}\norm{\partial^\alpha\partial_d^{k''}\mathbb{Q}^{d,\sharp}}_0
\left(\norm{\partial^\alpha\partial_d^{k'+1}\partial_d^\mathcal{A}q}_0
+\norm{\partial^\alpha\partial_d^{k'+1}\mathbb{Q}^{d,\sharp}}_0\right)
\nonumber\\&
-\int_{\Omega}\frac{2(d-1)\mu+d\mu'}{dh'(\bar{\rho})\bar{\rho}^2}
\partial^\alpha\partial_d^{k'+1}\partial_d^\mathcal{A}q \partial^\alpha\partial_d^{k'+1}G^{1,1,d,\sharp}.
\end{align}
For $\uppercase\expandafter{\romannumeral3}$, we can derive
\begin{align}\label{I4}
\uppercase\expandafter{\romannumeral3}
\lesssim &
\Bigg\{
\norm{\partial_t\partial_d^\mathcal{A}u}_{4N-2}
+\sum_{k''\leq k'}\left(\norm{\partial^\alpha\partial_d^{k''}\nabla \partial_d^\mathcal{A}u}_{0}
+\norm{\partial^\alpha\partial_d^{k''}D\nabla \partial_d^\mathcal{A}u}_{0}
+\norm{\partial^\alpha\partial_d^{k''}\nabla u}_{1}\right)
\Bigg.
\nonumber\\&\quad
\Bigg.
+\norm{G^{1,2,d,\sharp}}_{4N-1}+\norm{G^{2,d,\sharp}}_{4N-2}
\Bigg\}
\times
\displaystyle\left(\norm{\partial^\alpha\partial_d^{k'+1}\partial_d^\mathcal{A}q}_0
+\norm{\partial^\alpha\partial_d^{k'+1}\mathbb{Q}^{d,\sharp}}_0\right).
\end{align}
Similarly as \eqref{s6}, we have
\begin{align}\label{ti1}
&\int_{\Omega}\left(1+\frac{2(d-1)\mu+d\mu'}{dh'(\bar{\rho})\bar{\rho}^2}\right)
\partial^\alpha\partial_d^{k'+1}\partial_d^\mathcal{A}q
\partial^\alpha\partial_d^{k'+1}G^{1,1,d,\sharp}
\lesssim \sqrt{\mathcal{E}_{2N}}\mathfrak{D}_{2N}.
\end{align}

In light of \eqref{I2}--\eqref{ti1}, by Cauchy's inequality, we deduce from \eqref{v5}  that,  by \eqref{NW1},
\begin{align}\label{I5}
&\frac{d}{dt}\norm{\partial^\alpha\partial_d^{k'+1}\partial_d^\mathcal{A}q}_0^2
+\norm{\partial^\alpha\partial_d^{k'+1}\partial_d^\mathcal{A}q}_0^2
+\norm{\partial^\alpha\partial_d^{k'+1}\mathbb{Q}^{d,\sharp}}_0^2
\nonumber\\&\quad
\lesssim
\sum_{k''\leq k'}\norm{\partial^\alpha\partial_d^{k''}\mathbb{Q}^{d,\sharp}}_0^2
+\sum_{k''\leq k'}\left(\norm{\partial^\alpha\partial_d^{k''}\nabla \partial_d^\mathcal{A}u}_{0}^2
+\norm{\partial^\alpha\partial_d^{k''}D\nabla \partial_d^\mathcal{A}u}_{0}^2\right)
\nonumber\\&\quad\quad
+\norm{u}_{4N}^2
+\norm{\partial_t\partial_d^\mathcal{A}u}_{4N-2}^2
+\norm{G^{1,2,d,\sharp}}_{4N-1}^2
+\norm{G^{2,d,\sharp}}_{4N-1}^2
\nonumber\\&\quad
\lesssim
\sum_{k''\leq k'}\norm{\partial^\alpha\partial_d^{k''}\mathbb{Q}^{d,\sharp}}_0^2
+\sum_{k''\leq k'}\norm{\partial^\alpha\partial_d^{k''}\partial_d^\mathcal{A}u}_{1,1}^2
+\norm{u}_{4N}^2
+\norm{\partial_t\partial_d^\mathcal{A}u}_{4N-2}^2
+\sqrt{\mathcal{E}_{2N}}\mathfrak{D}_{2N}.
\end{align}
Summing \eqref{I5} over such $\alpha$ yields
\begin{align}\label{i5}
&\frac{d}{dt}\norm{\partial_d^{k'+1}\partial_d^\mathcal{A}q}_{0,4N-2-k'}^2
+\norm{\partial_d^{k'+1}\partial_d^\mathcal{A}q}_{0,4N-2-k'}^2
+\norm{\partial_d^{k'+1}\mathbb{Q}^{d,\sharp}}_{0,4N-2-k'}^2
\nonumber\\&\quad
\lesssim
\sum_{k''\leq k'}\norm{\partial_d^{k''}\mathbb{Q}^{d,\sharp}}_{0,4N-2-k'}^2
+\sum_{k''\leq k'}\norm{\partial_d^{k''}\partial_d^\mathcal{A}u}_{1,4N-1-k'}^2
\nonumber\\&\quad\quad
+\norm{u}_{4N}^2
+\norm{\partial_t\partial_d^\mathcal{A}u}_{4N-2}^2
+\sqrt{\mathcal{E}_{2N}}\mathfrak{D}_{2N}.
\end{align}

A suitable linear combination of  \eqref{i5} from $k'=k$ to $k'=0$ leads to
\begin{align}\label{I6}
&\frac{d}{dt}
\norm{\partial_d\partial_d^\mathcal{A}q}_{k,4N-k-2}^2
+\norm{\partial_d\partial_d^\mathcal{A}q}_{k,4N-k-2}^2
+\norm{\partial_d\mathbb{Q}^{d,\sharp}}_{k,4N-k-2}^2
\nonumber\\&\quad
\lesssim
\norm{\partial_t\partial_d^\mathcal{A}u}_{4N-2}^2
+\norm{\mathbb{Q}^{d,\sharp}}_{0,4N-k-2}^2
+
\norm{\partial_d^\mathcal{A}u}_{k+1,4N-k-1}^2
+\norm{u}_{4N}^2
+\sqrt{\mathcal{E}_{2N}}\mathfrak{D}_{2N}.
\end{align}
This gives \eqref{vs} for $i=d$.

The estimates \eqref{vs} for $i=1,d-1$ follow similarly as that for $i=d$,
and the estimate \eqref{v1} follows similarly as \eqref{vs} by using \eqref{dq4} and \eqref{ne5}
in place of \eqref{hj} and \eqref{NW1}, respectively.
\end{proof}

We then record a similar result at the $N+2$ level.
\begin{prop}\label{prop8}
$(1)$ Fix $1\leq j\leq N+1$ and $0\leq k \leq 2(N+2)-2j-1$. It holds that
\begin{align}\label{I7}
&\frac{d}{dt}
\norm{\partial_d\partial_t^jq}_{k,2(N+2)-2j-k-1}^2
+\norm{\partial_d\partial_t^jq}_{k,2(N+2)-2j-k-1}^2
+\norm{\partial_d\partial_t^j\mathbb{Q}}_{k,2(N+2)-2j-k-1}^2
\nonumber\\&\quad
\lesssim \norm{\partial_t^{j+1} u}_{2(N+2)-2j-1}^2+\norm{\partial_t^j\mathbb{Q}}_{0,2(N+2)-2j-k-1}^2
+\norm{\partial_t^j u}_{k+1,2(N+2)-2j-k}^2
+\sqrt{\mathcal{E}_{2N}}\mathcal{D}_{N+2,1}.
\end{align}
$(2)$ Fix $0\leq k \leq 2(N+2)-1$. It holds that
\begin{align}\label{v2}
&\frac{d}{dt}
\norm{\partial_dq}_{k,2(N+2)-k-1}^2
+\norm{\partial_dq}_{k,2(N+2)-k-1}^2
+\norm{\partial_d\mathbb{Q}}_{k,2(N+2)-k-1}^2
\nonumber\\&\quad
\lesssim \norm{\partial_t u}_{2(N+2)-1}^2+\norm{\mathbb{Q}}_{0,2(N+2)-k-1}^2
+\norm{ D u}_{k+1,2(N+2)-k-1}^2
\nonumber\\&\quad\quad
+\norm{ u_d}_{k+1,2(N+2)-k-1}^2
+\sqrt{\mathcal{E}_{2N}}\mathcal{D}_{N+2,1}.
\end{align}
\end{prop}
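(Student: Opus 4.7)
The overall strategy mirrors that of Proposition \ref{prop6}, but at the $N+2$ level: I work with the evolution equation \eqref{dq4} in place of \eqref{hj}, replace the $2N$-level nonlinear estimates \eqref{ne5} (or \eqref{NW1}) by the $N+2$-level estimates \eqref{ne2}, and aim to close with $\sqrt{\mathcal{E}_{2N}}\mathcal{D}_{N+2,1}$ on the right-hand side.

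For Part (1), I apply $\partial_t^j$ to \eqref{dq4}. Then for each $0\le k'\le k$ and each multi-index $\alpha\in\mathbb{N}^{d-1}$ with $|\alpha|\le 2(N+2)-2j-2-k'$, I apply $\partial^\alpha\partial_d^{k'}$ and pair in $L^2(\Omega)$ with $\partial^\alpha\partial_d^{k'+1}(\partial_d\partial_t^j q+\partial_t^j\mathbb{Q})$, exactly as in \eqref{v5}--\eqref{I6}. The damping in $\partial_d\partial_t^j q$ produces $\frac{1}{2}\frac{d}{dt}\norm{\partial^\alpha\partial_d^{k'+1}\partial_d\partial_t^j q}_0^2+\norm{\partial^\alpha\partial_d^{k'+1}\partial_d\partial_t^j q}_0^2$, while the $\mathbb{Q}$-piece gives $c\norm{\partial^\alpha\partial_d^{k'+1}\partial_t^j\mathbb{Q}}_0^2$ once Cauchy's inequality absorbs the cross-terms. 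The right-hand side of the $\partial_t^j$-differentiated \eqref{dq4} contributes $\norm{\partial_t^{j+1}u}_{2(N+2)-2j-1}^2$ from $\partial_t\partial_t^j u_d$, $\norm{\partial_t^j u}_{k+1,2(N+2)-2j-k}^2$ from the $\Delta_h\partial_t^j u_d-\partial_d\diverge_h\partial_t^j u_h$, the $g\partial_d\partial_t^j u_d$ and the $\partial_d P'(\bar\rho)\diverge\partial_t^j u$ terms, and $\sqrt{\mathcal{E}_{2N}}\mathcal{D}_{N+2,1}$ from the nonlinearities $\partial_t^j G^{1,2}$ and $\partial_t^j G_d^2$ via \eqref{ne2}. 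Summing over $\alpha$ and taking a suitable linear combination from $k'=k$ down to $k'=0$ absorbs the lower-$k'$ norms of $\partial_t^j\mathbb{Q}$ into the damping and yields \eqref{I7}.

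For Part (2), I run the same template directly on \eqref{dq4} without any $\partial_t^j$: for $0\le k'\le k$ and $|\alpha|\le 2(N+2)-1-k'$, I pair $\partial^\alpha\partial_d^{k'}$ of \eqref{dq4} with $\partial^\alpha\partial_d^{k'+1}(\partial_d q+\mathbb{Q})$. The dissipative structure now yields, after summing and a linear combination, $\frac{d}{dt}\norm{\partial_d q}_{k,2(N+2)-k-1}^2+\norm{\partial_d q}_{k,2(N+2)-k-1}^2+\norm{\partial_d\mathbb{Q}}_{k,2(N+2)-k-1}^2$ on the left, while the right-hand side of \eqref{dq4} produces $\norm{\partial_t u}_{2(N+2)-1}^2$ from $\partial_t u_d$, $\norm{Du}_{k+1,2(N+2)-k-1}^2$ from $\Delta_h u_d-\partial_d\diverge_h u_h$ and the $\partial_d P'(\bar\rho)\diverge u$ piece, and $\norm{u_d}_{k+1,2(N+2)-k-1}^2$ from the $g\partial_d u_d$ contribution; the nonlinear pieces $G^{1,2}$ and $G_d^2$ are bounded by $\sqrt{\mathcal{E}_{2N}}\mathcal{D}_{N+2,1}$ through \eqref{ne2}.

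The main technical point, inherited from the proof of Proposition \ref{prop6}, is the treatment of the $G^{1,1}$-piece that appears once $\mathbb{Q}=\partial_t q-G^{1,1}$ is used to read off the damping. Its derivative $\partial^\alpha\partial_d^{k'+1}G^{1,1}=\partial^\alpha\partial_d^{k'+1}(J^{-1}\partial_t\varphi\partial_d q-u_l\mathcal{A}_{lk}\partial_k q)$ naively requires one more derivative on $q$ than $\mathcal{D}_{N+2,1}$ allows. The remedy, borrowed from \eqref{s3}--\eqref{s6}, is to split off the principal parts $J^{-1}\partial_t\varphi\,\partial_d\partial^\alpha\partial_d^{k'+1}q$ and $-u_l\mathcal{A}_{lk}\partial_k\partial^\alpha\partial_d^{k'+1}q$, rewrite the corresponding $L^2$ pairing as $\tfrac{1}{2}\int_\Omega (J^{-1}\partial_t\varphi\partial_d-u_l\mathcal{A}_{lk}\partial_k)|\partial^\alpha\partial_d^{k'+1}\partial_d q|^2/h'(\bar\rho)$, and integrate by parts so that the top-order $\partial_d q$ factor is squared and the cost transfers onto $\partial_d(h'(\bar\rho)^{-1}J^{-1}\partial_t\varphi)-\partial_k(h'(\bar\rho)^{-1}u_l\mathcal{A}_{lk})$, which is bounded by $\sqrt{\mathcal{E}_{2N}}$; the commutator remainders cost no extra derivative on $q$ and are controlled by \eqref{ne2}.
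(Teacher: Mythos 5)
Your proposal takes essentially the same route as the paper: the paper's proof of Proposition \ref{prop8} is exactly ``repeat the proof of Proposition \ref{prop6} with \eqref{dq4} in place of \eqref{hj} and \eqref{ne2} in place of \eqref{NW1}'', and you reproduce this faithfully, including the key transport/integration-by-parts treatment of the $G^{1,1}$ piece (the analogue of \eqref{s3}--\eqref{s6} and \eqref{ti1}) that avoids losing a derivative on $q$. The only blemishes are cosmetic index slips in the sketch (e.g.\ in Part (2) the test function should be $\partial^\alpha\partial_d^{k'+1}(q+\mathbb{Q})=\partial^\alpha\partial_d^{k'}(\partial_d q+\partial_d\mathbb{Q})$ rather than carrying an extra $\partial_d$ on the first slot, and similarly in Part (1)), which do not affect the argument or the stated conclusions \eqref{I7} and \eqref{v2}.
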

\begin{proof}
The proof is similar to that of Proposition \ref{prop6} by using \eqref{dq4} and \eqref{ne2}
in place of \eqref{hj} and \eqref{NW1},  respectively.
\end{proof}

\section{Elliptic regularity}
In this section, we will provide the elliptic regularity estimates that will be employed to derive the estimates for the full derivatives
of the solution.
\subsection{The Lam\'{e} system}
We shall use the following Lam\'{e} system derived from \eqref{q2} for the estimates in the energy:
\begin{equation}\label{q5}
\begin{cases}
-\mu \Delta u-\left(\displaystyle\frac{d-2}{d}\mu+\mu'\right)\nabla \diverge u
            = -\bar{\rho}\partial_t u-\bar{\rho}\nabla q +G^2 & \text{ in } \Omega
\\-\mathbb{S}ue_d=-\rho^\star qe_d+\rho^\star g \eta e_d+G^4   & \text{ on  } \Sigma
\\u=0    & \text{ on  } \Sigma_b.
\end{cases}
\end{equation}

We first record the  estimates at the $2N$ level.
\begin{lem}\label{llem}
 It holds that for $0\leq j\leq 2N-1$,
\begin{align}\label{chf1}
\norm{\partial_t^ju}_{4N-2j}^2
\lesssim \norm{\partial_t^{j+1}u}_{4N-2j-2}^2
+\norm{\partial_t^j q}_{4N-2j-1}^2
+\abs{\partial_t^j\eta}_{4N-2j-3/2}^2
+(\mathcal{E}_{2N})^2.
\end{align}
\end{lem}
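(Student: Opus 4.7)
The plan is to apply $\partial_t^j$ to the Lam\'e system \eqref{q5} and invoke a standard elliptic regularity estimate for the resulting boundary value problem. Since the coefficients $\mu,\mu',\bar\rho,\rho^\star$ are time-independent, applying $\partial_t^j$ to \eqref{q5} produces the same system with $u,q,\eta,G^2,G^4$ replaced by $\partial_t^ju,\partial_t^jq,\partial_t^j\eta,\partial_t^jG^2,\partial_t^jG^4$, together with an additional forcing term $-\bar\rho\,\partial_t^{j+1}u$ on the right-hand side of the bulk equation:
\begin{equation*}
\begin{cases}
-\mu\Delta(\partial_t^ju)-\left(\tfrac{d-2}{d}\mu+\mu'\right)\nabla\diverge(\partial_t^ju)
= -\bar\rho\,\partial_t^{j+1}u-\bar\rho\nabla\partial_t^jq+\partial_t^jG^2 & \text{in } \Omega,\\
-\mathbb{S}(\partial_t^ju)e_d = -\rho^\star\partial_t^jq\,e_d+\rho^\star g\,\partial_t^j\eta\,e_d+\partial_t^jG^4 & \text{on } \Sigma,\\
\partial_t^ju=0 & \text{on } \Sigma_b.
\end{cases}
\end{equation*}

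Next I would appeal to the standard regularity theory for the Lam\'e system with Dirichlet condition on $\Sigma_b$ and traction-type condition on $\Sigma$, at the regularity level $4N-2j\geq 2$ (this holds since $j\leq 2N-1$). Combined with the smoothness of $\bar\rho$, this yields
\begin{align*}
\norm{\partial_t^ju}_{4N-2j}^2 &\lesssim \norm{\partial_t^{j+1}u}_{4N-2j-2}^2 + \norm{\nabla\partial_t^jq}_{4N-2j-2}^2 + \norm{\partial_t^jG^2}_{4N-2j-2}^2\\
&\quad + \abs{\partial_t^jq}_{4N-2j-3/2}^2 + \abs{\partial_t^j\eta}_{4N-2j-3/2}^2 + \abs{\partial_t^jG^4}_{4N-2j-3/2}^2,
\end{align*}
where lower order $L^2$ terms are absorbed using $\partial_t^ju=0$ on $\Sigma_b$ together with Poincar\'e's inequality of Lemma~\ref{poincare_b}. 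The trace inequality $\abs{\partial_t^jq}_{4N-2j-3/2}^2\lesssim\norm{\partial_t^jq}_{4N-2j-1}^2$ and the trivial bound $\norm{\nabla\partial_t^jq}_{4N-2j-2}^2\lesssim\norm{\partial_t^jq}_{4N-2j-1}^2$ then collapse the two $q$-contributions into the single term appearing on the right of \eqref{chf1}.

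Finally, the nonlinear forcings $\norm{\partial_t^jG^2}_{4N-2j-2}^2$ and $\abs{\partial_t^jG^4}_{4N-2j-3/2}^2$ are controlled by $(\mathcal{E}_{2N})^2$ via the estimate \eqref{ne3} in Lemma~\ref{N2}, which applies precisely in the range $0\leq j\leq 2N-1$. Assembling these pieces yields \eqref{chf1}.

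The argument is essentially routine once the abstract elliptic estimate for the Lam\'e system in the horizontally infinite slab is available; the main point to double-check is that this estimate holds with the mixed Dirichlet/traction boundary conditions uniformly at every relevant Sobolev index $s=4N-2j\geq 2$, and that the constants in \eqref{ne3} are independent of $j$. No genuine obstacle is anticipated: the main work of this lemma has in fact been shifted to \eqref{ne3} and to the elliptic lemma from the preliminaries, both of which are already in hand.
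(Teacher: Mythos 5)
Your proposal is correct and follows essentially the same route as the paper: apply $\partial_t^j$ to the Lam\'e system \eqref{q5}, invoke the elliptic estimate of Lemma \ref{St1} with $r=4N-2j\geq 2$, use the trace theory to absorb the boundary trace of $\partial_t^j q$ into $\norm{\partial_t^j q}_{4N-2j-1}^2$, and control the nonlinear forcings by \eqref{ne3}. This matches the paper's proof of Lemma \ref{llem}.
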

\begin{proof}
Fix $0\leq j\leq 2N-1$.
 Applying $\partial_t^j$ to \eqref{q5},
 and then employing the elliptic estimates of Lemma \ref{St1} with $r=4N-2j\geq 2$, by the trace theory and \eqref{ne3}, we  obtain
\begin{align}\label{ee1}
\norm{\partial_t^ju}_{4N-2j}^2
&\lesssim  \norm{\partial_t^{j+1}u}_{4N-2j-2}^2
+\norm{\partial_t^j\nabla q}_{4N-2j-2}^2+\norm{\partial_t^jG^2}_{4N-2j-2}^2
\nonumber\\&\quad
+\abs{\partial_t^jq}_{4N-2j-3/2}^2+\abs{\partial_t^j\eta}_{4N-2j-3/2}^2+\abs{\partial_t^jG^4}_{4N-2j-3/2}^2
\nonumber\\&
\lesssim \norm{\partial_t^{j+1}u}_{4N-2j-2}^2
+\norm{\partial_t^j q}_{4N-2j-1}^2
+\abs{\partial_t^j\eta}_{4N-2j-3/2}^2
+(\mathcal{E}_{2N})^2.
\end{align}
This completes \eqref{chf1}.
\end{proof}

We then record the  estimates at the $N+2$ level.
\begin{lem}\label{llem2}
 It holds that for $1\leq j\leq N+1$,
\begin{align}\label{chf2}
&\norm{\partial_t^ju}_{2(N+2)-2j}^2
\lesssim
\norm{\partial_t^{j+1}u}_{2(N+2)-2j-2}^2
+\norm{\partial_t^j q}_{2(N+2)-2j-1}^2
+\abs{\partial_t^j\eta}_{2(N+2)-2j-3/2}^2
+\mathcal{E}_{2N}\mathcal{E}_{N+2,1}
\end{align}
and that
\begin{align}\label{chf3}
&
\norm{Du}_{2(N+2)-1}^2
\lesssim
\norm{\partial_tu}_{2(N+2)-2}^2
+\norm{ Dq}_{2(N+2)-2}^2
+\abs{D\eta}_{2(N+2)-5/2}^2
+\mathcal{E}_{2N}\mathcal{E}_{N+2,1}.
\end{align}
\end{lem}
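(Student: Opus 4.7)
The proof follows the same template as Lemma \ref{llem}, treating the two parts of Lemma \ref{llem2} separately. The key point is that the Lam\'e system \eqref{q5} has constant-in-$x_h$ (indeed, $x_d$-dependent only) coefficients, so both $\partial_t^j$ and the purely horizontal derivative $D$ commute through it cleanly, and we may invoke the elliptic estimates of Lemma \ref{St1} after differentiation. The nonlinear forcing will be absorbed into $\mathcal{E}_{2N}\mathcal{E}_{N+2,1}$ via the estimates \eqref{ne1} in Lemma \ref{lem1}.

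For \eqref{chf2}, fix $1\le j\le N+1$ and apply $\partial_t^j$ to \eqref{q5}. Since $\bar{\rho}$ is time-independent and $\partial_t^j u=0$ on $\Sigma_b$, we obtain a Lam\'e system for $\partial_t^j u$ with bulk forcing
$-\bar{\rho}\partial_t^{j+1}u-\bar{\rho}\nabla(\partial_t^j q)+\partial_t^j G^2$
and boundary data
$-\rho^\star\partial_t^j q\,e_d+\rho^\star g\partial_t^j\eta\,e_d+\partial_t^j G^4$.
Apply Lemma \ref{St1} with $r=2(N+2)-2j\ge 2$. Using the trace theorem on the $\partial_t^j q$ boundary term and the estimate \eqref{ne1} to control $\norm{\partial_t^j G^2}_{2(N+2)-2j-2}^2+\abs{\partial_t^j G^4}_{2(N+2)-2j-3/2}^2\lesssim \mathcal{E}_{2N}\mathcal{E}_{N+2,1}$, one arrives at exactly
\begin{align*}
\norm{\partial_t^j u}_{2(N+2)-2j}^2 &\lesssim \norm{\partial_t^{j+1}u}_{2(N+2)-2j-2}^2+\norm{\partial_t^j\nabla q}_{2(N+2)-2j-2}^2\\
&\quad+\abs{\partial_t^j q}_{2(N+2)-2j-3/2}^2+\abs{\partial_t^j\eta}_{2(N+2)-2j-3/2}^2+\mathcal{E}_{2N}\mathcal{E}_{N+2,1},
\end{align*}
and another application of the trace theory absorbs the $\abs{\partial_t^j q}_{2(N+2)-2j-3/2}^2$ term into $\norm{\partial_t^j q}_{2(N+2)-2j-1}^2$.

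For \eqref{chf3}, apply the horizontal derivative $D$ to \eqref{q5}. Since $D\bar{\rho}=0$ and $D u=0$ on $\Sigma_b$, we get a Lam\'e system for $Du$ with bulk source $-\bar{\rho}\partial_t Du-\bar{\rho}\nabla Dq+DG^2$ and boundary source $-\rho^\star Dq\,e_d+\rho^\star g D\eta\,e_d+DG^4$. Apply Lemma \ref{St1} with $r=2(N+2)-1\ge 3$, bounding $\norm{DG^2}_{2(N+2)-3}^2\le \norm{G^2}_{2(N+2)-2}^2$ and $\abs{DG^4}_{2(N+2)-5/2}^2$ both by $\mathcal{E}_{2N}\mathcal{E}_{N+2,1}$ via \eqref{ne1}; the trace theorem on $\abs{Dq}_{2(N+2)-5/2}^2$ is absorbed into $\norm{Dq}_{2(N+2)-2}^2$, yielding \eqref{chf3}.

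There is no genuine obstacle here: the estimate is structurally identical to \eqref{chf1}, and all the index bookkeeping is tailored so that \eqref{ne1} exactly provides the nonlinear bounds we need (in particular, the range $1\le j\le N+1$ in \eqref{chf2} aligns with the range $j=1,\dots,N+1$ in the $\sum \abs{\partial_t^j G^4}_{2(N+2)-2j-3/2}^2$ term of \eqref{ne1}, and the $j=0$ case of \eqref{chf3} uses the separately stated $\abs{G_h^4}_{2(N+2)-3/2}^2+\abs{DG_d^4}_{2(N+2)-5/2}^2$ piece of \eqref{ne1}). The only mild point of care is the $D$-derivative estimate in part (2): one must check that differentiating $G^4$ horizontally and then taking an $H^{2(N+2)-5/2}$ boundary norm stays within the $\abs{DG_d^4}_{2(N+2)-5/2}^2$ budget of \eqref{ne1}, which indeed it does.
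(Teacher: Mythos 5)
Your proposal is correct and follows essentially the same route as the paper: the paper proves Lemma \ref{llem2} exactly by repeating the argument of Lemma \ref{llem} (apply $\partial_t^j$, respectively $D$, to the Lam\'e system \eqref{q5}, invoke Lemma \ref{St1} with $r=2(N+2)-2j$, respectively $r=2(N+2)-1$, and use the trace theory), with \eqref{ne1} replacing \eqref{ne3} to bound the nonlinear terms, which is precisely your index bookkeeping for $\partial_t^jG^2$, $\partial_t^jG^4$ ($j\ge1$), $G^2$, $G_h^4$ and $DG_d^4$.
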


\begin{proof}
The proof is similar to that of Lemma \ref{llem} by using \eqref{ne1}
in place of \eqref{ne3}.
\end{proof}
\subsection{The Stokes problem}
We shall use the Stokes system for the estimates in the dissipation.
Using the second equation in \eqref{q2} and \eqref{dq1}, we can derive the following Stokes system:
\begin{equation}\label{Sto1}
\begin{cases}
-\mu\Delta \left(\displaystyle\frac{u}{\bar{\rho}}\right)+\nabla q
=-\partial_t u
-\mu\left(2\partial_d\left(\displaystyle\frac{1}{\bar{\rho}}\right)\partial_d u+\partial_d^2\left(\displaystyle\frac{1}{\bar{\rho}}\right)u\right)
 \\
\quad\quad\quad\quad\quad\quad\quad\quad\;\;
+\displaystyle\frac{(d-2)\mu+d\mu'}{d\bar{\rho}}\nabla\left(\frac{-\mathbb{Q}+gu_d+G^{1,2}}{h'(\bar{\rho})\bar{\rho}}
                                                 \right)
                                                 +\displaystyle\frac{1}{\bar{\rho}}G^2
&\text{in }\Omega
\\
\diverge\left(\displaystyle\frac{u}{\bar{\rho}}\right)
=\displaystyle\frac{-\mathbb{Q}+2gu_d+G^{1,2}}{h'(\bar{\rho})\bar{\rho}^2}  &\text{in }\Omega
\\
u=u     &\text{on }\Sigma
\\
u=0     &\text{on }\Sigma_b.
\end{cases}
\end{equation}
Similarly, using the second equation in \eqref{Aq1} and \eqref{xx2}, we derive that for $i=1,d-1,d$,
\begin{equation}\label{Stop}
\begin{cases}
-\mu\Delta \left(\displaystyle\frac{\partial_i^\mathcal{A}u}{\bar{\rho}}\right)+\nabla \partial_i^\mathcal{A}q
\\=-\partial_t \partial_i^\mathcal{A}u
-\displaystyle\frac{\partial_i\bar{\rho}}{\bar{\rho}^2}\diverge\mathbb{S} u
-\mu\left(2\partial_d\left(\displaystyle\frac{1}{\bar{\rho}}\right)\partial_d \partial_i^\mathcal{A}u+\partial_d^2\left(\displaystyle\frac{1}{\bar{\rho}}\right)\partial_i^\mathcal{A}u\right)
\\
\quad
+ \displaystyle\frac{(d-2)\mu+d\mu'}{d\bar{\rho}}
\nabla
\Bigg(\frac{ -\mathbb{Q}^{i,\sharp}  +g\partial_i^\mathcal{A}u_d
                                                           -\partial_iP'(\bar{\rho})\diverge u +G^{1,2,i,\sharp}}{h'(\bar{\rho})\bar{\rho}}
                                                  \Bigg)
                                                  +\displaystyle\frac{1}{\bar{\rho}}G^{2,i,\sharp}
&\text{in }\Omega
\\ \diverge\left(\displaystyle\frac{\partial_i^\mathcal{A}u}{\bar{\rho}}\right)
=\displaystyle\frac{-\mathbb{Q}^{i,\sharp}+2g\partial_i^\mathcal{A}u_d-\partial_iP'(\bar{\rho})\diverge u+G^{1,2,i,\sharp} }{h'(\bar{\rho})\bar{\rho}^2}
 &\text{in }\Omega
\\ \partial_i^\mathcal{A}u=\partial_i^\mathcal{A}u  &\text{on }\Sigma
\\ \partial_i^\mathcal{A}u=\delta_{id}\partial_i^\mathcal{A}u     &\text{on }\Sigma_b.
\end{cases}
\end{equation}
\begin{rem}
It should be remarked here that the dissipation estimates of energy evolution estimates in Section \ref{pdq}
 provided the needed boundary regularity control of $u$ and $D_\mathcal{A}u$.
 However, we do not have the energy evolution estimate for $\partial_d^\mathcal{A}u$.
Fortunately, after deriving the full dissipation estimate of $D_\mathcal{A}u$, we can get the boundary regularity control of $\partial_d^\mathcal{A}u$ due to the fact that $D_\mathcal{A}\partial_d^\mathcal{A}u=\partial_d^\mathcal{A}D_\mathcal{A} u$.
\end{rem}
We first record  the estimates at the $2N$ level.
\begin{lem}\label{lem3}
$(1)$ Fix $1\leq j\leq 2N-1$ and  $1\leq k\leq 4N-2j$. It holds that
\begin{align}\label{Se1}
&\norm{\partial_t^j u}_{k+1,4N-2j-k}^2+\norm{\nabla \partial_t^j q}_{k-1,4N-2j-k}^2
\nonumber\\&\quad
\lesssim \norm{\partial_t^{j+1}u}_{4N-2j-1}^2+\norm{\partial_t^j \mathbb{Q}}_{k,4N-2j-k}^2
+\norm{\partial_t^ju}_{1,4N-2j}^2+\mathcal{E}_{2N}\mathfrak{D}_{2N}.
\end{align}
$(2)$ Fix $1\leq k\leq 4N-1$. It holds that for $i=1,d-1,d$,
\begin{align}\label{Ses}
&\norm{ \partial_i^\mathcal{A}u}_{k+1,4N-k-1}^2+\norm{\nabla \partial_i^\mathcal{A}q}_{k-1,4N-k-1}^2
\nonumber\\&\quad
\lesssim \norm{\partial_t\partial_i^\mathcal{A}u}_{4N-2}^2+\norm{\mathbb{Q}^{i,\sharp}}_{k,4N-k-1}^2
+\norm{\partial_i^\mathcal{A}u}_{1,4N-1}^2
+\delta_{id}\norm{ u}_{4N}^2+\mathcal{E}_{2N}\mathfrak{D}_{2N}.
\end{align}
\end{lem}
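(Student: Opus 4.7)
My strategy is to view \eqref{Sto1} and \eqref{Stop} as Stokes problems with Dirichlet data on both $\Sigma$ and $\Sigma_b$, and to invoke an anisotropic Stokes elliptic regularity estimate (the natural companion of the Lam\'e estimate used in Lemma~\ref{llem}) of the schematic form
\begin{equation*}
\norm{v}_{k+1,\ell}^{2}+\norm{\nabla p}_{k-1,\ell}^{2}\lesssim \norm{F}_{k-1,\ell}^{2}+\norm{H}_{k,\ell}^{2}+\abs{v|_\Sigma}_{k+1/2,\ell}^{2}+\abs{v|_{\Sigma_b}}_{k+1/2,\ell}^{2}
\end{equation*}
for the system $-\mu\Delta v+\nabla p=F$, $\diverge v=H$ in $\Omega$ with prescribed Dirichlet traces. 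Both parts of the lemma then reduce to reading off the right-hand sides of \eqref{Sto1} and \eqref{Stop} in this format and estimating nonlinearities via Lemmas~\ref{N2} and~\ref{NW}.

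For part (1), I apply $\partial_t^j$ to \eqref{Sto1} and invoke the Stokes estimate with $v=\partial_t^j u/\bar{\rho}$, $p=\partial_t^j q$, at tangential order $\ell=4N-2j-k$. The divergence source $H$ contributes $\norm{\partial_t^j \mathbb{Q}}_{k,4N-2j-k}^{2}$ together with lower-order multiples of $\partial_t^j u_d$ (absorbed into $\norm{\partial_t^j u}_{1,4N-2j}^{2}$) and nonlinear $\partial_t^j G^{1,2}$ pieces (bounded by $\mathcal{E}_{2N}\mathfrak{D}_{2N}$ via Lemma~\ref{N2}). The forcing $F$ contains a $\partial_t^{j+1}u$ piece giving $\norm{\partial_t^{j+1}u}_{4N-2j-1}^{2}$, a gradient of the $\mathbb{Q}$-expression subsumed again by $\norm{\partial_t^j \mathbb{Q}}_{k,4N-2j-k}^{2}$, zero-order multiples of $\partial_t^j u$ absorbed by $\norm{\partial_t^j u}_{1,4N-2j}^{2}$, and a $\partial_t^j G^{2}$ piece controlled by $\mathcal{E}_{2N}\mathfrak{D}_{2N}$. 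The top-boundary trace satisfies $\abs{\partial_t^j u}_{k+1/2,4N-2j-k}^{2}\lesssim \norm{\partial_t^j u}_{1,4N-2j}^{2}$, while the bottom trace vanishes because $\partial_t^j u|_{\Sigma_b}=0$.

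For part (2), the same estimate applies to \eqref{Stop} with $v=\partial_i^{\mathcal{A}}u/\bar{\rho}$, $p=\partial_i^{\mathcal{A}}q$, and $\ell=4N-k-1$. The divergence source produces $\norm{\mathbb{Q}^{i,\sharp}}_{k,4N-k-1}^{2}$ plus lower-order contributions from $\partial_i^{\mathcal{A}}u_d$ and $\diverge u$, all absorbed into $\norm{\partial_i^{\mathcal{A}}u}_{1,4N-1}^{2}$. The forcing $F$ yields $\norm{\partial_t\partial_i^{\mathcal{A}}u}_{4N-2}^{2}$, another copy of $\norm{\mathbb{Q}^{i,\sharp}}_{k,4N-k-1}^{2}$ from the pressure-type gradient, and nonlinear $G^{1,2,i,\sharp}$, $G^{2,i,\sharp}$ pieces bounded by Lemma~\ref{NW}. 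The top-boundary trace gives $\abs{\partial_i^{\mathcal{A}}u}_{k+1/2,4N-k-1}^{2}\lesssim \norm{\partial_i^{\mathcal{A}}u}_{1,4N-1}^{2}$. The new feature is the bottom trace: for $i=1,d-1$, $\partial_i^{\mathcal{A}}u|_{\Sigma_b}=\partial_i u|_{\Sigma_b}=0$, but for $i=d$, $\partial_d^{\mathcal{A}}u|_{\Sigma_b}=\partial_d u|_{\Sigma_b}$ is nonvanishing and its trace satisfies $\abs{\partial_d u}_{k+1/2,4N-k-1}^{2}\lesssim \norm{u}_{4N}^{2}$, accounting for the $\delta_{id}\norm{u}_{4N}^{2}$ term in the statement.

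The main obstacle will be the careful anisotropic bookkeeping: ensuring that every commutator term created by $\partial_t^j$ (resp.\ $\partial_i^{\mathcal{A}}$) landing on the $\bar{\rho}$-dependent coefficients in \eqref{Sto1}, \eqref{Stop} actually fits the $F$-count $\norm{\cdot}_{k-1,\ell}$ or the $H$-count $\norm{\cdot}_{k,\ell}$, and that the drift-type terms -- in particular the $(\partial_i\bar{\rho}/\bar{\rho}^2)\diverge\mathbb{S}u$ piece in \eqref{Stop} -- can be legitimately controlled by the claimed right-hand side without inflating the regularity count. A secondary subtlety is correctly identifying that only the case $i=d$ generates the extra $\delta_{id}\norm{u}_{4N}^{2}$ contribution from $\Sigma_b$.
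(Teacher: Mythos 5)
Your skeleton is the same as the paper's: apply horizontal derivatives to the Stokes systems \eqref{Sto1} and \eqref{Stop}, invoke the Stokes regularity estimate of Lemma \ref{St4}, bound the boundary data anisotropically, and control the nonlinearities by Lemmas \ref{N2} and \ref{NW}. But two of the steps you assert would fail as written. First, the terms that are of first order in the unknown itself (e.g.\ $2\mu\,\partial_d(1/\bar{\rho})\partial_d\partial_t^j u$ in the forcing, and the $g\partial_t^j u_d$, $g\partial_i^\mathcal{A}u_d$ pieces in the divergence source) are measured in $\norm{\cdot}_{k-1,\ell}$, resp.\ $\norm{\cdot}_{k,\ell}$, and hence contribute $\norm{\partial_t^j u}_{k,4N-2j-k}^2$ (resp.\ $\norm{\partial_i^\mathcal{A}u}_{k,4N-k-1}^2$); for $k\ge 2$ this is \emph{not} controlled by the tangential term $\norm{\cdot}_{1,\cdot}^2$ into which you claim it is "absorbed". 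The paper removes these terms by applying the Stokes estimate at every intermediate order $r=k'+1$, $1\le k'\le k$, and running a downward induction in $k'$ whose base case is exactly the $\norm{\cdot}_{1,\cdot}$ term; your black-box "anisotropic Stokes estimate" with no interior lower-order contribution presupposes this recursion rather than proving it.

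Second, your mechanism for the $\delta_{id}\norm{u}_{4N}^2$ term is wrong. The bottom trace for $i=d$ is not bounded by $\norm{u}_{4N}^2$: at top order ($|\alpha|=4N-k-1$ horizontal derivatives with data in $H^{k+1/2}(\Sigma_b)$) the trace theorem would require roughly $4N+1$ derivatives of $u$, so the claimed inequality $\abs{\partial_d u}_{k+1/2,4N-k-1}^2\lesssim\norm{u}_{4N}^2$ fails precisely in the critical case. In the paper the bottom trace is treated exactly like the top one: split off horizontal derivatives and use the $H^1(\Omega)\to H^{1/2}$ trace, which yields $\norm{\partial_d^\mathcal{A}u}_{1,4N-1}^2$, a term already present on the right of \eqref{Ses}. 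The $\delta_{id}\norm{u}_{4N}^2$ contribution instead originates in the interior: the terms $\partial_i P'(\bar{\rho})\diverge u$ and $(\partial_i\bar{\rho}/\bar{\rho}^2)\diverge\mathbb{S}u$ in \eqref{Stop} vanish for $i=1,\dots,d-1$ because $\bar{\rho}$ depends only on $x_d$, and for $i=d$ they are estimated by $\norm{u}_{4N}^2$. Relatedly, your plan to absorb the $\diverge u$ contribution of the divergence source into $\norm{\partial_i^\mathcal{A}u}_{1,4N-1}^2$ cannot work either, since there it carries up to $k$ vertical derivatives; it belongs with $\delta_{id}\norm{u}_{4N}^2$.
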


\begin{proof}
We first prove \eqref{Ses} for $i=d$. Fix $1\leq k\leq 4N-1$ and then
let $\alpha \in \mathbb{N}^{d-1}$ be so that $0\leq |\alpha| \leq 4N-k-1$.
Applying $\partial^\alpha $ to \eqref{Stop}, and then employing
the elliptic estimates of Lemma \ref{St1} with $r=k'+1\geq2$ for any $1\leq k'\leq k$,
by \eqref{NW1}  and the trace theory, we obtain
\begin{align}\label{fq1}
&\norm{\partial^\alpha \partial_d^\mathcal{A}u}_{k'+1}^2+\norm{\nabla \partial^\alpha \partial_d^\mathcal{A}q}_{k'-1}^2
\nonumber\\&\quad
\lesssim
\norm{\partial^\alpha \left(\frac{\partial_d^\mathcal{A}u}{\bar{\rho}}\right)}_{k'+1}^2
+\norm{\nabla \partial^\alpha \partial_d^\mathcal{A}q}_{k'-1}^2
\nonumber\\&\quad
\lesssim
\norm{\partial^\alpha \partial_t\partial_d^\mathcal{A}u}_{k'-1}^2
+\norm{\partial^\alpha \nabla u}_{k'}^2+\norm{\partial^\alpha \partial_d^\mathcal{A}u}_{k'}^2
+\norm{\partial^\alpha\mathbb{Q}^{d,\sharp}}_{k'}^2
\nonumber\\&\quad\quad
+\norm{\partial^\alpha G^{1,2,d,\sharp}}_{k'}^2
+\norm{\partial^\alpha G^{2,d,\sharp}}_{k'-1}^2
+\abs{\partial^\alpha \partial_d^\mathcal{A}u}_{k'+1/2}^2
+\norm{\partial^\alpha\partial_d^\mathcal{A}u}_{H^{k'+1/2}(\Sigma_b)}^2
\nonumber\\&\quad
\lesssim
\norm{\partial_t\partial_d^\mathcal{A}u}_{4N-2}^2
+\norm{u}_{4N}^2
+\norm{\partial^\alpha \partial_d^\mathcal{A}u}_{k'}^2
+\norm{D_0^{4N-k-1}\mathbb{Q}^{d,\sharp}}_{k}^2
\nonumber\\&\quad\quad
+\abs{\partial_d^\mathcal{A}u}_{4N-1/2}^2
+\norm{\partial_d^\mathcal{A}u}_{H^{4N-1/2}(\Sigma_b)}^2
+\mathcal{E}_{2N}\mathfrak{D}_{2N}
\nonumber\\&\quad
\lesssim
\norm{\partial_t\partial_d^\mathcal{A}u}_{4N-2}^2
+\norm{ u}_{4N}^2
+\norm{\partial^\alpha \partial_d^\mathcal{A}u}_{k'}^2
+\norm{\mathbb{Q}^{d,\sharp}}_{k,4N-k-1}^2
\nonumber\\&\quad\quad
+\norm{\partial_d^\mathcal{A}u}_{1,4N-1}^2
+\mathcal{E}_{2N}\mathfrak{D}_{2N}.
\end{align}

A simple induction on \eqref{fq1} from $k'=k$ to $k'=1$ yields that
\begin{align}\label{fq2}
&\norm{\partial^\alpha \partial_d^\mathcal{A}u}_{k+1}^2+\norm{\nabla \partial^\alpha \partial_d^\mathcal{A}q}_{k-1}^2
\nonumber\\&\quad
\lesssim
\norm{\partial_t\partial_d^\mathcal{A}u}_{4N-2}^2
+\norm{\mathbb{Q}^{d,\sharp}}_{k,4N-k-1}^2
+\norm{\partial_d^\mathcal{A}u}_{1,4N-1}^2
+\norm{ u}_{4N}^2+\mathcal{E}_{2N}\mathfrak{D}_{2N}.
\end{align}
Summing  \eqref{fq2} over such $\alpha$, we then conclude \eqref{Ses}  for $i=d$.

The estimate \eqref{Ses} for $i=1,d-1$ follows similarly as that for $i=d$,
and the estimate \eqref{Se1} follows similarly as \eqref{Ses} by using \eqref{Sto1} and \eqref{ne5}
in place of \eqref{Stop} and \eqref{NW1}, respectively.
\end{proof}

We then record the estimates at the $N+2$ level.
\begin{lem}\label{lem4}
$(1)$ Fix $1\leq j\leq N+1$ and $1\leq k\leq 2(N+2)-2j$. It holds that
\begin{align}\label{Se3}
&\norm{\partial_t^j u}_{k+1,2(N+2)-2j-k}^2
+\norm{\nabla \partial_t^j q}_{k-1,2(N+2)-2j-k}^2
\nonumber\\&\quad
\lesssim \norm{\partial_t^{j+1}u}_{2(N+2)-2j-1}^2+\norm{\partial_t^j \mathbb{Q}}_{k,2(N+2)-2j-k}^2
+\norm{\partial_t^ju}_{1,2(N+2)-2j}^2
+\mathcal{E}_{2N}\mathcal{D}_{N+2,1}.
\end{align}
$(2)$ Fix $1\leq k\leq 2(N+2)-1$. It holds that
\begin{align}\label{Se5}
&\norm{D u}_{k+1,2(N+2)-k-1}^2+\norm{\nabla Dq}_{k-1,2(N+2)-k-1}^2
\nonumber\\&\quad
\lesssim \norm{\partial_tD u}_{2(N+2)-2}^2+\norm{D\mathbb{Q}}_{k,2(N+2)-k-1}^2
+\norm{Du}_{1,2(N+2)-1}^2
+\mathcal{E}_{2N}\mathcal{D}_{N+2,1}.
\end{align}
\end{lem}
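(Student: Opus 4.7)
The plan is to mirror the proof of Lemma \ref{lem3}, replacing the $2N$-level nonlinear bound \eqref{ne5} by its $N+2$-level counterpart \eqref{ne2} so that the forcing collapses into $\mathcal{E}_{2N}\mathcal{D}_{N+2,1}$ rather than $\mathcal{E}_{2N}\mathfrak{D}_{2N}$. For part $(1)$ I would work directly with \eqref{Sto1} after applying $\partial_t^j\partial^\alpha$ with $\alpha\in\mathbb{N}^{d-1}$; for part $(2)$ I would apply the horizontal spatial derivative $D\partial^\alpha$ to \eqref{Sto1}, which is admissible since $D$ commutes with the constant-coefficient left-hand side and the Dirichlet data transform cleanly (on $\Sigma_b$ one has $Du=0$ by no-slip).

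For part $(1)$, I fix $1\leq j\leq N+1$ and $1\leq k\leq 2(N+2)-2j$, let $|\alpha|\leq 2(N+2)-2j-k$, and apply $\partial_t^j\partial^\alpha$ to \eqref{Sto1}. Invoking the Stokes elliptic regularity of Lemma \ref{St1} with $r=k'+1\geq 2$ for each $1\leq k'\leq k$, and using \eqref{ne2} to handle the $G^{1,2}$ and $G^{2}$ forcing together with the trace theorem to absorb the Dirichlet datum $\partial_t^j u|_\Sigma$ into $\norm{\partial_t^j u}_{1,2(N+2)-2j}$, would yield
\bea
&\norm{\partial_t^j\partial^\alpha u}_{k'+1}^2+\norm{\nabla \partial_t^j\partial^\alpha q}_{k'-1}^2
\lesssim \norm{\partial_t^{j+1}u}_{2(N+2)-2j-1}^2\\
&\qquad +\norm{\partial_t^j\partial^\alpha \mathbb{Q}}_{k'}^2+\norm{\partial_t^j\partial^\alpha u}_{k'}^2+\norm{\partial_t^j u}_{1,2(N+2)-2j}^2+\mathcal{E}_{2N}\mathcal{D}_{N+2,1}.
\eea
A finite induction on $k'$ from $1$ up to $k$ absorbs the lower-order remainder $\norm{\partial_t^j\partial^\alpha u}_{k'}^2$, after which summing over $|\alpha|\leq 2(N+2)-2j-k$ produces \eqref{Se3}.

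Part $(2)$ proceeds in parallel with $\partial_t^j$ replaced by $D$ and with $|\alpha|\leq 2(N+2)-k-1$. The time-derivative forcing in \eqref{Sto1} now contributes $\norm{\partial_t Du}_{2(N+2)-2}^2$, the $\mathbb{Q}$-type source (coming from the divergence identity and from the elliptic inhomogeneity) contributes $\norm{D\mathbb{Q}}_{k,2(N+2)-k-1}^2$, and the boundary datum on $\Sigma$ is absorbed via the trace theorem into $\norm{Du}_{1,2(N+2)-1}^2$. The same inductive bootstrap on $k'$ followed by summation over $\alpha$ then delivers \eqref{Se5}.

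The only delicate point I expect is the bookkeeping of the nonlinear contributions: when $k'+1$ saturates the allowed count, the highest-order pieces of $G^{2}$ and $G^{1,2}$ must be matched against the individual entries in \eqref{ne2}, in particular the split between $\norm{DG_h^2}_{2(N+2)-2}^2$ and $\norm{G_d^2}_{2(N+2)-1}^2$, and the separate control of $\partial_t^j G^{2}$ for $j\geq 1$. Since these are precisely the entries supplied by Lemma \ref{lem1}, the matching is routine, and no analogue of the highest-order cancellation that drove the $4N$-level tangential estimates is required here; the argument closes with constants independent of the induction step.
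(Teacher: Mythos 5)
Your proposal is correct and takes essentially the same route as the paper, which proves Lemma \ref{lem4} by repeating the argument of Lemma \ref{lem3} with the Stokes system \eqref{Sto1} (hit with $\partial_t^j\partial^\alpha$, resp. $D\partial^\alpha$) and the $N+2$-level nonlinear bounds \eqref{ne2} in place of \eqref{Stop} and \eqref{NW1}, followed by the same bootstrap in $k'$ and summation over $\alpha$. Only a cosmetic remark: the elliptic input is the Stokes estimate of Lemma \ref{St4} (with the Dirichlet data on $\Sigma$ absorbed by trace theory exactly as you indicate), not the Lam\'e Lemma \ref{St1}, though the paper's own proof of Lemma \ref{lem3} contains the same mislabeling.
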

\begin{proof}
The proof is similar to that of Lemma \ref{lem3} by using \eqref{Sto1} and \eqref{ne2}
in place of \eqref{Stop} and \eqref{NW1}, respectively.
\end{proof}


\section{Full energy-dissipation estimates}
In this section, we shall combine the energy evolution estimates in Sections 4 and 5 and the elliptic estimates in Section 6
to derive the full energy-dissipation estimates.

We first present the result at the $2N$ level.
\begin{prop}\label{Sy1}
It holds that
\begin{align}\label{Sy}
&\frac{d}{dt}\mathcal{E}_{2N}
+\mathfrak{D}_{2N}
\lesssim
\mathcal{E}_{2N}^{7/8}\mathcal{D}_{N+2,1}^{5/8}
+\mathcal{D}_{N+2,1}\mathcal{F}_{2N}.
\end{align}
\end{prop}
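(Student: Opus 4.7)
The plan is to assemble the tangential evolution estimates of Section~4, the normal-derivative $q$ evolution of Section~5, and the elliptic regularity of Section~6 into a single inequality, performing a recursion in the number of vertical derivatives and absorbing the small nonlinear terms with the a priori smallness \eqref{apriori1}. First, I would sum Lemma~\ref{lemm}, Proposition~\ref{prop1}, Proposition~\ref{prop3} and Proposition~\ref{prop5} (with appropriate small multiplicative constants, per the convention \eqref{timeq2}) to obtain a differential inequality
$$
\frac{d}{dt}\widetilde{\mathcal{E}}^{\|}_{2N} + \widetilde{\mathfrak{D}}^{\|}_{2N}
\lesssim \sqrt{\mathcal{E}_{2N}}\mathfrak{D}_{2N} + \mathcal{E}_{2N}^{7/8}\mathcal{D}_{N+2,1}^{5/8}
+ \sqrt{\mathfrak{D}_{2N}}\sqrt{\mathcal{D}_{N+2,1}\mathcal{F}_{2N}},
$$
where $\widetilde{\mathcal{E}}^{\|}_{2N}$ and $\widetilde{\mathfrak{D}}^{\|}_{2N}$ are the horizontal-and-time parts of $\mathcal{E}_{2N}$ and $\mathfrak{D}_{2N}$ (including $\norm{D_\mathcal{A}u}_{0,4N-1}^2$ and $\norm{D_\mathcal{A}q}_{0,4N-1}^2$). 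The cross term is split by Young's inequality into $\tfrac14 \mathfrak{D}_{2N} + C\mathcal{D}_{N+2,1}\mathcal{F}_{2N}$, the first piece being reserved for absorption into the left once the full dissipation is built.

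Next, I would add a small multiple of Proposition~\ref{prop6} to introduce $\norm{\partial_d q}$-type terms into the energy. Using \eqref{dq1} and \eqref{xx2} to convert $\mathbb{Q}$ and $\mathbb{Q}^{i,\sharp}$ into $\partial_t q$, $\partial_t \partial_i^\mathcal{A} q$ plus nonlinear remainders, the right-hand sides of \eqref{v1} and \eqref{vs} are controlled by the already-assembled tangential quantities (modulo the nonlinear $\sqrt{\mathcal{E}_{2N}}\mathfrak{D}_{2N}$ error, which will be absorbed at the end). This yields control of $\norm{\partial_d\partial_t^j q}_{k,4N-2j-k-1}^2$ and $\norm{\partial_d \partial_i^\mathcal{A} q}_{k,4N-k-2}^2$ in the dissipation and, by multiplication by a smaller coefficient, in the energy as well. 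I would then apply the Stokes regularity estimate Lemma~\ref{lem3} recursively in the vertical derivative count $k$: at each level the extra $\mathbb{Q}$/$\mathbb{Q}^{i,\sharp}$ norm on the right is bounded by the normal $q$ dissipation obtained the step before, and the $\delta_{id}\norm{u}_{4N}^2$ term (for $i=d$) is controlled from the $D_\mathcal{A}u$ dissipation through $D_\mathcal{A}\partial_d^\mathcal{A} u = \partial_d^\mathcal{A} D_\mathcal{A} u$. The analogous recursion with Lemma~\ref{llem} converts the tangential energy of $\partial_t^j u$ into the full $H^{4N-2j}$ energy. Collecting everything produces an inequality of the form
$$
\frac{d}{dt}\widetilde{\mathcal{E}}_{2N} + \mathfrak{D}_{2N}
\lesssim \sqrt{\mathcal{E}_{2N}}\,\mathfrak{D}_{2N} + \mathcal{E}_{2N}^{7/8}\mathcal{D}_{N+2,1}^{5/8} + \mathcal{D}_{N+2,1}\mathcal{F}_{2N},
$$
with $\widetilde{\mathcal{E}}_{2N}\sim \mathcal{E}_{2N}$. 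Smallness of $\delta$ in \eqref{apriori1} absorbs the first term on the right into $\mathfrak{D}_{2N}$ on the left, giving \eqref{Sy} (reading the conclusion through the convention \eqref{timeq1}).

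The main obstacle is the bookkeeping in the recursive Stokes/Lamé step: each extra vertical derivative applied through Lemma~\ref{lem3} brings in a $\mathbb{Q}^{i,\sharp}$ norm at one higher derivative count, whose control is available only after the corresponding normal $q$ evolution from Proposition~\ref{prop6} has been added, while Proposition~\ref{prop6} itself requires the $u$ dissipation that is being built. One therefore needs to choose the small coefficients multiplying \eqref{v1}, \eqref{vs} in increasing smallness as the recursion proceeds, so that at each step the newly generated right-hand side terms are absorbed by previously controlled quantities. A secondary delicate point is that the boundary commutator estimate \eqref{NW2} for $G^{3,1,i,\sharp}$ and $G^{4,i,\sharp}$ genuinely produces $\mathcal{D}_{N+2,1}\mathcal{F}_{2N}$; unlike the other nonlinearities this term is not reducible to $\sqrt{\mathcal{E}_{2N}}\mathfrak{D}_{2N}$ and must survive on the right-hand side of \eqref{Sy}, which is exactly why the statement admits it.
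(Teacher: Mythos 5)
Your proposal follows essentially the same route as the paper: tangential evolution (Lemma \ref{lemm}, Propositions \ref{prop1}, \ref{prop3}, \ref{prop5}) plus the $\partial_d q$ / $\partial_d\partial_i^{\mathcal{A}}q$ evolution of Proposition \ref{prop6} combined recursively with the Stokes estimates of Lemma \ref{lem3} and the Lam\'e estimates of Lemma \ref{llem}, absorption of $\sqrt{\mathcal{E}_{2N}}\mathfrak{D}_{2N}$ by the a priori smallness, Cauchy's inequality to reduce $\sqrt{\mathfrak{D}_{2N}}\sqrt{\mathcal{D}_{N+2,1}\mathcal{F}_{2N}}$ to the surviving $\mathcal{D}_{N+2,1}\mathcal{F}_{2N}$, and the comparability $\bar{\mathcal{E}}_{2N}\sim\mathcal{E}_{2N}$ at the end. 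The only points left implicit in your sketch, which the paper carries out explicitly, are the recovery of the $\eta$, $\nabla q$ and $\partial_t^j q$ parts of $\mathfrak{D}_{2N}$ from the boundary conditions and the equations (the paper's \eqref{cdqh}--\eqref{xfq2}, which also generate $\mathcal{D}_{N+2,1}\mathcal{F}_{2N}$) and the treatment of $\norm{u}_{4N}^2$ by Sobolev interpolation with $\varepsilon$-absorption rather than by the commutation identity, which the paper uses instead for the $\partial_d^{\mathcal{A}}u$ terms.
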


\begin{proof}
For $1\leq j\leq 2N-1$ and $1\leq k \leq 4N-2j-1$, we deduce from \eqref{v1} and \eqref{Se1} that
\begin{align}\label{ssw1}
&\frac{d}{dt}
\norm{\partial_d\partial_t^jq}_{k,4N-2j-k-1}^2
+\norm{\partial_d\partial_t^jq}_{k,4N-2j-k-1}^2
+\norm{ \partial_t^j\mathbb{Q}}_{k+1,4N-2j-k-1}^2
\nonumber\\&\quad
\lesssim \norm{\partial_t^{j+1} u}_{4N-2j-1}^2
+\norm{\partial_t^j \mathbb{Q}}_{k,4N-2j-k}^2
+\norm{\partial_t^ju}_{1,4N-2j}^2
+\sqrt{\mathcal{E}_{2N}}\mathfrak{D}_{2N}.
\end{align}
Then for fixed $1\leq j\leq 2N-1$, a suitable linear combination of \eqref{ssw1} from $k=4N-2j-1$ to $k=1$ leads to
\begin{align}\label{ooo}
&\frac{d}{dt}
\norm{\partial_d\partial_t^jq}_{4N-2j-1}^2
+\norm{\partial_d\partial_t^jq}_{4N-2j-1}^2
+\norm{ \partial_t^j\mathbb{Q}}_{4N-2j }^2
\nonumber\\&\quad
\lesssim \norm{\partial_t^{j+1} u}_{4N-2j-1}^2
+\norm{\partial_t^j\mathbb{Q}}_{1,4N-2j-1}^2
+\norm{\partial_t^ju}_{1,4N-2j}^2
+\sqrt{\mathcal{E}_{2N}}\mathfrak{D}_{2N}.
\end{align}
We recall from \eqref{v1} with $k=0$ that
\begin{align}\label{Sy22}
&\frac{d}{dt}
\norm{\partial_d\partial_t^jq}_{0,4N-2j-1}^2
+\norm{\partial_d\partial_t^jq}_{0,4N-2j-1}^2
+\norm{\partial_d\partial_t^j\mathbb{Q}}_{0,4N-2j-1}^2
\nonumber\\&\quad
\lesssim \norm{\partial_t^{j+1} u}_{4N-2j-1}^2+\norm{\partial_t^j\mathbb{Q}}_{0,4N-2j-1}^2
+\norm{\partial_t^j u}_{1,4N-2j}^2
+\sqrt{\mathcal{E}_{2N}}\mathfrak{D}_{2N}.
\end{align}
We thus conclude from \eqref{ooo} and \eqref{Sy22} that
\begin{align}\label{oo1}
&\frac{d}{dt}
\norm{\partial_d\partial_t^jq}_{4N-2j-1}^2
+\norm{\partial_d\partial_t^jq}_{4N-2j-1}^2
+\norm{ \partial_t^j\mathbb{Q}}_{4N-2j }^2
\nonumber\\&\quad
\lesssim \norm{\partial_t^{j+1} u}_{4N-2j-1}^2
+\norm{\partial_t^j\mathbb{Q}}_{0,4N-2j}^2
+\norm{\partial_t^ju}_{1,4N-2j}^2
+\sqrt{\mathcal{E}_{2N}}\mathfrak{D}_{2N}.
\end{align}
On the other hand, taking $k=4N-2j$ in \eqref{Se1} yields
\begin{align}\label{Sya}
&\norm{\partial_t^ju}_{4N-2j+1}^2
+\norm{\nabla\partial_t^jq}_{4N-2j-1}^2\nonumber\\
&\quad\lesssim
\norm{\partial_t^{j+1} u}_{4N-2j-1}^2
+\norm{\partial_t^j\mathbb{Q}}_{4N-2j}^2
+\norm{\partial_t^ju}_{1,4N-2j}^2
+\mathcal{E}_{2N}\mathfrak{D}_{2N},
\end{align}
which together with \eqref{oo1} implies
\begin{align}\label{Sy4}
&\frac{d}{dt}
\norm{\partial_d\partial_t^jq}_{4N-2j-1}^2
+\norm{\partial_t^ju}_{4N-2j+1}^2
+\norm{\nabla\partial_t^jq}_{4N-2j-1}^2
+\norm{\partial_t^j\mathbb{Q}}_{4N-2j}^2
\nonumber\\&\quad
\lesssim \norm{\partial_t^{j+1} u}_{4N-2j-1}^2+\norm{\partial_t^j\mathbb{Q}}_{0,4N-2j}^2
+\norm{\partial_t^ju}_{1,4N-2j}^2
+\sqrt{\mathcal{E}_{2N}}\mathfrak{D}_{2N}.
\end{align}
Note that by \eqref{dq1} and \eqref{ne5},
\begin{align}\label{11}
\norm{\partial_t^j\mathbb{Q}}_{0,4N-2j}^2
&\lesssim \norm{\partial_t^j\diverge\(\bar{\rho}u\)}_{0,4N-2j}^2+\norm{\partial_t^jG^{1,2}}_{0,4N-2j}^2
\nonumber\\
&\lesssim \norm{\partial_t^ju}_{1,4N-2j}^2+\mathcal{E}_{2N}\mathfrak{D}_{2N}.
\end{align}
Hence, we have
\begin{align}\label{Sy5}
&\frac{d}{dt}
\norm{\partial_d\partial_t^jq}_{4N-2j-1}^2
+\norm{\partial_t^ju}_{4N-2j+1}^2
+\norm{\nabla\partial_t^jq}_{4N-2j-1}^2
+\norm{\partial_t^j\mathbb{Q}}_{4N-2j}^2
\nonumber\\&\quad
\lesssim \norm{\partial_t^{j+1} u}_{4N-2j-1}^2
+\norm{\partial_t^ju}_{1,4N-2j}^2
+\sqrt{\mathcal{E}_{2N}}\mathfrak{D}_{2N}.
\end{align}
A suitable linear combination of \eqref{Sy5} from $j=1$ to $j=2N-1$ then leads to
\begin{align}\label{Syr}
&\frac{d}{dt}
\sum_{j=1}^{2N-1}\norm{\partial_d\partial_t^jq}_{4N-2j-1}^2
+\sum_{j=1}^{2N-1}\left(\norm{\partial_t^ju}_{4N-2j+1}^2
+\norm{\nabla\partial_t^jq}_{4N-2j-1}^2
+\norm{\partial_t^j\mathbb{Q}}_{4N-2j}^2\right)
\nonumber\\&\quad
\lesssim
\sum_{j=1}^{2N}\norm{\partial_t^ju}_{1,4N-2j}^2
+\sqrt{\mathcal{E}_{2N}}\mathfrak{D}_{2N}.
\end{align}

Now, similarly as the derivation of \eqref{Sy4}, basing on \eqref{vs} and \eqref{Ses}, we may deduce that for $i=1,d-1,d,$
\begin{align}\label{bbw0}
&\frac{d}{dt}
\norm{\partial_d\partial_i^\mathcal{A}q}_{4N-2}^2
+\norm{\partial_i^\mathcal{A}u}_{4N}^2
+\norm{\nabla \partial_i^\mathcal{A}q}_{4N-2}^2
+\norm{\mathbb{Q}^{i,\sharp}}_{4N-1}^2
\nonumber\\&\quad
\lesssim\norm{\partial_t\partial_i^\mathcal{A}u}_{4N-2}^2+\norm{\mathbb{Q}^{i,\sharp}}_{0,4N-1}^2
+\norm{\partial_i^\mathcal{A}u}_{1,4N-1}^2
+\delta_{id}\norm{u}_{4N}^2
+\sqrt{\mathcal{E}_{2N}}\mathfrak{D}_{2N}.
\end{align}
Note that by \eqref{xx2} and \eqref{NW1},
\begin{align}\label{ly2}
\norm{\mathbb{Q}^{i,\sharp}}_{0,4N-1}^2
&\lesssim
\norm{\partial_i^\mathcal{A}u}_{1,4N-1}^2
+\delta_{id}\norm{u}_{1,4N-1}^2
+\norm{G^{1,2,i,\sharp}}_{0,4N-1}^2
\nonumber\\&
\lesssim
\norm{\partial_i^\mathcal{A}u}_{1,4N-1}^2
+\delta_{id}\norm{u}_{4N}^2+\mathcal{E}_{2N}\mathfrak{D}_{2N}.
\end{align}
Also,
\begin{align}\label{lw}
\norm{\partial_t\nabla_\mathcal{A}u}_{4N-2}^2
&\lesssim \norm{\partial_t\nabla u}_{4N-2}^2
+\norm{\partial_t\left(\partial_d^\mathcal{A} u \nabla \varphi\right)}_{4N-2}^2
\lesssim\norm{\partial_tu}_{4N-1}^2+\mathcal{E}_{2N}\mathfrak{D}_{2N}
\end{align}
and
\begin{align}\label{bu4}
\norm{\partial_d^\mathcal{A}u}_{1,4N-1}^2
&\lesssim
\norm{\partial_d^\mathcal{A}u}_{0}^2
+\norm{D\partial_d^\mathcal{A}u}_{0,4N-1}^2
+\norm{\partial_d\partial_d^\mathcal{A}u}_{0,4N-1}^2
\nonumber\\&
\lesssim
\norm{\partial_d^\mathcal{A}u}_{1}^2
+\norm{D\partial_d^\mathcal{A}u}_{1,4N-2}^2
\nonumber\\&\lesssim
\norm{\partial_d^\mathcal{A}u}_{1}^2+\norm{\partial_d^\mathcal{A}D_\mathcal{A}u}_{1,4N-2}^2
+\norm{\partial_d^\mathcal{A}\partial_d^\mathcal{A}uD\varphi}_{1,4N-2}^2
\nonumber\\&\lesssim \norm{u}_{4N}^2
+\norm{D_\mathcal{A}u}_{4N}^2
+\mathcal{E}_{2N}\mathfrak{D}_{2N}.
\end{align}
In light of \eqref{ly2}--\eqref{bu4}, we may deduce from \eqref{bbw0} that
\begin{align}\label{face1}
&\frac{d}{dt}
\norm{\partial_d\nabla_\mathcal{A}q}_{4N-2}^2
+\norm{\nabla_\mathcal{A}u}_{4N}^2
+\norm{\nabla \nabla_\mathcal{A}q}_{4N-2}^2
+\sum_{i=1 }^d\norm{\mathbb{Q}^{i,\sharp}}_{4N-1}^2
\nonumber\\&\quad
\lesssim
\norm{\partial_tu}_{4N-1}^2
+\norm{D_\mathcal{A}u}_{1,4N-1}^2
+\norm{u}_{4N}^2
+\sqrt{\mathcal{E}_{2N}}\mathfrak{D}_{2N}.
\end{align}

We now improve the dissipation estimates in the left-hand sides of \eqref{Syr} and \eqref{face1}.
By Poincar\'{e}'s inequality and since $u_d=0$ on $\Sigma_b$, we have
\begin{align}\label{cdqh}
\norm{u_d}_{4N+1}^2
\lesssim
\norm{\nabla u_d}_{4N}^2
&\lesssim
\norm{ \nabla_\mathcal{A}u}_{4N}^2
+\norm{\partial_d^\mathcal{A}u_d\nabla\varphi}_{4N}^2
\nonumber\\&\lesssim
\norm{ \nabla_\mathcal{A}u}_{4N}^2+\mathcal{E}_{2N}\mathfrak{D}_{2N}
+\mathcal{D}_{N+2,1}\mathcal{F}_{2N}.
\end{align}
By the second equation in \eqref{neweq}, we find
\begin{align}
\norm{\nabla_\mathcal{A} q}_{4N-1}^2
&\lesssim
\norm{\partial_t^\mathcal{A} u}_{4N-1}^2
+\norm{u\cdot \nabla_\mathcal{A}  u}_{4N-1}^2
+\norm{\frac{1}{\rho}\diverge_\mathcal{A}\mathbb{S}_\mathcal{A}  u}_{4N-1}^2
\nonumber\\&
\lesssim
\norm{\partial_t u}_{4N-1}^2
+\norm{\nabla_\mathcal{A}u}_{4N}^2
+\mathcal{E}_{2N}\mathfrak{D}_{2N},
\end{align}
and hence,
\begin{align}\label{cdq}
\norm{\nabla q}_{4N-1}^2
&\lesssim
\norm{\nabla_\mathcal{A} q}_{4N-1}^2
+\norm{\partial_d^\mathcal{A}q\nabla\varphi}_{4N-1}^2
\nonumber\\&
\lesssim
\norm{\partial_t u}_{4N-1}^2
+\norm{\nabla_\mathcal{A}u}_{4N}^2
+\mathcal{E}_{2N}\mathfrak{D}_{2N}.
\end{align}
By the vertical component of the second equation in \eqref{Bq1},
using the trace theory and  \eqref{NW2},
we obtain
\begin{align}\label{ddc2}
\abs{D\eta}_{4N-3/2}^2
&\lesssim \abs{D_\mathcal{A}q}_{4N-3/2}^2
+\abs{\nabla D_\mathcal{A}u}_{4N-3/2}^2+\sum_{i=1,d-1}\abs{G_d^{4,i,\sharp}}_{4N-3/2}^2
\nonumber\\&
\lesssim
\norm{Dq}_{4N-1}^2
+\norm{ D_\mathcal{A}u}_{4N}^2+\mathcal{E}_{2N}\mathfrak{D}_{2N}
+\mathcal{D}_{N+2,1}\mathcal{F}_{2N}.
\end{align}
Using the third equation in \eqref{q2}, by \eqref{ne5}, \eqref{cdqh} and the trace theory,
we have
\begin{align}\label{ddc3}
&\abs{\partial_t\eta}_{4N-1}^2
\lesssim
\abs{u_d}_{4N-1}^2+\abs{G^3}_{4N-1}^2
\lesssim
\norm{u_d}_{4N}^2+\mathcal{E}_{2N}\mathfrak{D}_{2N},
\end{align}
and by the definition of $G^3$,
\begin{align}\label{wd1}
\abs{\partial_t^2 \eta}_{4N-3/2}^2
&\lesssim
\abs{\partial_t u_d}_{4N-3/2}^2
+\abs{\partial_t \(u_h D\eta\)}_{4N-3/2}^2
\lesssim
\norm{\partial_t u}_{4N-1}^2
+\abs{u_hD\partial_t  \eta}_{4N-3/2}^2
\nonumber\\&
\lesssim
\norm{\partial_t u}_{4N-1}^2
+\abs{u_hD\(u_d+u_h \cdot D\eta\)}_{4N-3/2}^2
\nonumber\\&
\lesssim
\norm{\partial_t u}_{4N-1}^2+\mathcal{E}_{2N}\mathfrak{D}_{2N}+\abs{u_hu_h D^{4N}\eta}_{1/2}^2
\nonumber\\&
\lesssim
\norm{\partial_t u}_{4N-1}^2+\mathcal{E}_{2N}\mathfrak{D}_{2N}+\mathcal{D}_{N+2,1}\mathcal{F}_{2N},
\end{align}
where we have estimated, by using Lemma \ref{va1} and \eqref{ei},
\begin{align}\label{wd2}
\abs{u_hu_h D^{4N}\eta}_{1/2}^2
\lesssim
\norm{u_h}_{C^{1}(\Sigma)}^{4}\abs{ D^{4N}\eta}_{1/2}^2
\lesssim
\mathcal{D}_{N+2,1}\mathcal{F}_{2N}.
\end{align}
For $j=3,\ldots, 2N+1$, by using again the third equation in \eqref{q2}, \eqref{ne5} and the trace theory,
we obtain
\begin{align}\label{xfq}
\abs{\partial_t^j \eta}_{4N-2j+5/2}^2
&\lesssim
\abs{\partial_t^{j-1} u_d}_{4N-2(j-1)+1/2}^2
+\abs{\partial_t^{j-1} G^3}_{4N-2(j-1)+1/2}^2
\nonumber\\&
\lesssim
\norm{\partial_t^{j-1} u_d}_{4N-2(j-1)+1}^2+\mathcal{E}_{2N}\mathfrak{D}_{2N}.
\end{align}
By the first equation in \eqref{q2} and \eqref{ne5}, we obtain
\begin{align}\label{ddc4}
 \norm{\partial_tq}_{4N-1}^2
\lesssim  \norm{u}_{4N}^2+ \norm{G^1}_{4N-1}^2
\lesssim  \norm{u}_{4N}^2+\mathcal{E}_{2N}\mathfrak{D}_{2N},
\end{align}
and for $j=2,3,\ldots,2N+1$,
\begin{align}\label{xfq2}
 \norm{\partial_t^j q}_{4N-2j+2}^2
&\lesssim  \norm{\partial_t^{j-1} u}_{4N-2(j-1)+1}^2
+ \norm{\partial_t^{j-1} G^1}_{4N-2(j-1)}^2
\nonumber\\&
\lesssim  \norm{\partial_t^{j-1} u}_{4N-2(j-1)+1}^2+\mathcal{E}_{2N}\mathfrak{D}_{2N}.
\end{align}
Therefore,
we can deduce from \eqref{Syr}, \eqref{face1}, \eqref{cdqh}, \eqref{cdq}--\eqref{wd1} and \eqref{xfq}--\eqref{xfq2} that
\begin{align}\label{cmw1}
&\frac{d}{dt}
\left(
 \norm{\partial_d\nabla_\mathcal{A}q}_{4N-2}^2
+\sum_{j=1}^{2N-1} \norm{\partial_d\partial_t^jq}_{4N-2j-1}^2\right)
+\mathfrak{D}_{2N}
\nonumber\\&\quad
\lesssim
\sum_{j=1}^{2N} \norm{\partial_t^ju}_{1,4N-2j}^2
+ \norm{D_\mathcal{A}u}_{1,4N-1}^2
+ \norm{u}_{4N}^2
+\sqrt{\mathcal{E}_{2N}}\mathfrak{D}_{2N}+\mathcal{D}_{N+2,1}\mathcal{F}_{2N}.
\end{align}
By the Sobolev interpolation, we obtain
\begin{align}\label{buh}
 \norm{u}_{4N}^2
\lesssim
 \norm{u}_{0}^2+ \norm{\nabla u}_{4N-1}^2
&\lesssim
 \norm{u}_{0}^2+ \norm{\nabla_\mathcal{A} u}_{4N-1}^2+\mathcal{E}_{2N}\mathfrak{D}_{2N}
\nonumber\\&
\lesssim
 \norm{u}_{0}^2+\varepsilon \norm{\nabla_\mathcal{A} u}_{4N}^2+C(\varepsilon) \norm{\nabla_\mathcal{A} u}_{0}^2+\mathcal{E}_{2N}\mathfrak{D}_{2N}
\nonumber\\&
\lesssim
 \varepsilon \norm{\nabla_\mathcal{A} u}_{4N}^2+\norm{u}_{1}^2+\mathcal{E}_{2N}\mathfrak{D}_{2N}.
\end{align}
Taking $\varepsilon>0$ sufficiently small in \eqref{buh}, we then refine \eqref{cmw1} to be
\begin{align}\label{face3}
&\frac{d}{dt}
\left( \norm{\partial_d\nabla_\mathcal{A}q}_{4N-2}^2
+\sum_{j=1}^{2N-1} \norm{\partial_d\partial_t^jq}_{4N-2j-1}^2
\right)
+\mathfrak{D}_{2N}
\nonumber\\&\quad
\lesssim
\sum_{j=1}^{2N} \norm{\partial_t^ju}_{1,4N-2j}^2
+ \norm{D_\mathcal{A}u}_{1,4N-1}^2
+ \norm{u}_{1}^2
+\sqrt{\mathcal{E}_{2N}}\mathfrak{D}_{2N}+\mathcal{D}_{N+2,1}\mathcal{F}_{2N}.
\end{align}
Combining the estimates \eqref{L2e},
\eqref{tan1},
\eqref{tan3},
 \eqref{tan5}
 and \eqref{face3},  by the smallness of $\mathcal{E}_{2N}$ and Cauchy's inequality,
 we conclude that
\begin{align}\label{cmw5}
&\frac{d}{dt}
\bar{\mathcal{E}}_{2N}
+\mathfrak{D}_{2N}
\lesssim
\mathcal{E}_{2N}^{7/8}\mathcal{D}_{N+2,1}^{5/8}
+
\mathcal{D}_{N+2,1}\mathcal{F}_{2N},
\end{align}
where
\begin{align}
\bar{\mathcal{E}}_{2N}:
=&
 \norm{q}_{0}^{2}
+ \norm{D_\mathcal{A}q}_{4N-1}^{2}
+ \norm{\partial_d\nabla_\mathcal{A}q}_{4N-2}^{2}
+\sum_{j=1}^{2N} \norm{\partial_t^jq}_{4N-2j}^{2}
\nonumber\\&
+ \norm{u}_{0}^{2}
+ \norm{D_\mathcal{A}u}_{0,4N-1}^{2}
+\sum_{j=1}^{2N} \norm{\partial_t^j u}_{0,4N-2j}^{2}
+\sum_{j=0}^{2N}\abs{\partial_t^j\eta}_{4N-2j}^{2}.
\end{align}

Finally, it remains to  improve the energy estimates in \eqref{cmw5}.
It holds that
\begin{align}\label{hao1}
 \norm{q}_{4N}^2
\lesssim
 \norm{q}_{0}^2+ \norm{\nabla ^2q}_{4N-2}^2
&\lesssim
 \norm{q}_{0}^2+ \norm{Dq}_{4N-1}^2+ \norm{\partial_d^2q}_{4N-2}^2
\nonumber\\&
\lesssim
 \norm{q}_{0}^2+ \norm{D_\mathcal{A}q}_{4N-1}^2
+ \norm{\partial_d\partial_d^\mathcal{A}q}_{4N-2}^2
+ \norm{\partial_d^\mathcal{A}q\nabla\varphi}_{4N-1}^2
\nonumber\\&
\lesssim
\bar{\mathcal{E}}_{2N}+(\mathcal{E}_{2N})^2.
\end{align}
A simple induction on \eqref{chf1} from $j=0$ to $j=2N-1$ leads to
\begin{align}\label{ee2}
\sum_{j=0}^{2N-1} \norm{\partial_t^ju}_{4N-2j}^2
&\lesssim
 \norm{\partial_t^{2N}u}_{0}^2
+\sum_{j=0}^{2N-1}\left( \norm{\partial_t^j q}_{4N-2j-1}^2
+\abs{\partial_t^j\eta}_{4N-2j-3/2}^2\right)
+\(\mathcal{E}_{2N}\)^2
\nonumber\\&
\lesssim
\bar{\mathcal{E}}_{2N}+(\mathcal{E}_{2N})^2.
\end{align}
By the smallness of $\mathcal{E}_{2N}$, we can deduce from \eqref{hao1} and \eqref{ee2} that
$
\bar{\mathcal{E}}_{2N}
\sim
\mathcal{E}_{2N},
$
which together with \eqref{cmw5} gives \eqref{Sy}.
\end{proof}

Now we present the result at the $N+2$ level.
\begin{prop}\label{lsy}
It holds that
\begin{align}\label{lsy1}
&\frac{d}{dt}\mathcal{E}_{N+2,1}
+ \mathcal{D}_{N+2,1}
\leq
0.
\end{align}
\end{prop}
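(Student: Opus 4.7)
The proof will follow the blueprint of Proposition~\ref{Sy1} but at the low-order level, assembling the tangential, temporal, $\partial_d q$ and elliptic estimates already established at the $N+2$ level. The crucial simplification compared with the $2N$ case is that $\mathcal{F}_{2N}$ does not appear on the right-hand side: all nonlinear remainders produced at the $N+2$ level are controlled either by $\sqrt{\mathcal{E}_{2N}}\,\mathcal{D}_{N+2,1}$ or by $\mathcal{E}_{2N}\,\mathcal{E}_{N+2,1}$ (via the interpolation Lemma~\ref{q,u}), so they can be absorbed once the a priori smallness \eqref{apriori1} is invoked. Consequently the final inequality has no remainder term at all.

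First, I would combine the tangential evolution estimate \eqref{tan2} of Proposition~\ref{prop2} with the highest-order temporal evolution \eqref{tan4} of Proposition~\ref{prop4}, obtaining an evolution inequality for a modified tangential energy whose time derivative is bounded by the tangential dissipation plus $\sqrt{\mathcal{E}_{2N}}\,\mathcal{D}_{N+2,1}$. Next, following the blueprint of \eqref{Sy5}--\eqref{face1}, I would invoke the $\partial_d q$ evolution estimates \eqref{I7} and \eqref{v2} of Proposition~\ref{prop8} together with the Stokes-based elliptic bounds \eqref{Se3} and \eqref{Se5} of Lemma~\ref{lem4} in a recursive induction on the vertical derivative count. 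This promotes the tangential control to full spatial control of $\nabla q$, $\nabla\partial_t^j q$, $Du$, $u_d$, and $\partial_t^j u$ at one extra order of horizontal regularity. The Lam\'e estimate \eqref{chf2} of Lemma~\ref{llem2}, iterated in $j$ from $N+1$ down to $1$, then converts the purely temporal $L^2$ control of $\partial_t^j u$ into the full Sobolev control $\norm{\partial_t^j u}_{2(N+2)-2j}^2$ required by $\mathcal{E}_{N+2,1}$; the remaining pieces $\norm{\partial_t^j q}_{2(N+2)-2j}^2$ and $\abs{\partial_t^j\eta}_{2(N+2)-2j}^2$ for $j\ge 1$ are read off from the first and third equations of \eqref{q2} together with \eqref{ne1}, exactly as in \eqref{ddc4}--\eqref{xfq2}.

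The main obstacle, as flagged in Section~\ref{sj}, is recovering the dissipation $\norm{\partial_t q}_0^2$: the tangential dissipation at this level contains only $\norm{Du}_{1,2(N+2)-1}^2$ with minimal derivative count one, so one cannot simply use the continuity equation to read off $\partial_t q$ from $\diverge u$, as worked at the $2N$ level. To bypass this, I would apply $\partial_t$ to the vertical component of the second equation in \eqref{q2} and test in $L^2(\Omega)$ against $u_d$. An integration by parts in $\int_\Omega\bar{\rho}\,\partial_t\partial_d q\,u_d$ transfers the $\partial_d$ onto $u_d$, and combining this with the first equation of \eqref{q2} to rewrite $\partial_d u_d$ in terms of $\partial_t q$ modulo $Du_h$- and $G^1$-type contributions yields an evolution inequality of the form $\frac{d}{dt}(\cdots)+\norm{\partial_t q}_0^2\lesssim\norm{Du}_{1,1}^2+\norm{\partial_t u}_1^2+\norm{u_d}_2^2+\sqrt{\mathcal{E}_{2N}}\,\mathcal{D}_{N+2,1}$; the boundary terms in $q$, $\eta$ and $\partial_t u_d$ created by the integration by parts are absorbable thanks to the dynamic and kinematic boundary conditions of \eqref{q2} together with the trace theory. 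The right-hand side here is already contained in $\mathcal{D}_{N+2,1}$ modulo an error that will be absorbed below.

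Finally, summing the tangential, $\partial_d q$, elliptic and new $\norm{\partial_t q}_0^2$ estimates with appropriately small coefficients, and adding in the basic $L^2$ bound \eqref{L2e} (whose right-hand side $\mathcal{E}_{2N}^{7/8}\mathcal{D}_{N+2,1}^{5/8}\le\sqrt{\mathcal{E}_{2N}}\,\mathcal{D}_{N+2,1}$ by Young's inequality and smallness), the nonlinear errors $\sqrt{\mathcal{E}_{2N}}\,\mathcal{D}_{N+2,1}$ and $\mathcal{E}_{2N}\,\mathcal{E}_{N+2,1}$ are absorbed via \eqref{apriori1}. The upshot is a modified energy $\mathcal{E}'\sim\mathcal{E}_{N+2,1}$ satisfying $\frac{d}{dt}\mathcal{E}'+\mathcal{D}_{N+2,1}\le 0$, which is exactly \eqref{lsy1} under the convention \eqref{timeq2}.
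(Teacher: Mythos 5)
Your blueprint matches the paper's proof in all the essential ingredients: the tangential evolution \eqref{tan2}, the highest-temporal evolution \eqref{tan4}, the $\partial_d q$ evolution \eqref{I7}/\eqref{v2} combined recursively with the Stokes estimates \eqref{Se3}/\eqref{Se5}, the Lam\'e-based energy equivalence via \eqref{chf2}/\eqref{chf3} and \eqref{ond3}, and, crucially, the new step of recovering the $\norm{\partial_t q}_0^2$ dissipation by testing $\partial_t$ of the vertical momentum equation against $u_d$ (the paper's \eqref{zhen}--\eqref{ok3}). That last item is correctly identified as the non-routine piece.

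However, there is a genuine error in your final assembly: you ``add in the basic $L^2$ bound \eqref{L2e},'' claiming its right-hand side is absorbable. This fails on two counts. First, \eqref{L2e} has right-hand side $\sqrt{\mathcal{E}_{2N}}\,\mathfrak{D}_{2N}+\mathcal{E}_{2N}^{7/8}\mathcal{D}_{N+2,1}^{5/8}$, and the term $\sqrt{\mathcal{E}_{2N}}\,\mathfrak{D}_{2N}$ involves the \emph{high-order} dissipation $\mathfrak{D}_{2N}$, which is not controlled by $\mathcal{D}_{N+2,1}$; it cannot be absorbed into the low-order inequality at all. Second, your claimed estimate $\mathcal{E}_{2N}^{7/8}\mathcal{D}_{N+2,1}^{5/8}\le\sqrt{\mathcal{E}_{2N}}\,\mathcal{D}_{N+2,1}$ is false in general: it is equivalent to $\mathcal{E}_{2N}\le\mathcal{D}_{N+2,1}$, which need not hold since the low-order dissipation can be arbitrarily smaller than the high-order energy, and Young's inequality applied to $a^{7/8}b^{5/8}$ produces $a^{7/3}$ (not absorbable) rather than a pure $b$. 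This is precisely why the time-weighted integration in Theorem \ref{main} is the only place that term can be handled. The fix is simply to \emph{omit} \eqref{L2e}: the quantity $\mathcal{E}_{N+2,1}$ contains $\norm{\nabla q}$ but not $\norm{q}_0$, $\abs{D\eta}$ but not $\abs{\eta}_0$, and $\norm{u}_{2(N+2)}$ is recovered via the Lam\'e/Poincar\'e chain \eqref{chf3}, \eqref{ond2}, \eqref{ond3} rather than from the basic $L^2$ identity. Indeed, the paper's closing combination is \eqref{tan2}, \eqref{tan4} and \eqref{wwt} only, with \eqref{L2e} conspicuously absent, and that is not an accident.

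One small stylistic note: your $\partial_t q$ step is described as producing an evolution inequality $\frac{d}{dt}(\cdots)+\norm{\partial_t q}_0^2\lesssim\cdots$, whereas the paper's \eqref{ok3} is a pointwise-in-time algebraic bound $\norm{\partial_t q}_0^2\lesssim\norm{\partial_t^2 u_d}_0^2+\norm{\partial_t u}_1^2+\norm{Du}_0^2+\sqrt{\mathcal{E}_{2N}}\mathcal{D}_{N+2,1}$ (the $\int\bar\rho\,\partial_t^2 u_d\,u_d$ term is estimated directly by Cauchy--Schwarz, not integrated by parts in $t$). Either formulation works, but the algebraic version is simpler and avoids introducing an extra energy functional.
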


\begin{proof}
 Basing on  \eqref{I7} and \eqref{Se3}, similarly as the derivation of \eqref{Sy4}, we may deduce that for $1\leq j\leq N+1$,
\begin{align}
&\frac{d}{dt}
 \norm{\partial_d\partial_t^jq}_{2(N+2)-2j-1}^2
+ \norm{\partial_t^ju}_{2(N+2)-2j+1}^2
+ \norm{\nabla\partial_t^jq}_{2(N+2)-2j-1}^2
+ \norm{\partial_t^j\mathbb{Q}}_{2(N+2)-2j}^2
\nonumber\\&\quad
\lesssim  \norm{\partial_t^{j+1} u}_{2(N+2)-2j-1}^2+ \norm{\partial_t^j\mathbb{Q}}_{0,2(N+2)-2j}^2
+ \norm{\partial_t^ju}_{1,2(N+2)-2j}^2
+\sqrt{\mathcal{E}_{2N}}\mathcal{D}_{N+2,1}.
\end{align}
Note that, by \eqref{dq1} and \eqref{ne2},
\begin{align}
 \norm{\partial_t^j\mathbb{Q}}_{0,2(N+2)-2j}^2
&\lesssim  \norm{\partial_t^j\diverge\(\bar{\rho}u\)}_{0,2(N+2)-2j}^2+ \norm{\partial_t^jG^{1,2}}_{0,2(N+2)-2j}^2
\nonumber\\
&\lesssim  \norm{\partial_t^ju}_{1,2(N+2)-2j}^2+\mathcal{E}_{2N}\mathcal{D}_{N+2,1}.
\end{align}
Hence, we have
\begin{align}
&\frac{d}{dt}
 \norm{\partial_d\partial_t^jq}_{2(N+2)-2j-1}^2
+ \norm{\partial_t^ju}_{2(N+2)-2j+1}^2
+ \norm{\nabla\partial_t^jq}_{2(N+2)-2j-1}^2
+ \norm{\partial_t^j\mathbb{Q}}_{2(N+2)-2j}^2
\nonumber\\&\quad
\lesssim  \norm{\partial_t^{j+1} u}_{2(N+2)-2j-1}^2
+ \norm{\partial_t^ju}_{1,2(N+2)-2j}^2
+\sqrt{\mathcal{E}_{2N}}\mathcal{D}_{N+2,1}.
\end{align}
A suitable linear combination of \eqref{Sy5} from $j=1$ to $j=N+1$ then leads to
\begin{align}\label{lp1}
&\frac{d}{dt}
\sum_{j=1}^{N+1} \norm{\partial_d\partial_t^jq}_{2(N+2)-2j-1}^2
+\sum_{j=1}^{N+1}\left( \norm{\partial_t^ju}_{2(N+2)-2j+1}^2
+ \norm{\nabla\partial_t^jq}_{2(N+2)-2j-1}^2
+ \norm{\partial_t^j\mathbb{Q}}_{2(N+2)-2j}^2\right)
\nonumber\\&\quad
\lesssim
\sum_{j=1}^{N+2} \norm{\partial_t^ju}_{1,2(N+2)-2j}^2
+\sqrt{\mathcal{E}_{2N}}\mathcal{D}_{N+2,1}.
\end{align}

Now, for $1\leq k\leq 2(N+2)-1$, we deduce from \eqref{v2} and \eqref{Se5} that
\begin{align}\label{lp4}
&\frac{d}{dt}
 \norm{\partial_dq}_{k,2(N+2)-k-1}^2
+ \norm{\partial_dq}_{k,2(N+2)-k-1}^2
+ \norm{\mathbb{Q}}_{k+1,2(N+2)-k-1}^2
\nonumber\\&\quad
\lesssim  \norm{\partial_t u}_{2(N+2)-1}^2
+ \norm{\mathbb{Q}}_{k,2(N+2)-k}^2
+ \norm{Du}_{1,2(N+2)-1}^2
+ \norm{ u_d}_{2(N+2)}^2
+\sqrt{\mathcal{E}_{2N}}\mathcal{D}_{N+2,1}.
\end{align}
A suitable linear combination of  \eqref{lp4} from $k= 2(N+2)-1$ to $k=1$ then leads to
\begin{align}\label{coo4}
&\frac{d}{dt}
 \norm{\partial_dq}_{2(N+2)-1}^2
+ \norm{\partial_dq}_{2(N+2)-1}^2
+ \norm{\mathbb{Q}}_{2(N+2)}^2
\nonumber\\&\quad
\lesssim  \norm{\partial_t u}_{2(N+2)-1}^2
+ \norm{\mathbb{Q}}_{1,2(N+2)-1}^2
+ \norm{Du}_{1,2(N+2)-1}^2
+ \norm{ u_d}_{2(N+2)}^2
+\sqrt{\mathcal{E}_{2N}}\mathcal{D}_{N+2,1}.
\end{align}
We recall from \eqref{v2} with $k=0$ that
\begin{align}\label{coo6}
&\frac{d}{dt}
 \norm{\partial_dq}_{0,2(N+2)-1}^2
+ \norm{\partial_dq}_{0,2(N+2)-1}^2
+ \norm{\mathbb{Q}}_{1,2(N+2)-1}^2
\nonumber\\&\quad
\lesssim \norm{\partial_t u}_{2(N+2)-1}^2+ \norm{\mathbb{Q}}_{0,2(N+2)}^2
+ \norm{D u}_{1,2(N+2)-1}^2
+ \norm{u_d}_{2(N+2)}^2
+\sqrt{\mathcal{E}_{2N}}\mathcal{D}_{N+2,1}.
\end{align}
We thus conclude from \eqref{coo4} and \eqref{coo6} that
\begin{align}\label{coo7}
&\frac{d}{dt}
 \norm{\partial_dq}_{2(N+2)-1}^2
+ \norm{\partial_dq}_{2(N+2)-1}^2
+ \norm{\mathbb{Q}}_{2(N+2)}^2
\nonumber\\&\quad
\lesssim  \norm{\partial_t u}_{2(N+2)-1}^2
+ \norm{\mathbb{Q}}_{0,2(N+2)}^2
+ \norm{Du}_{1,2(N+2)-1}^2
+ \norm{ u_d}_{2(N+2)}^2
+\sqrt{\mathcal{E}_{2N}}\mathcal{D}_{N+2,1}.
\end{align}
On the other hand, taking $k=2(N+2)-1$ in \eqref{Se5} yields
\begin{align}\label{coo8}
& \norm{D u}_{2(N+2)}^2+ \norm{\nabla Dq}_{2(N+2)-2}^2
\nonumber\\&\quad
\lesssim
 \norm{\partial_tD u}_{2(N+2)-2}^2+ \norm{D\mathbb{Q}}_{2(N+2)-2}^2
+ \norm{Du}_{1,2(N+2)-1}^2+\mathcal{E}_{2N}\mathcal{D}_{N+2,1}.
\end{align}
Unlike \eqref{11}, using \eqref{ne2} and \eqref{dq1}, we derive
\begin{align}\label{yh3}
 \norm{\mathbb{Q}}_{0,2(N+2)}^2
&\lesssim
 \norm{\mathbb{Q}}_0^2+ \norm{D\mathbb{Q}}_{0,2(N+2)-1}^2
\nonumber\\&
\lesssim
 \norm{\partial_t q}_{0}^2
+ \norm{ G^{1,1}}_{0}^2
+ \norm{ Du}_{1,2(N+2)-1}^2
+ \norm{D G^{1,2}}_{0,2(N+2)-1}^2
\nonumber\\&
\lesssim
 \norm{\partial_t q}_{0}^2
+ \norm{ Du}_{1,2(N+2)-1}^2
+\mathcal{E}_{2N}\mathcal{D}_{N+2,1}.
\end{align}
By \eqref{coo8} and \eqref{yh3}, we deduce from \eqref{coo7} that
\begin{align}\label{yh2}
&\frac{d}{dt}
 \norm{\partial_dq}_{2(N+2)-1}^2
+ \norm{D u}_{2(N+2)}^2+ \norm{ D^2q}_{2(N+2)-2}^2
+ \norm{\partial_dq}_{2(N+2)-1}^2
+ \norm{\mathbb{Q}}_{2(N+2)}^2
\nonumber\\&\quad
\lesssim
 \norm{\partial_t q}_{0}^2
+ \norm{\partial_t u}_{2(N+2)-1}^2
+ \norm{ Du}_{1,2(N+2)-1}^2
+ \norm{u_d}_{2(N+2)}^2
+\sqrt{\mathcal{E}_{2N}}\mathcal{D}_{N+2,1}.
\end{align}

We now improve the dissipation estimates in the left-hand sides of \eqref{lp1} and \eqref{yh2}.
Using the vertical component of the second equation in \eqref{q2}, by \eqref{ne2}, we obatin
\begin{align}\label{ond6}
 \norm{\partial_d^2u_d}_{2(N+2)-1}^2
&\lesssim  \norm{Du}_{2(N+2)}^2
+ \norm{\partial_t u}_{2(N+2)-1}^2+ \norm{\partial_dq}_{2(N+2)-1}^2
+ \norm{G_d^2}_{2(N+2)-1}^2
\nonumber\\&
\lesssim  \norm{Du}_{2(N+2)}^2
+ \norm{\partial_t u}_{2(N+2)-1}^2+ \norm{\partial_dq}_{2(N+2)-1}^2+\mathcal{E}_{2N}\mathcal{D}_{N+2,1}.
\end{align}
By Poincar\'{e}'s inequality, the first equation in \eqref{q2} and \eqref{ne1}, we have
\begin{align}\label{oond2}
\norm{u_d}_0^2+ \norm{\partial_du_d}_{0}^2
\lesssim
 \norm{\partial_d\(\bar{\rho}u_d\)}_{0}^2
&\lesssim
 \norm{\partial_t q}_{0}^2+ \norm{Du}_{0}^2+ \norm{G^1}_0^2
\nonumber\\&
\lesssim
 \norm{\partial_t q}_{0}^2+ \norm{Du}_{0}^2
+\mathcal{E}_{2N}\mathcal{D}_{N+2,1}.
\end{align}
By the Sobolev interpolation,
\begin{align}\label{ond8}
 \norm{u_d}_{2(N+2)}^2
&\lesssim
\varepsilon \norm{u_d}_{2(N+2)+1}^2
+C(\varepsilon) \norm{u_d}_{0}^2.
\end{align}
Hence, in light of \eqref{ond6}--\eqref{ond8} (with taking $\varepsilon>0$ sufficiently small), we can improve \eqref{yh2} to be
\begin{align}\label{tnn1}
&\frac{d}{dt}
 \norm{\partial_dq}_{2(N+2)-1}^2
+ \norm{D u}_{2(N+2)}^2
+ \norm{u_d}_{2(N+2)+1}^2+ \norm{ D^2q}_{2(N+2)-2}^2
+ \norm{\partial_dq}_{2(N+2)-1}^2
\nonumber\\&\quad
\lesssim
 \norm{\partial_t q}_{0}^2
+ \norm{\partial_t u}_{2(N+2)-1}^2
+ \norm{Du}_{1,2(N+2)-1}^2
+\sqrt{\mathcal{E}_{2N}}\mathcal{D}_{N+2,1}.
\end{align}
On the other hand, by the vertical component of the fourth equation in \eqref{q2},  \eqref{ne2} and the trace theory,
we get
\begin{align}\label{nit5}
\abs{D^2 \eta}_{2(N+2)-5/2}^2
&\lesssim\abs{D^2 q}_{2(N+2)-5/2}^2
 +\abs{D^2 u}_{2(N+2)-3/2}^2+\abs{D^2G_d^4}_{2(N+2)-5/2}^2
\nonumber\\&
\lesssim
 \norm{D^2 q}_{2(N+2)-2}^2
 +\norm{D^2 u}_{2(N+2)-1}^2
 +\mathcal{E}_{2N}\mathcal{D}_{N+2,1}.
\end{align}
 For $j=1,2,\ldots,N+3$, by the  third equation in \eqref{q2},
 we  have
\begin{align}\label{nit3}
\abs{\partial_t^j \eta}_{2(N+2)-2j+5/2}^2
&\lesssim
\abs{\partial_t^{j-1} u_d}_{2(N+2)-2(j-1)+1/2}^2
+\abs{\partial_t^{j-1} G^3}_{2(N+2)-2(j-1)+1/2}^2
\nonumber\\&
\lesssim \norm{\partial_t^{j-1} u_d}_{2(N+2)-2(j-1)+1}^2+\mathcal{E}_{2N}\mathcal{D}_{N+2,1},
\end{align}
and by the first equation in \eqref{q2} and \eqref{ne2},
 we obtain
\begin{align}\label{nnn}
\norm{\partial_t^j q}_{2(N+2)-2j+2}^2
&\lesssim
\norm{\partial_t^{j-1}D u_h}_{2(N+2)-2j+2}^2
+\norm{\partial_t^{j-1} u_d}_{2(N+2)-2j+3}^2
+\norm{\partial_t^{j-1} G^1}_{2(N+2)-2j+2}^2
\nonumber\\&
\lesssim
\norm{\partial_t^{j-1}D u_h}_{2(N+2)-2(j-1)}^2
+\norm{\partial_t^{j-1} u_d}_{2(N+2)-2(j-1)+1}^2+\mathcal{E}_{2N}\mathcal{D}_{N+2,1}.
\end{align}
By the horizontal component of the fourth equation in \eqref{q2}, \eqref{ne2} and the trace theory, we derive
\begin{align}\label{yus}
\abs{\partial_du_h}_{2(N+2)-1/2}^2
&\lesssim
\abs{D u_d}_{2(N+2)-1/2}^2
+\abs{G_h^4}_{2(N+2)-1/2}^2
\lesssim
\norm{D u_d}_{2(N+2)}^2+\mathcal{E}_{2N}\mathcal{D}_{N+2,1}.
\end{align}
Therefore, we may deduce from \eqref{lp1}, \eqref{tnn1}--\eqref{yus} that
\begin{align}\label{cmw2}
&\frac{d}{dt}
\sum_{j=0}^{N+1}\norm{\partial_d\partial_t^jq}_{2(N+2)-2j-1}^2
+\mathcal{D}_{N+2,1}
\nonumber\\&\quad
\lesssim
\norm{\partial_t q}_{0}^2
+\sum_{j=1}^{N+2}\norm{\partial_t^ju}_{1,2(N+2)-2j}^2
+\norm{Du}_{1,2(N+2)-1}^2
+\sqrt{\mathcal{E}_{2N}}\mathcal{D}_{N+2,1}.
\end{align}

Differently from \eqref{face3}, here we need to control additionally  the dissipation estimate of $\left\|\partial_t q\right\|_{0}^2$.
Note that it does not follow from the first equation in \eqref{q2}, since we do not have the control of $\|u_d\|_0^2$ due to
the tangential energy evolution estimates with minimal count 1 of derivatives.
To overcome this,
we apply $\partial_t$ to the vertical component of the second equation in \eqref{q2} and then take the $L^2(\Omega)$ inner product
of the resulting with $u_d$ to obtain
\begin{align}\label{zhen}
&\int_\Omega\bar{\rho}\partial_t\partial_d q u_d
-\int_\Omega \partial_t\(\diverge\mathbb{S}u\)_d u_d=-\int_\Omega \bar{\rho}\partial_t^2u_d u_d+\int_\Omega \partial_tG_d^2 u_d.
\end{align}
By integrating by parts  and using the fifth equation of \eqref{q2},
 we deduce that, using the first equation of \eqref{q2},
\begin{align}
\int_\Omega\bar{\rho}\partial_t\partial_d q u_d
&=\int_\Sigma \bar{\rho}\partial_t q u_d-\int_\Omega\partial_t q\partial_d\(\bar{\rho} u_d\)
\nonumber\\&
=\int_\Sigma \rho^\star\partial_t q u_d+\int_\Omega\frac{1}{h'(\bar{\rho})}|\partial_t q|^2+\int_\Omega\partial_t qD\cdot\(\bar{\rho} u_h\)
-\int_\Omega\frac{1}{h'(\bar{\rho})}\partial_t qG^1
\end{align}
and that
\begin{align}
-\int_\Omega \partial_t\(\diverge\mathbb{S}u\)_d u_d
&=
-\int_\Sigma \partial_t\(\mathbb{S}u\)_{dd}u_d+\int_\Omega \partial_t\(\mathbb{S}u\)_{dj} \partial_ju_d.
\end{align}
On the other hand, by the vertical component of the fourth equation and the third equation in \eqref{q2},
\begin{align}\label{zhen1}
\int_\Sigma \rho^\star\partial_t q u_d-\int_\Sigma \partial_t\(\mathbb{S}ue_d\)_du_d
&=\int_\Sigma \(\rho^\star \partial_t \eta +\partial_tG_d^4\)u_d
\nonumber\\&
=\int_\Sigma \rho^\star |\partial_t \eta|^2-\int_\Sigma \rho^\star \partial_t \eta G^3+\int_\Sigma \partial_tG_d^4u_d.
\end{align}
Hence, by \eqref{zhen}--\eqref{zhen1}, we have
\begin{align}\label{ok2}
\int_\Omega \frac{1}{h'(\bar{\rho})}\abs{\partial_tq}^2+\int_\Sigma \rho^\star g\abs{\partial_t\eta}^2
=&
-\int_\Omega \bar{\rho}\partial_t^2u_d u_d+\int_\Omega \partial_tG_d^2 u_d
-\int_\Omega\partial_t qD\cdot\(\bar{\rho} u_h\)
+\int_\Omega\frac{1}{h'(\bar{\rho})}\partial_t qG^1
\nonumber\\&
-\int_\Omega \partial_t\(\mathbb{S}u\)_{dj} \partial_ju_d
+\int_\Sigma \rho^\star \partial_t \eta G^3-\int_\Sigma \partial_tG_d^4u_d.
\end{align}
By \eqref{ne2} and the trace theory,  we  derive
\begin{align}\label{zhen3}
\norm{\partial_tq}_0^2+\abs{\partial_t\eta}_0^2
&\lesssim
\norm{\partial_t^2 u_d}_0\norm{u_d}_0
+\norm{\partial_tG_d^2}_0\norm{u_d}_0
+\norm{\partial_tq}_0\norm{D u_h}_0
+\norm{\partial_tq}_0\norm{G^1}_0
\nonumber\\&\quad
+\norm{\partial_t\nabla u}_0\norm{\nabla u_d}_0
+\abs{\partial_t\eta}_0\abs{G^3}_0^2
+\abs{\partial_tG_d^4}_0\abs{u_d}_0
\nonumber\\&
\lesssim
\norm{\partial_t^2 u_d}_0\norm{u_d}_0+\norm{\partial_tq}_0\norm{D u_h}_0+\norm{\partial_t\nabla u}_0\norm{\nabla u_d}_0
+\sqrt{\mathcal{E}_{2N}}\mathcal{D}_{N+2,1}.
\end{align}
Therefore, by \eqref{zhen3}, \eqref{oond2} and Cauchy's inequality, we obtain
\begin{align}\label{ok3}
\norm{\partial_tq}_0^2+\abs{\partial_t\eta}_0^2
\lesssim
\norm{\partial_t^2 u_d}_0^2+\norm{\partial_t u}_1^2+\norm{D u}_0^2
+\sqrt{\mathcal{E}_{2N}}\mathcal{D}_{N+2,1}.
\end{align}
By \eqref{cmw2} and \eqref{ok3}, we have
\begin{align}\label{wwt}
&\frac{d}{dt}
\sum_{j=0}^{N+1}\norm{\partial_d\partial_t^jq}_{2(N+2)-2j-1}^2
+\mathcal{D}_{N+2,1}
\nonumber\\&\quad
\lesssim
\sum_{j=1}^{N+2}\norm{\partial_t^ju}_{1,2(N+2)-2j}^2
+\norm{ Du}_{1,2(N+2)-1}^2
+\sqrt{\mathcal{E}_{2N}}\mathcal{D}_{N+2,1}.
\end{align}
Combining the estimates \eqref{tan2},
\eqref{tan4}, \eqref{wwt},
by the smallness of $\mathcal{E}_{2N}$, we conclude that
\begin{align}\label{okk}
&\frac{d}{dt}
\bar{\mathcal{E}}_{N+2,1}
+\mathcal{D}_{N+2,1}
\leq 0,
\end{align}
where
\begin{align}\label{bare}
\bar{\mathcal{E}}_{N+2,1}:
=&\norm{\nabla q}_{2(N+2)-1}^{2}
+\sum_{j=1}^{N+2}\norm{\partial_t^jq}_{2(N+2)-2j}^{2}
+\norm{Du}_{0,2(N+2)-1}^{2}
+\sum_{j=1}^{N+2}\norm{\partial_t^j u}_{0,2(N+2)-2j}^{2}
\nonumber\\&
+\abs{D\eta}_{2(N+2)-1}^{2}
+\sum_{j=1}^{N+2}\abs{\partial_t^j\eta}_{2(N+2)-2j}^{2}.
\end{align}

Finally, it remains to improve the energy estimates in \eqref{okk}.
A simple induction on \eqref{chf2} from $j=1$ to $j=N+1$ leads to
\begin{align}\label{chff}
\sum_{j=1}^{N+1}\norm{\partial_t^ju}_{2(N+2)-2j}^2
&\lesssim
\norm{\partial_t^{N+2}u}_{0}^2
+\sum_{j=1}^{N+1}\(\norm{\partial_t^j q}_{2(N+2)-2j-1}^2
+\abs{\partial_t^j\eta}_{2(N+2)-2j-3/2}^2\)
+\mathcal{E}_{2N}\mathcal{E}_{N+2,1}
\nonumber\\&
\lesssim
\bar{\mathcal{E}}_{N+2,1}+\mathcal{E}_{2N}\mathcal{E}_{N+2,1}.
\end{align}
Similarly as \eqref{ond6}, by the second equation in \eqref{q2} and \eqref{ne1}, we derive
\begin{align}\label{ond1}
\norm{\partial_d^2u}_{2(N+2)-2}^2
&\lesssim \norm{Du}_{2(N+2)-1}^2
+\norm{\partial_t u}_{2(N+2)-2}^2+\norm{\nabla q}_{2(N+2)-2}^2
+\mathcal{E}_{2N}\mathcal{E}_{N+2,1}.
\end{align}
Similarly as \eqref{oond2}, by \eqref{ne1}, we get
\begin{align}\label{ond2}
\norm{u_d}_0^2+\norm{\partial_du_d}_{0}^2
\lesssim
\norm{\partial_d(\bar{\rho}u_d)}_{0}^2
&\lesssim
\norm{\partial_t q}_{0}^2+\norm{Du}_{0}^2
+\mathcal{E}_{2N}\mathcal{E}_{N+2,1}
.
\end{align}
Utilizing the horizontal component of the fourth equation in \eqref{q2},
by \eqref{ne1} and the trace theory, we find
\begin{align}\label{rx8}
\abs{\partial_du_h}_{0}^2
\lesssim \abs{Du_d}_{0}^2
+\abs{G_h^4}_{0}^2
\lesssim \norm{Du_d}_{1}^2+\mathcal{E}_{2N}\mathcal{E}_{N+2,1}.
\end{align}
Note that, by Poincar\'{e}'s inequality,
\begin{align}
\norm{\partial_du_h}_{0}^2
\lesssim
\abs{\partial_du_h}_{0}^2+\norm{\partial_d^2u_h}_{0}^2.
\end{align}
Hence, by \eqref{chf3}, \eqref{chff} and Poincar\'{e}'s inequality, we have
\begin{align}\label{ond3}
\norm{u}_{2(N+2)}^2
&\lesssim
\norm{Du}_{2(N+2)-1}^2
+\norm{\partial_du}_{0}^2
+\norm{\partial_d^2u}_{2(N+2)-2}^2
\nonumber\\&
\lesssim
 \norm{Du}_{2(N+2)-1}^2
+\norm{\partial_t u}_{2(N+2)-2}^2+\norm{\partial_t q}_{0}^2+\norm{\nabla q}_{2(N+2)-2}^2+\mathcal{E}_{2N}\mathcal{E}_{N+2,1}
\nonumber\\&
\lesssim
\norm{\partial_tu}_{2(N+2)-2}^2
+\norm{\partial_t q}_{0}^2
+\norm{\nabla q}_{2(N+2)-2}^2
+\abs{D\eta}_{2(N+2)-5/2}^2
+\mathcal{E}_{2N}\mathcal{E}_{N+2,1}
\nonumber\\&
\lesssim
\bar{\mathcal{E}}_{N+2,1}+\mathcal{E}_{2N}\mathcal{E}_{N+2,1}.
\end{align}
Consequently, by the smallness of $\mathcal{E}_{2N}$, we can deduce from \eqref{bare}--\eqref{ond3} that
$
\bar{\mathcal{E}}_{N+2,1}
\sim\mathcal{E}_{N+2,1},
$
which together with \eqref{okk} gives \eqref{lsy1}.
\end{proof}





\section{A priori estimates}
To conclude the a priori estimates, it still needs to estimate $\mathcal{F}_{2N}$.
\begin{prop}\label{pf}
It holds that
\begin{align}\label{L4}
\frac{d}{dt}\mathcal{F}_{2N}
\lesssim \sqrt{\mathfrak{D}_{2N}}\sqrt{\mathcal{F}_{2N}}
+\sqrt{\mathcal{D}_{N+2,1}}\mathcal{F}_{2N}.
\end{align}
\end{prop}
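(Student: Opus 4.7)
The driving equation is the kinematic boundary condition, recast as the transport equation $\dt\eta+u_h\cdot D\eta=u_d$ on $\Sigma$ (the third equation of \eqref{neweq}). My strategy is to apply the fractional horizontal derivative $\Lambda^{4N+1/2}$, with $\Lambda:=\sqrt{-\Delta_h}$, and pair against $\Lambda^{4N+1/2}\eta$ in $L^2(\Sigma)$. Since $\f\sim\abs{\eta}_0^2+\abs{\Lambda^{4N+1/2}\eta}_0^2$ and the $\abs{\eta}_0$ contribution admits an analogous but much easier estimate (based on $\abs{u_d}_0\ls\norm{u_d}_1\ls\sqrt{\fd{2N}}$ via the trace theory and $\norm{u_d}_{4N+1}^2\subset\fd{2N}$), the task reduces to controlling
\begin{equation*}
\hal\dtt\abs{\Lambda^{4N+1/2}\eta}_0^2=\int_\Sigma\Lambda^{4N+1/2}u_d\,\Lambda^{4N+1/2}\eta-\int_\Sigma\Lambda^{4N+1/2}(u_h\cdot D\eta)\,\Lambda^{4N+1/2}\eta.
\end{equation*}

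The source integral is immediate from Cauchy--Schwarz and the trace theory: since $\norm{u_d}_{4N+1}^2\subset\fd{2N}$, one has $\abs{u_d}_{4N+1/2}^2\ls\fd{2N}$, producing the desired $\sqrt{\fd{2N}}\sqrt{\f}$. For the transport integral, I would split $\Lambda^{4N+1/2}(u_h\cdot D\eta)=u_h\cdot D\Lambda^{4N+1/2}\eta+[\Lambda^{4N+1/2},u_h]\cdot D\eta$. The first piece integrates by parts into $\hal\int_\Sigma(D\cdot u_h)\abs{\Lambda^{4N+1/2}\eta}^2$, bounded by $\norm{Du_h}_{L^\infty(\Sigma)}\f$, and the anisotropic embedding $\norm{\na u}_{C^1(\Sigma)}^2\ls\mathcal{D}_{N+2,1}$ recorded in Section \ref{sj} then delivers $\sqrt{\mathcal{D}_{N+2,1}}\f$.

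The principal technical point is the Kato--Ponce commutator estimate $\abs{[\Lambda^{4N+1/2},u_h]\cdot D\eta}_0\ls\norm{Du_h}_{L^\infty}\abs{D\eta}_{4N-1/2}+\abs{u_h}_{4N+1/2}\abs{D\eta}_{L^\infty}$. After multiplication by $\abs{\Lambda^{4N+1/2}\eta}_0=\sqrt{\f}$, the first summand contributes $\sqrt{\mathcal{D}_{N+2,1}}\f$ using $\abs{D\eta}_{4N-1/2}^2\le\f$. The second summand is the main obstacle, since $\abs{u_h}_{4N+1/2}$ is not literally a component of $\fd{2N}$. The resolution is to observe that $\na u_h=\a^{-1}\na_\a u_h$ with $\na\varphi$ small, so the trace theory together with $\norm{u_h}_{4N}^2+\norm{\na_\a u_h}_{4N}^2\subset\fd{2N}$ yields $\abs{u_h}_{4N+1/2}^2\ls\norm{u_h}_{4N+1}^2\ls\fd{2N}$; coupling this with the interpolation $\abs{D\eta}_{L^\infty(\Sigma)}^2\ls\mathcal{D}_{N+2,1}^{3/4}\mathcal{E}_{2N}^{1/4}$ from \eqref{ei}, the apriori smallness of $\mathcal{E}_{2N}$ and $\mathcal{D}_{N+2,1}$ absorbs the extra factor $\mathcal{D}_{N+2,1}^{3/8}\mathcal{E}_{2N}^{1/8}$ into a universal constant, leaving a final contribution of $\sqrt{\fd{2N}}\sqrt{\f}$. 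Summing all the pieces and returning to $\f$ itself then closes \eqref{L4}.
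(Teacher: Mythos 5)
Your overall framework — recasting the kinematic boundary condition as a transport equation for $\eta$, applying $\Lambda^{4N+1/2}$, integrating by parts on the good transport piece, and invoking Kato--Ponce for the commutator — is effectively what the paper does (it cites a packaged version of precisely this estimate from Proposition~6.4 of \cite{WYJ1}, producing $\frac{d}{dt}\mathcal{F}_{2N}\lesssim\abs{u}_{4N+1/2}\sqrt{\mathcal{F}_{2N}}+\abs{Du}_{L^\infty(\Sigma)}\mathcal{F}_{2N}$). The source term, the pure transport term, and the first commutator summand are all handled the same way and correctly. The gap is in your treatment of the second commutator summand.

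You assert $\abs{u_h}_{4N+1/2}^2\lesssim\norm{u_h}_{4N+1}^2\lesssim\mathfrak{D}_{2N}$ by writing $\nabla u_h=\mathcal{A}^{-1}\nabla_\mathcal{A}u_h$ and appealing to the smallness of $\nabla\varphi$. This bound is false, and the paper explicitly warns of it: Remark~2.2 states that the final estimate \eqref{obey} does \emph{not} yield time integrability of $\norm{u_h}_{4N+1}^2$, which would be immediate if $\norm{u_h}_{4N+1}^2\lesssim\mathfrak{D}_{2N}$. The smallness of $\nabla\varphi$ controls the algebraic size of $\mathcal{A}^{-1}$ but not its high derivatives; applying the $H^{4N}$ norm to $\mathcal{A}^{-1}\nabla_\mathcal{A}u_h$ produces, at top order, a term behaving like $\norm{\nabla^{4N}(\mathcal{A}^{-1})}_0\norm{\nabla_\mathcal{A}u_h}_{L^\infty(\Omega)}$, and $\norm{\nabla^{4N}(\mathcal{A}^{-1})}_0\sim\norm{\nabla^{4N+1}\varphi}_0\sim\abs{\eta}_{4N+1/2}=\sqrt{\mathcal{F}_{2N}}$, which is exactly the quantity $\mathfrak{D}_{2N}$ does not control. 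Worse, the factor $\norm{\nabla_\mathcal{A}u_h}_{L^\infty(\Omega)}$ is an \emph{interior} sup norm, which does not enjoy the clean bound $\lesssim\sqrt{\mathcal{D}_{N+2,1}}$; as noted in Section~\ref{sj}, only the boundary trace $\norm{\nabla u}_{C^1(\Sigma)}$ does.

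The correct handling, and the one the paper uses in \eqref{f2nn}, is to stay on the boundary throughout and exploit the anisotropy there: write $\abs{u}_{4N+1/2}\lesssim\abs{u}_0+\abs{Du}_{4N-1/2}$, then on $\Sigma$ expand $Du=D_\mathcal{A}u+\partial_d^\mathcal{A}u\,D\varphi$, so $\abs{D_\mathcal{A}u}_{4N-1/2}\lesssim\norm{\nabla_\mathcal{A}u}_{4N}\lesssim\sqrt{\mathfrak{D}_{2N}}$ by the trace theory while $\abs{\partial_d^\mathcal{A}u\,D\varphi}_{4N-1/2}\lesssim\norm{\nabla u}_{C^1(\Sigma)}\abs{\eta}_{4N+1/2}\lesssim\sqrt{\mathcal{D}_{N+2,1}\mathcal{F}_{2N}}$ by Lemma~\ref{va1}. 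Thus $\abs{u}_{4N+1/2}\lesssim\sqrt{\mathfrak{D}_{2N}}+\sqrt{\mathcal{D}_{N+2,1}\mathcal{F}_{2N}}$ carries a genuine $\mathcal{F}_{2N}$-tail that cannot be dropped. If you substitute this corrected bound into your commutator term and multiply by $\abs{D\eta}_{L^\infty(\Sigma)}\sqrt{\mathcal{F}_{2N}}$, the argument still closes (the residual $\sqrt{\mathcal{D}_{N+2,1}\mathcal{F}_{2N}}$ part produces $\sqrt{\mathcal{D}_{N+2,1}}\mathcal{F}_{2N}$ up to a small a priori factor), so the proposal is repairable, but as written it rests on a bound the paper deliberately avoids assuming.
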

\begin{proof}
We refer to Proposition 6.4 of \cite{WYJ1}, by using the third equation in \eqref{neweq}, to derive
\begin{align}\label{f2n}
\frac{d}{dt}\mathcal{F}_{2N}
&\lesssim
\abs{u}_{4N+1/2}\sqrt{\mathcal{F}_{2N}}+\abs{Du}_{L^\infty(\Sigma)}\mathcal{F}_{2N}
\nonumber\\&
\lesssim
\abs{u}_{4N+1/2}\sqrt{\mathcal{F}_{2N}}
+\sqrt{\mathcal{D}_{N+2,1}}\mathcal{F}_{2N}.
\end{align}
Note that, by Sobolev's embedding and the trace theory, we find
\begin{align}\label{f2nn}
\abs{u}_{4N+1/2}
\lesssim
\abs{u}_{0}+\abs{Du}_{4N-1/2}
&\lesssim
\abs{u}_{0}+\abs{D_\mathcal{A}u}_{4N-1/2}+\abs{\partial_d^\mathcal{A}uD\varphi}_{4N-1/2}
\nonumber\\&
\lesssim
\norm{u}_{1}+\norm{\nabla_\mathcal{A}u}_{4N}
+\norm{\nabla u}_{C^1(\Sigma)}\abs{\eta}_{4N+1/2}
\nonumber\\&
\lesssim \sqrt{\mathfrak{D}_{2N}}
+\sqrt{\mathcal{D}_{N+2,1}\mathcal{F}_{2N}}.
\end{align}
Therefore, combining \eqref{f2n} and \eqref{f2nn} implies \eqref{L4}.
\end{proof}
Now we shall present the proof of the a priori estimates.
\begin{thm}\label{main}
There exists a universal $0<\delta<1$ so that if $\mathcal{G}_{2N}(T)\leq \delta$, then
\begin{align}\label{main1}
\mathcal{G}_{2N}(T)\lesssim \mathcal{G}_{2N}(0).
\end{align}
\end{thm}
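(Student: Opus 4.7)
The plan is to integrate each of Propositions~\ref{Sy1}, \ref{lsy}, and \ref{pf} in time against appropriate weights and to use the smallness hypothesis $\mathcal{G}_{2N}(T)\le\delta$ to absorb the resulting nonlinear contributions. All three estimates feed into the single quantity $\mathcal{G}_{2N}(T)$, and the argument will close because every nonlinear term on the right produces a factor of $\mathcal{G}_{2N}(T)$ to a power strictly greater than one.

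\emph{Step 1: bound on $\mathcal{E}_{2N}$ and $\int_0^t\mathfrak{D}_{2N}$.} I would integrate \eqref{Sy} directly in time. The first nonlinear term is rewritten as
\begin{equation*}
\mathcal{E}_{2N}^{7/8}\mathcal{D}_{N+2,1}^{5/8} = \mathcal{E}_{2N}^{7/8}\bigl((1+r)\mathcal{D}_{N+2,1}\bigr)^{5/8}(1+r)^{-5/8},
\end{equation*}
and H\"older with exponents $(8/5,8/3)$, together with $\int_0^\infty(1+r)^{-5/3}\,dr<\infty$, yields $\int_0^t\mathcal{E}_{2N}^{7/8}\mathcal{D}_{N+2,1}^{5/8}\,dr\lesssim \mathcal{G}_{2N}(T)^{3/2}$. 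For the second nonlinear term I insert $\mathcal{F}_{2N}(r)\le(1+r)\mathcal{G}_{2N}(T)$ and use $\int_0^t(1+r)\mathcal{D}_{N+2,1}\,dr\le\mathcal{G}_{2N}(T)$ to get $\int_0^t\mathcal{D}_{N+2,1}\mathcal{F}_{2N}\,dr\lesssim\mathcal{G}_{2N}(T)^2$. Hence $\mathcal{E}_{2N}(t)+\int_0^t\mathfrak{D}_{2N}(r)\,dr \lesssim \mathcal{E}_{2N}(0)+\mathcal{G}_{2N}(T)^{3/2}$.

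\emph{Step 2: bound on $\mathcal{F}_{2N}/(1+t)$ and its $(1+r)^{-2}$-weighted time integral.} The identity $\frac{d}{dt}\bigl(\frac{\mathcal{F}_{2N}}{1+t}\bigr)+\frac{\mathcal{F}_{2N}}{(1+t)^2} = \frac{\mathcal{F}_{2N}'}{1+t}$ combined with Proposition~\ref{pf} gives
\begin{equation*}
\frac{d}{dt}\!\Bigl(\frac{\mathcal{F}_{2N}}{1+t}\Bigr) + \frac{\mathcal{F}_{2N}}{(1+t)^2} \lesssim \frac{\sqrt{\mathfrak{D}_{2N}\mathcal{F}_{2N}}}{1+t} + \frac{\sqrt{\mathcal{D}_{N+2,1}}\mathcal{F}_{2N}}{1+t}.
\end{equation*}
Applying Young's inequality to each factor with the common choice $A=\mathcal{F}_{2N}/(1+t)^2$ dominates $\sqrt{\mathfrak{D}_{2N}\mathcal{F}_{2N}}/(1+t)\le\tfrac18 A+C\mathfrak{D}_{2N}$ and $\sqrt{\mathcal{D}_{N+2,1}}\mathcal{F}_{2N}/(1+t)\le\tfrac18 A+C\mathcal{D}_{N+2,1}\mathcal{F}_{2N}$; absorbing the two $\tfrac18 A$ terms into the left produces $\frac{d}{dt}(\mathcal{F}_{2N}/(1+t))+\tfrac34\mathcal{F}_{2N}/(1+t)^2\lesssim \mathfrak{D}_{2N}+\mathcal{D}_{N+2,1}\mathcal{F}_{2N}$. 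Integrating and invoking Step~1 together with $\int_0^t\mathcal{D}_{N+2,1}\mathcal{F}_{2N}\,dr\lesssim\mathcal{G}_{2N}(T)^2$ yields
\begin{equation*}
\frac{\mathcal{F}_{2N}(t)}{1+t}+\int_0^t\frac{\mathcal{F}_{2N}(r)}{(1+r)^2}\,dr \lesssim \mathcal{F}_{2N}(0)+\mathcal{E}_{2N}(0)+\mathcal{G}_{2N}(T)^{3/2}.
\end{equation*}

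\emph{Step 3: time-weighted decay of $\mathcal{E}_{N+2,1}$.} Multiplying \eqref{lsy1} by $(1+t)$ gives $\tfrac{d}{dt}[(1+t)\mathcal{E}_{N+2,1}]+(1+t)\mathcal{D}_{N+2,1}\le\mathcal{E}_{N+2,1}$. A term-by-term Sobolev interpolation in the spirit of Lemma~\ref{q,u} gives $\mathcal{E}_{N+2,1}\lesssim \mathcal{D}_{N+2,1}^{1-\theta}\mathcal{E}_{2N}^{\theta}$ with $\theta=1/(4N-7)$, the ratio being dictated by the hardest gap between the two norms, namely $|D\eta|_{2(N+2)-1}$ interpolated between $|D^2\eta|_{2(N+2)-5/2}$ (in $\mathcal{D}_{N+2,1}$) and $|\eta|_{4N}$ (in $\mathcal{E}_{2N}$). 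Young's inequality then splits $\mathcal{E}_{N+2,1}\le \tfrac12(1+t)\mathcal{D}_{N+2,1}+C(1+t)^{-(1-\theta)/\theta}\mathcal{E}_{2N}$; since $(1-\theta)/\theta=4N-8>1$ for $N\ge4$, the tail is integrable. Absorbing the $\tfrac12(1+t)\mathcal{D}_{N+2,1}$ and integrating gives $(1+t)\mathcal{E}_{N+2,1}(t)+\int_0^t(1+r)\mathcal{D}_{N+2,1}(r)\,dr\lesssim\mathcal{E}_{2N}(0)$.

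\emph{Step 4: closing.} Summing the three bounds and taking the supremum over $[0,T]$ produces $\mathcal{G}_{2N}(T)\le C(\mathcal{E}_{2N}(0)+\mathcal{F}_{2N}(0))+C\mathcal{G}_{2N}(T)^{3/2}$. Since the nonlinear exponent $3/2$ exceeds one, choosing $\delta$ small enough that $C\delta^{1/2}\le\tfrac12$ absorbs the last term into the left-hand side and gives \eqref{main1}.

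The main obstacle is Step~2: Proposition~\ref{pf} by itself only yields Gronwall-type control of $\sqrt{\mathcal{F}_{2N}}$ by $\int_0^t\sqrt{\mathcal{D}_{N+2,1}}\,dr$, which has no intrinsic decay in $t$ and produces, at best, a slowly growing exponential factor that is incompatible with the linear-in-$t$ bound $\mathcal{F}_{2N}\lesssim(1+t)$ needed inside $\mathcal{G}_{2N}$. The decisive move is to pass to the quotient $\mathcal{F}_{2N}/(1+t)$, because the automatically produced sink term $\mathcal{F}_{2N}/(1+t)^2$ is precisely the weight against which both $\sqrt{\mathfrak{D}_{2N}\mathcal{F}_{2N}}/(1+t)$ and $\sqrt{\mathcal{D}_{N+2,1}}\mathcal{F}_{2N}/(1+t)$ can be dominated by Young, leaving only the time-integrable factors $\mathfrak{D}_{2N}$ (controlled by Step~1) and $\mathcal{D}_{N+2,1}\mathcal{F}_{2N}$ (controlled by the reservoir $\int(1+r)\mathcal{D}_{N+2,1}\,dr$ in $\mathcal{G}_{2N}$). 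A secondary technical point is the interpolation claim in Step~3, which requires verifying component-by-component that every term in $\mathcal{E}_{N+2,1}$ with a smaller derivative count than its counterpart in $\mathcal{D}_{N+2,1}$ can be interpolated against the highest-regularity bound coming from $\mathcal{E}_{2N}$ with an exponent $\theta\le 1/(4N-7)$.
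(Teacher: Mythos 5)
Your Steps 1, 2 and 4 reproduce the paper's argument (in Step 2 you run Young's inequality pointwise in time before integrating, while the paper multiplies \eqref{L4} by $(1+t)^{-1}$, integrates and then applies Cauchy's inequality under the integral, but the two are the same estimate). The genuine gap is in Step 3. The interpolation $\mathcal{E}_{N+2,1}\lesssim \mathcal{D}_{N+2,1}^{1-\theta}\mathcal{E}_{2N}^{\theta}$ with $\theta=1/(4N-7)$ is false: you located the ``hardest gap'' at the top end of the derivative range (that is indeed where $1/(4N-7)$ comes from, as in \eqref{sob1}), but the real obstruction sits at the bottom end, i.e. exactly the minimal-derivative-count issue that motivates the subscript ``$,1$''. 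The energy \eqref{le} contains $\norm{u}_{2(N+2)}^2\supset\norm{u_h}_0^2$, $\norm{\nabla q}_{2(N+2)-1}^2\supset\norm{Dq}_0^2$ and $\abs{D\eta}_{2(N+2)-1}^2\supset\abs{D\eta}_0^2$, whereas the dissipation \eqref{ld} contains no undifferentiated $u_h$, no $Dq$ and no $D\eta$; by the $L^2$ tables of Lemma \ref{q,u} (which the paper notes ``could not be improved'') these pieces satisfy only $\norm{u_h}_0^2+\norm{Dq}_0^2+\abs{D\eta}_0^2\lesssim \mathcal{D}_{N+2,1}^{1/2}\mathcal{E}_{2N}^{1/2}$. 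So the best uniform exponent is $1-\theta=1/2$, and then your Young step gives $\mathcal{E}_{N+2,1}\le \varepsilon(1+t)\mathcal{D}_{N+2,1}+C_\varepsilon(1+t)^{-1}\mathcal{E}_{2N}$, whose tail is not integrable: integration produces a factor $\log(1+t)$ in the bound for $(1+t)\mathcal{E}_{N+2,1}(t)+\int_0^t(1+r)\mathcal{D}_{N+2,1}(r)\,dr$, which is incompatible with the uniform-in-time bounds demanded by $\mathcal{G}_{2N}$, so the argument does not close.

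The repair is the paper's Step 3, which needs no interpolation at all: for $N\ge 4$ every term of $\mathcal{E}_{N+2,1}$ is already contained in $\mathfrak{D}_{2N}$ (e.g. $\norm{u_h}_{2(N+2)}^2\le\norm{u_h}_{4N}^2$, $\norm{\nabla q}_{2(N+2)-1}^2\le\norm{\nabla q}_{4N-1}^2$, $\abs{D\eta}_{2(N+2)-1}^2\le\abs{D\eta}_{4N-3/2}^2$, and similarly for the time-differentiated terms), hence $\mathcal{E}_{N+2,1}\lesssim\mathfrak{D}_{2N}$ pointwise in time. Multiplying \eqref{lsy1} by $(1+t)$ and integrating, the error $\int_0^t\mathcal{E}_{N+2,1}(r)\,dr\lesssim\int_0^t\mathfrak{D}_{2N}(r)\,dr$ is then controlled by your Step 1, giving $(1+t)\mathcal{E}_{N+2,1}(t)+\int_0^t(1+r)\mathcal{D}_{N+2,1}(r)\,dr\lesssim\mathcal{G}_{2N}(0)+(\mathcal{G}_{2N}(t))^{3/2}$; this is precisely where the high-order dissipation bound is used, and an attempt to make the low-order decay estimate self-contained through interpolation against $\mathcal{D}_{N+2,1}$ alone cannot succeed because of the missing zero-derivative pieces. (A minor additional slip: even granting your interpolation, your Step 3 conclusion ``$\lesssim\mathcal{E}_{2N}(0)$'' should read $\lesssim\mathcal{G}_{2N}(0)+(\mathcal{G}_{2N}(T))^{3/2}$, since $\sup_{[0,t]}\mathcal{E}_{2N}$ is only controlled via Step 1.)
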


\begin{proof}
We divide the proof into three steps.

\textbf{Step 1: Bounded estimates of $\mathcal{E}_{2N}+\int_{0}^{t}\mathfrak{D}_{2N}$.}
We integrate  \eqref{Sy1} in Proposition \ref{Sy} directly
 in time to find, by the definition of $\mathcal{G}_{2N}(t)$ and H\"{o}lder's inequality,
\begin{align}\label{pr6}
&\mathcal{E}_{2N}(t)+\int_0^t \mathfrak{D}_{2N}(r)dr
\nonumber\\&\quad
\lesssim \mathcal{E}_{2N}(0)
+\int_0^t\mathcal{E}_{2N}^{7/8}(r)\left((1+r)\mathcal{D}_{N+2,1}(r)\right)^{5/8}(1+r)^{-5/8}dr
+\int_0^t(1+r)\mathcal{D}_{N+2,1}(r)\frac{\mathcal{F}_{2N}(r)}{(1+r)}dr
\nonumber\\&\quad
\lesssim
\mathcal{E}_{2N}(0)+(\mathcal{G}_{2N}(t))^{7/8}\left(\int_0^t(1+r)\mathcal{D}_{N+2,1}(r)dr\right)^{5/8}
\left(\int_0^t(1+r)^{-5/3}dr\right)^{3/8}
+(\mathcal{G}_{2N}(t))^{2}
\nonumber\\&\quad
\lesssim \mathcal{G}_{2N}(0)+(\mathcal{G}_{2N}(t))^{3/2}.
\end{align}

\textbf{Step 2: Growth estimate of $\mathcal{F}_{2N}$.}
Multiplying \eqref{L4} in Proposition \ref{pf} by $(1+t)^{-1}$ and then integrating in time, we derive
\begin{align}\label{L5}
&\frac{\mathcal{F}_{2N}(t)}{(1+t)}
+\int_0^t \frac{\mathcal{F}_{2N}(r)}{(1+r)^{2}}dr
\nonumber\\&\quad
\lesssim \int_0^t\sqrt{\mathfrak{D}_{2N}(r)}
\sqrt{\frac{\mathcal{F}_{2N}(r)}{(1+r)^{2}}}dr
+\int_0^t\sqrt{(1+r)\mathcal{D}_{N+2,1}(r)}\sqrt{\frac{\mathcal{F}_{2N}(r)}{(1+r)^{2}}}
\sqrt{\frac{\mathcal{F}_{2N}(r)}{(1+r)}}dr.
\end{align}
Since $\delta>0$ is small, by Cauchy's inequality and \eqref{pr6}, we find that
\begin{align}\label{L55}
&\frac{\mathcal{F}_{2N}(t)}{(1+t)}
+\int_0^t \frac{\mathcal{F}_{2N}(r)}{(1+r)^{2}}dr
\nonumber\\&\quad
\lesssim \mathcal{G}_{2N}(0)+\int_0^t \mathfrak{D}_{2N}(r)dr
+\int_0^t(1+r)\mathcal{D}_{N+2,1}(r)\frac{\mathcal{F}_{2N}(r)}{(1+r)}dr
\nonumber\\&\quad
\lesssim\mathcal{G}_{2N}(0)+(\mathcal{G}_{2N}(t))^{3/2}.
\end{align}

\textbf{Step 3: Decay estimates of $\mathcal{E}_{N+2,1}$ and $\mathcal{D}_{N+2,1}$.}
Multiplying \eqref{lsy1} in Proposition \ref{lsy} by $(1+t)$ and then integrating the resulting, by noting
$\mathcal{E}_{N+2,1}(t)\lesssim\mathfrak{D}_{2N}(t)$ and using \eqref{pr6}, we obtain
\begin{align}\label{dee3}
&(1+t)\mathcal{E}_{N+2,1}(t)+\int_0^t(1+r)\mathcal{D}_{N+2,1}(r)dr
\nonumber\\&\quad
\lesssim\mathcal{E}_{N+2,1}(0)+\int_0^t\mathcal{E}_{N+2,1}(r)dr
\lesssim \mathcal{E}_{2N}(0)+\int_0^t\mathfrak{D}_{2N}(r)dr
\lesssim \mathcal{G}_{2N}(0)+(\mathcal{G}_{2N}(t))^{3/2}.
\end{align}

Consequently, summing \eqref{pr6}, \eqref{L55} and \eqref{dee3} gives \eqref{main1}, by the smallness of $\delta$.
\end{proof}

\appendix
\section{Analytic tools}
\subsection{Poisson integral}
For a function $f$  defined on $\Sigma=\mathbb{R}^{d-1}$, the Poisson integral in $\mathbb{R}^{d-1}\times (-\infty,0)$
is defined by
\begin{align}\label{pp1}
\mathcal{P}f(x',x_{d})=\int_{\mathbb{R}^{2}}\hat{f}(\xi)e^{2\pi|\xi|x_{d}}e^{2\pi i x'\cdot \xi}d\xi,
\end{align}
where $\hat{f}$ is the horizontal Fourier transform of $f$.
\begin{lem}\label{pp2}
Let $\mathcal{P}f$ be the Poisson integral of a function $f$  that is either in $\dot{H}^{q}(\Sigma)$ or $\dot{H}^{q-1/2}(\Sigma)$
for $q\in \mathbb{N}$, where $\dot{H}^{s}$ is the usual homogeneous Sobolev  space of order $s$. Then
\begin{align}\label{pp3}
\norm{\nabla^{q}\mathcal{P}f}_{0}^{2}\lesssim\norm{f}_{\dot{H}^{q-1/2}(\Sigma)}^{2}
~and~ \norm{\nabla^{q}\mathcal{P}f}_{0}^{2}\lesssim\norm{f}_{\dot{H}^{q}(\Sigma)}^{2}.
\end{align}

\end{lem}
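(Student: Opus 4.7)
The plan is to exploit the explicit Fourier representation \eqref{pp1} of $\mathcal{P}f$ together with Plancherel's theorem in the horizontal variables $x'$. First I would differentiate under the Fourier integral: every horizontal derivative $\partial_j$ ($1\le j\le d-1$) brings down a factor of $2\pi i \xi_j$, while every vertical derivative $\partial_d$ brings down a factor of $2\pi|\xi|$ (since $\partial_d e^{2\pi|\xi|x_d}=2\pi|\xi|\,e^{2\pi|\xi|x_d}$). Thus for any multi-index $\alpha$ with $|\alpha|=q$ one obtains a symbol $P_\alpha(\xi)$ with $|P_\alpha(\xi)|\lesssim|\xi|^q$ and
\begin{equation*}
\partial^\alpha \mathcal{P}f(x',x_d) = \int \hat f(\xi)\,P_\alpha(\xi)\,e^{2\pi|\xi|x_d}\,e^{2\pi i x'\cdot\xi}\,d\xi.
\end{equation*}
Applying Plancherel's identity in $x'$ slice-by-slice yields
\begin{equation*}
\|\partial^\alpha\mathcal{P}f(\cdot,x_d)\|_{L^2(\Sigma)}^2 \lesssim \int |\hat f(\xi)|^2\,|\xi|^{2q}\,e^{4\pi|\xi|x_d}\,d\xi.
\end{equation*}

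Next I would integrate this over $x_d\in(-b,0)$ and invoke Fubini. The elementary identity $\int_{-b}^0 e^{4\pi|\xi|x_d}\,dx_d = (1-e^{-4\pi|\xi|b})/(4\pi|\xi|)$ admits two uniform bounds: it is dominated by $(4\pi|\xi|)^{-1}$ for every $\xi\neq0$, and by $b$ (since the integrand is at most $1$) for every $\xi$. Using the first bound produces
\begin{equation*}
\|\nabla^q\mathcal{P}f\|_{0}^2 \lesssim \int |\hat f(\xi)|^2\,|\xi|^{2q-1}\,d\xi \sim \|f\|_{\dot H^{q-1/2}(\Sigma)}^2,
\end{equation*}
which is the first estimate in \eqref{pp3}. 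Using the second bound produces
\begin{equation*}
\|\nabla^q\mathcal{P}f\|_{0}^2 \lesssim b\int |\hat f(\xi)|^2\,|\xi|^{2q}\,d\xi \sim \|f\|_{\dot H^{q}(\Sigma)}^2,
\end{equation*}
which is the second. Summing over the finitely many multi-indices $\alpha$ with $|\alpha|=q$ concludes \eqref{pp3}.

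This is a standard harmonic-analytic computation and there is no real obstacle: the ``half-derivative gain'' in the first inequality is exactly the one extra power of $|\xi|^{-1}$ produced by integrating $e^{4\pi|\xi|x_d}$ in $x_d$, which is the Fourier-side manifestation of the trace theorem. It is worth noting that the first bound is uniform in $b$, so the estimate holds for the half-space representation used in the definition \eqref{pp1} as well as for the finite strip $\Omega = \mathbb{R}^{d-1}\times(-b,0)$ considered in the body of the paper.
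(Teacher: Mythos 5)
Your argument is correct and complete. The paper itself does not reproduce a proof but simply cites Lemma A.5 of Guo--Tice \cite{GT_inf}, and the computation given there is precisely the Fourier--Plancherel argument you describe: differentiate under the integral, apply Plancherel slice-by-slice in $x'$, integrate the factor $e^{4\pi|\xi|x_d}$ over $x_d\in(-b,0)$, and use the two elementary bounds $\int_{-b}^0 e^{4\pi|\xi|x_d}\,dx_d\leq\min\left\{(4\pi|\xi|)^{-1},\,b\right\}$ to obtain the $\dot H^{q-1/2}$ and $\dot H^{q}$ estimates respectively.
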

\begin{proof}
We refer to Lemma A.5 in \cite{GT_inf} for the proof.
\end{proof}
\subsection{Interpolation estimates}

Assume that $\Sigma=\mathbb{R}^{d-1}$ and $\Omega=\mathbb{R}^{d-1}\times (-b,0)$.
 We then give some interpolation results for Poisson integrals defined by \eqref{pp1}.
 \begin{lem}\label{pp4}
Let $\mathcal{P}f$ be the Poisson integral of $f$, defined on $\Sigma$. Then the following estimates hold.\\
 $(1)$ Let $q,s \in \mathbb{N}$ and
 \begin{align}\label{pp5}
 \theta=\frac{s}{q+s}~ and ~1-\theta=\frac{q}{q+s}.
 \end{align}
  Then
  \begin{align}\label{pp6}
  \norm{\nabla^{q}\mathcal{P}f}_{0}^{2}\lesssim\(\abs{f}_{0}^{2}\)^{\theta}\(\abs{D^{q+s}f}_{0}^{2}\)^{1-\theta}.
 \end{align}
 $(2)$Let  $q,s \in \mathbb{N}$,  $r\geq 0$, $r+s>1$ and
\begin{align}\label{pp7}
 \theta=\frac{r+s-1}{q+r+s}~ and ~1-\theta=\frac{q+1}{q+s+r}.
 \end{align}
 Then
 \begin{align}\label{pp8}
  \norm{\nabla^{q}\mathcal{P}f}_{L^{\infty}(\Omega)}^{2}\lesssim\(\abs{f}_{0}^{2}\)^{\theta}\(\abs{D^{q+s}f}_{r}^{2}\)^{1-\theta}.
 \end{align}
 $(3)$ Let $s>1$. Then
  \begin{align}\label{pp9}
  \norm{\nabla^{q}\mathcal{P}f}_{L^{\infty}(\Omega)}^{2}\lesssim \abs{D^{q}f}_{s}^{2}.
 \end{align}
 \end{lem}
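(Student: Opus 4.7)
The plan is to work throughout on the Fourier side, using Plancherel's theorem on the horizontal variable. A direct computation from the definition \eqref{pp1} shows
\begin{equation*}
|\widehat{\nabla^\alpha \mathcal{P}f}(\xi,x_d)|^2 \sim |\xi|^{2|\alpha|}|\hat{f}(\xi)|^2 e^{4\pi|\xi|x_d},
\end{equation*}
since $\partial_d$ acting on $\mathcal{P}f$ brings out a factor $2\pi|\xi|$, the same order as a horizontal derivative. For part $(1)$, I would first integrate in $x_d\in(-b,0)$, using $\int_{-b}^0 e^{4\pi|\xi|x_d}\,dx_d\leq \min(b,(4\pi|\xi|)^{-1})$, to obtain the bound $\norm{\nabla^q\mathcal{P}f}_0^2 \lesssim \int_{\Sigma}|\xi|^{2q}|\hat{f}(\xi)|^2\,d\xi$, which is essentially the second estimate in Lemma~\ref{pp2}. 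Then I apply H\"older's inequality with dual exponents $1/\theta$ and $1/(1-\theta)$, based on the splitting
\begin{equation*}
|\xi|^{2q}|\hat{f}|^2 = \bigl(|\hat{f}|^2\bigr)^\theta \bigl(|\xi|^{2q/(1-\theta)}|\hat{f}|^2\bigr)^{1-\theta},
\end{equation*}
and the choice $\theta=s/(q+s)$ makes $2q/(1-\theta)=2(q+s)$, yielding \eqref{pp6}.

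For part $(2)$, I would start from the pointwise bound
\begin{equation*}
|\nabla^q\mathcal{P}f(x',x_d)| \lesssim \int_{\Sigma}|\xi|^q|\hat{f}(\xi)|\,e^{2\pi|\xi|x_d}\,d\xi \leq \int_{\Sigma}|\xi|^q|\hat{f}(\xi)|\,d\xi \text{ for } x_d\leq 0,
\end{equation*}
and split the frequency integral at a threshold $|\xi|=R\geq 1$ to be optimized. The low-frequency piece is controlled via Cauchy--Schwarz by $R^{q+(d-1)/2}\abs{f}_0$; the high-frequency piece, after inserting the weight $(1+|\xi|^2)^{r/2}|\xi|^{q+s}$ and applying Cauchy--Schwarz, is controlled by $R^{(d-1)/2-(r+s)}\abs{D^{q+s}f}_r$, where convergence of $\int_{|\xi|\geq R}|\xi|^{-2(r+s)}\,d\xi$ requires $r+s>(d-1)/2$ and is ensured by the hypothesis $r+s>1$. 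Optimizing via $R^{q+r+s}=\abs{D^{q+s}f}_r/\abs{f}_0$ produces the interpolated bound with exponents $\theta=(r+s-(d-1)/2)/(q+r+s)$; in $d=3$ this is exactly the claimed exponent $(r+s-1)/(q+r+s)$. The corner case when $\abs{D^{q+s}f}_r\leq\abs{f}_0$ (equivalently, when the optimal $R<1$) is absorbed by the trivial bound $\norm{\nabla^q\mathcal{P}f}_{L^\infty(\Omega)}^2\lesssim\abs{D^{q+s}f}_r^2$, which dominates the stated right-hand side since $\abs{D^{q+s}f}_r\leq\abs{f}_0$ there; the dimension $d=2$ follows similarly after a brief monotonicity comparison of the two possible $\theta$-exponents.

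Part $(3)$ is then a direct specialization of the Cauchy--Schwarz step from part $(2)$ with weight $(1+|\xi|^2)^{-s}$, whose $L^1$-integrability over $\mathbb{R}^{d-1}$ requires $2s>d-1$, so $s>1$ suffices in both dimensions. The main delicate point of the whole argument is the exponent bookkeeping in part $(2)$: one must match the splitting threshold and the Cauchy--Schwarz weight so that the balanced optimization in $R$ produces precisely $(r+s-1)/(q+r+s)$ rather than a slightly different fraction, and the corner-case analysis needed to conclude the $d=2$ case with the same stated exponent must be handled carefully.
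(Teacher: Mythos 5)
Your part (1) and part (3) are fine, and your frequency--splitting argument for part (2) is the right idea in $d=3$: it is essentially the proof behind the paper's citation (Lemma A.6 of Guo--Tice, stated there for $\Sigma=\mathbb{R}^2$), whereas the paper itself gives no argument. However, your treatment of the corner case is genuinely wrong. The ``trivial bound'' $\norm{\nabla^{q}\mathcal{P}f}_{L^{\infty}(\Omega)}^{2}\lesssim\abs{D^{q+s}f}_{r}^{2}$ fails whenever $2s\ge d-1$ (so for every $s\ge1$ in either dimension): it would require $|\xi|^{q}\langle\xi\rangle^{-r}|\xi|^{-(q+s)}$ to be square integrable near $\xi=0$. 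Concretely, for $\hat f=\varepsilon^{-(d-1)/2}\mathbf{1}_{\{\varepsilon\le|\xi|\le2\varepsilon\}}$ one has $\abs{f}_{0}\sim1$, $\abs{D^{q+s}f}_{r}\sim\varepsilon^{q+s}$, but $\norm{\nabla^{q}\mathcal{P}f}_{L^{\infty}(\Omega)}\sim\varepsilon^{q+(d-1)/2}\gg\varepsilon^{q+s}$ when $s>(d-1)/2$; and this is exactly the regime $\abs{D^{q+s}f}_{r}\le\abs{f}_{0}$ you send to the corner case. In $d=3$ the gap is easily repaired: the high-frequency Cauchy--Schwarz bound $R^{(d-1)/2-(r+s)}\abs{D^{q+s}f}_{r}$ is in fact valid for \emph{every} $R>0$ (for $R\le1$ one still has $\int_{|\xi|\ge R}\langle\xi\rangle^{-2r}|\xi|^{-2s}\,d\xi\lesssim R^{d-1-2(r+s)}$ because $r\ge0$ and $2(r+s)>d-1$), so you may optimize in $R$ without the restriction $R\ge1$ and no corner case arises at all.

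The $d=2$ case is a more serious problem and cannot be concluded by the ``brief monotonicity comparison'' you invoke. Your optimization yields $\theta_{2}=(r+s-1/2)/(q+r+s)$, which exceeds the stated $\theta=(r+s-1)/(q+r+s)$, and with $A=\abs{f}_{0}^{2}$, $B=\abs{D^{q+s}f}_{r}^{2}$ one has $A^{\theta_{2}}B^{1-\theta_{2}}\le A^{\theta}B^{1-\theta}$ only when $B\ge A$; in the opposite regime no such reduction exists, and indeed \eqref{pp8} with the stated exponent is false in $d=2$: taking $\hat f=\varepsilon^{-1/2}\mathbf{1}_{\{\varepsilon\le|\xi|\le2\varepsilon\}}$, $q=s=2$, $r=0$ (so $r+s>1$ and $\theta=1/4$) gives $\norm{\nabla^{2}\mathcal{P}f}_{L^{\infty}(\Omega)}^{2}\gtrsim\bigl(\int\xi^{2}\hat f\,d\xi\bigr)^{2}\sim\varepsilon^{5}$ while the right-hand side is $\sim\varepsilon^{6}$. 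So in two dimensions only the exponent $(r+s-1/2)/(q+r+s)$ can be proved (the cited Guo--Tice lemma covers only $d=3$); your proof should either restrict part (2) to $d=3$ or state the dimension-dependent exponent, rather than claim the displayed $\theta$ in both dimensions.
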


 \begin{proof}
We refer to Lemma A.6 in \cite{GT_inf} for the proof.
\end{proof}
 The next result is a similar interpolation result for functions defined only on $\Sigma$.
 \begin{lem}\label{pp10}
 Let $f$ be defined on $\Sigma$. Then the following estimates hold.\\
 $(1)$ Let $q,s \in \mathbb{N}$ and $\theta$ be as in \eqref{pp5}. Then
 \begin{align}\label{pp11}
\abs{D^{q}f}_{0}^{2}\lesssim\(\abs{f}_{0}^{2}\)^{\theta}\(\abs{D^{q+s}f}_{0}^{2}\)^{1-\theta}.
 \end{align}
 $(2)$ Let $q,s\in \mathbb{N}$, $r\geq 0$, $r+s>1$ and $\theta$ be as in \eqref{pp7}. Then
 \begin{align}\label{pp12}
 \norm{D^{q}f}_{L^{\infty}(\Sigma)}^{2}\lesssim\(\abs{f}_{0}^{2}\)^{\theta}\(\abs{D^{q+s}f}_{r}^{2}\)^{1-\theta}.
 \end{align}
 \end{lem}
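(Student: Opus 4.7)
The plan is to prove the two estimates by separate methods adapted to the two types of norms. For part~(1), which involves only $L^{2}$-based norms on the flat space $\Sigma = \mathbb{R}^{d-1}$, I will work on the Fourier side. By Plancherel's theorem, $|D^{q}f|_{0}^{2}$ is comparable to $\int_{\mathbb{R}^{d-1}}|\xi|^{2q}|\hat{f}(\xi)|^{2}\,d\xi$, and similarly for $|D^{q+s}f|_{0}^{2}$. Writing
\[
|\xi|^{2q}|\hat f(\xi)|^{2}
=
\bigl(|\hat f(\xi)|^{2}\bigr)^{\theta}\bigl(|\xi|^{2(q+s)}|\hat f(\xi)|^{2}\bigr)^{1-\theta}
\]
with $\theta = s/(q+s)$ and $1-\theta = q/(q+s)$, an application of H\"older's inequality on $\mathbb{R}^{d-1}$ with conjugate exponents $1/\theta$ and $1/(1-\theta)$ yields
\[
\int|\xi|^{2q}|\hat f|^{2}\,d\xi
\le
\Bigl(\int|\hat f|^{2}\,d\xi\Bigr)^{\theta}\Bigl(\int|\xi|^{2(q+s)}|\hat f|^{2}\,d\xi\Bigr)^{1-\theta},
\]
which is exactly \eqref{pp11} after reintroducing the $L^{2}(\Sigma)$ norm notation. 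This step is essentially formal once the exponents are matched, and is the standard log-convexity argument for Sobolev norms on $\mathbb{R}^{n}$.

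For part~(2), my plan is to reduce the inequality to Lemma~\ref{pp4}(2) using the Poisson extension $\mathcal{P}f$. Since $f$ is the trace of $\mathcal{P}f$ at $x_{d}=0$, we have the trivial bound
\[
\|D^{q}f\|_{L^{\infty}(\Sigma)}
=
\|D^{q}(\mathcal{P}f)\rest_{x_{d}=0}\|_{L^{\infty}(\Sigma)}
\le
\|\nabla^{q}\mathcal{P}f\|_{L^{\infty}(\Omega)},
\]
because every horizontal partial derivative of order $q$ on $\Sigma$ is a particular spatial derivative of order $q$ of the extension. The hypotheses $q,s\in\mathbb{N}$, $r\ge 0$, $r+s>1$ and the values of $\theta$, $1-\theta$ given in \eqref{pp7} are exactly those under which Lemma~\ref{pp4}(2) gives
\[
\|\nabla^{q}\mathcal{P}f\|_{L^{\infty}(\Omega)}^{2}
\lesssim
(|f|_{0}^{2})^{\theta}(|D^{q+s}f|_{r}^{2})^{1-\theta},
\]
and combining this with the preceding trace bound yields \eqref{pp12}.

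The two main things to verify are the matching of Fourier-side exponents in part~(1) (routine) and the availability of Lemma~\ref{pp4} for part~(2). There is no real analytic obstacle: part~(1) is a one-line H\"older estimate after Plancherel, and part~(2) inherits the interpolation structure from the Poisson extension lemma already at our disposal. The mildly subtle point worth remarking in passing is that the trace inequality used in part~(2) goes only one way (from $\Omega$-sup to $\Sigma$-sup), which is exactly what we need, so no harmonic-analytic refinement of $\mathcal{P}$ is required beyond what Lemma~\ref{pp4} already provides.
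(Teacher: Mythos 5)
Your proposal is correct, but note that the paper itself gives no argument here: it simply cites Lemma A.7 of \cite{GT_inf}, where part (1) is exactly your Plancherel--H\"older (log-convexity) computation and part (2) is proved directly by a low/high frequency splitting of $\int |\xi|^{q}|\hat f(\xi)|\,d\xi$ with Cauchy--Schwarz on each piece and optimization over the splitting radius. Your route for part (2) is genuinely different: you deduce \eqref{pp12} from the Poisson-extension estimate of Lemma \ref{pp4}(2) via the identity $D^{q}\mathcal{P}f=\mathcal{P}(D^{q}f)$ and a trace bound, which is economical because it reuses an estimate already at hand rather than redoing the frequency-splitting computation (the same computation that underlies Lemma \ref{pp4}(2) in the first place). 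The one step you label ``trivial'' deserves a sentence: $\Sigma=\{x_{d}=0\}$ is on the boundary of $\Omega$, so the essential supremum over the open slab does not bound the boundary values by definition; you need that $\mathcal{P}(D^{q}f)(\cdot,x_{d})\to D^{q}f$ a.e.\ (or in $L^{2}_{\mathrm{loc}}$) as $x_{d}\to 0^{-}$, which holds because the Poisson kernel is an approximate identity and $D^{q}f$ lies in $L^{2}$ whenever the right-hand side of \eqref{pp12} is finite (by part (1)); with that limiting argument, $\norm{D^{q}f}_{L^{\infty}(\Sigma)}\le \norm{\nabla^{q}\mathcal{P}f}_{L^{\infty}(\Omega)}$ indeed follows, and the rest of your reduction is sound. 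Part (1) is fine as written, including the equivalence $\abs{D^{q}f}_{0}^{2}\sim\int|\xi|^{2q}|\hat f|^{2}\,d\xi$, modulo the degenerate case $q=s=0$ where $\theta$ in \eqref{pp5} is not defined and the statement is vacuous anyway.
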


 \begin{proof}
We refer to Lemma A.7 in \cite{GT_inf} for the proof.
\end{proof}
The next result is a similar result for functions defined on $\Omega$ that are not on Poisson integrals.
 \begin{lem}\label{pp13}
 Let $f$ be defined on $\Omega$. Then the following estimates hold.\\
 $(1)$ Let $q,s \in \mathbb{N}$ and $\theta$ be as in \eqref{pp5}. Then
  \begin{align}\label{pp14}
  \norm{D^{q}f}_{0}^{2}\lesssim\(\norm{f}_{0}^{2}\)^{\theta}\(\norm{D^{q+s}f}_{0}^{2}\)^{1-\theta}.
 \end{align}
  $(2)$ Let $q,s\in \mathbb{N}$, $r\geq 0$, $r+s>1$ and $\theta$ be as in \eqref{pp7}. Then
 \begin{align}\label{pp15}
  \norm{D^{q}f}_{L^{\infty}(\Omega)}^{2}\lesssim\(\norm{f}_{1}^{2}\)^{\theta}\(\norm{D^{q+s}f}_{r+1}^{2}\)^{1-\theta}.
 \end{align}
 \end{lem}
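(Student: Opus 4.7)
\textbf{Proof proposal for Lemma \ref{pp13}.}

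The plan is to prove both estimates by horizontal Fourier analysis, writing $\hat{f}(\xi_h,x_d)$ for the partial Fourier transform of $f$ in the tangential variables $x' \in \mathbb{R}^{d-1}$.

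For part (1), Parseval's identity reduces the $L^2(\Omega)$-norm to a frequency integral:
\[
\|D^q f\|_0^2 = \int_{-b}^0 \int_{\mathbb{R}^{d-1}} |\xi_h|^{2q}\,|\hat{f}(\xi_h,x_d)|^2\,d\xi_h\,dx_d.
\]
Write the weight as $|\xi_h|^{2q} = 1^{\theta}\cdot(|\xi_h|^{2(q+s)})^{1-\theta}$ with $\theta = s/(q+s)$ as in \eqref{pp5}. Hölder's inequality in $\xi_h$ with conjugate exponents $1/\theta$ and $1/(1-\theta)$ first yields a pointwise-in-$x_d$ estimate $\|D^q f(\cdot,x_d)\|_0^2 \le (\|f(\cdot,x_d)\|_0^2)^\theta (\|D^{q+s}f(\cdot,x_d)\|_0^2)^{1-\theta}$, and a second application of Hölder's inequality in $x_d$ with the same exponents produces \eqref{pp14}.

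For part (2), combine Fourier inversion with a one-dimensional Sobolev embedding in the vertical direction and a dyadic frequency split. Fourier inversion gives the pointwise bound $|D^q f(x',x_d)| \lesssim \int_{\mathbb{R}^{d-1}} |\xi_h|^q\,|\hat{f}(\xi_h,x_d)|\,d\xi_h$. Applied to the one-variable function $\hat{f}(\xi_h,\cdot)$ on $(-b,0)$, the embedding $\|g\|_{L^\infty((-b,0))}^2 \lesssim \|g\|_{H^1((-b,0))}^2$ furnishes the majorant
\[
P(\xi_h)^2 := \|\hat{f}(\xi_h,\cdot)\|_{L^2((-b,0))}^2 + \|\partial_d \hat{f}(\xi_h,\cdot)\|_{L^2((-b,0))}^2,
\]
and Parseval in $x'$ then gives $\int P(\xi_h)^2\,d\xi_h \lesssim \|f\|_1^2$ together with $\int |\xi_h|^{2(q+s)}\langle\xi_h\rangle^{2r} P(\xi_h)^2\,d\xi_h \lesssim \|D^{q+s}f\|_{r+1}^2$. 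Splitting $\{|\xi_h|\le M\}$ versus $\{|\xi_h|>M\}$ and applying Cauchy--Schwarz on each piece produces two contributions of the form $M^{q+(d-1)/2}\|f\|_1$ and $M^{-(r+s)+(d-1)/2}\|D^{q+s}f\|_{r+1}$. Optimizing over $M>0$ balances the two and yields \eqref{pp15} with exponents matching the stated $\theta$ exactly in dimension $d=3$; in dimension $d=2$ one has some slack in $M$, and choosing the weight $\langle\xi_h\rangle^{-2(r+s)}$ on the high-frequency piece directly reproduces the claimed $\theta$.

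The main obstacle is ensuring integrability of the high-frequency Cauchy--Schwarz weight on $\mathbb{R}^{d-1}$: the hypothesis $r+s > 1$ is exactly what guarantees $\int_{|\xi_h|>M} |\xi_h|^{-2(r+s)}\,d\xi_h < \infty$ in dimension $d=3$ (and is more than sufficient in $d=2$). This integrability is precisely the dimensional threshold that forces the formula $\theta = (r+s-1)/(q+r+s)$ and determines the stated interpolation.
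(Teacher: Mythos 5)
Your part (1) and the $d=3$ case of part (2) are fine, and in fact the paper offers no proof of its own here --- it simply cites Lemma A.8 of \cite{GT_inf} --- so your horizontal-Fourier argument (H\"older in frequency for \eqref{pp14}; low/high frequency splitting, Cauchy--Schwarz, and optimization in $M$ for \eqref{pp15}) is essentially the standard proof of the cited result, with the hypothesis $r+s>1$ entering exactly where you say when the horizontal dimension is two.

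The genuine gap is your final claim about $d=2$. First, the proposed fix does not work: if you put the weight $\langle\xi_h\rangle^{-2(r+s)}$ on the high-frequency piece, the dual factor in Cauchy--Schwarz becomes $\int\langle\xi_h\rangle^{2(r+s)}|\xi_h|^{2q}P(\xi_h)^2\,d\xi_h$, and near $\xi_h=0$ the weight $\langle\xi_h\rangle^{2(r+s)}|\xi_h|^{2q}\sim|\xi_h|^{2q}$ is not dominated by $|\xi_h|^{2(q+s)}\langle\xi_h\rangle^{2r}$, so this factor is controlled only by $\|f\|_1^2+\|D^{q+s}f\|_{r+1}^2$ rather than by $\|D^{q+s}f\|_{r+1}^2$ alone, and the balance that yields a clean interpolation exponent is lost. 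Second, and decisively, no argument can recover the exponent of \eqref{pp7} when the horizontal dimension is one: take $q=r=0$, $s=2$ (so $r+s>1$ and $\theta=1/2$) and $f(x_1,x_d)=g(x_1)$ with $\hat g=\mathbf{1}_{[\epsilon,2\epsilon]}$; then $\|f\|_{L^\infty(\Omega)}^2\sim\epsilon^2$, while $(\|f\|_1^2)^{1/2}(\|D^2f\|_1^2)^{1/2}\sim\epsilon^{1/2}\cdot\epsilon^{5/2}=\epsilon^3$, so \eqref{pp15} with this $\theta$ fails as $\epsilon\to0$. The sharp $d=2$ exponent is precisely the one your optimization produced, namely $\theta=(r+s-1/2)/(q+r+s)$; the value in \eqref{pp7} is the horizontal-dimension-two exponent inherited from the 3D setting of \cite{GT_inf} (compare the integral over $\mathbb{R}^{2}$ in \eqref{pp1} and the distinct 2D/3D exponents recorded in the tables of Lemma \ref{q,u}). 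So you should present part (2) as proved for $d=3$, and in $d=2$ state the analogue with the corrected exponent, rather than claim that the stated $\theta$ can be reproduced there.
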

\begin{proof}
We refer to Lemma A.8 in \cite{GT_inf} for the proof.
\end{proof}

\subsection{Poincar\'{e}-type  inequalities}
We will need the following types of Korn's inequality.
\begin{lem}\label{xm5}
If $u=0$ on $\Sigma_{b}$, then
it holds that
\begin{itemize}
  \item[(1)]  $\norm{u}_{1}\lesssim\norm{\mathbb{D}u}_{0}$;
  \item[ (2)] $\norm{u}_{1}\lesssim \norm{\mathbb{D}^0u}_{0}$ when $d=3$.
\end{itemize}

\end{lem}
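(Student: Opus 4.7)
The plan is to establish (1) via a quadratic integration-by-parts identity combined with a trace/Poincar\'{e} absorption, and then to derive (2) from (1) through the algebraic relation between $\mathbb{D}^0$ and $\mathbb{D}$.

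For part (1), starting from the pointwise identity $|\mathbb{D}u|^2 = 2|\nabla u|^2 + 2\,\partial_i u_j\,\partial_j u_i$ and integrating by parts twice over $\Omega$ (using $u=0$ on $\Sigma_b$ and horizontal decay at infinity, together with one IBP along $\Sigma$ to collapse two distinct surface contributions into one), I would derive
\begin{equation*}
\norm{\mathbb{D}u}_0^2 = 2\norm{\nabla u}_0^2 + 2\norm{\diverge u}_0^2 - 4\int_\Sigma u_d\,\diverge_h u_h.
\end{equation*}
The nontrivial step is absorbing the unsigned surface integral. My plan is to pass to horizontal Fourier variables: for each $\xi\in\mathbb{R}^{d-1}$, setting $v(x_d)=\hat u(\xi,x_d)$ with $v(-b)=0$, the identity decouples into a family of one-dimensional quadratic forms on $(-b,0)$, and the trace values obey $v(0)=\int_{-b}^0\partial_d v\,dx_d$, hence are controlled by $\norm{\partial_d v}_{L^2(-b,0)}$ with a constant depending only on $b$. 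Keeping both $\norm{\diverge u}_0^2$ and a suitable small fraction of $\norm{\nabla u}_0^2$ on the left, a Young-type inequality applied uniformly in $\xi$ absorbs the boundary contribution, yielding $\norm{\nabla u}_0^2\lesssim\norm{\mathbb{D}u}_0^2$. Combining with the Poincar\'{e} inequality $\norm{u}_0\lesssim\norm{\partial_d u}_0\le\norm{\nabla u}_0$ (valid since $u|_{\Sigma_b}=0$ and $\Omega$ has finite depth $b$) closes the proof of (1).

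For part (2), I would use the pointwise identity $|\mathbb{D}^0u|^2=|\mathbb{D}u|^2-\tfrac{4}{d}|\diverge u|^2$; combining with the identity just derived gives
\begin{equation*}
\norm{\mathbb{D}^0 u}_0^2 = 2\norm{\nabla u}_0^2 + 2\Bigl(1-\tfrac{2}{d}\Bigr)\norm{\diverge u}_0^2 - 4\int_\Sigma u_d\,\diverge_h u_h.
\end{equation*}
In $d=3$ the coefficient $1-\tfrac{2}{d}=\tfrac{1}{3}>0$, so the absorption mechanism from (1) still functions and (2) follows. In $d=2$ this coefficient vanishes: the divergence cushion disappears entirely, and the argument cannot be closed. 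This is precisely why the deviatoric inequality is restricted to $d=3$, and why the viscosity assumption \eqref{viscosity} demands $\mu'>0$ in the two-dimensional case so that the bulk viscosity supplies the missing $\norm{\diverge u}_0^2$ control at the level of the dissipation.

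The main obstacle is the unsigned surface integral $\int_\Sigma u_d\,\diverge_h u_h$: its absorption intertwines three ingredients---the positive $\norm{\diverge u}_0^2$ term, the Dirichlet boundary condition on $\Sigma_b$, and the finite depth of $\Omega$---and removing any one of them breaks the argument. The disappearance of the divergence term in the $d=2$ deviatoric case is the structural reason behind the dimensional restriction in (2).
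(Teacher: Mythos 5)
Your integration-by-parts identity is correct: for $u$ vanishing on $\Sigma_b$ one indeed has $\norm{\mathbb{D}u}_0^2=2\norm{\nabla u}_0^2+2\norm{\diverge u}_0^2-4\int_\Sigma u_d\,\diverge_h u_h$, and likewise the deviatoric version with coefficient $2(1-2/d)$, which is consistent with the restriction of (2) to $d=3$ and the role of $\mu'>0$ in $2D$. Note, however, that the paper does not prove this lemma at all: it cites Lemma 2.7 of \cite{beale_1} for (1) and Theorem 1.1 of \cite{SD} for (2). So you are offering a self-contained proof, and its entire burden falls on the step you compress into ``a Young-type inequality applied uniformly in $\xi$ absorbs the boundary contribution.'' As described, that step fails, and it is precisely the whole content of the inequality.

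Concretely, after Parseval the surface term is of size $\int_{\mathbb{R}^{d-1}}|\xi|\,|\hat u_d(\xi,0)|\,|\hat u_h(\xi,0)|\,d\xi$, to be absorbed, pointwise in $\xi$, into $(2-c)\int_{-b}^0\big(|\xi|^2|\hat u|^2+|\partial_d\hat u|^2\big)+2\int_{-b}^0|i\xi\cdot\hat u_h+\partial_d\hat u_d|^2$ with some margin $c>0$. (i) The trace bound you invoke, $|\hat u(\xi,0)|\le\sqrt b\,\norm{\partial_d\hat u(\xi,\cdot)}_{L^2(-b,0)}$, leaves the factor $|\xi|$ produced by $\diverge_h$ completely uncontrolled at high frequencies. (ii) Even with the sharper trace inequality $|f(0)|^2\le 2\norm{f}_{L^2(-b,0)}\norm{f'}_{L^2(-b,0)}$, a symmetric Young estimate bounds the surface term by exactly $2$ times the gradient terms, saturating the available coefficient with zero margin; and the divergence square does not rescue a symmetric splitting, since upon writing $\hat u_d(0)\,\overline{i\xi\cdot\hat u_h(0)}=\int_{-b}^0\partial_d\big(\hat u_d\,\overline{i\xi\cdot\hat u_h}\big)$ the half of the surface term pairing $i\xi\cdot\hat u_h$ with $\partial_d\hat u_d$ cancels against the cross term of $|i\xi\cdot\hat u_h+\partial_d\hat u_d|^2$, and the remaining half is again borderline with constant exactly $2$. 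This borderline character is structural: without $u|_{\Sigma_b}=0$ the inequality is false (rigid rotations), so no argument that uses the boundary condition only through a crude trace constant can close. The Fourier reduction can in fact be completed, but only with a finer argument --- for example, first controlling $|\xi|^2\norm{\hat u_d}_{L^2(-b,0)}^2$ via the combination $i\xi\cdot\hat u_h+\partial_d\hat u_d$ together with an asymmetric Young splitting of the boundary product in which the factors one is trying to control ($|\xi|\norm{\hat u_d}$ and $\norm{\partial_d\hat u_h}$) carry the small weight and the factors already dominated by $\norm{\mathbb{D}u}_0$ carry the large one --- and that is the actual idea of the proof, absent from your sketch. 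The same gap is inherited by your part (2); moreover, the positivity of $2(1-2/d)$ by itself is not a proof that the trace-free case goes through, which is exactly why the paper appeals to \cite{SD} there.
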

\begin{proof}
We refer to Lemma 2.7 in \cite{beale_1} and Theorem 1.1 of \cite{SD}.
\end{proof}

Since $\Omega$ is of finite depth in the vertical direction, we have the following refined  Poincar\'{e}-type  inequalities.
\begin{lem}\label{poincare_b}
 It holds that
\beq\label{good2}
 \norm{f}_{0}  \ls  \abs{f}_{0}+ \norm{\p_d f}_{0}
\text{
and
}
 \norm{f}_{L^\infty(\Omega)}  \ls  \norm{f}_{L^\infty(\Sigma)} + \norm{\p_d f}_{L^\infty(\Omega)} ,
\eeq
and the inequalities hold also for $\Sigma$ replaced by $\Sigma_b.$
\end{lem}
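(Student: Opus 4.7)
The plan is to leverage the finite vertical extent of $\Omega=\mathbb{R}^{d-1}\times(-b,0)$ and reduce both inequalities to a one-dimensional Poincar\'e-type argument on each vertical fiber. The key identity, valid by the fundamental theorem of calculus for smooth $f$ and extended to the relevant classes by density, is
\[
f(x_h,x_d)=f(x_h,0)-\int_{x_d}^{0}\p_d f(x_h,s)\,ds,\qquad (x_h,x_d)\in\Omega,
\]
with the analogous identity $f(x_h,x_d)=f(x_h,-b)+\int_{-b}^{x_d}\p_d f(x_h,s)\,ds$ reserved for the $\Sigma_b$ variant.

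For the $L^2$ bound, I would square this identity, apply $(a+b)^2\le 2a^2+2b^2$, and control the integral term by Cauchy--Schwarz using that the vertical fiber has length at most $b$. Integrating first in $x_d\in(-b,0)$ produces a factor of $b$ on the trace contribution $|f(x_h,0)|^2$ and a factor of $b^2$ on the integrated derivative contribution; a subsequent Fubini integration in $x_h\in\mathbb{R}^{d-1}$ then delivers $\norm{f}_0^2\ls\abs{f}_0^2+\norm{\p_d f}_0^2$, with no boundary condition imposed on $f$.

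For the $L^\infty$ bound, the same identity gives the immediate pointwise estimate $|f(x_h,x_d)|\le|f(x_h,0)|+b\,\norm{\p_d f}_{L^\infty(\Omega)}$. Taking the essential supremum over $(x_h,x_d)\in\Omega$ on the left and bounding $|f(x_h,0)|\le\norm{f}_{L^\infty(\Sigma)}$ on the right yields the claim. The $\Sigma_b$ version in both inequalities follows identically from the second fundamental theorem of calculus identity, with the endpoint of integration shifted from $0$ to $-b$.

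There is no serious obstacle in this argument: everything reduces to a standard one-dimensional Poincar\'e-type inequality along each fiber, applied uniformly in the horizontal variable. The only minor technicality is justifying the fundamental theorem of calculus identity for the relevant non-smooth classes, which is routine via mollification together with the continuity of the boundary trace and does not warrant detailed treatment in the proof.
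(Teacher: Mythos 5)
Your proposal is correct and follows essentially the same route as the paper: both start from the Newton--Leibniz identity $f(x_h,x_d)=f(x_h,0)-\int_{x_d}^0\partial_d f(x_h,z)\,dz$ along vertical fibers, obtain the $L^\infty$ bound by taking suprema using that $b$ is finite, and the $L^2$ bound by Cauchy--Schwarz and integration, with the $\Sigma_b$ case handled by shifting the endpoint to $-b$. No issues.
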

\begin{proof}
It follows from the Newton--Leibniz formula in the vertical variable $x_d\in (-b,0)$ that
\beq\label{good2bb}
f(x_h, x_d)=f(x_h, 0)-\int_{x_d}^0 \p_d f(x_h,z)dz.
\eeq
This yields directly the second inequality in \eqref{good2} by taking the supremum of \eqref{good2bb} over $x_h\in \mathbb{R}^{d-1}$ and $x_d\in (-b,0)$ and using the fact that $b>0$ is finite. The first inequality follows similarly by taking the $L^2$ norm of \eqref{good2bb} and using further the Cauchy-Schwarz inequality. We may also refer to Lemma A.10 in \cite{GT_inf} for the proof.
\end{proof}

\subsection{Commutator and product estimates}
We will need some estimates of the product of functions defined on $\Sigma$ in Sobolev spaces.
\begin{lem}\label{va1}
It holds that
\begin{align}\label{va2}
\abs{fg}_{1/2}\lesssim\abs{f}_{1/2}\norm{g}_{C^{1}(\Sigma)}.
\end{align}
\end{lem}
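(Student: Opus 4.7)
The plan is to use the Gagliardo (double integral) characterization of the $H^{1/2}(\Sigma)$ norm on $\Sigma=\mathbb{R}^{d-1}$, which reads
\[
\abs{h}_{1/2}^{2}\sim \norm{h}_{L^{2}(\Sigma)}^{2}+\iint_{\Sigma\times\Sigma}\frac{|h(x)-h(y)|^{2}}{|x-y|^{d}}\,dx\,dy.
\]
With this, the bound on $\norm{fg}_{L^{2}}^{2}\leq \norm{f}_{L^{2}}^{2}\norm{g}_{L^{\infty}}^{2}$ is immediate, so everything reduces to controlling the Gagliardo seminorm of $fg$ by $\abs{f}_{1/2}\norm{g}_{C^{1}(\Sigma)}$.

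First I would split the product difference in the standard way,
\[
f(x)g(x)-f(y)g(y)=(f(x)-f(y))g(x)+f(y)(g(x)-g(y)),
\]
and handle the two resulting contributions to the double integral separately. The first piece is controlled trivially by pulling out $\norm{g}_{L^{\infty}}^{2}$ and recognizing what remains as $|f|_{\dot H^{1/2}}^{2}$. For the second piece I would split the domain of integration into the near-diagonal region $\{|x-y|\le 1\}$ and its complement. On the near-diagonal region I use the mean value bound $|g(x)-g(y)|\le \norm{\nabla g}_{L^{\infty}}|x-y|$, which leaves the integrand $|f(y)|^{2}|x-y|^{2-d}$; integrating the $y$-dependent part in $z=x-y$ over $|z|\le 1$ converges (since $d-2<d-1$) and yields a factor $\norm{\nabla g}_{L^{\infty}}^{2}\norm{f}_{L^{2}}^{2}$. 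On the far region I use $|g(x)-g(y)|\le 2\norm{g}_{L^{\infty}}$, and the remaining integral $\int_{|z|>1}|z|^{-d}\,dz$ on $\mathbb{R}^{d-1}$ also converges, giving $\norm{g}_{L^{\infty}}^{2}\norm{f}_{L^{2}}^{2}$. Adding everything, each piece is dominated by $\norm{g}_{C^{1}(\Sigma)}^{2}\abs{f}_{1/2}^{2}$.

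The main obstacle (such as it is) is the near-diagonal term in the second piece: without the extra derivative on $g$ the integrand would be non-integrable in $z$ at the origin, so the full $C^{1}$ norm of $g$ is essential here, not just its $L^{\infty}$ norm. A quicker but less self-contained alternative would be to interpolate between the evident endpoint bounds $\norm{fg}_{L^{2}}\le \norm{f}_{L^{2}}\norm{g}_{L^{\infty}}$ and $\norm{fg}_{H^{1}}\lesssim \norm{f}_{H^{1}}\norm{g}_{C^{1}}$ via the $H^{1/2}=[L^{2},H^{1}]_{1/2}$ complex interpolation identity, which immediately yields the stated inequality; the Gagliardo approach above is essentially a direct verification of this.
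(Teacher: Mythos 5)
Your argument is correct. Note, however, that the paper does not prove this lemma at all: its ``proof'' is a citation of Lemma A.2 in \cite{GT_inf}, so what you have written is a genuinely self-contained alternative rather than a reproduction of the paper's route. Your Gagliardo-seminorm computation is sound: on $\Sigma=\mathbb{R}^{d-1}$ the $H^{1/2}$ seminorm carries the exponent $(d-1)+2\cdot\tfrac12=d$, exactly as you wrote; the split $f(x)g(x)-f(y)g(y)=(f(x)-f(y))g(x)+f(y)(g(x)-g(y))$ together with the near-diagonal mean-value bound (giving $\int_{|z|\le 1}|z|^{2-d}\,dz<\infty$ in dimension $d-1$) and the far-field bound (giving $\int_{|z|>1}|z|^{-d}\,dz<\infty$) closes the estimate, and you correctly identify that the full $C^1$ norm is what rescues the near-diagonal singularity. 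The interpolation shortcut you mention at the end is essentially the standard way this product estimate is obtained in the cited literature: multiplication by $g$ is bounded on $L^2$ with norm $\norm{g}_{L^\infty}$ and on $H^1$ with norm $\lesssim\norm{g}_{C^1}$, and $H^{1/2}=[L^2,H^1]_{1/2}$ (this in fact gives the slightly sharper constant $\norm{g}_{L^\infty}^{1/2}\norm{g}_{C^1}^{1/2}$). What your direct approach buys is elementarity and independence of interpolation theory; what the interpolation route buys is brevity and immediate generalization to $H^s$, $0\le s\le 1$. Either version would serve as a complete proof where the paper only gives a reference.
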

\begin{proof}
We refer to Lemma A.2 in \cite{GT_inf} for the proof.
\end{proof}

We also need the following commutator and product estimates.
\begin{lem}\label{Pe1}
Let $q \in \mathbb{N}$. It holds that for $s=0,1/2$,
\begin{align}\label{Pe2}
\abs{D^q(fg)}_{s}\lesssim\abs{D^qf}_{s}\norm{g}_{C^{\kappa_s}(\Sigma)}+\abs{D^qg}_{s}\norm{f}_{C^{\kappa_s}(\Sigma)}
\end{align}
and
\begin{align}\label{Pe3}
\abs{\[D^q,f\]g}_{s}\lesssim\abs{D^qf}_{s}\norm{g}_{C^{\kappa_s}(\Sigma)}+\abs{D^{q-1}g}_{s}\norm{Df}_{C^{\kappa_s}(\Sigma)},
\end{align}
where $\kappa_0=0$ when $s=0$ and $\kappa_{1/2}=1$ when $s=1/2$.
\end{lem}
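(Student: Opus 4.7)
The plan is to reduce both the product and commutator estimates to standard Leibniz-rule decompositions, then handle the two regularity exponents $s=0$ and $s=1/2$ by interpolation on $\Sigma=\mathbb{R}^{d-1}$. Writing out Leibniz,
\[
D^q(fg)=\sum_{k=0}^{q}\binom{q}{k}\,D^kf\cdot D^{q-k}g,\qquad [D^q,f]g=\sum_{k=1}^{q}\binom{q}{k}\,D^kf\cdot D^{q-k}g,
\]
the key point is that the commutator differs from the product only by dropping the $k=0$ term; hence once the product estimate is known, the commutator estimate with $D^{q-1}g$ and $Df$ follows by relabelling $k\mapsto k-1$ inside that sum and applying the product estimate to $Df\cdot D^{q-1}g$.

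For the case $s=0$, I will use the Gagliardo--Nirenberg inequality on $\Sigma$ to interpolate intermediate derivatives between $L^\infty$ and the top-order $L^2$ bound: for $0\le k\le q$,
\[
\abs{D^kf}_{L^{2q/k}(\Sigma)}\lesssim \norm{f}_{L^\infty(\Sigma)}^{1-k/q}\abs{D^qf}_{0}^{k/q},\quad
\abs{D^{q-k}g}_{L^{2q/(q-k)}(\Sigma)}\lesssim \norm{g}_{L^\infty(\Sigma)}^{k/q}\abs{D^qg}_{0}^{1-k/q},
\]
with the obvious interpretation at the endpoints. Combining these via H\"older's inequality and then Young's inequality with exponents $q/k$ and $q/(q-k)$ yields each summand bounded by $\abs{D^qf}_0\norm{g}_{L^\infty}+\abs{D^qg}_0\norm{f}_{L^\infty}$, which is \eqref{Pe2} with $\kappa_0=0$.

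For the more delicate case $s=1/2$, I will work with the Slobodeckij seminorm
\[
\abs{h}_{1/2}^{2}=\abs{h}_0^2+\iint_{\Sigma\times\Sigma}\frac{\abs{h(x)-h(y)}^2}{\abs{x-y}^{d}}\,dx\,dy,
\]
and estimate each Leibniz term $D^kf\cdot D^{q-k}g$ by splitting $D^kf(x)D^{q-k}g(x)-D^kf(y)D^{q-k}g(y)$ into two pieces through the intermediate $D^kf(y)D^{q-k}g(x)$. The two resulting differences are controlled by $\abs{D^kf}_{1/2}\,\norm{D^{q-k}g}_{L^\infty}$ and $\norm{D^kf}_{L^\infty}\,\abs{D^{q-k}g}_{1/2}$ after Cauchy--Schwarz. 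Then I interpolate again: by Gagliardo--Nirenberg-type embeddings on $\Sigma$, an intermediate derivative of order $k<q$ can be bounded in $L^\infty$ by $\norm{f}_{C^1}^{1-k/q}\abs{D^qf}_{1/2}^{k/q}$, and similarly in $H^{1/2}$. The need for the $C^1$ norm (rather than merely $L^\infty$) at this level is precisely the half-derivative ``gap" in the Gagliardo--Nirenberg scale at the endpoint, which accounts for $\kappa_{1/2}=1$. After summing over $k$ and invoking Young's inequality, I recover \eqref{Pe2} at $s=1/2$.

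The main obstacle is the honest bookkeeping at $s=1/2$: one must make sure that the fractional $H^{1/2}$ norm is distributed correctly between the two factors and that the endpoint cases $k=0$ and $k=q$ are handled cleanly, since there the interpolation collapses and one directly uses $\abs{D^qf\cdot g}_{1/2}\lesssim\abs{D^qf}_{1/2}\norm{g}_{C^1}$ from Lemma \ref{va1}. Once the product estimate \eqref{Pe2} is established, the commutator estimate \eqref{Pe3} follows immediately by the reduction described above, noting that $D^kf=D(D^{k-1}f)$ so we may write $D^kf\cdot D^{q-k}g=D^{k-1}(Df)\cdot D^{(q-1)-(k-1)}g$ and apply \eqref{Pe2} at order $q-1$ to the pair $(Df,g)$.
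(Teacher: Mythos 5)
Your argument is correct and is essentially the paper's own route: the paper simply cites Lemma 3.4 of Majda--Bertozzi for $s=0$ (whose proof is exactly your Leibniz plus Gagliardo--Nirenberg/H\"older/Young argument) and, for $s=1/2$, combines Lemma \ref{va1} with that same method, which is what you do via the Slobodeckij splitting and endpoint use of Lemma \ref{va1}. The only caveat is the minor bookkeeping of the interpolation exponents at $s=1/2$ (your stated powers $k/q$, $1-k/q$ are heuristic there), but this does not affect the validity of the approach.
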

\begin{proof}
For $s=0$, we refer to Lemma 3.4 in \cite{Majda_B}.
For $s=1/2$, the estimates \eqref{Pe2} and \eqref{Pe3} follow  by combining Lemma \ref{va1} and the way of proving the case $s=0$.
\end{proof}

\subsection{Elliptic estimates}
We will need the elliptic estimates for the following Lam\'{e} problem.
\begin{lem}\label{St1}
Let $r\geq2$. Suppose that $f\in H^{r-2}(\Omega)$, $\psi\in H^{r-3/2}(\Sigma)$.
Then there exists a unique $u\in H^{r}(\Omega)$ solving the problem
\begin{equation} \label{St2}
\begin{cases}
- \mu\Delta u-\left(\displaystyle\frac{d-2}{d}\mu+\mu'\right)\nabla \diverge u=f &\text{in}~ \Omega
\\ -\mathbb{S}ue_{d}=\psi& \text{on }\Sigma
\\u=0 & \text{on }\Sigma_{b}.
\end{cases}
\end{equation}
Moreover,
\begin{align}\label{St3}
\norm{u}_{r}^{2}\lesssim\norm{f}_{r-2}^{2}+\abs{\psi}_{r-3/2}^{2}.
\end{align}
\end{lem}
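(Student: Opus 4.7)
The plan is to establish existence by a variational argument and then to derive the estimate \eqref{St3} by tangential difference quotients combined with algebraic recovery of vertical derivatives from the equation; this is standard elliptic regularity for the Lam\'e system adapted to the mixed Dirichlet/stress boundary conditions on the slab $\Omega = \mathbb{R}^{d-1}\times(-b,0)$.

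First, for existence and the base $H^1$ estimate, I would use the weak formulation in $V := \{v \in H^1(\Omega) : v|_{\Sigma_b} = 0\}$. Testing \eqref{St2} against $v \in V$ and integrating by parts, using $\mathbb{S}u: \nabla v = \frac{\mu}{2}\mathbb{D}^0 u : \mathbb{D}^0 v + \mu' \diverge u\diverge v$ by symmetry, produces the bilinear form
$$B(u,v) = \int_\Omega \Big(\tfrac{\mu}{2}\mathbb{D}^0 u : \mathbb{D}^0 v + \mu' \diverge u\diverge v\Big), \qquad F(v) = \int_\Omega f\cdot v - \int_\Sigma \psi\cdot v.$$
By \eqref{viscosity} and Korn's inequality of Lemma \ref{xm5} (applicable since $v$ vanishes on $\Sigma_b$), $B$ is coercive on $V$. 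Lax--Milgram then gives a unique weak solution $u \in V$ with $\|u\|_1 \lesssim \|f\|_0 + |\psi|_{-1/2}$, which in particular covers the case $r$ on the right-hand side via trace embedding.

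Next I would prove the estimate \eqref{St3} for $r=2$ by horizontal difference quotients $D_h^\varepsilon$. Since both $\Sigma$ and $\Sigma_b$ are translation invariant in $x_h$, applying $D_h^\varepsilon$ to the weak formulation preserves the Dirichlet condition and only produces commutator terms on the right. A standard Nirenberg-type argument yields $\|Du\|_1^2 \lesssim \|f\|_0^2 + |\psi|_{1/2}^2 + \|u\|_1^2$, which controls all horizontal second derivatives and the mixed horizontal-vertical ones. To recover $\partial_d^2 u$, observe that the $\partial_d^2$-block of the Lam\'e operator is the diagonal matrix with entries $-\mu$ on the horizontal components and $-\frac{2(d-1)\mu + d\mu'}{d}$ on the vertical one, both strictly negative under \eqref{viscosity}; thus the interior equation can be solved algebraically for $\partial_d^2 u$ in terms of $f$ and derivatives already controlled.

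Finally, for $r > 2$ I would iterate. Applying $\partial^\alpha$ with $\alpha \in \mathbb{N}^{d-1}$, $|\alpha| \leq r-2$, commutes with the constant-coefficient Lam\'e operator and with both boundary conditions, so the $r=2$ estimate applied to $\partial^\alpha u$ yields $\|\partial^\alpha u\|_2^2 \lesssim \|\partial^\alpha f\|_0^2 + |\partial^\alpha \psi|_{1/2}^2$; summing over $\alpha$ then controls every derivative with at most two vertical orders. An induction on the number of vertical derivatives $k\le r$, in which the interior equation is used once more to trade one vertical derivative against two tangential ones plus a source term, closes the bound $\|u\|_r^2 \lesssim \|f\|_{r-2}^2 + |\psi|_{r-3/2}^2 + \|u\|_1^2$; the lower-order term is absorbed via the $H^1$ estimate from Lax--Milgram. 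The main delicate point is the boundary trace on $\Sigma$ at the induction step: with $\psi \in H^{r-3/2}(\Sigma)$ one must control the normal second-derivative trace through the stress condition $-\mathbb{S}u e_d = \psi$, which requires checking that the pair (Dirichlet on $\Sigma_b$, stress on $\Sigma$) satisfies the Lopatinskii--Shapiro complementing condition uniformly --- this is classical for the Lam\'e system under \eqref{viscosity} and is what makes the ADN elliptic estimates applicable and gives \eqref{St3}.
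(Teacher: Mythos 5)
Your argument is correct, but note that the paper itself gives no proof of this lemma: it simply cites Lemma A.10 of \cite{WYJ2}, so your proposal supplies a self-contained argument where the paper defers to a reference. Your route --- Lax--Milgram on $V=\{v\in H^{1}(\Omega):\ v|_{\Sigma_b}=0\}$ with coercivity coming from \eqref{viscosity} together with Korn's inequality of Lemma \ref{xm5}, then horizontal difference quotients (legitimate here because the slab, the constant-coefficient Lam\'e operator, and both boundary conditions are invariant under horizontal translations), and finally algebraic recovery of the vertical derivatives from the interior equation, whose $\partial_d^2$-block is $\mathrm{diag}\bigl(-\mu,\dots,-\mu,-\tfrac{2(d-1)\mu+d\mu'}{d}\bigr)$ and hence invertible under \eqref{viscosity} --- is the standard way to prove exactly \eqref{St3}, with uniqueness following from coercivity. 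Two small remarks: (i) your closing appeal to the Lopatinskii--Shapiro/ADN theory is redundant in this scheme, since the difference-quotient step plus the inductive algebraic recovery of $\partial_d^k u$ already yields all normal derivatives without ever invoking the complementing condition (it is of course satisfied, but you do not need it); (ii) the statement as written allows real $r\ge 2$ while your induction runs over integers --- the paper only ever applies the lemma with integer $r$, and the general case follows by interpolation, but this should be said if you want the lemma in its stated generality.
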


\begin{proof}
We refer to Lemma A.10 in \cite{WYJ2} for the proof.
\end{proof}
We also need the elliptic estimates for the following Stokes problem.
\begin{lem}\label{St4}
Let $r\geq2$. Suppose that $f\in H^{r-2}(\Omega)$, $h\in H^{r-1}(\Omega)$, $\psi\in H^{r-1/2}(\Sigma)$ and that
$u\in H^{r}(\Omega)$, $\nabla p\in H^{r-2}(\Omega)$ solving the problem
\begin{equation} \label{St5}
\begin{cases}
- \mu\Delta u+\nabla p=f &\text{in}~ \Omega
\\  \diverge u=h& \text{in }\Omega
\\ u=\psi& \text{on }\Sigma
\\u=\phi & \text{on }\Sigma_{b}.
\end{cases}
\end{equation}
Then
\begin{align}\label{St6}
\norm{u}_{r}^{2}+\norm{\nabla p}_{r-2}^{2}\lesssim\norm{u}_{0}^{2}+\norm{f}_{r-2}^{2}+\norm{h}_{r-1}^{2}+\abs{\psi}_{r-1/2}^{2}
+\norm{\phi}_{H^{r-1/2}(\Sigma_b)}^{2}.
\end{align}
\end{lem}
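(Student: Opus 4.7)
The plan is to reduce \eqref{St5} to a fully homogeneous Stokes system on the strip $\Omega = \mathbb{R}^{d-1} \times (-b,0)$ and invoke the classical Cattabriga / Agmon--Douglis--Nirenberg regularity theory, then undo the reductions while tracking norms. Since the problem is linear and the two pieces of boundary data $\psi$ and $\phi$ live on flat horizontal pieces of $\partial\Omega$, each reduction is routine; this is essentially the strategy already used for the analogous Lemma \ref{St1}.

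First I would lift the boundary data: using the Poisson extension of Lemma \ref{pp2} (or any standard trace inverse on the strip), construct $U \in H^r(\Omega)$ with $U|_\Sigma = \psi$, $U|_{\Sigma_b} = \phi$, and $\norm{U}_r^2 \lesssim \abs{\psi}_{r-1/2}^2 + \norm{\phi}_{H^{r-1/2}(\Sigma_b)}^2$. Then $v := u - U$ has zero Dirichlet data on $\partial\Omega$ and solves a Stokes system whose forcing lies in $H^{r-2}$ and whose divergence is $h - \diverge U \in H^{r-1}$. Next, I would remove the divergence constraint by applying a Bogovski-type right inverse $\mathcal{B}$ of the divergence on the strip (constructed explicitly via horizontal Fourier analysis), giving a field $\mathcal{B}(h - \diverge U) \in H^r(\Omega) \cap H^1_0(\Omega)$ with $\norm{\mathcal{B}(h - \diverge U)}_r \lesssim \norm{h}_{r-1} + \norm{U}_r$. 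Setting $w = v - \mathcal{B}(h - \diverge U)$ yields a divergence-free field vanishing on $\partial\Omega$ that solves $-\mu \Delta w + \nabla p = F$ in $\Omega$ with $\norm{F}_{r-2} \lesssim \norm{f}_{r-2} + \norm{h}_{r-1} + \abs{\psi}_{r-1/2} + \norm{\phi}_{H^{r-1/2}(\Sigma_b)}$.

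For this homogeneous Stokes system I would take the horizontal Fourier transform and reduce to a family of ODE systems in $x_d \in (-b,0)$ parametrized by $\xi \in \mathbb{R}^{d-1}$. For each $\xi$ the resulting two-point boundary value problem is uniquely solvable and admits the standard Sobolev-type estimate $\norm{w}_r^2 + \norm{\nabla p}_{r-2}^2 \lesssim \norm{w}_0^2 + \norm{F}_{r-2}^2$, with constants that behave polynomially in $\xi$ in the precise way required by Plancherel. Integrating in $\xi$ then gives this estimate on $\Omega$. Alternatively, one can use a localization and flattening argument to reduce to the half-space Stokes regularity, the horizontal direction being handled by a partition of unity in $x_h$. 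Reassembling $u = w + \mathcal{B}(h - \diverge U) + U$ and controlling the low-order term $\norm{w}_0$ by $\norm{u}_0$ plus the already-estimated boundary and divergence data yields \eqref{St6}.

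The main obstacle is not conceptual but bookkeeping: one must (i) verify the mean/flux compatibility needed for the Bogovski step, which is automatic since $\int_\Omega (h - \diverge U) = \int_\Omega h - \int_\Sigma \psi\cdot e_d + \int_{\Sigma_b} \phi \cdot e_d$ matches the flux of any divergence-free field with the prescribed boundary data, and (ii) retain the $\norm{u}_0$ term on the right of \eqref{St6}, which is unavoidable on an infinite strip without imposing decay or mean-zero conditions on $u$. Both issues are handled by the standard arguments cited in \cite{WYJ2}.
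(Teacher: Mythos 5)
The paper does not actually prove this lemma: its ``proof'' is a one-line citation to the a priori estimate (3.7) in \cite{KY}, which is precisely the layer-domain Stokes estimate you are trying to re-derive. Your outline (lift the boundary data, correct the divergence, then horizontal Fourier transform reducing to ODE systems in $x_d$, or alternatively localization to half-space regularity) is the standard route behind such results, so in spirit you are reconstructing the cited reference rather than taking a different path from the paper.

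As written, though, two steps are genuinely shaky. First, the Bogovskii step: the classical Bogovskii operator and the mean-zero compatibility condition belong to bounded domains. On the strip $\mathbb{R}^{d-1}\times(-b,0)$ the integrals $\int_\Omega h$ and $\int_\Sigma \psi\cdot e_d$ you invoke need not converge for $h\in H^{r-1}$, $\psi\in H^{r-1/2}$, so the claim that compatibility is ``automatic'' is not meaningful; and a Fourier-built right inverse of the divergence with zero trace on both boundaries degenerates at low horizontal frequencies (for fixed $\xi$ one needs $\int_{-b}^{0}\hat g\,dx_d=0$ or must compensate with horizontal components carrying a factor $\abs{\xi}^{-1}$, which is controlled only by going back to the solution itself). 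The cleaner fix is to skip this reduction and keep the inhomogeneous divergence in the transformed ODE system or in the localized problems: ADN-type Stokes theory handles $\diverge u=h$ with $h\in H^{r-1}$ directly, which is what the cited reference does. Second, the sentence ``constants behave polynomially in $\xi$ in the precise way required by Plancherel'' is exactly the analytic content of the lemma (uniform parameter-dependent estimates for the boundary-value ODE system, including the regimes $\abs{\xi}\to 0$ and $\abs{\xi}\to\infty$), and it is asserted rather than proved; likewise, in the localization alternative you must deal with the term $p\,\nabla\chi$ (only $\nabla p$ is controlled, so you need a per-patch normalization of $p$ plus a local Poincar\'e inequality) and absorb lower-order commutators through $\norm{u}_{0}$ by interpolation. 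None of this is fatal, but it is where the actual work lies; as it stands your argument is an outline of the known proof rather than a complete one.
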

\begin{proof}
We refer to (3.7) in \cite{KY} for the proof.
\end{proof}

\section*{Acknowledgements}
The authors are deeply grateful to the referees for the insightful comments and suggestions.\bigskip

\noindent{\bf Declarations}

\section*{Data Availability}
No datasets are analysed or generated in this manuscript.

\section*{Conflict of Interest}
The authors declare that there is no conflict of interest.

\vspace{0.5cm}

\end{document}